\newcommand{\N}{\mathbb{N}}
\newcommand{\R}{\mathbb{R}}
\newcommand{\C}{\mathbb{C}}
\newcommand{\D}{\mathbb{D}}
\newcommand{\A}{\mathbf{A}}
\newcommand{\de}{\partial}
\newcommand{\db}{\overline{\partial}}
\newcommand{\ddbar}{i\partial\overline\partial}
\newcommand{\ov}[1]{\overline{#1}}
\newcommand{\tr}[2]{\mathrm{tr}^{#1}{#2}}
\newcommand{\ti}[1]{\tilde{#1}}
\newcommand{\vp}{\varphi}
\newcommand{\ve}{\varepsilon}
\renewcommand{\P}{\mathbb{P}}
\renewcommand{\leq}{\leqslant}
\renewcommand{\geq}{\geqslant}
\renewcommand{\epsilon}{\varepsilon}
\renewcommand{\diamond}{\diamondsuit}
\newtheorem{theorem}{Theorem}[section]
\newtheorem{lemma}[theorem]{Lemma}
\newtheorem{proposition}[theorem]{Proposition}
\newcounter{mtheorem}
\newtheorem{mtheorem}[mtheorem]{Theorem}
\theoremstyle{definition}
\newtheorem{definition}[theorem]{Definition}
\newtheorem{rk}[theorem]{Remark}
\numberwithin{equation}{section}
\begin{document}

\title{Smooth asymptotics for collapsing Calabi-Yau metrics}

\author{Hans-Joachim Hein}
\address{Mathematisches Institut, WWU M\"unster, 48149 M\"unster, Germany}
\email{hhein@wwu.de}

\author{Valentino Tosatti}
\address{Courant Institute of Mathematical Sciences, New York University, New York, NY 10012, USA}
\email{tosatti@cims.nyu.edu}

\date{\today}

\begin{abstract}
We prove that Calabi-Yau metrics on compact Calabi-Yau manifolds whose K\"ahler classes shrink the fibers of a holomorphic fibration have a priori estimates of all orders away from the singular fibers.  To this end we prove an asymptotic expansion of these metrics in terms of powers of the fiber diameter,  with $k$-th order remainders that satisfy uniform $C^k$-estimates with respect to a collapsing family of background metrics.  The constants in these estimates are uniform not only in the sense that they are independent of the fiber diameter,  but also in the sense that they only depend on the constant in the estimate for $k = 0$ known from previous work of the second-named author.  For $k > 0$ the new estimates are proved by blowup and contradiction,  and each additional term of the expansion arises as the obstruction to proving a uniform bound on one additional derivative of the remainder.
\end{abstract}

\maketitle

\markboth{Smooth asymptotics for collapsing Calabi-Yau metrics}{Hans-Joachim Hein and Valentino Tosatti}

\setcounter{tocdepth}{3}
\tableofcontents

\section{Introduction}

Yau's proof of the Calabi conjecture \cite{Ya} shows that compact K\"ahler manifolds with vanishing real first Chern class (Calabi-Yau manifolds) admit Ricci-flat K\"ahler metrics, a unique one in each K\"ahler cohomology class. The study of these metrics has since been a central topic in complex geometry, and one particularly interesting question is to understand the behavior of these metrics when the K\"ahler class degenerates. The case when the metrics are volume noncollapsed is by now well-understood (see \cite{To4} and references therein), but the collapsing case presents a much harder challenge. This problem was first studied in the work of Gross-Wilson \cite{GW} on elliptically fibered $K3$ surfaces with $I_1$ singular fibers, where, by constructing the degenerating Ricci-flat metrics via gluing, they showed in particular that these metrics collapse locally smoothly away from the singular fibers to a canonical K\"ahler metric with nonnegative Ricci curvature on the base. Our goal in this paper is to prove that this conclusion holds in complete generality in all dimensions.  This will be a corollary of the existence of a complete asymptotic expansion of the collapsing Ricci-flat metrics locally uniformly away from the singular fibers.  The key new insight of this paper, which led us to discover this expansion but actually yields a stronger statement, is that the existence of such an expansion can be proved in tandem with Schauder-type regularity estimates. In particular,  each term of the expansion is bounded in all $C^k$ norms by a constant times a fixed power of the fiber diameter, where the constant is not only independent of the fiber diameter (the usual meaning of an asymptotic expansion) but is in fact almost independent of the given family of Ricci-flat metrics itself. Indeed, we will prove that all of these constants depend only on the constant in a $C^0$ estimate,  \eqref{unifequiv}, of these metrics due to the second-named author \cite{To}.

\subsection{Setup}\label{sectsetup}

Let $X$ be a compact Calabi-Yau manifold (compact K\"ahler with $c_1(X)=0$ in $H^2(X,\mathbb{R})$) of dimension $m+n$ which admits a surjective holomorphic map $f:X\to B$ with connected fibers onto a compact K\"ahler reduced and irreducible analytic space of dimension $m$. Let $S\subset X$ be the preimage of the singular locus of $B$ together with the critical values of $f$ on the regular part of $B$, so $S$ and $f(S)$ are closed proper analytic subvarieties and $f:X\setminus S\to B\setminus f(S)$ is a proper holomorphic submersion with $n$-dimensional Calabi-Yau fibers $X_b=f^{-1}(b)$. We will implicitly assume that $m,n>0$, so that the discussion is nontrivial. The set $S$ will be referred to as the union of the singular fibers of $f$.

Such fiber spaces arise naturally when $X$ is a projective Calabi-Yau manifold and $L$ is a semiample line bundle on $X$ with $(L^{m+n})=0$, by taking $f$ to be the morphism given by $|\ell L|$ for $\ell$ sufficiently large and divisible (conjecturally the semiampleness assumption can be relaxed to nefness, up to replacing $L$ by another numerically equivalent line bundle). This gives a wealth of examples, including the elliptic fibrations of $K3$ surfaces mentioned above and Lefschetz pencils on Calabi-Yau $3$-folds as in \cite{Li}.

It is important to note that, by Ehresmann's lemma, all fibers $X_b$, $b\in B\setminus f(S)$, are diffeomorphic to a fixed Calabi-Yau $n$-fold $Y$, and $f|_{X\setminus B}$ is a locally trivial $C^\infty$ fiber bundle. On the other hand, $f|_{X\setminus B}$ is a locally trivial holomorphic fiber bundle if and only if the smooth fibers $X_b$ are all pairwise biholomorphic, by the Fischer-Grauert theorem \cite{FG} (in this case $f$ is called isotrivial). Furthermore, for a Calabi-Yau fiber space $f:X\to B$ as above, it is proved in \cite{TZ} and \cite[Thm 3.3]{TZ2} that $S=\emptyset$ (i.e., $B$ is smooth and $f$ is a submersion) if and only if $f$ is a holomorphic fiber bundle.

Given now a fiber space $f:X\to B$ as above, fix K\"ahler metrics $\omega_X$ and $\omega_B$, with $\omega_X$ Ricci-flat. For all $t\geq 0$ we let $\omega^\bullet_t$ be the unique Ricci-flat K\"ahler metric on $X$ cohomologous to $f^*\omega_B+e^{-t}\omega_X$. We can write $\omega^\bullet_t=f^*\omega_B+e^{-t}\omega_X+\ddbar\psi_t$, where $\psi_t$ are smooth functions solving the degenerating family of complex Monge-Amp\`ere equations
\begin{equation}\label{ma_initial}
(\omega^\bullet_t)^{m+n}=(f^*\omega_B+e^{-t}\omega_X+\ddbar\psi_t)^{m+n}=c_t e^{-nt}\omega_X^{m+n},\quad \sup\nolimits_X\psi_t=0.
\end{equation}
Here $c_t>0$ is defined by integrating \eqref{ma_initial} over $X$, and as $t \to \infty$,
\begin{equation}c_t \to \binom{m+n}{n}\frac{\int_X f^*\omega_B^m\wedge\omega_X^{n}}{\int_X\omega_X^{m+n}}>0.
\end{equation} In particular, the total volume $\mathrm{Vol}(X,\omega^\bullet_t)$ of $X$ as well as the total volume ${\rm Vol}(X_b, \omega_t^\bullet|_{X_b})$ of each fiber is comparable to $e^{-nt}$, so we have volume collapse as $t\to\infty$.

To describe the leading order behavior of $\omega_t^\bullet$ as $t \to \infty$, one then produces a K\"ahler metric $\omega_{\rm can}=\omega_B+\ddbar \psi_\infty$ on $B\setminus f(S)$ by solving the complex Monge-Amp\`ere equation \cite{ST,To}
\begin{equation}\label{ma_limit}
\omega_{\rm can}^m=(\omega_B+\ddbar \psi_\infty)^m=c_\infty f_*(\omega_X^{m+n}),\quad \sup\nolimits_B\psi_t=0,
\end{equation}
where $\psi_t\in C^0(B)\cap C^\infty(B\setminus f(S))$ and $c_\infty=\int_B\omega_B^m\int_{X_b}\omega_X^n/\int_X\omega_X^{m+n}$. Then $\mathrm{Ric}(\omega_{\rm can})=\omega_{\rm WP}\geq 0$, where the Weil-Petersson form $\omega_{\rm WP}$ measures the variation of the complex structures of the fibers.

\subsection{Smooth collapsing}

Our first main theorem is the following. This confirms a conjecture of the second-named author from roughly 10 years ago, see for instance the surveys \cite{To2,To3,To4}.

\begin{mtheorem}\label{mthmA}
In the above setting, as $t \to \infty$, locally uniformly away from the singular fibers of $f$, the Ricci-flat metrics $\omega_t^\bullet$ converge to $f^*\omega_{\rm can}$ in the standard $C^\infty$ topology of tensor fields.
\end{mtheorem}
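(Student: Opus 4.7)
The plan is to work locally on a fixed compact set $K \subset X \setminus S$ over which $f$ is a holomorphic submersion, and to upgrade the known $C^0$ bound for $\psi_t$ to uniform $C^k$ bounds for every $k$, ideally in the form of a full asymptotic expansion in powers of $e^{-t}$. The first step is to replace the background in \eqref{ma_initial} by one that already encodes the expected collapse. Let $\omega_{\mathrm{SRF}}$ denote the semi-Ricci-flat form on $X \setminus S$, namely the unique smooth closed $(1,1)$-form whose restriction to each smooth fiber $X_b$ is the Ricci-flat K\"ahler representative of $[\omega_X|_{X_b}]$, produced by applying Yau's theorem fiberwise with smooth parameter dependence. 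Set $\omega_t^{\#} = f^*\omega_{\mathrm{can}} + e^{-t}\omega_{\mathrm{SRF}}$ and write $\omega_t^\bullet = \omega_t^{\#} + \ddbar \vp_t$ with $\vp_t$ suitably normalized. Using \eqref{ma_limit} and the defining property of $\omega_{\mathrm{SRF}}$, the Monge-Amp\`ere equation \eqref{ma_initial} becomes $(\omega_t^{\#} + \ddbar\vp_t)^{m+n} = (\omega_t^{\#})^{m+n}(1 + F_t)$ on $K$, where $F_t$ is smooth and $O(e^{-t})$ in every $C^k$-norm.

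Next, I would establish uniform two-sided comparability $c_K \omega_t^{\#} \le \omega_t^\bullet \le C_K \omega_t^{\#}$ on $K$. The upper bound comes from a Chern--Lu / maximum principle argument applied to $\log \mathrm{tr}_{\omega_t^\bullet}\omega_t^{\#} - A\vp_t$, exploiting the Ricci-flatness of $\omega_t^\bullet$ and a uniform lower bound on the bisectional curvature of $\omega_t^{\#}$ measured in the collapsing sense; the matching lower bound then follows from the Monge-Amp\`ere equation. With this in hand, I would expose a non-collapsing geometry by rescaling the fiber directions: on a trivializing polydisc with horizontal coordinates $z\in \Delta^m$ and vertical $w\in\Delta^n$, the substitution $\ti{w} = e^{t/2} w$ turns $\omega_t^{\#}$ into a uniformly non-degenerate K\"ahler metric on $\Delta^m \times \Delta^n_{e^{t/2}}$, and the equation for $\vp_t$ becomes a uniformly elliptic complex Monge-Amp\`ere equation with $t$-uniform coefficient bounds. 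Standard interior Schauder and Evans--Krylov estimates on these non-collapsing pieces then yield uniform $C^k$-bounds for $\vp_t$ in the \emph{collapsing $C^k$ norm} that weights horizontal derivatives at scale $1$ and vertical derivatives at scale $e^{-t/2}$.

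The decisive step is to convert these bounds into a genuine asymptotic expansion $\vp_t = \sum_{j=1}^N e^{-jt}\eta_j + R_{t,N}$, with $\eta_j$ smooth on $X \setminus S$ and $R_{t,N}$ of collapsing $C^k$ size $O(e^{-(N+1)t})$. Substituting this ansatz into the Monge-Amp\`ere equation and matching powers of $e^{-t}$ produces a hierarchy of linear equations on the fibers: the leading one is a fiberwise linear elliptic PDE for $\eta_1$ whose solvability condition is precisely the fiber-integrated identity \eqref{ma_limit} defining $\omega_{\mathrm{can}}$, and each subsequent $\eta_{j+1}$ is obtained by inverting the same fiberwise linearization modulo its cokernel of horizontal functions, the obstruction being absorbed into a correction of the background at that order. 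The remainder is controlled by a bootstrap argument based on the collapsing Schauder estimates of the previous step. Specializing to $N=1$ then gives $\omega_t^\bullet - f^*\omega_{\mathrm{can}} \to 0$ in $C^k_{\mathrm{loc}}(X \setminus S)$ for every $k$, which is Theorem A.

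The main obstacle is precisely this last step: developing a uniform collapsing Schauder theory together with a uniform Fredholm analysis for the fiberwise linearized Monge-Amp\`ere operator with smooth horizontal parameter dependence, and handling the horizontal--vertical splitting at every order. These ingredients would all fail if one naively applied standard local elliptic theory, because the operators are degenerating along the fibers; the asymptotic-expansion framework is what both identifies the correct model at each order and verifies the relevant solvability conditions, with the horizontal obstruction at order $e^{-t}$ singling out $f^*\omega_{\mathrm{can}}$ as the unique correct leading-order limit.
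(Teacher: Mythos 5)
Your overall architecture is close to the paper's: choose $\omega_t^\natural = f^*\omega_{\mathrm{can}} + e^{-t}\omega_F$ as background (the paper's ``semi-Ricci-flat'' reference), exploit the uniform equivalence $\omega_t^\bullet \sim \omega_t^\natural$ of \cite{To}, and then try to upgrade to all orders. But the decisive Step 3 has a genuine gap, and it is exactly the gap whose resolution occupies most of the paper. (As a side remark, the rescaling as you wrote it, $\tilde{w} = e^{t/2}w$ on a polydisc, does not uniformize $\omega_t^\#$ and is not compatible with the fiber $Y$ being a closed manifold; the useful stretching is $\tilde{z} = e^{t/2}z$ in the base direction, producing a domain $\Delta^m_{e^{t/2}} \times Y$, which is the paper's map $\Sigma_t$ in \eqref{sigmat}.)

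The real problem is the claim that interior Schauder/Evans--Krylov ``on these non-collapsing pieces'' produces uniform $C^k$ bounds in the collapsing norm. After stretching by $\Sigma_t$ at a base point $\hat{z}$ and scaling by $e^t$, the complex structure, reference metric, and Monge-Amp\`ere density do converge smoothly and the standard local estimates apply; but they apply on balls of fixed radius $R$ in the stretched picture, which are balls of radius $Re^{-t/2}$ in the original base. Translating back using $|\check{\D}^a\check{\eta}_t|_{\check{g}_t} = e^{-at/2}\,|\D^a\eta_t|_{g_t}$ (where $\eta_t = \omega_t^\bullet - \omega_t^\natural$ and $\check{\cdot}$ denotes the stretched objects), a uniform bound in the stretched picture gives only $\|\D^a\eta_t\|_{L^\infty(g_t)} \le Ce^{at/2}$, with a loss of $e^{at/2}$ coming precisely from the base derivatives at the original scale, which are never controlled by the local estimate centered at a single $\hat{z}$. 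This loss is not an artifact: Theorem \ref{mthmB} shows the first correction is $e^{-2t}\,i\partial\overline\partial\Delta_Y^{-2}(\cdots)$, and a direct computation of the fiber-fiber components of $\D^3$ of this term gives $|\D^3\eta_t|_{g_t}\sim e^{t/2}$, so the desired uniform $C^{k,\alpha}(g_t)$ bound is genuinely false once $k \ge 2$, as the paper stresses in the introduction. A patching argument can therefore never produce the uniform collapsing-$C^k$ bounds you assert, and without them the ``bootstrap based on collapsing Schauder estimates'' in your Step 4 does not get off the ground either. The paper's solution is structurally different: it identifies explicit obstruction functions $G_{i,p}$ (Section~\ref{sectref}) and decomposes $\omega_t^\bullet - \omega_t^\natural = \gamma_{t,0}+\gamma_{t,2,k}+\cdots+\gamma_{t,j,k}+\eta_{t,j,k}$, where the $\gamma_{t,i,k}$ carry the unbounded $C^{k,\alpha}(g_t)$ contributions (but are explicit enough to be bounded in a fixed metric), while $\eta_{t,j,k}$ does satisfy strong collapsing estimates, proved by a delicate nonlinear blowup/contradiction argument rather than a local Schauder bootstrap. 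Relatedly, a clean expansion $\vp_t = \sum_j e^{-jt}\eta_j(z,y)$ with $t$-independent $\eta_j$ is too naive: the coefficients in the paper's Theorem \ref{shutupandsuffer} are $t$-dependent functions $A_{t,i,p,k}$ on the base, and the ``fiberwise Fredholm'' viewpoint, while capturing the leading-order constraint that determines $\omega_{\mathrm{can}}$, does not account for the fact that the obstruction at order $e^{-2t}$ also enters the reference metric used in the Schauder step, forcing the iterative selection of Theorem~\ref{ghost}.
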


Equivalently, this means that $\psi_t\to f^*\psi_\infty$ in $C^k_{\rm loc}(X\setminus S,g_X)$ for all $k\geq 0$, so, in particular, that we have derivative estimates to all orders for the solution $\psi_t$ of the Monge-Amp\`ere equation \eqref{ma_initial}, away from the singular fibers of $f$ but uniformly in $t$. As observed in \cite[Rmk 1.7]{HT2}, the analogous purely local statement for the real or complex Monge-Amp\`ere equation with ellipticity degenerating along a foliation is false, and, while our proof is local on the base, it crucially relies on the leaves being closed manifolds. Importantly, the constants in our higher order $C^k$ estimates depend only on the constant in the $C^0$ estimate \eqref{unifequiv} due to \cite{To}. In this sense, our estimates are true \emph{a priori} estimates.

\subsection{Previous work}

There have been a number of partial results in the direction of Theorem \ref{mthmA}. As mentioned earlier, this statement follows from \cite{GW} in the case of elliptically fibered $K3$ surfaces with $I_1$ fibers, where a gluing construction for $\omega^\bullet_t$ is given (see also \cite{JS2} for suitable higher order estimates at the singular fibers). This was recently extended to more general singular fibers in \cite{CVZ}, and to Lefschetz fibrations $f:X^3 \to\P^1$ with $K3$ fibers in \cite{Li}. The gluing approach relies on knowing the precise structure of the singular fibers and on being able to use this knowledge to construct an ansatz for $\omega^\bullet_t$ near them, so it would be unrealistic to hope to prove Theorem \ref{mthmA} in this way in general.

In the special situation where there are no singular fibers (so in fact $f$ is a holomorphic fiber bundle by \cite{TZ,TZ2}), Theorem \ref{mthmA} follows from \cite{Fi0,Fi}, where $\omega^\bullet_t$ is constructed by considering a {\em semi-Ricci-flat} form $\omega_F$ on $X$ (to be discussed below) and deforming it using the implicit function theorem. For this approach the absence of singular fibers is crucial, as discussed in \cite[\S 1.3]{HT2}.

Progress towards Theorem \ref{mthmA} in general became possible thanks to \cite{DP,EGZ}, who by using Ko\l odziej's pluripotential theory techniques \cite{Kol} proved that $\sup_X|\psi_t|\leq C$ uniformly in $t$. Using this estimate, the second-named author \cite{To} extended Yau's second-order estimates \cite{Ya} to our setting (see also \cite{ST0} for the case when $m=n=1$) and showed that $\psi_t\to f^*\psi_\infty$ in $C^{1,\alpha}_{\rm loc}(X\setminus S,g_X)$ for all $0<\alpha<1$, and also established that for any $K\Subset X\setminus S$ there is a $C$ such that
\begin{equation}\label{unifequiv}
C^{-1}(f^*\omega_B+e^{-t}\omega_X)\leq \omega^\bullet_t\leq C(f^*\omega_B+e^{-t}\omega_X)
\end{equation}
holds on $K$ for all $t$.

If the smooth fibers $X_b$ are tori (or finite free quotients of tori), it is possible to ``unravel'' the fibers by passing to the universal cover of the preimage of a ball in the base. Applying a vertical stretching on this cover, estimate \eqref{unifequiv} with some work implies that the stretched Ricci-flat metrics are uniformly Euclidean, hence satisfy higher-order estimates by standard theory. In this way, Theorem \ref{mthmA} was proved in \cite{GTZ} when the fibers $X_b$ are tori, under a projectivity assumption which was removed in \cite{HT}, and then in \cite{TZ3} when the fibers are finite free quotients of tori. This approach crucially uses the fact that tori are covered by Euclidean space, and does not extend to more general smooth fibers.

The next general result towards Theorem \ref{mthmA} was obtained in \cite{TWY}, where it is shown that $\omega_t^\bullet\to f^*\omega_{\rm can}$ in $C^0_{\rm loc}(X\setminus S,g_X)$. This regularity appears to be a natural barrier for methods based on modifications of Yau's estimates \cite{Ya}. However, by introducing new methods, we reproved this result in \cite{HT2} and improved the convergence to $C^{0,\alpha}_{\rm loc}(X\setminus S,g_X)$ in general ($0<\alpha<1$), and also proved Theorem \ref{mthmA} if $f$ is isotrivial (i.e., the smooth fibers are all biholomorphic but $S$ can be nonempty). In that work, which is purely local on the base, we introduced a new nested blowup method and the idea of working with non-K\"ahler Riemannian product reference metrics, deriving a contradiction to various Liouville type theorems on cylinders. This method is also the foundation of the current paper.
\subsection{Smooth asymptotics}

Theorem \ref{mthmA} is in fact a simple corollary of a much stronger statement, which gives an asymptotic expansion for $\omega^\bullet_t$ with strong uniform estimates of the remainders. We will now describe this statement.  Broadly speaking, our expansion is a hybrid of an expansion in the usual sense and an interior regularity estimate, where each new term in the expansion is constructed as the obstruction to proving a uniform estimate of the previous remainder in a stronger H\"older-type norm.  By construction, the resulting terms then satisfy uniform interior  {\em a priori} estimates (depending only on background data and on the uniform $C^0$ bound \eqref{unifequiv} established in \cite{To}) in these norms.

The precise theorem that we prove is Theorem \ref{shutupandsuffer} below, whose statement we now describe in rough terms. First, thanks to \eqref{unifequiv}, one might naively expect that $\omega^\bullet_t$ satisfies higher order $C^k_{\rm loc}$ estimates even with respect to the shrinking reference metrics $f^*\omega_B+e^{-t}\omega_X$. However, a moment's thought reveals that such estimates very much depend on the fiberwise restrictions of these reference metrics for $t = 0$, as different choices give rise to $C^k$ norms that do not remain uniformly equivalent as $t\to\infty$ for $k \geq 1$. Furthermore, it was shown in \cite{TWY,TZ3} that $e^t\omega^\bullet_t|_{X_b}$ converges smoothly to the unique Ricci-flat K\"ahler metric $\omega_{F,b}$ on $X_b$ cohomologous to $\omega_X|_{X_b}$. Thus, if higher order estimates with respect to shrinking reference metrics hold at all, this forces the reference metrics to restrict to $e^{-t}\omega_{F,b}$ on each $X_b$. We thus define a $(1,1)$-form $\omega_F$ on $X\setminus S$ by writing $\omega_{F,b}=\omega_X|_{X_b}+\ddbar\rho_b$ where $\int_{X_b}\rho_b \omega_X^n=0$,
observing that $\rho_b$ varies smoothly in $b\in B\setminus f(S)$ and so defines a smooth function $\rho$ on $X\setminus S$, and defining
\begin{equation}\label{pstwlrllk}
\omega_F=\omega_X+\ddbar\rho, \quad \omega^\natural_t=f^*\omega_{\rm can}+e^{-t}\omega_F
\end{equation}
on $X\setminus S$. Then the $(1,1)$-form $\omega_F$ is semi-Ricci-flat in the sense that it restricts to a Ricci-flat metric on every smooth fiber, but it is not in general semipositive definite on $X\setminus S$; see \cite{CGP} for a counterexample. Nevertheless, given any $K\Subset X\setminus S$ there is a $t_K$ such that $\omega^\natural_t$ is a K\"ahler metric on $K$ for all $t\geq t_K$, uniformly equivalent to $f^*\omega_B+e^{-t}\omega_X$, and hence to $\omega^\bullet_t$ by \eqref{unifequiv}. Observe also that every closed real $(1,1)$-form on $X\setminus S$ which is $i\de\db$-cohomologous to $\omega_X$ and restricts to $\omega_{F,b}$ on each $X_b$ must be equal to $\omega_F+\ddbar f^*u$ for some smooth function $u$ on $B\setminus f(S)$. The semi-Ricci-flat form $\omega_F$ has also played a crucial role in previous work on this problem, see e.g. \cite{GSVY,GTZ,GW,HT,HT2,ST,To,TWY}.

From now on we work locally on the base, and simply let $B$ denote a Euclidean coordinate ball in the base away from $f(S)$ over which $f$ is $C^\infty$ trivial. We are thus working on $B\times Y$ with a complex structure $J$ which is not in general the product one, but for which ${\rm pr}_B$ is $(J,J_{\C^m})$-holomorphic. Up to replacing $\psi_t$ by $\psi_t-\psi_\infty-e^{-t}\rho$, we can write $\omega^\bullet_t=\omega^\natural_t+\ddbar\psi_t,$ where from the above-mentioned results we know that $\psi_t\to 0$ in $C^{2,\alpha}(B\times Y)$ (although for the purposes of the present paper it is sufficient to know that $i\partial\overline\partial\psi_t \to 0$ in the weak sense of currents, which was already proved in \cite{To}).

If $f$ is isotrivial, we can pick the $C^\infty$ trivialization to be holomorphic, so $J$ is a product, the metrics $\omega_{F,b}$ are independent of $b$ and $\omega^\natural_t$ is a product metric. In  \cite{HT2} we then proved that one does indeed have uniform $C^k_{\rm loc}(X\setminus S,g^\natural_t)$ estimates for $\omega^\bullet_t$. This was also proved in \cite{GTZ,HT} in the case of torus fibers if $J$ is not a product, but then the fiberwise Ricci-flat metrics $\omega_{F,b}$ are parallel with respect to each other in a suitable $C^\infty$ trivialization of $f$, which greatly simplifies the estimates. Outside of these two special cases, the metrics $\omega^\natural_t$ are not products and $\omega_{F,b}$ is not parallel with respect to $\omega_{F,b'}$ for any $b \neq b'$ with respect to any $C^\infty$ identification $X_b \cong X_{b'}$. As in our previous paper \cite{HT2}, this forces us to introduce a family of collapsing non-K\"ahler Riemannian product metrics $g_{b,t} = g_{\C^m} + e^{-t}g_{F,b}$.

The first innovation of the present paper is to introduce a certain connection $\D$ and parallel transport operator $\P$ adapted to the family $g_{b,t}$ such that, with a suitable definition of H\"older norms in terms of these objects, a sharp interpolation inequality can be proved whose constants are independent of $t$. Using this setup we are then also able prove the crucial new technical Theorem \ref{prop55}.

At this point one might still hope to prove that $\omega^\bullet_t$ has uniform $C^{k,\alpha}(\D,\P,\{g_{b,t}\}_{b\in B})$ bounds for all $k$ and $\alpha$, up to passing to a smaller ball in the base if necessary. This hope turns out to be justified for $k=0$. The proof of this amounts to a streamlined and improved version of our previous paper \cite{HT2}, and a similar statement with a similar proof also holds for $k=1$.

However, the second innovation of the present paper is to realize that no such statement can be true for $k = 2$ in general, and the obstructions roughly speaking come from the fact that the semi-Ricci-flat metric $\omega^\natural_t$ is itself not uniformly bounded in $C^{2,\alpha}(\D,\P,\{g_{b,t}\}_{b \in B})$ unless $f$ is isotrivial or the fibers are flat. With substantial work, for any given $k\geq 2$, we then identify explicit obstruction functions, and decompose $\omega^\bullet_t-\omega^\natural_t=\ddbar\psi_t$ into a sum of finitely many terms (constructed using these obstruction functions together with $\omega_F$) and a remainder, such that each of the finitely many terms is unbounded in $C^{k,\alpha}(\D,\P,\{g_{b,t}\}_{b \in B})$ but is almost explicit, while the remainder is not explicit at all but is indeed uniformly bounded in $C^{k,\alpha}(\D,\P,\{g_{b,t}\}_{b \in B})$. This is in a nutshell the content of Theorem \ref{shutupandsuffer}.

To deduce Theorem \ref{mthmA} from this, it suffices to check that those terms in the above decomposition of $\omega_t^\bullet-\omega_t^\natural$ that come from the obstruction functions, while not uniformly bounded in $C^{k,\alpha}(\D,\P,\{g_{b,t}\}_{b \in B})$, are at least uniformly bounded in the $C^{k,\alpha}$ norm of a $t$-independent metric.

From Theorem \ref{shutupandsuffer} it is possible to extract the explicit form of the first nontrivial term in the above expansion of $\omega^\bullet_t$ by plugging this expansion back into the Monge-Amp\`ere equation \eqref{ma_initial}. This is exactly the term which indirectly made us realize that starting at $k = 2$ a new strategy is required if $f$ is not isotrivial and the fibers are not flat. In fact, from the explicit form of this term one can see directly why these two special cases play a distinguished role. To state a simplified form of the result, let $z_1,\ldots,z_m$ denote the standard coordinates on $B \subset \C^m$, and for any given $b\in B$ and $1\leq\mu\leq m$ let $A_\mu$ denote the unique $T^{1,0}X_b$-valued $(0,1)$-form on $X_b$ which is harmonic with respect to the fiberwise Ricci-flat metric $\omega_{F,b}$ and represents the Kodaira-Spencer class of the variation of the complex structure of $X_b$ in direction $\partial_{z_\mu}$. Let also $\langle\cdot,\cdot\rangle$ denote the fiberwise Ricci-flat inner product, $\Delta_{Y}^{-1}$ the fiberwise inverse of the fiberwise Ricci-flat Laplacian, and $\underline{\psi}$ the fiberwise average of a function $\psi$ on the total space with respect to the fiberwise Calabi-Yau volume form $\omega_F^n|_{X_b}$. Then we have:

\begin{mtheorem}\label{mthmB}
The expansion mentioned above begins with
\begin{equation}\label{e:stronzo1}
\omega_t^\bullet = f^*\omega_{\rm can} + e^{-t}\omega_F - e^{-2t}i\partial\overline\partial\Delta_Y^{-1} \Delta_Y^{-1}(g_{\rm can}^{\mu\bar\nu}(\langle A_\mu, A_{\bar{\nu}}\rangle - \underline{\langle A_{\mu}, A_{\bar\nu}\rangle} )) + f^*\mathrm{error}_1 + \mathrm{error}_2.
\end{equation}
Here ${\rm error}_1$ is an $i\partial\overline\partial$-exact $(1,1)$-form on $B$ that goes to zero in $C^2(B)$ whereas ${\rm error}_2$ is an $i\partial\overline\partial$-exact $(1,1)$-form on $B \times Y$ that satisfies the following estimates. Fix any local $C^\infty$ product coordinate system on $B \times Y$. For any contravariant tensor $T$, let $T\{\ell\}$ denote the component of $T$ with $\ell$ fiber indices in terms of these local coordinates. Then for all $0 \leq j \leq 2$ we have that
\begin{equation}\label{simplicio}
|(\de^j\mathrm{error}_2)\{\ell\}| = \begin{cases}O(e^{-(2+\ell-j)\frac{t}{2}})&\text{if}\;\,0 \leq \ell < j+2,\\
o(e^{-2t})&\text{if}\;\,\ell = j+2.
\end{cases}
\end{equation}
\end{mtheorem}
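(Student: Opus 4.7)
The plan is to derive the expansion \eqref{e:stronzo1} by substituting the asymptotic expansion of $\psi_t$ from Theorem \ref{shutupandsuffer} into the Monge-Amp\`ere equation \eqref{ma_initial} and matching coefficients in powers of $e^{-t}$. Writing $\omega_t^\bullet = \omega_t^\natural + \ddbar\psi_t$ and taking the logarithm of \eqref{ma_initial} gives
\begin{equation*}
\log\det\bigl(I + (g_t^\natural)^{-1}\ddbar\psi_t\bigr) = \log c_t - \log\mathcal{V}_t, \qquad \mathcal{V}_t := (\omega_t^\natural)^{m+n}/(e^{-nt}\omega_X^{m+n}),
\end{equation*}
and Theorem \ref{shutupandsuffer} guarantees a formal expansion $\psi_t = e^{-t}\psi^{(1)} + e^{-2t}\psi^{(2)} + \ldots$ modulo well-controlled remainders. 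Matching both sides determines the $\psi^{(k)}$ up to base-only ambiguity, which will be absorbed into $f^*\mathrm{error}_1$.

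First I would analyze the right-hand side. Expanding $(f^*\omega_{\rm can} + e^{-t}\omega_F)^{m+n}$ and using that $(f^*\omega_{\rm can})^k = 0$ for $k>m$, the leading coefficient of $\mathcal{V}_t$ is a constant equal to $\binom{m+n}{n}c_\infty$ (a consequence of the defining equation of $\omega_{\rm can}$ combined with the fiberwise Calabi-Yau equations), so the $O(1)$ source cancels. The $O(e^{-t})$ source is governed by the term $\binom{m+n}{n+1}\omega_F^{n+1}\wedge f^*\omega_{\rm can}^{m-1}/\omega_X^{m+n}$. Using the horizontal/vertical splitting of $\omega_F$ in local holomorphic coordinates, only two contributions to this top form survive: $\omega_F^{HH}\wedge(\omega_F^{VV})^n$ and $\omega_F^{HV}\wedge\omega_F^{VH}\wedge(\omega_F^{VV})^{n-1}$. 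Standard trace identities on base and fibers convert the first into $g_{\rm can}^{\mu\bar\nu}(\omega_F)_{\mu\bar\nu}$ and the second into $g_{\rm can}^{\mu\bar\nu}\langle A_\mu, A_{\bar\nu}\rangle$, after identifying the mixed components of $\omega_F$ with the harmonic Kodaira-Spencer representatives via the fiberwise Ricci-flat metric. Differentiating the fiberwise Calabi-Yau equation $\omega_{F,b}^n = c_b\,\Omega_b\wedge\bar\Omega_b$ twice in the transverse directions and identifying the variation of $\Omega_b\wedge\bar\Omega_b$ with the $A_\mu$ then rewrites the fiber-mean-zero part of $(\omega_F)_{\mu\bar\nu}$ in terms of $\Delta_Y^{-1}$ of a quadratic Kodaira-Spencer expression; combining this with the direct $\langle A_\mu, A_{\bar\nu}\rangle$ contribution produces a cancellation that collapses the fiber-mean-zero $O(e^{-t})$ source to exactly $-\Delta_Y^{-1}(g_{\rm can}^{\mu\bar\nu}(\langle A_\mu, A_{\bar\nu}\rangle - \underline{\langle A_\mu, A_{\bar\nu}\rangle}))$.

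Next I would expand the left-hand side. Block inversion yields $(g_t^\natural)^{i\bar j} = e^t g_{F,b}^{\bar j i} + O(1)$ with the other blocks of order one, so the leading contribution to $\log\det(I+(g_t^\natural)^{-1}\ddbar\psi_t)$ is $\mathrm{tr}_{\omega_t^\natural}(\ddbar\psi_t) = e^t\Delta_Y\psi_t + (\text{lower})$, with the quadratic correction being $O(e^{-2t})$ and therefore inert at order $e^{-t}$. The $O(1)$ matching forces $\Delta_Y\psi^{(1)} = 0$, so $\psi^{(1)} = \psi^{(1)}(b)$ is base-only and enters $f^*\mathrm{error}_1$. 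The $O(e^{-t})$ matching then gives $\Delta_Y\psi^{(2)} = -\Delta_Y^{-1}(g_{\rm can}^{\mu\bar\nu}(\langle A_\mu, A_{\bar\nu}\rangle - \underline{\langle A_\mu, A_{\bar\nu}\rangle}))$ on the fiber-mean-zero component, whose unique fiber-mean-zero solution is $\psi^{(2)} = -\Delta_Y^{-1}\Delta_Y^{-1}(\ldots)$ as claimed. The fiber-mean part of the source combines with the base Laplacian of $\psi^{(1)}$ and mixed inverse-metric corrections into a Poisson equation on $B$ whose solution contributes to $f^*\mathrm{error}_1$; its $C^2(B)$-smallness follows from the $C^{2,\alpha}_{\rm loc}(X\setminus S)$ convergence $\omega_t^\bullet \to f^*\omega_{\rm can}$ (which in any case is subsumed by Theorem \ref{mthmA}). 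Finally, $\mathrm{error}_2$ absorbs the higher-order remainder from Theorem \ref{shutupandsuffer} at $k=2$; converting its $C^{2,\alpha}(\D,\P,\{g_{b,t}\}_{b\in B})$-bound to a fixed $C^\infty$ product coordinate system introduces a factor $e^{-\ell t/2}$ for each fiber index of the tensor (reflecting the fiber scaling $g_{b,t}\sim e^{-t}g_{F,b}$), yielding \eqref{simplicio}; the borderline case $\ell = j+2$ picks up the extra $\varepsilon>0$ from the H\"older improvement. The most delicate step, and the main obstacle, is the source calculation: showing that the combined contribution from $(\omega_F)_{\mu\bar\nu}$ and $\omega_F^{HV}\wedge\omega_F^{VH}$ collapses to a single $\Delta_Y^{-1}$ applied to a Kodaira-Spencer pairing. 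This relies on a careful linearization of the fiberwise Calabi-Yau equation together with the identification of the variation of the fiberwise holomorphic volume forms in terms of the $A_\mu$, in the spirit of Fujiki-Schumacher, Tian, and Song-Tian's derivations of the Weil-Petersson form from fiberwise Calabi-Yau data.
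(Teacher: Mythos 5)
Your high-level strategy — plug the decomposition into the Monge--Amp\`ere equation and match terms, with the Semmes geodesic curvature and Schumacher's fiberwise Laplacian formula supplying the identification $\mathcal{S} - \underline{\mathcal{S}} = \Delta_Y^{-1}(g_{\rm can}^{\mu\bar\nu}(\langle A_\mu, A_{\bar\nu}\rangle - \underline{\langle A_\mu, A_{\bar\nu}\rangle}))$ — is the same as the paper's, and your formal order-counting on both sides of the linearized equation produces the correct sign and prefactor. Your identification of the $\ell = j+2$ improvement as coming from the extra H\"older regularity is also correct in spirit.

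However, there is a genuine gap in the step you brush aside by saying that ``Theorem \ref{shutupandsuffer} guarantees a formal expansion $\psi_t = e^{-t}\psi^{(1)} + e^{-2t}\psi^{(2)} + \ldots$ modulo well-controlled remainders.'' Theorem \ref{shutupandsuffer} does not give a power series. At $j = 2$ it gives $\omega_t^\bullet = \omega_t^\natural + \gamma_{t,0} + \gamma_{t,2,k} + \eta_{t,2,k}$, where $\gamma_{t,2,k} = \sum_p i\partial\overline\partial\mathfrak{G}_{t,k}(A_{t,2,p,k}, G_{2,p})$ and where the only bound on $A_{t,2,p,k}$ supplied by Theorem \ref{shutupandsuffer} (through \eqref{drei2}) is $|A_{t,2,p,k}| \leq Ce^{-\frac{3+\alpha}{2}t}$ for every fixed $\alpha < 1$ --- strictly weaker than $O(e^{-2t})$. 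To actually prove that the coefficient of the leading piece of $(\gamma_{t,2,k})_{\mathbf{ff}}$ is $-e^{-2t}(\mathcal{S} - \underline{\mathcal{S}}) + o(e^{-2t})$, one must \emph{upgrade} this bound to $|A_{t,2,p,k}| \leq Ce^{-2t}$. This is a nontrivial bootstrap: one isolates the fiber-fiber leading term $\sum_p A_{t,2,p,k}\,\partial_{\mathbf{f}}\overline\partial_{\mathbf{f}}\Delta_Y^{-1}G_{2,p}$ of $\gamma_{t,2,k}$ (requiring the structure Lemma for $\mathfrak{G}_{t,k}$ plus H\"older interpolations that have to hit the right exponents), substitutes into the Monge--Amp\`ere equation, separates into fiberwise average and fiberwise mean-zero parts, and reads off $A_{t,2,p,k} = -e^{-2t}\int_{\{z\}\times Y}\mathcal{S}\,G_{2,p}\,\omega_F^n + o(e^{-2t})$ from the mean-zero part tested against each $G_{2,p}$. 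Without this step there is no control on the size of the claimed leading term, and the subsequent error estimates cannot be proved. A related consequence is that the error bounds \eqref{simplicio} with the crucial $\epsilon>0$ are not obtained simply by ``converting the $C^{2,\alpha}$ bound'': one first needs to propagate the improved $A$-decay into refined $C^{2,\beta}(g_t)$-type estimates on the pieces $\gamma_{t,2,k}^\sharp$ and on the lines of the expanded Monge--Amp\`ere equation, and the $\ell = j+2$ case rests on a separate fiberwise estimate on $\eta_{\mathbf{ff}}|_{\{z\}\times Y}$ using Lemma \ref{l:braindead2}. These are where the real work is, and your proposal does not engage with them.
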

Thus, for $0 \leq j \leq 2$, the components of $\partial^j {\rm error}_2$ with at least one base index might still swamp the corresponding components of $\partial^j$ of the explicit $e^{-2t}$ term in \eqref{e:stronzo1}. However, the ``all fiber'' component of $\partial^j{\rm error}_2$ decays strictly faster than the ``all fiber'' component of $\partial^j$ of that term.  Unfortunately, the estimates \eqref{simplicio} are not completely effective in terms of the constant $C$ of \eqref{unifequiv} because they depend on the rate at which $\text{error}_1$ goes to zero in $C^2(B)$,  and this convergence comes from an ineffective global argument in \cite{To} (subsequential limits solving a Monge-Amp\`ere equation globally on the base, which implies uniqueness of subsequential limits and hence convergence, but at an unknown rate).

Recall that $\underline{\langle A_\nu,A_{\bar\nu}\rangle}=\omega_{{\rm WP},\mu\bar\nu}$ and $\mathrm{Ric}(\omega_{\rm can})=\omega_{\rm WP}$. The quantity $\Delta_Y^{-1}(\langle A_\mu, A_{\bar{\nu}}\rangle - \underline{\langle A_{\mu}, A_{\bar\nu}\rangle})$ arises also in the work of Schumacher \cite{Schu}, and is related to the ``geodesic curvature'' appearing in the work of Semmes \cite{Se}. While the corresponding term in \eqref{e:stronzo1} obviously vanishes if $f$ is isotrivial or the fibers are flat, it does not vanish in general. Indeed, consider the case when $n=2$, $m=1$, and the fibers are $K3$ surfaces. Thanks to the smoothness of the moduli space of $K3$ surfaces, $A_1$ can be identified with an arbitrary closed, anti-self-dual, complex-valued $2$-form, and our term vanishes if and only if $A_1$ has constant length with respect to the Ricci-flat metric. But this is false in the asymptotically cylindrical gluing limit of $K3$ surfaces \cite[\S 5]{CC} because then there is a $16$-dimensional space of such forms that are exponentially small on the neck, see e.g.~\cite[\S 4]{He0}, hence cannot possibly have constant length.

As mentioned previously, in the case of flat or pairwise isomorphic fibers, Theorem \ref{mthmA} was already known \cite{GTZ,HT,HT2,TZ3}. It turns out that these papers also imply a strong version of Theorem \ref{mthmB} in these two special cases. Indeed, it was pointed out in the appendix of \cite{DJZ} that it follows from the estimates of \cite{GTZ,HT,TZ3} that if the fibers are flat, then $|\partial^j{\rm error}_2| = O_{j,K}(e^{-Kt})$ for all $j,K \in \mathbb{N}_0$. In the isotrivial case, the same conclusion can be extracted from \cite{HT2}. Thus, in these two cases, the expansion of $\omega_t^\bullet$ actually contains no terms of finite order in $e^{-t}$ beyond $f^*\omega_{\rm can} + e^{-t}\omega_F +  f^*{\rm error}_1$. However, the best known result towards Theorem \ref{mthmB} in general, which also follows from \cite{HT2}, only states that
\begin{equation}
\omega_t^\bullet = f^*\omega_{\rm can} + e^{-t}\omega_F + f^*{\rm error}'_1 + {\rm error}'_2\;\,{\rm with}\;\, |{\rm error}_2'\{\ell\}| = O(e^{-(\ell+\alpha)\frac{t}{2}})
\end{equation}
for all $0 \leq \ell \leq 2$ and $\alpha \in (0,1)$, where ${\rm error}'_1\to 0$ uniformly on $B$.

In the case of Calabi-Yau $3$-folds fibered by Lefschetz pencils of $K3$ surfaces, it might be interesting to compare Theorem \ref{mthmB} to the estimates resulting from the global gluing construction of \cite{Li}.

\subsection{Overview of the proofs}

Having already explained how Theorem \ref{mthmA} follows from the complete form of Theorem \ref{mthmB}, i.e., Theorem \ref{shutupandsuffer}, we focus here on the latter. To construct the desired expansion of the Ricci-flat metrics, we fix $k$ and for $0 \leq j \leq k$ aim to construct the first $j$ terms of the expansion (which, we emphasize, depend on the choice of $k$) by induction on $j$. For this, we first need to identify the obstruction functions mentioned before Theorem \ref{mthmB}. These are smooth functions $G_{i,p,k}$ on $B \times Y$ with fiberwise average zero, which are indexed over $0\leq i\leq j$ and $1\leq p\leq N_{i,k}$ and implicitly depend on $k$. They vanish identically for $j=0,1$ but for $j\geq 2$ have to be identified by a delicate procedure. In essence, they come from an expansion of the RHS of a Monge-Amp\`ere equation of the form
\begin{equation}\label{mak}
\frac{(\omega^\natural_t+\ddbar\psi_t)^{m+n}}{(\omega^\sharp_t)^{m+n}}=c_t e^{-nt}\frac{\omega_X^{m+n}}{(\omega^\sharp_t)^{m+n}},
\end{equation}
where $\omega^\sharp_t$ equals $\omega^\natural_t$ plus small corrections that involve the obstruction functions themselves. Breaking this apparent logical cycle requires an iterative procedure with a uniform gain at each step.  We will also require the obstruction functions to be fiberwise orthonormal in $L^2$, which is not immediate from Gram-Schmidt because the dimension of their fiberwise span may not be constant on the base. All of these problems will be solved in Section \ref{sectref}, specifically in Theorem \ref{ghost}.

Having chosen the obstruction functions, we decompose $\omega^\bullet_t-\omega^\natural_t$ inductively via
\begin{equation}
\gamma_{t,0}:=\ddbar\underline{\psi_t},\quad \gamma_{t,i,k}:=\sum_{p=1}^{N_{i,k}} \ddbar\mathfrak{G}_{t,k}(A_{t,i,p,k},G_{i,p,k})\;\,(2\leq i\leq j),\label{fahfee}
\end{equation}
\begin{equation}
\eta_{t,i,k}:=\omega^\bullet_t-\omega^\natural_t-\gamma_{t,0}-\sum_{\iota=2}^i \gamma_{t,\iota,k}.\label{fofum}
\end{equation}
Here $A_{t,i,p,k}$ is a function on $B$, approximately equal to the fiberwise $L^2$ component of $\mathrm{tr}^{\omega^\natural_t}\eta_{t,i-1,k}$ onto $G_{i,p,k}$, and $\mathfrak{G}_{t,k}$ is an approximate Green operator such that in a very rough sense, after scaling and stretching so that the fibers have unit size,
\begin{equation}\label{totalfuckinglie}
\Delta^{\omega^\natural_t}\mathfrak{G}_{t,k}(A_{t,i,p,k},G_{i,p,k}) \approx A_{t,i,p,k}G_{i,p,k}.
\end{equation}
Since $\mathfrak{G}_{t,k}$ acts as a differential rather than integral operator in the base directions (of order $2k$), this statement cannot be meaningful unless $A_{t,i,p,k}$ behaves like a polynomial of sufficiently low degree. For us, this assumption is justified because we are working with functions with bounded H\"older norms.

Our main Theorem \ref{shutupandsuffer} will then prove uniform $C^{j,\alpha}$ estimates for $\gamma_{t,0}$ and uniform $C^{j+2,\alpha}$ estimates for $A_{t,i,p,k}$ ($2\leq i\leq j$) with respect to the usual H\"older norms on $B$, uniform $C^{j,\alpha}$ estimates for $\eta_{t,j,k}$ with respect to the collapsing H\"older norms on $B \times Y$ defined in Section \ref{s:adia}, and auxiliary estimates for derivatives of $A_{t,i,p,k}$ of order $j+3$ to $2k+j+2$. While these auxiliary estimates are not uniform in $t$, they do imply a uniform $C^{j,\alpha}(g_X)$ bound for the pieces $\gamma_{t,i,k}$ $(2\leq i\leq j)$. All of these estimates are proved by a contradiction and blowup argument in the spirit of our paper \cite{HT2}, which essentially dealt with the case $j = k = 0$. It is worth noting that this case, which corresponds to the Evans-Krylov step in the classical regularity theory of the complex Monge-Amp\`ere equation, is the only one that requires a genuinely nonlinear ingredient (specifically, the Liouville theorem of \cite{He,LLZ}). However, the extension to $k > 0$ in this paper, while linear in nature, is vastly more complicated at a technical level.

If the desired estimates fail, we obtain a sequence of solutions violating these estimates more and more strongly, and we need to study $3$ cases according to how the blowup rate compares to $e^{t}$. In the fast-forming and regular-forming cases, after stretching and scaling, our background geometries limit to $\C^{m+n}$ and $\C^m\times Y$ respectively, and a contradiction can easily be derived from known regularity and Liouville theorems for the Monge-Amp\`ere equation. The majority of the work goes into understanding the slow-forming case, where the background geometries collapse to $\C^m$. Here we are quickly reduced to showing that the two points where the relevant H\"older seminorm achieves its maximum cannot collide, and one can think of this as an improvement of regularity of a more linear nature.

To achieve this improvement, we first split all our objects into their jets at a blowup basepoint and the corresponding Taylor remainders, and derive precise estimates on all these pieces, which ultimately allow us to expand and linearize the Monge-Amp\`ere equation. By construction, the Taylor remainders will have excellent convergence and growth properties, whereas the jets satisfy much worse bounds but are by definition polynomials. On the LHS of the Monge-Amp\`ere equation, we then move the jets into the reference metric $\omega_t^\natural$, thus creating the new reference metric $\omega^\sharp_t$ that appears in \eqref{mak}, and expand the LHS of \eqref{mak} as its
linearization at $\omega^\sharp_t$, plus nonlinearities, applied to the Taylor remainders of the various components of the original solution. The nonlinearities turn out to be negligible, as one would expect at this stage of the proof. The RHS is a mixture of background objects and jets of the original solution components, without any helpful growth bounds but with an almost explicit structure.

To actually prove that the blowup points do not collide, we again argue by contradiction and need to consider $3$ subcases, where after further stretching and scaling the background geometries converge to $\mathbb{C}^{m+n}, \mathbb{C}^m\times Y$ and $\mathbb{C}^m$, respectively. We deal with the fast-forming case by using Schauder estimates for the linearized PDE on balls in $\C^{m+n}$. While this is the easiest case conceptually, the proof is much longer than in the other two cases because of the large number of pieces appearing in the decomposition of the solution in this case and the complexity of the quantitative estimates that they satisfy.

In the regular-forming case, we argue that the LHS of the PDE has a limit thanks to the excellent growth of the Taylor remainders, so the RHS must have a limit as well, necessarily of the form
\begin{equation}\label{ratta}
K_0(z)+\sum_{q=1}^N K_q(z)H_q(y).
\end{equation}
Here $K_0,K_q$ are polynomials on $\C^m$ of degree at most $j$, and $H_q$ are smooth functions on $Y$ which by our choice of the obstruction functions lie in the linear span of the restrictions $G_{i,p,k}|_{\{z_\infty\}\times Y}$ ($2\leq i\leq j$), where $z_\infty \in B$ is the limit of the projections to $B$ of the blowup basepoints. On the other hand, the decomposition \eqref{fahfee}--\eqref{fofum} and the approximate Green property \eqref{totalfuckinglie} were made so that in the limit, the LHS of the PDE consists of one part coming from the Taylor remainders of $\gamma_{t,2,k}, \ldots, \gamma_{t,j,k}$, which lies in the fiberwise linear span of the functions $G_{i,p,k}|_{\{z_\infty\}\times Y}$, and another part coming from the Taylor remainders of $\gamma_{t,0}$ and $\eta_{t,j,k}$, which is fiberwise $L^2$ orthogonal to this span. Setting this equal to \eqref{ratta}, we obtain that the Taylor remainders of order $\geq j+1$ of $\gamma_{t,2,k}, \ldots, \gamma_{t,j,k}$ are polynomials of degree $\leq j$ in the limit (an immediate contradiction), and the Taylor remainders of order $\geq j+1$ of $\gamma_{t,0}$ and $\eta_{t,j,k}$ have trace equal to a polynomial of degree $\leq j$ in the limit (contradicting Liouville's theorem).

In the slow-forming case, Theorem \ref{prop55} tells us directly that $\eta_{t,j,k}$ is too small to contribute to the assumed failure of Theorem \ref{shutupandsuffer}, and it follows from the linearized Monge-Amp\`ere equation itself that $\gamma_{t,2,k}, \ldots, \gamma_{t,j,k}$ are too small to contribute as well. Thus, the only remaining contribution comes from $\gamma_{t,0}$, which lives on the base, and this eventually contradicts Liouville's theorem on $\C^m$.

Lastly, while the expansion of Theorem \ref{shutupandsuffer} is not quite precise enough to identify the first nontrivial term in Theorem \ref{mthmB} (which ultimately comes from the term $\gamma_{t,2,2}$ in Theorem \ref{shutupandsuffer}) explicitly, this can be done with some extra work by plugging this expansion back into the Monge-Amp\`ere equation

\subsection{Further applications}  The new techniques developed in this paper are very robust and can apply in a number of other contexts where a similar geometric collapsing picture appears. In particular, it can be applied to the study of long-time solutions of the K\"ahler-Ricci flow on compact K\"ahler manifolds with semiample canonical bundle and intermediate Kodaira dimension \cite{ST}, where the resulting analogous expansion would not only give smooth collapsing away from the singular fibers but also show that the evolving metrics have locally uniformly bounded Ricci curvature there, settling two long-standing conjectures of Song and Tian (see e.g. \cite{Ti}). This will be addressed in our forthcoming work with Lee \cite{HLT}. Our techniques also apply to the ``continuity method'' of \cite{LT}, which has been much studied recently.

\subsection{Organization of the paper}

In Section \ref{s:adia} we define new covariant derivatives, parallel transport operators and H\"older norms adapted to our setting, and prove the key interpolation inequality and the fundamental Theorem \ref{prop55}. Section \ref{sectref} contains our main technical constructions of the approximate Green operator and the selection of the obstruction functions in Theorem \ref{ghost}. Section \ref{s:mainest} is the heart of the paper where Theorem \ref{shutupandsuffer} is proved. Theorems \ref{mthmA} and \ref{mthmB} are deduced from this in Section \ref{sectmain}.

\subsection{Acknowledgments}

We thank L.~Lempert and S.~Zelditch for discussions, J.-M.~Bismut and C.~Margerin for encouragement, and M.-C.~Lee and a referee for many useful comments. Part of this work was carried out during the second-named author's visits to the Department of Mathematics and the Center for Mathematical Sciences and Applications at Harvard University, which he would like to thank for the hospitality. The authors were partially supported by NSF grants DMS-1745517 (H.-J.H.) and DMS-1610278, DMS-1903147, DMS-2231783 (V.T.). The first-named author was also partially funded by the German Research Foundation (DFG) under Germany's Excellence Strategy EXC 2044-390685587 ``Mathematics M\"unster:~Dynamics-Geometry-Structure" and by the CRC 1442 ``Geometry:~Deformations and Rigidity'' of the DFG.

\section{H\"older norms and interpolation inequalities}\label{s:adia}

\subsection{$\D$-derivatives and $\P$-transport}

Let $f:X\to B$ be a proper surjective holomorphic submersion with $n$-dimensional Calabi-Yau fibers over the unit ball $B\subset\C^m$, as in the introduction, equipped with a K\"ahler form $\omega_X$ on the total space, and let $Y=f^{-1}(0)$. For all $z\in B$ let $\omega_{F,z}$ be the unique Calabi-Yau metric on the fiber $f^{-1}(z)$ cohomologous to the restriction of $\omega_X$. Fixing an arbitrary $C^\infty$ trivialization $\Phi:B\times Y\to X$, denote by $J$ the complex structure on $B\times Y$ pulled back from $X$, let $g_{Y,z}$ be the Ricci-flat Riemannian metric on $Y$ associated to $\Phi^*\omega_{F,z}$ and $J$, extended trivially to $B\times Y$, and define a family of Riemannian product metrics on $B\times Y$ by
\begin{equation}\label{produkt}
g_{z,t}:={\rm pr}_B^*(g_{\mathbb{C}^m}) + e^{-t} {\rm pr}_Y^*(g_{Y,z})\;\,{\rm for\;all}\;z \in B.
\end{equation}
In our previous work \cite[\S 5]{HT2}, we defined a H\"older-type seminorm on contravariant tensors that uses $g_{z,t}$ and its parallel transport on the fiber over $z$. As it turns out, in order to generalize this construction to higher order derivatives and preserve all the good properties that we had in \cite{HT2}, we need to modify this construction by introducing a different notion of parallel transport adapted to this setting.

\begin{definition}
For $z \in B$ write $\nabla^z$ to denote the Levi-Civita connection of the product metric $g_{z,t}$, which is independent of $t$. Then let $\D$ denote the connection on the tangent bundle of $B \times Y$ and on all of its tensor bundles defined by setting
\begin{equation}\label{d:singlecov}
(\D \tau)(x) :=
(\nabla^{{\rm pr}_B(x)}\tau)(x)
\end{equation}
for all tensors $\tau$ on $B \times Y$ and all $x \in B \times Y$.
Write $\D^k$ for the iterated covariant derivatives of $\mathbb{D}$. For all curves $\gamma: [t_0,t_1] \to B \times Y$ and $a,b \in [t_0,t_1]$ let $\P^\gamma_{a,b}$ denote $\mathbb{D}$-parallel transport from $\gamma(a)$ to $\gamma(b)$.
\end{definition}

It is clear that $\D$ satisfies the definition of a connection on a vector bundle, which is torsion-free on the tangent bundle but in general not metric with respect to any fixed Riemannian metric. As usual, for all curves $\gamma: [t_0,t_1] \to B \times Y$ and all tensor fields $\tau$ along $\gamma$ we then have the useful formula
\begin{equation}\label{thebs}
\P^\gamma_{t_1,t_0}\frac{d}{dt}\bigg|_{t=t_0}\P^\gamma_{t,t_1}(\tau(t)) = \frac{\D}{dt}\biggr|_{t=t_0} \tau(t) =\frac{\nabla^{{\rm pr}_B(\gamma(t_0))}}{dt}\biggr|_{t=t_0}\tau(t).
\end{equation}
Also note that we obtain a natural definition of \emph{$\P$-geodesics}, but in this paper we will only be using the two obvious families of $\P$-geodesics:
vertical paths of the form $(z_0, \gamma(t))$, where $\gamma(t)$ is a $g_{Y,z_0}$-geodesic in the fiber $\{z_0\}\times Y$ (we will call such a geodesic \emph{minimal} if $\gamma$ is minimal with respect to $g_{Y,z_0}$), and horizontal paths of the form $(z(t),y_0)$, where $z(t)$ is an affine segment in $\C^m$. Every two points on $B\times Y$ can be joined by concatenating two of these $\P$-geodesics where the vertical one is minimal.

We now record two technical properties of $\P$ and $\D$ which will be very useful for us later on.

\subsubsection{The operator norm of $\P$-transport}\label{s:tridiagonal}

Let $x,x' \in B \times Y$ and let $\gamma: [t_0,t_1] \to B \times Y$ be a $\P$-geodesic from $x$ to $x'$, which we assume is either horizontal or minimal vertical.
Let $u$ be a tangent vector at $x$, which we again assume is either horizontal or vertical. Note that $\D(d{\rm pr}_B) = 0$ because all of the connections $\nabla^z$ are product connections on $B \times Y$. This implies that if $u$ is horizontal, then $\P_{a,b}^\gamma u = u$ for all $a,b \in [t_0,t_1]$, and if $u$ is vertical, then $\P_{a,b}^\gamma u$ is again vertical.

Thanks to this discussion, $\P^\gamma_{a,b}$ is block diagonal. Thus, in particular, the operator norm of $\P^\gamma_{t_0,t_1}$ with respect to the shrinking product metrics $g_{z,t}$ is fixed independent of $t$. Later, we will often be considering this picture after applying a family of stretching diffeomorphisms
\begin{equation}\label{gammat}
\Sigma_t:B_{e^{t/2}}\times Y\to B\times Y, \quad (z,y)=\Sigma_t(\check{z},\check{y})=(e^{-t/2}\check{z},\check{y}),
\end{equation}
where it is more natural to use the scaled product metrics
\begin{equation}
\check{g}_{\check{z},t}:=e^t\Sigma_t^*g_{z,t}={\rm pr}_B^*(g_{\mathbb{C}^m})+{\rm pr}_Y^*(g_{Y,e^{-t/2}\check{z}}),
\end{equation} which converge smoothly on compact sets to the fixed product metric
\begin{equation}g:={\rm pr}_B^*(g_{\mathbb{C}^m})+{\rm pr}_Y^*(g_{Y,0}).
\end{equation}
Then, since $\P$ commutes with this stretching and scaling, we conclude that the operator norm of $\P^{\check\gamma}$ from $\check{x}_t$ to $\check{x}'_t$ (points in $B_{e^{t/2}}\times Y$) still has uniform bounds independent of $t$ as long as their images $x_t=\Sigma_t(\check{x}_t)$,  $x'_t=\Sigma_t(\check{x}'_t)$ lie in a fixed relatively compact subset of $B\times Y$ (which will usually be $B'\times Y$ where $B'\subset B$ is a smaller ball, not necessarily concentric).

\subsubsection{Commutators of $\D$-derivatives}

If the complex structure on $B \times Y$ is a product, then $g_{Y,z} = g_Y$ for all $z \in B$, and $\D$ is simply the product connection on $B \times Y$ given by the Euclidean derivative in the $B$ directions and the Levi-Civita connection of $g_Y$ in the $Y$ directions. In particular, base and fiber derivatives with respect to $\D$ commute. In our general setting, we still have the following property.

\begin{lemma}\label{totalbullshit}
Let  $\ell \in \{1,\ldots,2m\}$. Let $\mathbf{h}^\ell$ denote the $\ell$-th standard basis vector field on $B$ trivially extended to $B\times Y$. Let $\tau$ be a tensor on $B \times Y$.  For any given $z \in B$ define
\begin{equation}\label{e:nefarious}
\A^z_{\ell}\tau := \frac{\partial}{\partial \tilde{z}^\ell}\biggr|_{\tilde{z} = z} \nabla^{\tilde{z}}\tau.
\end{equation}
Then it holds for all $x = (z,y) \in B \times Y$ and all $\mathbf{v} \in T_yY$ that
\begin{equation}\label{e:almostcommute}
(\D^2_{\mathbf{h}^\ell,\mathbf{v}}\tau)(x) = (\D^2_{\mathbf{v},\mathbf{h}^\ell}\tau)(x) + \mathbf{v} \,\lrcorner\,(\A^z_\ell\tau)(x).
\end{equation}
\end{lemma}

\begin{proof}
Fix the point $x = (z,y) \in B \times Y$. It is true by definition that
\begin{equation}(\D^2_{\mathbf{h}^\ell, \mathbf{v}}\tau)(x) = \mathbf{v} \,\lrcorner\, (\D_{\mathbf{h}^\ell}(\D\tau))(x).\end{equation}
Now observe that the operator $\D_{\mathbf{h}^\ell}$ is just the ordinary $\ell$-th partial derivative in $\R^{2m}$. Thus,
\begin{align}\label{herrfaltings}
(\D_{\mathbf{h}^\ell} (\D\tau))(x) &= \frac{\partial}{\partial \tilde{z}^\ell}\biggr|_{\tilde{z}=z} (\nabla ^{\tilde{z}}\tau)(\tilde{z},y) =  (\A^z_\ell\tau)(x) + (\nabla^z_{\mathbf{h}^\ell}(\nabla^z\tau))(x).
\end{align}
This then implies that
\begin{equation}\label{burkhard}(\D^2_{\mathbf{h}^\ell, \mathbf{v}}\tau)(x) = \mathbf{v} \,\lrcorner\, (\A^z_\ell\tau)(x) + ((\nabla^z)^2_{\mathbf{h}^\ell,\mathbf{v}}\tau)(x).\end{equation}
It remains to observe that $\nabla^z$ is a product connection, so that horizontal and vertical derivatives with respect to $\nabla^z$ commute, and that
\begin{equation}((\nabla^z)^2_{\mathbf{v},\mathbf{h}^\ell}\tau)(x) = \mathbf{h}^\ell \,\lrcorner\, (\nabla^z_{\mathbf{v}}(\nabla^z \tau))(x) = (\D^2_{\mathbf{v},\mathbf{h}^\ell}\tau)(x) \end{equation}
because $\nabla^z\tau = \D\tau$ along the entire fiber through $x$.
\end{proof}

Going back to the definition \eqref{e:nefarious}, we can also trivially define $\mathbf{A}^z_{\mathbf{v}}=0$ for any vertical vector $\mathbf{v}$ and in this way obtain a $(2,1)$ tensor $\mathbf{A}$ on $B\times Y$ that vanishes whenever the first lower index is tangent to $Y$. Then \eqref{burkhard} holds trivially by definition if we replace $\mathbf{h}^\ell$ by $\mathbf{v}$, i.e., we can compactly write
\begin{equation}\label{komm}\D^2\tau=\nabla^{z,2}\tau+\mathbf{A}\circledast\tau,\end{equation}
where here and in the rest of the paper $\circledast$ denotes some tensorial contraction.

We will need a generalization of this schematic formula to higher order derivatives, relating $\D^j$ with $\nabla^{z,j}$. To this end we introduce a dummy variable $\ti{z}\in B$ and write $\Gamma^{\ti{z}}(z,y)$ for the Christoffel symbols of $\nabla^{\ti{z}}$ at the point $(z,y)$. We can then write symbolically at the point $(z,y)$:
\begin{equation}\label{alien}\D=(\de_z+\de_y + \Gamma^{\ti{z}}(z,y)+ \de_{\ti{z}})|_{\tilde{z}=z},\end{equation}
which is applied to a tensor field $\tau$ on $B\times Y$ that may also depend on $\ti{z}$, for example when we take iterated $\D$ derivatives. Formally squaring \eqref{alien} clearly shows \eqref{komm}, where $\mathbf{A}=\de_{\ti{z}}\Gamma^{\ti{z}}|_{\tilde{z}=z}$.

\begin{lemma}\label{sgurz}
For each $j\geq 2$ we have that
\begin{equation}\label{domineddio}
\D^j\tau-\nabla^{z,j}\tau=\sum_{p=0}^{j-2}\nabla^{z,p}\tau\circledast \nabla^{j-2-p}_{z,y,\ti{z}}\mathbf{A},
\end{equation}
where $\nabla^{j-2-p}_{z,y,\ti{z}}$ means that there are $j-2-p$ derivatives, each of which is either a $\nabla^{\ti{z}}$ acting in any direction in the $(z,y)$ variables or a $\de_{\ti{z}}$ acting on some $\Gamma^{\tilde{z}}(z,y)$, with final evaluation at $\tilde{z}=z$.
\end{lemma}

\begin{proof}
To prove \eqref{domineddio}, we write the LHS as
\begin{equation}(\de_z+\de_y + \Gamma^{\ti{z}}(z,y)+ \de_{\ti{z}})^j(\tau(z,y))-(\de_z+\de_y + \Gamma^{\ti{z}}(z,y))^j(\tau(z,y)),\end{equation}
expand this (recalling that $\de_{\ti{z}}$ only acts on the Christoffel symbols $\Gamma^{\ti{z}}(z,y)$), and finally evaluate at $\ti{z}=z$. The fact that the derivatives go only up to $j-2$ is because the error term has to contain at least one $\de_{\ti{z}}$,  and thus also at least one $\Gamma^{\ti{z}}(z,y)$ (otherwise there is nothing for the $\de_{\ti{z}}$ to differentiate), which generates an $\mathbf{A}$, and the number of remaining derivatives is then at most $j-2$.
\end{proof}

In all applications the schematic formula \eqref{domineddio} will suffice without any need to make the error term on the RHS more explicit. \eqref{domineddio} shows that we can think of $\D^j\tau$ as a way of packaging together $\nabla^{z,j}\tau$ with lower derivatives of $\tau$ (of order up to $j-2$) contracted with some fixed background tensors.

\subsection{Definition of the H\"older norms}\label{holderiano}

Assume that $B$ is a ball of {\em any radius} centered at $0 \in \mathbb{C}^m$. In our applications this will be a rescaling of the unit ball that we have been using so far. Assume that we have a smooth family of Riemannian metrics $g_{Y,z}$ on $\{z\}\times Y$ for $z\in B$ such that there exists a constant $C_0 \geq 1$ such that for all $z\in B$ we have that
\begin{align}
C_0^{-1/2} \leq {\rm inj}(Y, g_{Y,z}) \leq {\rm diam}(Y,g_{Y,z}) \leq C_0^{1/2},\label{e:tilt0}
\end{align}
\begin{align}
C_0^{-1} g_{Y,0} \leq g_{Y,z} \leq C_0 g_{Y,0}.\label{e:tilt1}
\end{align}
In our later applications, this family will be simply our original family $g_{Y,z}$ where the variable $z$ has been rescaled, and such a constant $C_0$ will clearly exist after shrinking our original unit ball slightly. Define a product Riemannian metric
\begin{align}\label{mrn}
g := {\rm pr}_B^*(g_{\mathbb{C}^m}) + {\rm pr}_Y^*(g_{Y,0}).
\end{align}
For ease of notation, in this section we shall write
\begin{equation}
\mathbb{B}^g(p,R):=B^{g_{\C^m}}(z_*,R)\times B^{g_{Y,0}}(y_*,R)
\end{equation}
for all $p=(z_*,y_*)\in B\times Y$ and $R>0$.

\begin{definition}\label{def:holderdef}
For all $0 < \alpha < 1$ and $R > 0$ and for all $p \in B \times Y$  we define
\begin{align}\label{e:holderdef}
[\tau]_{C^{\alpha}(\mathbb{B}^g(p,R),g)} := \sup \left\{\frac{|\tau(x_0) - \P^\gamma_{t_1,t_0}(\tau(x_1))|_g}{d^g(x_0,x_1)^\alpha} \;\Bigg|\;
\parbox{64mm}{$\gamma: [t_0,t_1] \to B \times Y$ $\P$-geodesic, either \\ horizontal or minimal vertical, and\\
$x_i := \gamma(t_i) \in \mathbb{B}^g(p,R)$ for $i = 0,1$ } \right\}
\end{align}
for all smooth tensor fields $\tau$ on $B \times Y$.
\end{definition}

Observe that in this definition the family of metrics $g_{Y,z}$ is used to define (minimal) $\P$-geodesics and $\P$-transport, while the fixed metric $g$ is used to measure the distance $d^g(x_0,x_1)$ and to define $\mathbb{B}^g(p,R)$. We will often replace $g$ in \eqref{e:holderdef} by a family of product metrics $g_t$ with variable fiber size,
\begin{equation}
g_t := {\rm pr}_B^*(g_{\mathbb{C}^m}) + \gamma_t^2{\rm pr}_Y^*(g_{Y,0}),
\end{equation}
where $\gamma_t>0$ are arbitrary. We denote the corresponding H\"older seminorms by
\begin{equation}
[\tau]_{C^{\alpha}(\mathbb{B}^{g_t}(p,R),g_t)},
\end{equation}
where
\begin{equation}\label{babbubba}
\mathbb{B}^{g_t}(p,R):=B^{g_{\C^m}}(z_*,R)\times B^{\gamma_t^2g_{Y,0}}(y_*,R).
\end{equation}
In the case when $\gamma_t\to 0$ we will refer to these generically as shrinking or collapsing H\"older seminorms. Notice that in this case we have $\mathbb{B}^{g_t}(p,R)=B^{g_{\C^m}}(z_*,R)\times Y$ for all $t \geq t_R$ sufficiently large. This will be the setting where these seminorms are applied in almost all of the paper.

Let us also observe here that these seminorms satisfy a Leibniz-type formula
\begin{equation}
[\tau\circledast\sigma]_{C^\alpha(B_R\times Y,g)}\leq C[\tau]_{C^\alpha(B_R\times Y,g)}\|\sigma\|_{L^\infty(B_R\times Y,g)}+C[\sigma]_{C^\alpha(B_R\times Y,g)}\|\tau\|_{L^\infty(B_R\times Y,g)}
\end{equation}
for all $R > {\rm diam}(Y,g_{Y,0})$, where $B_R=B^{g_{\C^m}}(z_*,R)$ for brevity, $\tau,\sigma$ are any two tensor fields and $C$ is a uniform constant. This will be used many times in the sequel.

\begin{lemma}\label{kathoum}
Suppose that $z_* \in B$ and $R>0$ are such that $B^{g_{\C^m}}(z_*,R) \subset B$, where the bounds \eqref{e:tilt0}, \eqref{e:tilt1} hold. Then for every tensor $\tau$ we have that
\begin{equation}\label{qqq}
[\tau]_{C^\alpha(\mathbb{B}^g(p,R),g)}\leq C R^{1-\alpha}\|\D\tau\|_{L^\infty(\mathbb{B}^g(p,2C_0 R),g)}.
\end{equation}
Also, if $R<C_0^{-1}$, then we can replace $\mathbb{B}^g(p,2C_0 R)$ with $\mathbb{B}^g(p,R)$.
\end{lemma}

\begin{proof}
For any given $x_0,x_1\in \mathbb{B}^g(p,R)$, let $\gamma:[t_0,t_1]\to B\times Y$ be a $\P$-geodesic joining them which is either horizontal or minimal vertical, and use \eqref{thebs} to bound
\begin{equation}\begin{split}
|\tau(x_0)-\P_{t_1,t_0}^\gamma(\tau(x_1))|_g &= \left|\int_{0}^{t_1-t_0} \frac{d}{dt} \P^\gamma_{t_1-t,t_0}(\tau(\gamma(t_1-t))) \, dt\right|_g\\
&= \left|\int_0^{t_1-t_0} \P^\gamma_{t_1-t,t_0}\biggl[\frac{\nabla^{z(\gamma(t_1-t))}}{dt}\tau(\gamma(t_1-t))\biggr]\,dt\right|_g\\
&\leq Cd^g(x_0,x_1)\|\D\tau\|_{L^\infty(\mathbb{B}^g(p,2C_0R),g)}.
\end{split}\end{equation}
Here we have also applied the estimate for the operator norm of $\P$ from Section \ref{s:tridiagonal} and the bounds \eqref{e:tilt0}, \eqref{e:tilt1} on the geometry of $g_{Y,z}$ for all $z \in B^{g_{\C^m}}(z_*,R)$. If $R < C_0^{-1}$, then $\gamma \subset \mathbb{B}^g(p,R)$.
\end{proof}

\begin{lemma}\label{arschloch}
There is a constant $C$ depending only on $R$ such that if
\begin{equation}
\sum_{j=0}^k\|\D^j\tau\|_{L^\infty(\mathbb{B}^g(p,R),g)}+[\D^k\tau]_{C^{\alpha}(\mathbb{B}^g(p,R),g)}\leq A,
\end{equation}
then the standard $C^{k,\alpha}$ Hölder norm for the metric $g$ satisfies
\begin{equation}\label{krist}
\|\tau\|_{C^{k,\alpha}_{\rm std}(\mathbb{B}^g(p,R),g)}\leq CA.
\end{equation}
\end{lemma}

\begin{proof}
Using \eqref{domineddio} to convert $\D^j$ to $\nabla^{z,j}$ with error terms up to order $j-2$,  we get
\begin{equation}\label{kaka}
\sum_{j=0}^k\|\nabla^{z,j}\tau\|_{L^\infty(\mathbb{B}^g(p,R),g)}\leq CA.
\end{equation}
We can further change $\nabla^{z,j}$ to $\nabla^{g,j}$ since all the metrics $g_{Y,z}$ for $z \in B^{g_{\C^m}}(z_*,R)$ are smoothly bounded. When converting $[\D^k\tau]$ to $[\nabla^{g,k}\tau]$, the lower order error terms that involve H\"older seminorms can also be bounded by $CA$ since thanks to \eqref{qqq} they can be bounded in terms of \eqref{kaka} and the radius $R$. So we are left with the main term $[\nabla^{g,k}\tau]_{C^{\alpha}(\mathbb{B}^g(p,R),g)}$ in the sense of \eqref{e:holderdef}, which can be converted to the standard H\"older $g$-seminorm of $\nabla^{g,k}\tau$ up to an acceptable error bounded by \eqref{kaka}.
This can be seen as follows. First, for any $(z,y),(z',y')\in B\times Y$ we have that
\begin{equation}\label{excellente}
\max(|z-z'|,d^{g_{Y,0}}(y,y'))\leq d^g((z,y),(z',y'))\leq |z-z'|+d^{g_{Y,0}}(y,y').
\end{equation}
This implies that if, in the standard definition of the $g$-H\"older seminorm, we only consider horizontal or vertical rather than arbitrary minimal $g$-geodesics, then these two seminorms are still comparable by a factor of $2$. Next, we can compare this new seminorm (horizontal or vertical minimal $g$-geodesics, $g$-parallel transport) to the one defined in \eqref{e:holderdef} exactly along the lines of \cite[Lemma 3.6]{HT2}, using also that the operator norm of $\P$ is uniformly controlled thanks to the discussion in Section \ref{s:tridiagonal}.
\end{proof}

\begin{rk}\label{archiloco}
(1) Clearly, if, in our original setup, the metrics $g_{Y,z}$ are in fact independent of $z$, then the $\P$-transport, the H\"older seminorm \eqref{e:holderdef} and the $\D$-derivatives are the same as the usual parallel transport, H\"older seminorm and covariant derivatives of the product metric $g$ in \eqref{mrn}.

(2) If we pull back our setup by the diffeomorphisms $\Sigma_t$ in \eqref{gammat}, then the pulled-back fiber metrics $g_{Y,e^{-t/2}\check{z}}$ converge locally smoothly to the fixed metric $g_{Y,0}$. From this it follows easily \cite[Remark 3.7]{HT2} that our H\"older seminorm \eqref{e:holderdef} converges to the standard one for the product metric $g$ for any fixed $p$ in the pullback space and any fixed $R$. This also clearly remains true if the metric $g$ used in \eqref{e:holderdef} is replaced by a family of product metrics that converge locally smoothly to $g$.
\end{rk}

\subsection{An interpolation inequality}\label{s:interpol}

Let again $B$ denote the \emph{unit} ball in $\C^m$ with a family of fiberwise Riemannian metrics $g_{Y,z}$ on $\{z\} \times Y$ for all $z \in B$ satisfying \eqref{e:tilt0}, \eqref{e:tilt1}. Suppose we have numbers $0<\delta_t\leq 1$ for all $t \geq 0$
and consider the stretching diffeomorphisms
\begin{equation}\label{gammat2}
\Gamma_t:\check{B}_t\times Y\to B\times Y, \quad \check{B}_t = B_{\delta_t^{-1}}, \quad (z,y)=\Gamma_t(\check{z},\check{y})=(\delta_t\check{z},\check{y}).
\end{equation}
The  product metrics
\begin{equation}\label{shrpr}
g_t:={\rm pr}_B^*(g_{\mathbb{C}^m}) + \delta_t^2{\rm pr}_Y^*(g_{Y,0})
\end{equation}
on $B\times Y$ thus satisfy
\begin{align}
\check{g}_t := \delta_t^{-2}\Gamma_t^*g_t=g.
\end{align}
The formalism of Hölder norms developed in the previous section applies equally to $(B \times Y, g_t)$ with the given family $g_{Y,z}$ of fiberwise metrics defining a connection $\mathbb{D}$ and to $(\check{B}_t \times Y, \check{g}_t) = (\check{B}_t \times Y, g)$ with the pullback family $\check{g}_{Y,\check{z}} = g_{Y,z}$ defining a connection $\check{\mathbb{D}}_t = \Gamma_t^*\mathbb{D}$.  The main result of this section is the following interpolation inequality,  which we state in both of these (equivalent) settings for later reference. In the product case ($g_{Y,z}$ is independent of $z$), this was already proved in \cite[Lemma 3.5]{HT2}.  The proof in general is almost the same but we nevertheless give it in full because we have made a lot of small changes to the definitions to accommodate the general non-product case.

\begin{proposition}\label{l:higher-interpol}
For all $k \in \N_{\geq 1}$ and $0<\alpha<1$ there exists a constant $C_k = C_k(\alpha,C_0)$ such that the following holds for all $t \geq 0$.  Let $\tau$ be a smooth contravariant $p$-tensor $\tau$ and define
\begin{align}
\check{\tau}_t:=\delta_t^{-p}\Gamma_t^*\tau.
\end{align}
Then for all $\check{p} \in  \check{B}_t \times Y$ and all $0 < \rho < R<\frac{1}{2}d^g(\check{p},\de \check{B}_t \times Y)$ we have that
\begin{align}\label{hohoho}
\sum_{j=1}^k (R-\rho)^j \|\check{\mathbb{D}}_t^{j}\check{\tau}_t\|_{L^\infty(\mathbb{B}^g(\check{p},\rho),g)} \leq C_k ((R-\rho)^{k+\alpha}[\check{\mathbb{D}}_t^{k}\check{\tau}_t]_{C^{\alpha}(\mathbb{B}^g(\check{p},R),g)} + \|\check{\tau}_t\|_{L^\infty(\mathbb{B}^g(\check{p},R),g)}).
\end{align}
For all $j\in\mathbb{N}$ and $0<\beta<1$ with $j+\beta<k+\alpha$ we also have that
\begin{align}\label{hohohoho}
(R-\rho)^{j+\beta} [\check{\mathbb{D}}_t^{j}\check{\tau}_t]_{C^\beta(\mathbb{B}^g(\check{p},\rho),g)} \leq C_k ((R-\rho)^{k+\alpha}[\check{\mathbb{D}}_t^{k}\check{\tau}_t]_{C^{\alpha}(\mathbb{B}^g(\check{p},R),g)} + \|\check{\tau}_t\|_{L^\infty(\mathbb{B}^g(\check{p},R),g)}).
\end{align}
Furthermore, the above two inequalities are equivalent to the following:
\begin{equation}\label{sanctus}
\sum_{j=1}^k (R-\rho)^j \|\D^{j}\tau\|_{L^\infty(\mathbb{B}^{g_t}(p,\rho),g_t)} \leq C_k ((R-\rho)^{k+\alpha}[\D^{k}\tau]_{C^{\alpha}(\mathbb{B}^{g_t}(p,R),g_t)} + \|\tau\|_{L^\infty(\mathbb{B}^{g_t}(p,R),g_t)}),
\end{equation}
\begin{align}\label{benedictus}
(R-\rho)^{j+\beta} [\D^{j}\tau]_{C^\beta(\mathbb{B}^{g_t}(p,\rho),g_t)} \leq C_k ((R-\rho)^{k+\alpha}[\D^{k}\tau]_{C^{\alpha}(\mathbb{B}^{g_t}(p,R),g_t)} + \|\tau\|_{L^\infty(\mathbb{B}^{g_t}(p,R),g_t)}),
\end{align}
for all $p\in B\times Y$ and all $0<\rho<R<\frac{1}{2}d^{g_t}(p,\de B\times Y)$.
\end{proposition}

Let us remark that whenever this interpolation is applied to a tensor field which is pulled back from $B$, then it simply reduces to standard interpolation in Euclidean space.

In the proof and also later in this paper we will frequently use the following two lemmas from our previous paper \cite{HT2}, which we state here for convenience.  The first is \cite[Lemma 3.4]{HT2},  a slight variation of a classical iteration lemma from elliptic PDEs, see for instance \cite[Lemma 8.18]{GM2ndEd}.

\begin{lemma}\label{HT2-l:iterate}
For all $0 \leq \epsilon < 1$, $\beta_1 < \ldots < \beta_k$ and $\gamma_1 < \ldots < \gamma_m$, there exists a constant $C$ such that the following holds. Let $f_1, \ldots, f_k: [0,T] \to \R$ be bounded nonnegative functions such that
\begin{align}\label{e:iterate_hyp}
\sum_{j=1}^k (R-\rho)^{\beta_j}f_j(\rho) \leq \epsilon \sum_{j=1}^k (R-\rho)^{\beta_j}f_j(R) + \sum_{\ell = 1}^{m} A_\ell (R-\rho)^{\gamma_{\ell}}
\end{align}
for some $A_1, \ldots, A_m\geq 0$ and for all $0 \leq \rho < R \leq T$. Then for all $0 \leq \rho < R \leq T$,
\begin{align}\label{e:iterate_concl}
\sum_{j=1}^k(R-\rho)^{\beta_j}f_j(\rho) \leq C \sum_{\ell=1}^m A_\ell (R-\rho)^{\gamma_\ell}.
\end{align}
\end{lemma}

The second crucial lemma that we wish to recall from \cite{HT2} is \cite[Lemma 3.3]{HT2}.

\begin{lemma}\label{l:HT2:braindead}
Let $Y$ be a compact Riemannian manifold without boundary. Let $E$ be a metric vector bundle over $Y$ with a metric connection $\nabla$.
Then for all $k \in \N_{\geq 1}$,  $0 < \alpha < 1$, there exists a constant $C_k = C_k(Y,E,\alpha)$ such that for all $\sigma\in C^{k,\alpha}(Y,E)$,
\begin{align}\label{e:idioticestimate}
\|\nabla\sigma\|_{L^\infty(Y)} \leq C_k[\nabla^k\sigma]_{C^{\alpha}(Y)}.
\end{align}
\end{lemma}

\begin{proof}[Proof of Proposition \ref{l:higher-interpol}]
It suffices to prove \eqref{hohoho} and \eqref{hohohoho}, since \eqref{sanctus} and \eqref{benedictus} are then obtained from them by stretching and scaling. For ease of notation we will denote $\check{\mathbb{D}}_t, \check{\tau}_t,\check{p}$ simply by $\mathbb{D},\tau,p$.

We first prove \eqref{hohoho}. Aiming to apply Lemma \ref{HT2-l:iterate}, for $j \in \{1,\ldots,k\}$ define $\beta_j =  j$ and
$f_j(\rho) =\|\D^j\tau\|_{L^\infty(\mathbb{B}^g(p,\rho),g)}$.
In order to prove an inequality of the form
\begin{equation}\label{e:iterate_hyp2}
\sum_{j=1}^k (R-\rho)^{j}f_j(\rho) \leq \epsilon \sum_{j=1}^k (R-\rho)^{j}f_j(R) +C_\epsilon (R-\rho)^{k+\alpha}[\D^{k}\tau]_{C^{\alpha}(\mathbb{B}^g(p,R),g)} + C_\epsilon \|\tau\|_{L^\infty(\mathbb{B}^g(p,R),g)},
\end{equation}
consider the following three cases. The constant $C$ will always be a generic constant $\geq 1$.\medskip\

\noindent \textit{Case 1}: $R - \rho < C_0^{-1}$. Fix any $j \in \{1,\ldots,k\}$ and write $\sigma = \D^{j-1}\tau$. Fix any $x_0 = (z_0,y_0) \in \mathbb{B}^g(p,\rho)$. Let $v$ run over a
$g$-orthonormal basis of tangent vectors to $B \times Y$ at $x_0$ which are all either horizontal or vertical. Let $\gamma(t)$ be the unique $\P$-geodesic with $\gamma(0) = x_0$ and $\dot\gamma(0) = v$, with $t\in(0,R-\rho)$. If $v$ is vertical, then $\gamma(t)$ is a $g_{Y,z_0}$-geodesic in $\{z_0\} \times Y$, well within the injectivity radius (its speed is uniformly comparable to $1$ thanks to \eqref{e:tilt1}). If $v$ is horizontal, then $\gamma(t)$ is an affine line in $B\times\{y_0\}$. Either way, all subsegments of $\gamma$ are (minimal) $\P$-geodesics.

Let $x_1 = \gamma(t_1)$ for $t_1 = \epsilon(R-\rho)$ and $\epsilon \in (0,1)$. Writing $\sigma(t)$ to denote the tensor field along $\gamma(t)$ induced by $\sigma$, writing $z(t) = {\rm pr}_B(\gamma(t))$, and using \eqref{thebs}, we get that
\begin{align}\label{antivaxx}
\sigma(x_0) - \P_{t_1,0}^\gamma(\sigma(x_1)) = \int_0^{t_1} \frac{d}{dt} \P^\gamma_{t_1-t,0}(\sigma(t_1-t)) \, dt
= -\int_0^{t_1} \P^\gamma_{t_1-t,0}\biggl[\frac{\nabla^{z(t_1-t)}}{dt}\sigma(t_1-t)\biggr]\,dt.
\end{align}
We can rewrite the last integrand as $(\nabla^{z_0}_{v}\sigma)(x_0)+ \psi(t_1-t)$, where for all $t \in [0,t_1]$,
\begin{equation}\psi(t) :=\P^\gamma_{t,0}\biggl[(\nabla^{z(t)}_{\dot\gamma(t)}\sigma)(\gamma(t))\biggr]-(\nabla^{z_0}_v\sigma)(x_0).\end{equation}
Using the definition of the $C^\alpha$ seminorm and the fact that $\gamma$ is a $\P$-geodesic, so that we have the crucial property $\P^\gamma_{t,0}(\dot\gamma(t)) = v$, we can estimate
\begin{equation}|\psi(t)|_{g(x_0)} \leq
[\D\sigma]_{C^{\alpha}(\mathbb{B}^g(x_0,t_1),g)} d^g(x_0, \gamma(t))^\alpha \leq C [\D\sigma]_{C^{\alpha}(\mathbb{B}^g(x_0,t_1),g)}t^\alpha. \end{equation}
If $j<k$, then we alternatively also have that
\begin{equation}\begin{split}
\psi(t) &=\int_0^{t}\frac{d}{ds}\P^\gamma_{s,0}
\biggl[(\nabla^{z(s)}_{\dot{\gamma}(s)}\sigma)(\gamma(s))\biggr] \, ds = \int_0^t \P^\gamma_{s,0}\biggl[(\D^{2}_{\dot\gamma,\dot\gamma}\sigma)(\gamma(s)) \biggr] \, ds,
\end{split}\end{equation}
using again that $\dot{\gamma}$ is $\P$-parallel along $\gamma$. Since the operator norm of $\P$ is under control thanks to the discussion in Section \ref{s:tridiagonal}, we conclude that
\begin{equation}|\psi(t)|_{g(x_0)} \leq C\|\D^{2}\sigma\|_{L^\infty(\mathbb{B}^g(x_0,t_1),g)}t.\end{equation}
Summarizing, for all $t \in [0,t_1]$,
\begin{equation}|\psi(t)|_{g(x_0)} \leq
\begin{cases}
C[\D\sigma]_{C^{\alpha}(\mathbb{B}^g(x_0,t_1),g)} t^\alpha &{\rm for\; all}\;\,j,\\
C\|\D^{2}\sigma\|_{L^\infty(\mathbb{B} ^g(x_0,t_1),g)}t &{\rm for \;all}\;\,j < k.
\end{cases}\end{equation}
Together with \eqref{antivaxx}, this leads to
\begin{equation}|(\nabla^{z_0}_v\sigma)(x_0)|_{g(x_0)}t_1 \leq |\sigma(x_0)|_{g(x_0)} +|\P^\gamma_{t_1,0}(\sigma(x_1))|_{g(x_0)}+
\begin{cases}
C [\D\sigma]_{C^{\alpha}(\mathbb{B}^g(p,\rho+t_1),g)} t_1^{1+\alpha}& {\rm for \; all}\,\;j,\\
C\|\D^{2}\sigma\|_{L^\infty(\mathbb{B}^g(p,\rho+t_1),g)} t_1^2&{\rm for \;all}\,\;j < k.
\end{cases}\end{equation}
Recall that $v$ is an arbitrary element of a $g$-orthonormal basis at $x_0$ and that $\sigma = \D^{j-1}\tau$. Taking the sup over all $x_0 \in \mathbb{B}^g(p,\rho)$, we deduce that
\begin{align}\label{e:gazillionth_random_inequality}
 f_j(\rho) t_1 &\leq Cf_{j-1}(\rho + t_1) + \begin{cases}
C[\D^{j}\tau]_{C^{\alpha}(\mathbb{B}^g(p,\rho+t_1),g)}t_1^{1+\alpha}& {\rm for \; all}\;j,\\
C f_{j+1}(\rho+t_1)t_1^2 &{\rm for\;all}\;j < k.
\end{cases}
\end{align}
Now recall that $t_1 = \epsilon(R-\rho)$, where $\epsilon \in (0, 1)$ is arbitrary. Working backwards from $j = k$ to $j = 1$, decreasing and renaming $\epsilon$  in each step, we deduce that
\begin{align}\label{e:gazillion-and-one}
\sum_{j=1}^k (R-\rho)^{j }f_j(\rho) \leq  \epsilon \sum_{j=1}^k (R-\rho)^j f_j(R)  + \epsilon (R-\rho)^{k+\alpha}[\D^{k}\tau]_{C^{\alpha}(\mathbb{B}^g(p,R),g)} + C_\epsilon \|\tau\|_{L^\infty(\mathbb{B}^g(p,R),g)}.
\end{align}
This is the desired inequality of type \eqref{e:iterate_hyp2}.
\medskip\

\noindent \textit{Case 2}: $R-\rho \in [C_0^{-1}, C_0]$. The proof of \eqref{e:gazillion-and-one} in this case can be  reduced to Case 1. Let $R' = \rho + \frac{1}{2C_0}$ and apply Case 1 to the pair of radii $(\rho, R')$ instead of $(\rho,R)$. In \eqref{e:gazillion-and-one} with $R$ replaced with $R'$, notice that trivially $\mathbb{B}^g(x,R') \subset \mathbb{B}^g(x,R)$ and $(R'-\rho)^{k+\alpha} \leq (R-\rho)^{k+\alpha}$, so in order to obtain \eqref{e:gazillion-and-one} for the pair $(\rho,R)$ we only need to observe that $(R'-\rho)^j \geq (2C_0^2)^{-j} (R - \rho)^j$ for $j = 1, \ldots,k$.     \medskip\

\noindent \textit{Case 3}: $R-\rho > C_0$. Using the same idea as in Case 1, we can prove that \eqref{e:gazillionth_random_inequality} still holds if $f_j(\rho) t_1$ is replaced by $ \|\D_{\mathbf{b}}(\D^{j-1}\tau)\|_{L^\infty(\mathbb{B}^g(p,\rho),g)}t_1$ on the left-hand side. (Here and below, subscripts ${\mathbf{b}}$ and $\mathbf{f}$ denote covariant derivatives in the horizontal and fiber directions, respectively.) This is because we are free to take $\gamma$ to be an affine line in $B\times\{y_0\}$ for any $y_0\in Y$, and all such paths are minimal $\P$-geodesics (whereas a $g_{Y,z_0}$-geodesic in $\{z_0\} \times Y$ will never be minimal on a time interval of size $[0,R-\rho]$). On the other hand, for all $x = (z,y)\in \mathbb{B}^g(p,\rho)$,
\begin{equation}\label{filotto}|(\D_{\mathbf{f}}(\D^{j-1}\tau))(x)|_{g(x)} \leq C[\D_{\mathbf{f}\cdots\mathbf{f}}^{k-j+1}(\D^{j-1}\tau)]_{C^{\alpha}(\{z\}\times Y,g_{Y,z})} \leq C[\D^{k}\tau]_{C^{\alpha}(\mathbb{B}^g(p,R),g)}.\end{equation}
The first inequality holds because $(\D_{\mathbf{f}}\sigma)(x)$ equals the usual $g_{Y,z}$-covariant derivative of $\sigma$ along $\{z\} \times Y$ for all tensors $\sigma$, and because we are then in a position to apply Lemma \ref{l:HT2:braindead}. The second inequality holds because $\{z\} \times Y \subset \mathbb{B}^g(p,R)$ (because $R - \rho > C_0$) and again because $\P$-transport along curves contained in $\{z\}\times Y$ coincides with the standard notion of $g_{Y,z}$-parallel transport.

Proceeding as in Case 1 (working backwards from $j = k$), we get
\begin{align}\
\sum_{j=1}^k (R-\rho)^{j }f_j(\rho) \leq  \epsilon \sum_{j=1}^k (R-\rho)^j f_j(R)  + C_\epsilon (R-\rho)^{k+\alpha}[\D^{k}\tau]_{C^{\alpha}(\mathbb{B}^g(p,R),g)} + C_\epsilon \|\tau\|_{L^\infty(\mathbb{B}^g(p,R),g)}.
\end{align}
The only difference is that, instead of an $\varepsilon$, we now get a large factor of $C_\epsilon$ in front of $[\D^k\tau]_{C^\alpha}$.

Since \eqref{e:iterate_hyp2} has been proved in all three cases,  \eqref{hohoho} now follows using Lemma \ref{HT2-l:iterate}.

To conclude the proof of Proposition \ref{l:higher-interpol}, we show that \eqref{hohohoho} can be formally deduced from \eqref{hohoho}. Indeed, pick $x,x'\in \mathbb{B}^g(p,\rho)$ joined by a horizontal or minimal vertical $\P$-geodesic and let $d=d^g(x,x')$. If $d\geq (R-\rho)/(4C_0)$, we bound the $C^\beta$ difference quotient for $\D^j{\tau}$ at $x,x'$ using the triangle inequality and the boundedness of the operator norm of $\P$ from Section \ref{s:tridiagonal}. This yields the bound
\begin{equation}
Cd^{-\beta}\|\D^{j}\tau\|_{L^\infty(\mathbb{B}^g(p,\rho),g)}\leq C(R-\rho)^{-\beta}\|\D^{j}\tau\|_{L^\infty(\mathbb{B}^g(p,\rho),g)},
\end{equation}
which can then be bounded using \eqref{hohoho}. If $d<(R-\rho)/(4C_0)$ and $j=k$, we bound the $C^\beta$ difference quotient for $\D^k\tau$ at $x,x'$ trivially by
\begin{equation}
d^{\alpha-\beta}[\D^{k}\tau]_{C^\alpha(\mathbb{B}^g(p,R),g)}\leq (R-\rho)^{\alpha-\beta}[\D^{k}\tau]_{C^\alpha(\mathbb{B}^g(p,R),g)}.
\end{equation}
And if $d<(R-\rho)/(4C_0)$ and $j<k$, we apply Lemma \ref{kathoum} on $\mathbb{B}^g(x,d)$ and bound the $C^\beta$ difference quotient for $\D^j\tau$ at $x,x'$ by
\begin{equation}
Cd^{1-\beta}\|\D^{j+1}\tau\|_{L^\infty(\mathbb{B}^g(x,2C_0d),g)}\leq C(R-\rho)^{1-\beta}\|\D^{j+1}\tau\|_{L^\infty(\mathbb{B}^g(p,\rho + \frac{1}{2}(R-\rho)),g)},
\end{equation}
which can again be bounded using \eqref{hohoho}. This establishes \eqref{hohohoho}, and completes the proof.
\end{proof}

\subsection{From H\"older seminorm bounds to decay}\label{s:BIC}

We will now prove Theorem \ref{prop55},  the main result of Section \ref{s:adia}. This will be a cornerstone of all of our later arguments.

Assume we are in the same setting as in Section \ref{s:interpol}.
At each point $x=(z,y)\in B\times Y$ we have a splitting $T_x(B\times Y)=T_zB\oplus T_yY$, and we will denote by $\mathbf{f}$ any tangent vector in the second summand and by $\mathbf{b}$ any tangent vector in the first summand. If $Z$ is any vector in $\mathbb{C}^m$ (either real or complex), then we will denote by $\mathbf{Z}$ its trivial extension to a vector field (real or complex) on $B\times Y$.

One more piece of notation. First, we may assume without loss that $\int_{\{z\}\times Y}{\omega}_{F,z}^n=1$ for all $z\in B$. For a function $f$ on $B\times Y$ we will denote by $\underline{f}$ the function on $B$ given by
\begin{equation}
\underline{f}(z)=\int_{\{z\}\times Y}f\omega_{F,z}^n,
\end{equation}
i.e., the fiber average of $f$ with respect to the varying Calabi-Yau volume forms $\omega_{F,z}$ on $\{z\}\times Y$.

\begin{theorem}\label{prop55}
Let $0<\delta_t\leq 1$ be given for all $t \geq 0$. Define product metrics $g_t$ on $B\times Y$ by \eqref{shrpr}.
Given $k\in\mathbb{N}$, $0<\alpha<1$ and radii $0 <\rho<R<1$ with $\rho\geq C_0\sup_{t\geq 0} \delta_t$, there is a $C = C(k,\alpha,\rho,R)$ such that for all $(1,1)$-forms $\eta = i\partial\overline\partial\vp$ with $\underline{\vp}=0$ on $B \times Y$ and for all $0\leq j\leq k$ and $t\geq 0$,
\begin{equation}\label{estima}
\|\D^j\eta\|_{L^\infty(B_\rho(0) \times Y,g_t)}\leq C \delta_t^{k+\alpha-j}[\D^k\eta]_{C^\alpha(B_R(0)\times Y,g_t)}.
\end{equation}
Also, for all $0<\beta<1$ such that $j+\beta<k+\alpha$ we have that
\begin{equation}\label{estima2}
[\D^j\eta]_{C^\beta(B_\rho(0)\times Y,g_t)}\leq C \delta_t^{k+\alpha-j-\beta}[\D^k\eta]_{C^\alpha(B_R(0)\times Y,g_t)}.
\end{equation}
\end{theorem}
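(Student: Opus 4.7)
The plan is to reduce everything to the $j=0$ case of \eqref{estima}, namely the bound
\begin{equation}\label{p55:key}
\|\eta\|_{L^\infty(B_\rho(0)\times Y,g_t)}\leq C\delta_t^{k+\alpha}[\D^k\eta]_{C^\alpha(B_R(0)\times Y,g_t)}.
\end{equation}
Granting \eqref{p55:key}, for any $p\in B_\rho(0)\times Y$ choose $\rho<\rho''<R$ and apply the interpolation estimate \eqref{sanctus} on the pair $\mathbb{B}(p,\rho''-\rho-\delta_t)\subset\mathbb{B}(p,\rho''-\rho)\subset B_{\rho''}(0)\times Y$ (admissible for $t$ large enough since $\rho\delta_t^{-1}\geq C_0$; the opposite regime reduces to a fixed Schauder compactness statement). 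Using \eqref{p55:key} on $B_{\rho''}$ to absorb the $\|\eta\|_{L^\infty(g_t)}$ term on the right-hand side of \eqref{sanctus} then gives \eqref{estima} for all $1\leq j\leq k$. The refined statement \eqref{estima2} is obtained in exactly the same way from \eqref{benedictus}.

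To prove \eqref{p55:key} I argue by contradiction and blowup, in the spirit of \cite{HT2}. Assume the bound fails; then (after the easy case $\liminf\delta_{t_n}>0$ has been reduced to fixed-domain Schauder compactness) there exist $\delta_{t_n}\to 0$ and $\eta_n=i\partial\overline\partial\vp_n$ with $\underline{\vp_n}=0$ such that $\|\eta_n\|_{L^\infty(B_\rho\times Y,g_{t_n})}=1$ is attained at some $p_n=(z_n,y_n)$ while $\delta_{t_n}^{k+\alpha}[\D^k\eta_n]_{C^\alpha(B_R\times Y,g_{t_n})}\to 0$ (slightly shrinking $\rho$ if necessary so that the $z_n$ stay uniformly interior). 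Apply the stretching diffeomorphism $\Gamma_{t_n}$ of \eqref{gammat2}, translate in the base to place $\check z_n:=\delta_{t_n}^{-1}z_n$ at the origin, and pass to a subsequence with $z_n\to z_\infty\in\overline{B_\rho}$ and $y_n\to y_\infty$. The rescaled tensors $\check\eta_n:=\delta_{t_n}^{-2}\Gamma_{t_n}^*\eta_n$ and potentials $\tilde\vp_n:=\delta_{t_n}^{-2}\Gamma_{t_n}^*\vp_n$ satisfy $i\partial\overline\partial\tilde\vp_n=\check\eta_n$ in the stretched complex structure $\check J_{t_n}=\Gamma_{t_n}^*J$, $\underline{\tilde\vp_n}=0$, $\|\check\eta_n\|_{L^\infty(g)}=1$ attained near the origin, and $[\D^k\check\eta_n]_{C^\alpha(g)}\to 0$; moreover, the pulled-back complex structures and fiber metrics converge locally smoothly to $J_{\C^m}\times J_{z_\infty}$ and $g_{Y,z_\infty}$. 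Proposition \ref{l:higher-interpol} applied to $\check\eta_n$ gives uniform local bounds on $\|\D^{j'}\check\eta_n\|_{L^\infty(g)}$ for all $0\leq j'\leq k$, and fiberwise elliptic estimates applied to $\Delta_Y\tilde\vp_n=\mathrm{tr}_{\omega_{F,\delta_{t_n}\check z}}(\check\eta_n|_{\mathrm{fiber}})$ combined with $\underline{\tilde\vp_n}=0$ yield uniform $C^{k+2,\alpha'}_{\mathrm{loc}}$ bounds on $\tilde\vp_n$ for any $\alpha'<\alpha$. After a further subsequence one obtains limits $\check\eta_n\to\check\eta_\infty$ in $C^k_{\mathrm{loc}}$ and $\tilde\vp_n\to\tilde\vp_\infty$ in $C^{k+2,\alpha'}_{\mathrm{loc}}$ on $\C^m\times Y$ satisfying $\check\eta_\infty=i\partial\overline\partial\tilde\vp_\infty$ in the product complex structure, $\int_Y\tilde\vp_\infty(\check z,\cdot)\,\omega_{F,z_\infty}^n=0$ for all $\check z$, $|\check\eta_\infty|_g(0,y_\infty)=1$, and $[\D^k\check\eta_\infty]_{C^\alpha(g)}=0$ by lower semicontinuity of the H\"older seminorm.

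The heart of the proof, and the step I expect to be the main obstacle, is the Liouville-type rigidity at the blowup limit. Since $[\D^k\check\eta_\infty]_{C^\alpha(g)}=0$, the tensor $\D^k\check\eta_\infty$ is $\P$-parallel along all horizontal and minimal vertical $\P$-geodesics; because the limit connection $\D$ is simply the product of the Euclidean connection on $\C^m$ and the Levi-Civita connection of $g_{Y,z_\infty}$, this forces $\D^{k+1}\check\eta_\infty=0$. Restricting to any fiber $\{\check z\}\times Y$ and keeping only fiber covariant derivatives gives $(\nabla^{Y,z_\infty})^{k+1}(\check\eta_\infty|_{\mathrm{fiber}})=0$. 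An integration-by-parts induction on the compact fiber $Y$ (iteratively using $\int_Y|\nabla^{j+1}\sigma|^2=-\int_Y\langle\nabla^j\sigma,\nabla^*\nabla\nabla^j\sigma\rangle$, in which the last factor vanishes whenever $\nabla^{j+2}\sigma=0$) improves this to $\nabla^{Y,z_\infty}(\check\eta_\infty|_{\mathrm{fiber}})=0$, so the fiber-fiber part of $\check\eta_\infty$, which equals $i\partial_Y\overline\partial_Y\tilde\vp_\infty(\check z,\cdot)$, is a $g_{Y,z_\infty}$-parallel $(1,1)$-form on $Y$. Its $\omega_{F,z_\infty}$-trace $\Delta_Y\tilde\vp_\infty(\check z,\cdot)$ is then a parallel, hence constant, function on $Y$; since its integral over $Y$ vanishes by the divergence theorem, the constant is zero. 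Thus $\tilde\vp_\infty(\check z,\cdot)$ is fiberwise harmonic on compact $Y$ with zero fiber average, and therefore identically zero. Hence $\tilde\vp_\infty\equiv 0$ and $\check\eta_\infty\equiv 0$, contradicting $|\check\eta_\infty|_g(0,y_\infty)=1$. This contradiction proves \eqref{p55:key} and completes the plan.
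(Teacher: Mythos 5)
Your reduction of \eqref{estima}--\eqref{estima2} to the single $j=0$ bound via Proposition~\ref{l:higher-interpol} is exactly the paper's opening move, but from there your route diverges: the paper proves the $j=0$ estimate \eqref{estimatio} directly, component by component ($\eta_{\mathbf{ff}},\eta_{\mathbf{bf}},\eta_{\mathbf{bb}}$) via Lemmas~\ref{l:braindead2}, \ref{l:braindead3} and \ref{l:commutators}, whereas you argue by contradiction and blowup in the spirit of \cite{HT2}. Your Liouville step at the blowup limit is sound: from $\D^{k+1}\check\eta_\infty=0$ the integration-by-parts induction on the compact fiber gives $\nabla^{Y,z_\infty}(\check\eta_\infty|_{\mathrm{fiber}})=0$, the fiberwise trace is then a constant of integral zero and hence zero, and the normalized potential $\tilde\vp_\infty$ is fiberwise constant with zero average, so $\tilde\vp_\infty\equiv 0$ and (crucially) all components of $\check\eta_\infty=i\partial\overline\partial\tilde\vp_\infty$ vanish, not just the $\mathbf{ff}$ ones.

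The step you gloss over, and which deserves to be flagged, is the assertion that ``fiberwise elliptic estimates applied to $\Delta_Y\tilde\vp_n=\mathrm{tr}_{\omega_F}(\check\eta_n|_{\mathrm{fiber}})$ combined with $\underline{\tilde\vp_n}=0$ yield uniform $C^{k+2,\alpha'}_{\mathrm{loc}}(\C^m\times Y)$ bounds on $\tilde\vp_n$.'' Fiberwise Schauder only yields $L^\infty_{\check z}\bigl(C^{k+2,\alpha'}(Y)\bigr)$ control; it gives no handle on the base derivatives $\partial_{\check z}^a\tilde\vp_n$, which are needed both for the $C^{k+2,\alpha'}_{\mathrm{loc}}$ claim and to pass the normalization $\underline{\tilde\vp_n}=0$ to the limit. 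To control $\partial_{\check z}\tilde\vp_n$ you must (i) differentiate the normalization in $\check z$ to pin down the fiber average of $\partial_{\check z}\tilde\vp_n$, and (ii) relate the fiberwise $\overline\partial$ of $\partial_{\check z}\tilde\vp_n$ to the $\mathbf{bf}$ components of $\check\eta_n$ modulo a commutator involving the non-product complex structure---which is exactly Lemma~\ref{l:commutators}; iterate similarly for $\partial_{\check z}^2$ using $\eta_{\mathbf{bb}}$. These are precisely the mechanisms behind \eqref{bffriend}, \eqref{bbfriend}, \eqref{drano_brano} and \eqref{rel} in the paper's direct proof. So your blowup does not sidestep the mixed-derivative difficulty that makes this theorem nontrivial; it relocates it from the original scale to the blowup scale, where you would still have to run essentially the same chain of estimates before Ascoli--Arzel\`a can even be invoked. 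That the gap is fillable keeps your approach viable, but as written the compactness step is not justified, and once fixed it does not yield a shorter proof than the paper's.
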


Here, $B_\rho(0), B_R(0) \subset B$ simply denote standard Euclidean balls in $\mathbb{C}^m$ and the Hölder seminorms in \eqref{estima}--\eqref{estima2} are understood in the sense of Definition \ref{def:holderdef} and the paragraph after it.  In particular,  we are using here that $B_\rho(0) \times Y = \mathbb{B}^{g_t}(p,\rho)$ and $B_R(0) \times Y= \mathbb{B}^{g_t}(p,R)$ for all $p \in \{0\}\times Y$.

Theorem \ref{prop55} is a quantitative and much improved version of \cite[Prop 5.5]{HT2}. Indeed, the proof also relies on our crucial Lemma \ref{l:HT2:braindead} and its variants in Lemmas \ref{l:braindead2}--\ref{l:braindead3} below, and on a commutation estimate from the proof of \cite[Prop 5.5]{HT2} that we include and improve here in Lemma \ref{l:commutators}.  (We defer these lemmas until after the proof of the theorem because they are a bit technical and the proofs, while similar to the ones in \cite{HT2}, need to be given in full.) The proof also uses Proposition \ref{l:higher-interpol}.  The idea that uniform bounds on the highest derivatives in a shrinking H\"older seminorm should imply quantitative decay for the lower order derivatives came to us after reading the appendix of \cite{DJZ} and \cite{JS1}, where weaker and more special results in this direction are proved with different techniques.

\begin{proof}
For ease of notation we will write $B_r=B_r(0) \subset \C^m$. First suppose we have shown that for all $k,\alpha$ and $0<\rho<R$ with $\rho\geq C_0\sup_{t \geq 0} \delta_t$ there is a $C$ such that for all $\eta$ as above and all $t \geq 0$,
\begin{equation}\label{estimatio}
\|\eta\|_{L^\infty(B_\rho\times Y,g_t)}\leq C \delta_t^{k+\alpha}[\D^k\eta]_{C^\alpha(B_R\times Y,g_t)}.
\end{equation}
Then we claim that \eqref{estima} and \eqref{estima2} follow. Indeed, \eqref{estimatio} clearly implies \eqref{estima} for $j=0$. For $j>0$, note first that by assumption we have $B_r\times Y=\mathbb{B}^{g_t}(p,r)$ for any $p\in \{0\}\times Y$ and any $r\geq \rho$. We then use our interpolation inequality, Proposition \ref{l:higher-interpol}, in the form \eqref{sanctus}. We apply \eqref{sanctus} with $\rho$ replaced by $\ti{\rho}=R-\min(\delta_t, R-\rho)$, so that $R-\ti{\rho}\leq \delta_t$ and $\rho\leq\ti{\rho}$. Thus, using also \eqref{estimatio},
\begin{equation}\begin{split}
(R-\ti{\rho})^j\|\D^j\eta\|_{L^\infty(B_{\ti{\rho}}\times Y,g_t)}&\leq C (R-\ti{\rho})^{k+\alpha}[\D^k\eta]_{C^\alpha(B_R\times Y,g_t)}+C\|\eta\|_{L^\infty(B_R\times Y, g_t)}\\
&\leq C\delta_t^{k+\alpha}[\D^k\eta]_{C^\alpha(B_R\times Y,g_t)}.
\end{split}\end{equation}
This implies \eqref{estima} by considering the two cases $\delta_t\leq R-\rho$ and $\delta_t > R-\rho$, since the constant $C$ in \eqref{estima} is allowed to depend on $R-\rho$. As for \eqref{estima2},  the interpolation inequality \eqref{benedictus} yields
\begin{equation}\begin{split}
(R-\ti{\rho})^{j+\beta}[\D^j\eta]_{C^\beta(B_{\ti{\rho}}\times Y,g_t)}&\leq C (R-\ti{\rho})^{k+\alpha}[\D^k\eta]_{C^\alpha(B_R\times Y,g_t)}+C\|\eta\|_{L^\infty(B_R\times Y, g_t)}\\
&\leq C\delta_t^{k+\alpha}[\D^k\eta]_{C^\alpha(B_R\times Y,g_t)},
\end{split}\end{equation}
which proves \eqref{estima2} by the same reasoning as above.

We now establish \eqref{estimatio}.  To do so, we begin by decomposing $\eta = \eta_{\mathbf{bb}} + \eta_{\mathbf{bf}} + \eta_{\mathbf{ff}}$ according to
\begin{equation}\label{porra}
\Lambda^2(B \times Y) = {\rm pr}_B^*\Lambda^2B \oplus ({\rm pr}_B^*\Lambda^1 B \otimes {\rm pr}_Y^*\Lambda^1 Y) \oplus {\rm pr}_Y^*\Lambda^2 Y.
\end{equation}
Thus,  $\eta_{\mathbf{bb}}, \eta_{\mathbf{bf}},\eta_{\mathbf{ff}}$ are globally defined sections of the corresponding vector bundles over $B \times Y$,  each of which inherits a fiber metric from any given Riemannian metric on $B \times Y$ and inherits a connection from $\mathbb{D}$. Moreover,  \eqref{estimatio} is equivalent to the following for all $z\in B_\rho$ (recall $g = g_{\mathbb{C}^m} + g_{Y,0}$):
\begin{eqnarray}
\label{bbfriend}
\sup_{\{z\}\times Y}|\eta_{\mathbf{bb}}|_{g} \leq C\delta_t^{k+\alpha}[\D^k\eta]_{C^\alpha(B_R\times Y,g_t)},\\
\label{bffriend}
\sup_{\{z\}\times Y}|\eta_{\mathbf{bf}}|_{g} \leq C\delta_t^{k+1+\alpha}[\D^k\eta]_{C^\alpha(B_R\times Y,g_t)},\\
\label{fffriend}
\sup_{\{z\}\times Y}|\eta_{\mathbf{ff}}|_{g} \leq C\delta_t^{k+2+\alpha}[\D^k\eta]_{C^\alpha(B_R\times Y,g_t)}.
\end{eqnarray}

We first establish \eqref{fffriend} by applying Lemma \ref{l:braindead2}.  Indeed, by the lemma,
\begin{equation}\label{sagredo}\begin{split}
\sup_{\{z\}\times Y}|\eta_{\mathbf{ff}}|_{g}
&\leq C  \sup_{\{z\}\times Y} |\eta|_{\{z\}\times Y}|_{g_{Y,z}}\\
&\leq C [(\nabla^{g_{Y,z}})^k(\eta|_{\{z\}\times Y})]_{C^\alpha(\{z\}\times Y, g_{Y,z})}\\
&= C\delta_t^{k+2+\alpha}  [(\nabla^{g_{Y,z}})^k(\eta|_{\{z\}\times Y})]_{C^\alpha(\{z\}\times Y, \delta_t^2 g_{Y,z})}.
\end{split}
\end{equation}
We now use the fact that, by definition,
\begin{equation}(\nabla^{g_{Y,z}})^k (\eta|_{\{z\}\times Y}) = \nabla^{z,k}_{\mathbf{f\cdots f}}(\eta_{\mathbf{ff}})=\D^k_{\mathbf{f\cdots f}}(\eta_{\mathbf{ff}}),
\end{equation}
and that $g_{Y,z}$-parallel transport on $T^\bullet Y$ is equal to $\mathbb{P}$-transport on ${\rm pr}_Y^*T^\bullet Y$ in the vertical directions.  Up to a uniform constant we can also replace the fiber metric induced by $\delta_t^2 g_{Y,z}$ on $T^\bullet Y$ by the one induced by $g_t = g_{\mathbb{C}^m} + \delta_t^2 g_{Y,0}$ on ${\rm pr}_Y^*T^\bullet Y$.  This allows us to further estimate
\begin{equation}
\begin{split}
\sup_{\{z\}\times Y}|\eta_{\mathbf{ff}}|_{g}
&\leq
C\delta_t^{k+2+\alpha}[\D^k_{\mathbf{f}\cdots\mathbf{f}}(\eta_{\mathbf{ff}})]_{C^\alpha(B_\rho\times Y,g_t)}\\
&\leq
C\delta_t^{k+2+\alpha}[\D^k_{\mathbf{f}\cdots\mathbf{f}}\eta]_{C^\alpha(B_\rho\times Y,g_t)}\\
&\leq
C\delta_t^{k+2+\alpha}[\D^k\eta]_{C^\alpha(B_\rho\times Y,g_t)},
\end{split}\end{equation}
which implies the claim. The second-to-last inequality is true because $\mathbb{D}$ and $\P$ preserve the three sub-bundles of \eqref{porra} and because these subbundles are orthogonal with respect to $g_t$.  The last inequality is true because restriction of contravariant tensors to subspaces is norm nonincreasing.

To establish \eqref{bffriend}, we begin by noting that any fixed vector $Z \in (\mathbb{C}^m)^{1,0}$ may trivially be viewed as a section $\mathbf{Z}$ of $T(B \times Y) \otimes_{\mathbb{R}} \mathbb{C}$, which is however not usually of type $(1,0)$, let alone holomorphic, with respect to the fibered complex structure $J$.  Because $\eta$ is real, we may then estimate
\begin{equation}
\begin{split}
\sup_{\{z\}\times Y} |\eta_{\mathbf{bf}}|_{g} &\leq C \sup_{\{z\}\times Y} \sup_{Z \in (\mathbb{C}^m)^{1,0}:|Z|=1} |(\mathbf{Z} \,\lrcorner\,\eta)|_{\{z\}\times Y}|_{g_{Y,0}}\\
&\leq C \sup_{\{z\}\times Y} \sup_{Z \in (\mathbb{C}^m)^{1,0}:|Z|=1} |(\mathbf{Z}\,\lrcorner\,\eta)|_{\{z\}\times Y}|_{g_{Y,z}}.
\end{split}
\end{equation}
If the complex structure $J$ on $B \times Y$ was a product,  then $\mathbf{Z}$ would indeed be a holomorphic $(1,0)$-vector field with respect to $J$ and we would have $(\mathbf{Z} \,\lrcorner\,\eta)|_{\{z\} \times Y} = \db(\mathbf{Z}(\vp)|_{\{z\}\times Y})$. This would in turn allow us to apply Lemma \ref{l:braindead3} to complete the proof of \eqref{bffriend}, in analogy with how Lemma \ref{l:braindead2} was used to complete the proof of \eqref{fffriend} above.  In general, this motivates considering the difference
\begin{equation}\label{splitso}\begin{split}
\text{Diff} :=&\; (\mathbf{Z}\,\lrcorner\,\eta)|_{{\{z\}\times Y}}-\db(\mathbf{Z}(\vp)|_{\{z\}\times Y}) \\
=&\; L_1 \circledast D(\vp|_{\{z\}\times Y}) + L_2 \circledast D^2(\vp|_{\{z\}\times Y})
\end{split}
\end{equation}
according to Lemma \ref{l:commutators}.  Using Lemma \ref{l:braindead3}, we obtain that
\begin{equation}\label{lkjh}\begin{split}
\sup_{\{z\}\times Y}|\mathbf{Z}\,\lrcorner\,\eta|_{g_{Y,z}}&\leq \sup_{\{z\}\times Y} |{\rm Diff}|_{g_{Y,z}}+ \sup_{\{z\}\times Y}|\db(\mathbf{Z}(\vp)|_{\{z\}\times Y})|_{g_{Y,z}}\\
&\leq  \sup_{\{z\}\times Y} |{\rm Diff}|_{g_{Y,z}}+C [(\nabla^{g_{Y,z}})^k(\db(\mathbf{Z}(\vp)|_{\{z\}\times Y}))]_{C^\alpha(\{z\}\times Y,g_{Y,z})}\\
&\leq  \|{\rm Diff}\|_{C^{k,\alpha}(\{z\}\times Y,g_{Y,z})}+C [(\nabla^{g_{Y,z}})^k((\mathbf{Z}\,\lrcorner\,\eta)|_{{\{z\}\times Y}})]_{C^\alpha(\{z\}\times Y,g_{Y,z})}\\
&\leq \|{\rm Diff}\|_{C^{k,\alpha}(\{z\}\times Y,g_{Y,z})}+C\delta_t^{k+1+\alpha}[(\nabla^{g_{Y,z}})^k ((\mathbf{Z}\,\lrcorner\,\eta)|_{{\{z\}\times Y}})]_{C^\alpha(\{z\}\times Y,\delta_t^2 g_{Y,z})}.
\end{split}\end{equation}
By applying the same logic as after \eqref{sagredo} to the second term, we obtain that
\begin{equation}\begin{split}
\sup_{\{z\}\times Y}|\mathbf{Z}\,\lrcorner\,\eta|_{g_{Y,z}}&\leq  \|{\rm Diff}\|_{C^{k,\alpha}(\{z\}\times Y,g_{Y,z})}+C\delta_t^{k+1+\alpha}[\D^k\eta]_{C^\alpha(B_\rho\times Y,g_t)}.
\end{split}\end{equation}
Applying Lemma \ref{l:commutators}, i.e., the second line of \eqref{splitso}, and Lemma \ref{l:braindead2} to the first term yields
\begin{equation}\begin{split}
\sup_{\{z\}\times Y}|\mathbf{Z}\,\lrcorner\,\eta|_{g_{Y,z}}&\leq C[(\nabla^{g_{Y,z}})^k ( i\partial\overline\partial(\vp|_{\{z\}\times Y}))]_{C^\alpha(\{z\}\times Y, g_{Y,z})} + C\delta_t^{k+1+\alpha}[\D^k\eta]_{C^\alpha(B_\rho \times Y,g_t)}\\
&\leq C\delta_t^{k+2+\alpha}[\D^k\eta]_{C^\alpha(B_\rho \times Y,g_t)}+C\delta_t^{k+1+\alpha}[\D^k\eta]_{C^\alpha(B_\rho \times Y,g_t)}\\
&\leq C\delta_t^{k+1+\alpha}[\D^k\eta]_{C^\alpha(B_\rho\times Y,g_t)}.
\end{split}\end{equation}

It remains to prove \eqref{bbfriend}. For any two vectors $Z,W\in (\mathbb{C}^m)^{1,0}$ of length $1$,  let $\mathbf{Z},\mathbf{W}$ again denote their trivial extensions as complexified vector fields on $B \times Y$. Because $\eta$ is real, it suffices to bound $\eta(\mathbf{Z},\mathbf{W})$ and $\eta(\mathbf{Z},\overline{\mathbf{W}})$.  Since $\eta$ is of $J$-type $(1,1)$, it suffices to bound
$\eta(\mathbf{Z}^{1,0}, \mathbf{W}^{0,1})$ and $\eta(\mathbf{Z}^{1,0}, {\overline{\mathbf{W}}}^{0,1})$, where the superscripts indicate $J$-type.  From \eqref{e:kohelet5} in the proof of Lemma \ref{l:commutators}, schematically,
\begin{equation}\label{DL/AL}
\mathbf{Z}^{1,0} = \frac{\partial}{\partial z} + F \frac{\partial}{\partial y},  \quad \mathbf{W}^{0,1} = G\frac{\partial}{\partial\overline{y}} ,\quad {\overline{\mathbf{W}}}^{0,1} = \frac{\partial}{\partial\overline{z}} + H \frac{\partial}{\partial\overline{y}},
\end{equation}
in any local $J$-holomorphic product coordinate system $(z,y)$, where $F,G,H$ are smooth functions. We will spell out explicitly the bound for $\eta(\mathbf{Z}^{1,0}, {\overline{\mathbf{W}}}^{0,1})$, since it is the more complicated one, and the bound for $\eta(\mathbf{Z}^{1,0}, \mathbf{W}^{0,1})$ then follows along similar lines.

The first main point will be to use the assumption $\underline{\vp}=0$ to show that
\begin{equation}\label{fiberavg}
\sup_{B_R}|\underline{\eta(\mathbf{Z}^{1,0}, {\overline{\mathbf{W}}}^{0,1})}|\leq C\delta_t^{k+1+\alpha}[\D^k\ddbar\vp]_{C^\alpha(B_R\times Y,g_t)}.
\end{equation}
To prove this, we first claim that
\begin{equation}\label{drano_brano}
|(\ddbar\vp)(\mathbf{Z}^{1,0},{\overline{\mathbf{W}}}^{0,1}) - \mathbf{Z}^{1,0}({\overline{\mathbf{W}}}^{0,1}(\vp))| \leq C\delta_t^{k+1+\alpha}[\D^k\ddbar\vp]_{C^\alpha(B_R\times Y,g_t)},\end{equation}
To see this, we insert \eqref{DL/AL} into the quantity on the LHS of \eqref{drano_brano}; the main (base-base) terms of the difference obviously cancel out as expected, leaving us with every possible kind of fiber, fiber-fiber, and base-fiber derivative of $\vp$ as the error terms (i.e., any possible combination of $(1,0)$ indices and $(0,1)$ indices, or equivalently, any possible combination of real indices). The fiber and fiber-fiber ones are bounded by $C\delta_t^{k+2+\alpha}[\D^k\ddbar\vp]_{C^\alpha(B_R\times Y,g_t)}$ by using Lemma \ref{l:braindead2} as in \eqref{lkjh}. The base-fiber ones are dealt with by first applying Lemma \ref{l:braindead3} to $\psi = \mathbf{Z}(\vp)$
to get
\begin{equation}\label{plethysm}
\sup_{\{z\}\times Y}|\nabla_{\mathbf{f}}\left(\mathbf{Z}(\vp)|_{\{z\}\times Y}\right)|_{g_{Y,z}}\leq C [\nabla^{z,k}_{\mathbf{f\cdots f}}\db\left(\mathbf{Z}(\vp)|_{\{z\}\times Y}\right)]_{C^\alpha(\{z\}\times Y,g_{Y,z})}
\end{equation}
and then bounding this by $C\delta_t^{k+1+\alpha}[\D^k\ddbar\vp]_{C^\alpha(B_R\times Y,g_t)}$ as in \eqref{lkjh} above. This establishes \eqref{drano_brano}.

Taking then the fiber average of \eqref{drano_brano}, we are left with bounding the fiber average of $\mathbf{Z}^{1,0}({\overline{\mathbf{W}}}^{0,1}(\vp))$. For this, we apply $\mathbf{Z}^{1,0}({\overline{\mathbf{W}}}^{0,1}(\cdot))$ to the relation
\begin{equation}\label{rel}0=\underline{\vp}(z)=\int_{\{z\}\times Y}\vp\omega_{F,z}^n,\end{equation}
which allows us to bound $\underline{\mathbf{Z}^{1,0}({\overline{\mathbf{W}}}^{0,1}(\vp))}$ in terms of the fiberwise $L^\infty$ norm of $\vp$ and of $\mathbf{Z}^{1,0}(\vp),{\overline{\mathbf{W}}}^{0,1}(\vp)$. Since $\underline{\vp}=0$, we can bound
\begin{equation}\begin{split}
\sup_{\{z\}\times Y}|\vp|&\leq C\sup_{\{z\}\times Y}|\nabla_{\mathbf{f}}\vp|_{\{z\}\times Y}|_{g_{Y,z}}\leq C[\nabla^k_{\mathbf{f\cdots f}}\ddbar\vp|_{\{z\}\times Y}]_{C^\alpha(\{z\}\times Y, g_{Y,z})}\\
&\leq C\delta_t^{k+2+\alpha}[\D^k_{\mathbf{f\cdots f}}\ddbar\vp|_{\{z\}\times Y}]_{C^\alpha(\{z\}\times Y,g_t)}
\leq C\delta_t^{k+2+\alpha}[\D^k\eta]_{C^\alpha(B_R\times Y,g_t)}
\end{split}\end{equation}
using Lemma \ref{l:braindead2}. As for $\mathbf{Z}^{1,0}(\vp)$, applying $\mathbf{Z}^{1,0}$ to \eqref{rel} allows us to bound $\underline{\mathbf{Z}^{1,0}(\vp)}$ by the fiberwise $L^\infty$ norm of $\vp$, and so by $C\delta_t^{k+2+\alpha}[\D^k\eta]_{C^\alpha(B_R\times Y,g_t)}$.

We can then bound the fiberwise $L^\infty$ norm of $\mathbf{Z}^{1,0}(\vp)$ in terms of $\underline{\mathbf{Z}^{1,0}(\vp)}$ and of the $L^\infty$ norm of the fiberwise gradient $\nabla_{\mathbf{f}}\left(\mathbf{Z}^{1,0}(\vp)|_{\{z\}\times Y}\right)$, which is bounded by $C\delta_t^{k+1+\alpha}[\D^k\ddbar\vp]_{C^\alpha(B_R\times Y,g_t)}$ as in \eqref{plethysm} and \eqref{lkjh} above. Putting all of these together completes the proof of \eqref{fiberavg}.

Now that \eqref{fiberavg} is proved, we can use it to prove \eqref{bbfriend}. Assume first that $k=0$ and bound the $L^\infty$ norm of $\eta(\mathbf{Z}^{1,0}, {\overline{\mathbf{W}}}^{0,1})$ on the fiber $\{z\}\times Y$ in terms of its fiberwise $C^\alpha$ seminorm and its fiberwise average
\begin{equation}\begin{split}
\sup_{\{z\}\times Y}|\eta(\mathbf{Z}^{1,0}, {\overline{\mathbf{W}}}^{0,1})|_{g_{Y,z}}&\leq \sup_B|\underline{\eta(\mathbf{Z}^{1,0}, {\overline{\mathbf{W}}}^{0,1})}|
 + C[\eta(\mathbf{Z}^{1,0}, {\overline{\mathbf{W}}}^{0,1})|_{\{z\}\times Y}]_{C^\alpha(\{z\}\times Y,g_{Y,z})}\\
&\leq  C\delta_t^{1+\alpha}[\ddbar\vp]_{C^\alpha(B_R\times Y,g_t)} +  C\delta_t^\alpha
[\eta]_{C^\alpha(B_R\times Y,g_t)}\leq  C\delta_t^\alpha
[\eta]_{C^\alpha(B_R\times Y,g_t)},
\end{split}\end{equation}
using \eqref{fiberavg}.
If $k \geq 1$, we instead first bound the fiberwise $L^\infty$ norm of $\eta(\mathbf{Z}^{1,0}, {\overline{\mathbf{W}}}^{0,1})$ in terms of the $L^\infty$ norm of its fiberwise gradient and its fiberwise average, and then invoke Lemma \ref{l:HT2:braindead}:
\begin{align}
\sup_{\{z\}\times Y}|\eta(\mathbf{Z}^{1,0}, {\overline{\mathbf{W}}}^{0,1})|_{g_{Y,z}}&\leq \sup_{B_R}|\underline{\eta(\mathbf{Z}^{1,0}, {\overline{\mathbf{W}}}^{0,1})}|+ \sup_{\{z\}\times Y} |\nabla_{\mathbf{f}}\eta(\mathbf{Z}^{1,0}, {\overline{\mathbf{W}}}^{0,1})|_{\{z\}\times Y}|_{g_{Y,z}}\\
&\leq C\delta_t^{k+1+\alpha}[\D^k\ddbar\vp]_{C^\alpha(B_R\times Y,g_t)} + C[\nabla^k_{\mathbf{f\cdots f}}\eta(\mathbf{Z}^{1,0}, {\overline{\mathbf{W}}}^{0,1})|_{\{z\}\times Y}]_{C^\alpha(\{z\}\times Y, g_{Y,z})} \\
&\leq C\delta_t^{k+1+\alpha}[\D^k\eta]_{C^\alpha(B_R\times Y,g_t)} + C\delta_t^{k+\alpha}[\D^k_{\mathbf{f\cdots f}}\eta(\mathbf{Z}^{1,0}, {\overline{\mathbf{W}}}^{0,1})|_{\{z\}\times Y}]_{C^\alpha(\{z\}\times Y,g_t)}\\
&\leq C\delta_t^{k+\alpha}[\D^k\eta]_{C^\alpha(B_R\times Y,g_t)},
\end{align}
which concludes the proof of the theorem.
\end{proof}

The following lemma and its proof were used in the proof of Theorem \ref{prop55}. All of this is essentially contained in the proof of \cite[Prop 5.5]{HT2}, but we make the argument explicit here for convenience.

\begin{lemma}\label{l:commutators} For any fixed type $(1,0)$ vector $Z$ on $\C^m$ and its trivial extension $\mathbf{Z}$ to $\C^m\times Y$,
\begin{equation}
(\mathbf{Z} \,\lrcorner\,i\partial\overline\partial\vp)|_{\{z\}\times Y} - \overline\partial(\mathbf{Z}(\vp)|_{\{z\}\times Y}) = L_1 \circledast D(\vp|_{\{z\}\times Y}) + L_2 \circledast D^2(\vp|_{\{z\}\times Y}).
\end{equation}
Here $D,D^2$ are the gradient and Hessian with respect to any fixed local coordinate system on $Y$, and $L_1,L_2$ are fixed smooth local coefficient fields depending only on the coordinate system.
\end{lemma}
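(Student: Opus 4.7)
\emph{Plan.} The plan is to compute both sides of the asserted identity directly in local $J$-holomorphic coordinates on $B\times Y$ adapted to the fibration and to observe, by a bookkeeping of terms, that the contributions involving genuine base derivatives of $\vp$ cancel in the difference, leaving only first- and second-order fiber derivatives of the restriction $\vp|_{\{z\}\times Y}$.

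First I would use that $\mathrm{pr}_B\colon (B\times Y, J)\to (B, J_{\C^m})$ is holomorphic to complete the base coordinates $z^\mu$ to a local system of $J$-holomorphic coordinates $(z^1,\ldots,z^m,w^1,\ldots,w^n)$ near any point of $B\times Y$. In terms of the product (trivialization) coordinates $(z^\mu,u^\alpha)$, the functions $w^j(z,u)$ are smooth, and the trivially extended vector field $Z=\partial/\partial z^\mu$ decomposes in the $J$-holomorphic frame as
$$Z \;=\; \frac{\partial}{\partial z^\mu}\bigg|_{\mathrm{hol}} \;+\; F^j\,\frac{\partial}{\partial w^j}\bigg|_{\mathrm{hol}} \;+\; G^j\,\frac{\partial}{\partial \bar w^j}\bigg|_{\mathrm{hol}},$$
with $F^j=\partial w^j/\partial z^\mu$ and $G^j=\partial\bar w^j/\partial z^\mu$ smooth complex-valued functions (partial derivatives in the product coordinates). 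These coefficients measure the failure of $Z$ to be of pure type $(1,0)$ with respect to $J$ and both vanish when $f$ is holomorphically trivial.

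Next I would compute both sides using this decomposition. Writing $i\partial\bar\partial\vp = i\vp_{A\bar B}\,d\zeta^A\wedge d\bar\zeta^B$ in the $J$-holomorphic frame, contracting $Z$, and restricting to $\{z\}\times Y$ (which annihilates all $dz^\mu$ and $d\bar z^\mu$ components) produces a linear combination, with coefficients smooth in $F^j,G^j$, of the fiber evaluations $\vp_{z^\mu\bar w^k}|_{\mathrm{fib}}\,d\bar w^k$, $\vp_{w^j\bar w^k}|_{\mathrm{fib}}\,d\bar w^k$, and $\vp_{w^k\bar w^j}|_{\mathrm{fib}}\,dw^k$. Independently, from $Z\vp = \vp_{z^\mu} + F^j\vp_{w^j} + G^j\vp_{\bar w^j}$, and using crucially that $\partial/\partial z^\mu|_{\mathrm{hol}}$ commutes with $\partial/\partial \bar w^k$ (so that fiber differentiation of the restriction $\vp_{z^\mu}|_{\mathrm{fib}}$ yields precisely $\vp_{z^\mu\bar w^k}|_{\mathrm{fib}}$), the fiber $\bar\partial$ of $Z(\vp)|_{\{z\}\times Y}$ takes the schematic form
$$\bigl(\vp_{z^\mu\bar w^k} \;+\; F^j_{,\bar w^k}\vp_{w^j} \;+\; F^j\vp_{w^j\bar w^k} \;+\; G^j_{,\bar w^k}\vp_{\bar w^j} \;+\; G^j\vp_{\bar w^j\bar w^k}\bigr)\bigr|_{\mathrm{fib}}\,d\bar w^k.$$

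Finally I would subtract the two expressions. The base-fiber second derivative $\vp_{z^\mu\bar w^k}|_{\mathrm{fib}}$, which is the only quantity on either side that depends on $\vp$ through more than its fiber restriction, appears with matching coefficient on both sides (up to an overall factor absorbed into $L_1,L_2$) and therefore cancels. What remains is a finite linear combination of first fiber derivatives $\vp_{w^j}|_{\mathrm{fib}}$, $\vp_{\bar w^j}|_{\mathrm{fib}}$ and of second fiber derivatives of $\vp|_{\{z\}\times Y}$, with smooth coefficients built polynomially from $F^j, G^j, F^j_{,\bar w^k}, G^j_{,\bar w^k}$. Converting these fiber-holomorphic and fiber-antiholomorphic derivatives into real gradient $D$ and Hessian $D^2$ components with respect to any fixed real coordinate system on $Y$ then yields the claimed representation $L_1\circledast D(\vp|_{\{z\}\times Y}) + L_2\circledast D^2(\vp|_{\{z\}\times Y})$ with smooth coefficient tensors $L_1,L_2$. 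The only nontrivial part of the argument is the cancellation bookkeeping identified above, which comes down to the commutator identity $[\partial/\partial z^\mu|_{\mathrm{hol}},\partial/\partial\bar w^k]=0$ in the $J$-holomorphic frame; everything else is algebraic.
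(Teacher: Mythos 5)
Your plan follows essentially the same route as the paper's proof: construct a fibered $J$-holomorphic chart $(z^\mu,w^j)$ (in the paper $(z^\mu,\hat y^p)$), expand the trivially extended $Z$ in this frame as a pure $\partial/\partial z$ part plus fiber components $F^j\partial/\partial w^j+G^j\partial/\partial\bar w^j$ with no $\partial/\partial\bar z$ part, contract and restrict, and observe that the mixed derivative $\vp_{z^\mu\bar w^k}|_{\rm fib}$ cancels between the two sides — which is exactly the content of the paper's key observation \eqref{e:qoelet} — leaving only first and second fiber derivatives of the restriction. The argument is correct and matches the paper's in all essentials.
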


\begin{proof}
We may assume that $Z=Z_j$, the trivial extension to $B\times Y$ of the $j$-th coordinate $(1,0)$ vector field on $\C^m$.
Fix any $y \in Y$ and a $J_Y$-holomorphic chart $(y^1, \ldots, y^n)$ on $\{z\}\times Y$ near $y$. Extend the functions $y^i$ to $J$-holomorphic functions $\hat{y}^i$ on a neighborhood of $(z,y)$ in the total space. The key property of the ``fibered'' chart $(z^1, \ldots, z^m, \hat{y}^1, \ldots, \hat{y}^n)$ is that
\begin{align}\label{e:qoelet}
\frac{\partial \vp}{\partial\hat{y}^p}\biggr|_{\{z\}\times Y} = \frac{\partial (\vp|_{\{z\}\times Y})}{\partial y^p}
\end{align}
for all local functions $\vp$.  Thus, expanding $\eta= i\partial\ov{\partial}\vp$ in terms of this chart,
\begin{align}
(Z_j \,\lrcorner\, \eta)|_{\{z\}\times Y}&= i \sum_{p=1}^n \frac{\partial}{\partial \ov{y}^p}\biggl(\frac{\partial\vp}{\partial z^j}\biggr|_{\{z\}\times Y}\biggr) d\ov{y}^p \label{e:kohelet2} \\&+i \sum_{p,q=1}^n \biggl(\biggl[\frac{\partial^2(\vp|_{\{z\}\times Y})}{\partial y^q \partial \ov{y}^p } Z_j(\hat{y}^q) \biggr]d\ov{y}^p - \biggl[\frac{\partial^2(\vp|_{\{z\}\times Y})}{\partial y^p \partial\ov{y}^q} Z_j(\ov{\hat{y}}^q)\biggr] dy^{p}\biggr). \label{e:kohelet3}
\end{align}
This is straightforward to check.

Since $J$ is fibered, the $(0,1)$-part of $Z_j$ with respect to $J$ is a section of $TY \otimes \C$, and \eqref{e:qoelet} says that $TY \otimes \C$ is generated by the vector fields $\partial/\partial \hat{y}^p$ and their complex conjugates. Thus, if we expand $Z_j$ in terms of our chart, there will be no $\partial/\partial\ov{z}^k$ components. Since $dz^k(Z_j) = \delta^k_j$, it follows that
\begin{align}\label{e:kohelet5}
Z_j = \frac{\partial}{\partial z^j} - \sum_{q=1}^n \biggl(a_{j}^q \frac{\partial}{\partial \hat{y}^q} + b_{j}^q\frac{\partial}{\partial\ov{\hat{y}}^q}\biggr),
\end{align}
for some local smooth functions $a_{j}^q, b_{j}^q.$ Thus,
\begin{align}\label{e:kohelet6}
\frac{\partial}{\partial \ov{y}^p}\biggl(\frac{\partial\vp}{\partial z^j}\biggr|_{\{z\}\times Y}\biggr) - \frac{\partial}{\partial \ov{y}^p}(Z_j(\vp)|_{\{z\}\times Y})
= \sum_{q=1}^n \biggl(\frac{\partial a^q_{j}}{\partial \ov{\hat{y}}^p} \frac{\partial(\vp|_{\{z\}\times Y})}{\partial y^q}
& + \frac{\partial b^q_{j}}{\partial \ov{\hat{y}}^p} \frac{\partial(\vp|_{\{z\}\times Y})}{\partial \ov{y}^q} \\
\label{e:kohelet7}
+\,a^q_{j}\frac{\partial^2(\vp|_{\{z\}\times Y})}{\partial \ov{y}^p\partial y^q}
&+ b^q_{j}\frac{\partial^2(\vp|_{\{z\}\times Y})}{\partial \ov{y}^p\partial \ov{y}^q}\biggr).
\end{align}
\end{proof}

Lastly, in the proof of Theorem \ref{prop55} we also used the following two variants of Lemma \ref{l:HT2:braindead}. Their proofs are very similar to each other and to the proof of Lemma \ref{l:HT2:braindead} in \cite{HT2}; see \cite[Lemma 3.3]{HT2}.

\begin{lemma}\label{l:braindead2}
Let $(Y,g)$ be a compact K\"ahler manifold.
Then for all $k \in \N$, $\alpha \in (0,1)$ there exists a constant $C_k = C_k(Y,g,\alpha)$ such that for all $\vp\in C^{k+2,\alpha}(Y)$,
\begin{align}\label{e:idioticestimate2}
\sum_{i=-1}^k\|\nabla^{i+2}\vp\|_{L^\infty(Y)} \leq C_k[\nabla^k\de\db\vp]_{C^{\alpha}(Y)}.
\end{align}
\end{lemma}

\begin{proof}
This is proved along the lines of Lemma \ref{l:HT2:braindead}.
We may assume without loss that $\vp$ has average zero. Suppose the lemma fails for some $k,\alpha$, so there exists a sequence $\vp_i \in C^{k+2,\alpha}(Y)$ with zero average with $[\nabla^k\de\db\vp_i]_{C^{\alpha}(Y)} < \frac{1}{i}\sum_{j=-1}^k\|\nabla^{j+2}\vp_i\|_{L^\infty(Y)}$. Dividing $\vp_i$ by $\sum_{j=-1}^k\|\nabla^{j+2}\vp_i\|_{L^\infty(Y)} > 0$, we may further assume that $\sum_{j=-1}^k\|\nabla^{j+2}\vp_i\|_{L^\infty(Y)} = 1$. In particular we obtain a uniform gradient bound for $\vp_i$, and since it has fiberwise average zero, we obtain from this that $\|\vp_i\|_{L^\infty(Y)}\leq C$ for all $i$. %\medskip\

%\noindent \textit{Claim.} There exists a constant $C$ such that $\|\vp_i\|_{L^\infty(Y)} \leq C$ for all $i$.   \medskip\

%\noindent \textit{Proof of the Claim.} Suppose that this is false. Then we may assume that $\|\vp_i\|_{L^\infty(Y)} > i$. Dividing $\vp_i$ by $\|\vp_i\|_{L^\infty(Y)}$, we may further assume that $\|\vp_i\|_{L^\infty(Y)} = 1$, $\sum_{j=-1}^k\|\nabla^{j+2}\vp_i\|_{L^\infty(Y)} < \frac{1}{i}$, and $[\nabla^k\de\db\vp_i]_{C^{\alpha}(Y)}  < \frac{1}{i^2}$. Using standard elliptic estimates, we obtain from these a uniform $C^{k+2,\alpha}$ bound on $\vp_i$, so that by passing to a subsequence, we may then also assume that $\vp_i$ converges to some $\vp \in C^{k+2,\alpha}(Y)$ in the $C^{k+2,\beta}$ sense for all $\beta < \alpha$. By construction, this limit satisfies $\|\vp\|_{L^\infty(Y)} = 1$, $\nabla \vp = 0$, and $\vp$ has zero average, which is a contradiction. \hfill $\Box$     \medskip\

We thus have uniform bounds on $[\nabla^k\Delta \vp_i]_{C^{\alpha}(Y)}$ and $\|\vp_i\|_{L^\infty(Y)}$, and standard elliptic estimates give us a uniform $C^{k+2,\alpha}$ bound on $\vp_i$. Passing to a subsequence if needed, we may now assume that $\vp_i$ converges to some $\vp \in C^{k+2,\alpha}(Y)$ in the $C^{k+2,\beta}$ topology for every $\beta < \alpha$. By construction, this limit $\vp$ satisfies the following properties: $[\nabla^k\de\db\vp]_{C^{\alpha}(Y)} = 0$, $\sum_{j=-1}^k\|\nabla^{j+2}\vp\|_{L^\infty(Y)} = 1$, and $\vp$ has average zero. The first property implies that $\vp$ is smooth with $\nabla^{k+1}\de\db\vp = 0$.  We claim that this implies that $\nabla\de\db\vp=0$. This is clear when $k=0,$ while for $k\geq 1$, relying crucially on the fact that $\partial Y = \emptyset$, we have
\begin{equation}\int_Y |\nabla^k\de\db\vp|^2 = \int_Y \langle\nabla^{k-1}\de\db\vp,\nabla^*\nabla^k\de\db\vp\rangle  = 0,\end{equation}
and working backwards we get $\nabla^j\de\db\vp=0$ for $1\leq j\leq k$, proving our claim. But the claim then gives
\begin{equation}\int_Y |\de\db\vp|^2 = \int_Y \langle\db\vp,\de^*\de\db\vp\rangle  = 0,\end{equation}
so $\de\db\vp=0$ which implies that $\vp=0$, a contradiction.
\end{proof}

\begin{lemma}\label{l:braindead3}
Let $(Y,g)$ be a compact K\"ahler manifold.
Then for all $k \in \N$, $\alpha \in (0,1)$ there exists a constant $C_k = C_k(Y,g,\alpha)$ such that for all $\psi\in C^{k+1,\alpha}(Y,\mathbb{C})$,
\begin{align}\label{e:idioticestimate3}
\sum_{i=0}^k\|\nabla^{i+1}\psi\|_{L^\infty(Y)} \leq C_k[\nabla^k\db\psi]_{C^{\alpha}(Y)}.
\end{align}
\end{lemma}

\begin{proof}
Again, the proof is a simple modification of the proof of Lemma \ref{l:HT2:braindead}.
We may assume without loss that $\psi$ has average zero. Suppose the lemma fails for some $k,\alpha$, so there exists a sequence $\psi_i \in C^{k+1,\alpha}(Y)$ with zero average with $[\nabla^k\db\psi_i]_{C^{\alpha}(Y)} < \frac{1}{i}\sum_{j=0}^k\|\nabla^{j+1}\psi_i\|_{L^\infty(Y)}$. Dividing $\psi_i$ by $\sum_{j=0}^k\|\nabla^{j+1}\psi_i\|_{L^\infty(Y)} > 0$, we may further assume that $\sum_{j=0}^k\|\nabla^{j+1}\psi_i\|_{L^\infty(Y)} = 1$. In particular we obtain a uniform gradient bound for $\psi_i$, and since it has fiberwise average zero, we obtain from this that $\|\psi_i\|_{L^\infty(Y)}\leq C$ for all $i$. %\medskip\

The next step is to obtain a uniform $C^{k+1,\alpha}$ bound on $\psi_i$. When $k\geq 1$ this follows from standard elliptic estimates, since we have uniform bounds on $\|\Delta\psi_i\|_{C^{k-1,\alpha}(Y)}$ and $\|\psi_i\|_{L^\infty(Y)}$. For $k=0$ we instead obtain our bound on $\|\psi_i\|_{C^{1,\alpha}(Y)}$ from the elliptic estimate
\begin{equation}\label{SCV}
\|\psi_i\|_{C^{1,\alpha}(Y)}\leq C(\|\db\psi_i\|_{C^\alpha(Y)}+\|\psi_i\|_{L^\infty(Y)}),
\end{equation}
which comes from the interior local estimate
\begin{equation}\label{SCV2}
\|u\|_{C^{1,\alpha}(B_R)}\leq C(\|\db u\|_{C^\alpha(B_{2R})}+\|u\|_{L^\infty(B_{2R})}),
\end{equation}
for all $C^{1,\alpha}$ complex-valued functions on the Euclidean ball $B_{2R}\subset\mathbb{C}^n$. This in turn follows from general theory of elliptic systems \cite{DN} applied to the elliptic operator $\db+\db^*$ acting on $V=\oplus_{j=0}^n\Lambda^{0,j}\mathbb{C}^n$, and applying the interior estimate in \cite{DN} to $(u,0,\dots,0)\in V$. When $k=0$ the RHS of \eqref{SCV} is uniformly bounded, and the desired bound follows.

Now that we have a uniform $C^{k+1,\alpha}$ bound on $\psi_i$, passing to a subsequence if needed, we may assume that $\psi_i$ converges to some $\psi \in C^{k+1,\alpha}(Y)$ in the $C^{k+1,\beta}$ topology for every $\beta < \alpha$. By construction, this limit $\psi$ satisfies the following properties: $[\nabla^k\db\psi]_{C^{\alpha}(Y)} = 0$, $\sum_{j=0}^k\|\nabla^{j+1}\psi\|_{L^\infty(Y)}= 1$, and $\psi$ has average zero. The first property implies that $\psi$ is smooth with $\nabla^{k+1}\db\psi = 0$. We claim that this implies that $\nabla\db\psi=0$. This is clear when $k=0,$ while for $k\geq 1$, relying crucially on the fact that $\partial Y = \emptyset$,
\begin{equation}\int_Y |\nabla^k\db\psi|^2 = \int_Y \langle\nabla^{k-1}\db\psi,\nabla^*\nabla^k\db\psi\rangle  = 0,\end{equation}
and working backwards we get $\nabla^j\db\psi=0$ for $1\leq j\leq k$, proving our claim. The claim gives in particular that $\de\db\psi=0$, which implies that $\psi=0$, a contradiction.
\end{proof}

\subsection{Local Schauder estimates}

In the course of the proof of our main theorem, we will also need the following local Schauder estimate which will be applied when linearizing the complex Monge-Amp\`ere equation. Let $(z_t,y_t) \to (z_\infty,y_\infty)$ be a convergent family of points in $B \times Y$. Consider the diffeomorphisms
\begin{equation}\Lambda_t:  B_{\psi e^{\frac{t}{2}}} \times Y \to B \times Y, \;\, (z,y) = \Lambda_t(\check{z},\check{y}) =(z_t + e^{-\frac{t}{2}}\check{z},\check{y}),\end{equation}
defined for all $t \geq t(\psi)$, where $\psi>0$ is any fixed number strictly less than $d(z_\infty,\partial B)$, and let $\check{J}_t = \Lambda_t^*J$ denote the pullback of the given fibered complex structure $J$ on $B \times Y$. Then $\check{J}_t$ converges to $\check{J}_\infty = J_{\mathbb{C}^m} + J_{Y,z_\infty}$ locally smoothly. Similarly, let $\check{\D}_t$ denote the pullback of the connection $\D$, so that $\check{\D}_t \to \check{\D}_\infty = \nabla^{\C^m} + \nabla^{g_{Y,z_\infty}}$ locally smoothly. Our new basepoint is $(\check{z}_t, \check{y}_t) = (0,y_t) \to (0,y_\infty)$.

\begin{proposition}\label{sonomatematicofrancese}
Let $U \subset \C^m \times Y$ be an open set containing $(0,y_\infty)$. Let $\check{g}_t$, $\check{\omega}_t^\sharp$ be Riemannian resp.~$\check{J}_t$-K\"ahler metrics on $U$ that converge locally smoothly to a Riemannian resp.~$\check{J}_\infty$-K\"ahler metric $\check{g}_\infty$, $\check{\omega}_\infty^\sharp$ on $U$. Then for all $a \in \N, \alpha \in (0,1)$ and $R>0$ there exist $t_0<\infty$ and $C<\infty$ such that for all $0 < \rho < R$, $t \geq t_0$, and all smooth real-valued $i\partial\overline\partial$-exact $\check{J}_t$-$(1,1)$-forms $\eta$ on $U$ we have that
\begin{equation}\label{bakunina}
[\check{\D}_t^a\eta]_{C^\alpha({B}_{\check{g}_t}((0,\check{y}_t),\rho))}\leq
C[\check{\D}_t^a\tr{\check\omega_t^\sharp}{\eta}]_{C^\alpha({B}_{\check{g}_t}((0,\check{y}_t),R))}
+C(R-\rho)^{-a-\alpha}\|\eta\|_{L^\infty({B}_{\check{g}_t}((0,\check{y}_t),R))},
\end{equation}
whenever ${B}_{\check{g}_t}((0,\check{y}_t),R)\subset U$, where the seminorms are those in \eqref{e:holderdef} and the metric used is $\check{g}_t$.
\end{proposition}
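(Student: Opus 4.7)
The plan is to establish Proposition \ref{sonomatematicofrancese} as an interior Schauder estimate for the elliptic operator $\vp \mapsto \tr{\check\omega_t^\sharp}{\ddbar_{\check J_t}\vp}$, with constants uniform in $t$. The strategy has two halves: first prove the estimate at $t=\infty$ using classical Schauder theory in the fixed limit geometry, then propagate it to all sufficiently large $t$ using the assumed locally smooth convergence $\check J_t\to \check J_\infty$, $\check\omega_t^\sharp\to\check\omega_\infty^\sharp$, $\check g_t\to \check g_\infty$, $\check\D_t\to \check\D_\infty$.

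For $t=\infty$ I would work locally. Since $\eta$ is $i\partial\overline\partial$-exact, write $\eta = \ddbar_{\check J_\infty}\vp$ for some real-valued $\vp$; then $\tr{\check\omega_\infty^\sharp}{\eta}$ is a constant multiple of $\Delta_{\check\omega_\infty^\sharp}\vp$. Choose $R_0$ small enough that $B_{\check g_\infty}((0,y_\infty),R_0)$ is covered by finitely many $\check J_\infty$-holomorphic coordinate charts, in each of which the operator is uniformly elliptic with smooth coefficients. Using \eqref{domineddio} applied to the limit connection, rewrite $\check\D_\infty^a\eta$ as a coordinate $a$-th partial derivative of $\eta$ plus contractions of lower-order partial derivatives of $\eta$ with uniformly bounded background tensors. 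The claim then reduces to the following sharper form of the classical interior Schauder estimate for the Euclidean Laplacian:
\begin{equation}
[\nabla^{a+2}\vp]_{C^\alpha(B_\rho)} \leq C[\Delta \vp]_{C^{a,\alpha}(B_R)} + C(R-\rho)^{-a-\alpha}\|\nabla^2\vp\|_{L^\infty(B_R)}.
\end{equation}
The appearance of $\|\nabla^2\vp\|_{L^\infty}$ (rather than $\|\vp\|_{L^\infty}$) on the right, which is essential for matching the scaling in the statement, is classical: one splits $\vp$ into a Newtonian potential of $\Delta\vp$ and a harmonic piece, and applies the interior derivative estimates for harmonic functions to the Hessian of the harmonic piece, which is itself harmonic. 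Since $\|\nabla^2\vp\|_{L^\infty}$ differs from $\|\eta\|_{L^\infty}$ only by contractions with lower-order derivatives of $\vp$ (absorbed via Proposition \ref{l:higher-interpol} after suitably fixing the affine gauge of $\vp$), this yields the estimate at $t=\infty$.

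For large finite $t$, the coefficients of the operator $\tr{\check\omega_t^\sharp}{\ddbar_{\check J_t}}$ and of the connection $\check\D_t$, expressed in the fixed local charts, converge in every $C^k$ to those at $t=\infty$, and the operator remains uniformly elliptic. Classical Schauder constants depend continuously on the $C^\alpha$ norms of the coefficients and on the ellipticity constant, so there exist $R_0 > 0$ and $t_0 < \infty$ such that the estimate at $t = \infty$ extends to all $t\geq t_0$ with at most a doubling of the constant. The parameters $R_0, t_0, C$ in the statement are then chosen accordingly.

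The main obstacle I anticipate is the book-keeping between the covariant derivatives $\check\D_t^a$, which are adapted to the collapsing product geometry rather than to the complex structure $\check J_t$, and the coordinate partials required to invoke classical Schauder. I would handle this by iterating \eqref{domineddio} (and its analogue for $\check\D_t$): each application replaces one $\check\D_t$ by a coordinate partial at the cost of error terms involving the tensor $\mathbf A$ and its derivatives, which are uniformly $C^\infty$-bounded for $t\geq t_0$ in the fixed charts. The resulting lower-order error terms are then absorbed into the right-hand side of the Schauder estimate via the interpolation inequality of Proposition \ref{l:higher-interpol}, together with its $C^\beta$-seminorm analogue \eqref{hohohoho}.
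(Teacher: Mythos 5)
Your reduction to a ``sharper Schauder estimate'' with $\|\nabla^2\vp\|_{L^\infty}$ rather than $\|\vp\|_{L^\infty}$ on the right is a reasonable move, but the parenthetical that ``$\|\nabla^2\vp\|_{L^\infty}$ differs from $\|\eta\|_{L^\infty}$ only by contractions with lower-order derivatives of $\vp$, absorbed \ldots after suitably fixing the affine gauge of $\vp$'' is where the argument breaks. The form $\eta = i\partial\overline\partial_{\check J_t}\vp$ captures only the $(1,1)$-part of the complex Hessian of $\vp$; the $(2,0)+(0,2)$-part is completely unconstrained by $\eta$. Concretely, adding any pluriharmonic function to $\vp$ leaves $\eta$ unchanged but can make $\|\nabla^2\vp\|_{L^\infty}$ arbitrarily large (take $\vp = {\rm Re}(z^2)$, so $\eta = 0$ but $\nabla^2\vp\neq 0$). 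Affine gauge-fixing kills only the degree-$\leq 1$ part of this freedom, not the rest, so no amount of interpolation rescues the claim: there is in general \emph{no} bound $\|\nabla^2\vp\|_{L^\infty(B_R)} \leq C\|\eta\|_{L^\infty(B_R)}$ for an arbitrary potential $\vp$.

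The central step you are missing is precisely the construction of a \emph{good} local potential. The paper's proof picks, for each point $x$ in the interior ball, a small geodesic ball $\check B_x$ of radius $\frac{1}{10}(R-\rho)$, and on $3\check B_x$ constructs a $\check J_t$-potential $f$ for $\eta$ with the scale-correct bound $\|f\|_{L^\infty(2\check B_x)} \leq C(R-\rho)^2\|\eta\|_{L^\infty(3\check B_x)}$, following the proof of \cite[Prop 3.2]{He}. This is a substantive construction (a Poincar\'e lemma along radial $\check g_t$-geodesics, a $\overline\partial$-Neumann problem using uniform plurisubharmonicity of the squared distance, and Moser iteration for the $\check\omega_t^\sharp$-Laplacian), and it is precisely the ingredient that controls the potential. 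Moreover it is carried out at the scale $R-\rho$, which is what produces the stated $(R-\rho)^{-a-\alpha}$ power: if one tried to work with a single potential on all of $B_R$, a gauge-fixing argument would produce the weaker power $R^2(R-\rho)^{-a-2-\alpha}$. Once the local potential with its $L^\infty$ bound is in hand, the standard Schauder estimate \cite[Thm 6.2]{GT} on the small balls (with $\|f\|_{L^\infty}$ on the right), interpolation to remove intermediate derivatives of $\Delta^{\check\omega_t^\sharp}f$, and the conversions between $\partial$, $\check\D_t$ and the Euclidean versus $\check\P_t$ transports, finish the proof via the iteration lemma of \cite[Lemma 3.4]{HT2}. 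Your two-stage plan ($t=\infty$ first, then perturb) is a cosmetic reorganization of the paper's single-pass argument; the real gap is the missing potential construction.
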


\begin{proof}
The first step is to prove that there exists $R_0>0$ such that \eqref{bakunina} holds for $0<\rho<R\leq R_0$. After this is achieved, we will remove this assumption with a standard covering argument.

There exists a $\sigma_0 > 0$ such that the metric $g_{\C^m} + g_{Y,z_\infty}$ admits a normal coordinate chart
\begin{equation}
\Phi: \mathbb{R}^{2m+2n} \supset B_{\sigma_0}(0) \to B_{g_{\C^m} + g_{Y,z_\infty}}((0,\check{y}_\infty),\sigma_0) \subset U
\end{equation}
centered at the point $(0,\check{y}_\infty)$. Let $\sigma \in (0,\sigma_0]$ be arbitrary but fixed, to be determined later. Choose $R_0$ so small and $t_0$ so large that $B_{\check{g}_t}((0,\check{y}_t),2r)$ is $\check{g}_t$-geodesically convex and moreover contained in $\Phi(B_\sigma(0))$ for all $r\leq R_0$ and $t \geq t_0$. Pulling back by $\Phi$, we may then pretend that all objects of interest are defined on (resp.~contained in) the ball $B_\sigma(0) \subset \mathbb{R}^{2m+2n}$. In particular, for all $r \leq R_0$ and $t \geq t_0$, the geodesic ball $\check{B}_{t}(r) := {B_{\check{g}_t}((0,\check{y}_t),r)}$ has compact closure in $B_\sigma(0)$. Moreover, the K\"ahler structures $\check{\omega}_t^\sharp, \check{J}_t$ on $B_\sigma(0)$ satisfy uniform (in $t$)  $C^\infty$ bounds.

Fix $0 < \rho < R \leq R_0$. For all $x \in \check{B}_t(\rho)$ define $\check{B}_x := B_{\check{g}_t}(x,\frac{1}{10}(R-\rho))$. Let $\lambda\check{B}_x$ denote the concentric $\check{g}_t$-ball of $\lambda$ times the radius. Then we claim that there exists a uniform $C$ such that for all $f$,
\begin{equation}\label{triplyanal}
\begin{split}
&\biggl(\sum_{b=0}^{a+2} (R-\rho)^{b-a-2-\alpha}\|\partial^b f\|_{L^\infty(\check{B}_x)}\biggr) + [\partial^{a+2}f]_{C^\alpha(\check{B}_x)}\\
&\leq C\biggl((R-\rho)^{-a-2-\alpha}\|f\|_{L^\infty(2\check{B}_x)} + \biggl(\sum_{b=0}^{a} (R-\rho)^{b-a-\alpha}\|\partial^b\Delta^{\check{\omega}_t^\sharp}f\|_{L^\infty(2\check{B}_x)}\biggr) + [\partial^a \Delta^{\check{\omega}_t^\sharp}f]_{C^\alpha(2\check{B}_x)}\biggr),
\end{split}
\end{equation}
where all norms and derivatives are defined with respect to the standard Euclidean metric on $\mathbb{R}^{2m+2n}$. Indeed, after scaling and stretching by $(R-\rho)^{-1}$ we can apply standard interior Schauder estimates for an essentially fixed pair of domains in $\mathbb{R}^{2m+2n}$ and an essentially fixed operator. See \cite[Thm 6.2]{GT} for the case $a = 0$, noting that their statement gives more than we are using here but not in a way that would help us directly. After scaling back to the original picture, this yields \eqref{triplyanal}.

We now eliminate the $\|\partial^b\Delta^{\check{\omega}_t^\sharp}f\|_{L^\infty}$ terms with $b \geq 1$ from the right-hand side of \eqref{triplyanal}. This can be done using standard interpolation inequalities, at the cost of having to pass from $2\check{B}_x$ to $3\check{B}_x$. For example, we can again stretch and scale by $(R-\rho)^{-1}$ to reduce to the case of two essentially fixed domains in $\R^{2m+2n}$, invoke \cite[Lemma 6.32]{GT}, and then again scale back to the original picture.

Let $\eta$ be an $i\partial\overline\partial$-exact $(1,1)$-form $\eta$ with respect to $\check{J}_t$ on $B_\sigma(0)$ as in the statement of the proposition. We can follow the proof of \cite[Prop 3.2]{He} to find a $\check{J}_t$-potential $f$ for $\eta$ on $3\check{B}_x$ with
\begin{equation}\label{yippiyayieh}
\|f\|_{L^\infty(2\check{B}_x)} \leq C(R-\rho)^2\|\eta\|_{L^\infty(3\check{B}_x)}.
\end{equation}
The proof in \cite{He} was written for a standard ball in $\C^{m+n}$ but all the ingredients carry over to small $\check{g}_t$-geodesic balls and the K\"ahler structure $\check\omega_t^\sharp, \check{J}_t$. Indeed, the standard Poincar\'e lemma formula can be made to work using radial $\check{g}_t$-geodesics, solving the $\overline\partial$-Neumann problem only requires uniform $\check{J}_t$-plurisubharmonicity of the squared $\check{g}_t$-distance, and Moser iteration for the $\check\omega_t^\sharp$-Laplacian also only depends on uniform geometry bounds for $\check{g}_t,\check\omega_t^\sharp,\check{J}_t^\sharp$. All of these hold after increasing $t_0$ and decreasing $\sigma$ if necessary. Here we are only using the smoothness of the (unrelated) tensors $\check{g}_\infty$ and $\check\omega_\infty^\sharp,\check{J}_\infty$, and are not using the normal coordinate property of $\Phi$ with respect to the metric $g_{\C^m} + g_{Y,z_\infty}$.

Inserting \eqref{yippiyayieh} into \eqref{triplyanal} together with the above interpolation argument, we obtain that
\begin{equation}\label{quadruplyanal}
\begin{split}
&\biggl(\sum_{b=0}^{a+2} (R-\rho)^{b-a-2-\alpha}\|\partial^{b} f\|_{L^\infty(\check{B}_x)}\biggr) + [\partial^{a+2}f]_{C^\alpha(\check{B}_x)}\\
&\leq C\biggl((R-\rho)^{-a-\alpha}\|\eta\|_{L^\infty(3\check{B}_x)} +  [\partial^a {\rm tr}^{\check{\omega}_t^\sharp}\eta]_{C^\alpha(3\check{B}_x)}\biggr).
\end{split}
\end{equation}
Our next claim is that this implies that
\begin{equation}
\begin{split}
\label{suredoes}
&\biggl(\sum_{b=0}^a (R-\rho)^{b-a-\alpha}\|\partial^b\eta\|_{L^\infty(\check{B}_x)}\biggr) + [\partial^a \eta]_{C^\alpha(\check{B}_x)}\\
& \leq C\biggl((R-\rho)^{-a-\alpha}\|\eta\|_{L^\infty(3\check{B}_x)} +  [\partial^a {\rm tr}^{\check{\omega}_t^\sharp}\eta]_{C^\alpha(3\check{B}_x)}\biggr).
\end{split}
\end{equation}
This is straightforward to check by writing $\eta = -\frac{1}{2}d(df \circ \check{J}_t)$ on the left-hand side of \eqref{suredoes} and applying the Leibniz rule (discrete as well as continuous). The main terms, where no derivatives or difference quotients land on $\check{J}_t$, can obviously be bounded by the corresponding terms (with two more derivatives of $f$ than of $\eta$) on the left-hand side of \eqref{quadruplyanal}. All of the other terms can similarly be absorbed into the left-hand side of \eqref{quadruplyanal} with a suitable index shift, thanks to the fact that $R-\rho = O(1)$.

Getting closer to the statement of the proposition, we now claim that \eqref{suredoes} implies that
\begin{equation}\label{doublyanal}
\begin{split}
&\biggl(\sum_{b=0}^{a} (R-\rho)^{b-a-\alpha}\|\partial^b\eta \|_{L^\infty(\check{B}_t(\rho))}\biggr) + [\partial^{a}\eta]_{C^\alpha(\check{B}_t(\rho))}\\
&\leq C\biggl((R-\rho)^{-a-\alpha}\|\eta\|_{L^\infty(\check{B}_t(R))} + [\partial^a {\rm tr}^{\check{\omega}_t^\sharp}\eta]_{C^\alpha(\check{B}_t(R))}\biggr).
\end{split}
\end{equation}
Indeed, taking the supremum of \eqref{suredoes} over all $x \in \check{B}_t(\rho)$ already takes care of the $\|\partial^b \eta\|_{L^\infty}$ terms on the left-hand side of \eqref{doublyanal}. To bound the $[\partial^{a}\eta]_{C^\alpha}$ term, note that for $x \neq x'$ in $\check{B}_t(\rho)$ we can estimate the $C^\alpha$ quotient of $\partial^{a}\eta$ at $x,x'$ in two ways. If $x' \not\in\check{B}_x$, then we can estimate it trivially using the triangle inequality, resulting in a term which is absorbed by the $\|\partial^{a}\eta\|_{L^\infty}$ term that we have just dealt with. On the other hand, if $x' \in \check{B}_x$, then we can use \eqref{suredoes} directly.

In \eqref{doublyanal}, we can replace the Euclidean metric used in the definition of the norms by $\check{g}_t$ because these two metrics are uniformly equivalent. Less trivially, we may also replace $\partial$ by $\check{\D}_t$ and the Euclidean parallel transport $P$ implicit in the definition of the $C^{\alpha}$ seminorm by the stretched $\P$-transport $\check{\P}_t$. And also, in the definition \eqref{e:holderdef} of our H\"older seminorms we only consider horizontal or minimal vertical $\P$-geodesics, while the in the Euclidean H\"older seminorms we connect points using the Euclidean segment (which in general is neither horizontal nor vertical), but this difference is immediately seen to be harmless (cf. \eqref{excellente}). Thus, once we show how to replace $\partial$ and $P$ by $\check{\D}_t$ and $\check{\P}_t$, the first step of the proposition follows by dropping the sum over $b$ from the left-hand side.

To compare $\partial$ to $\check{\D}_t$, write $\partial = \check{\D}_t + \check\Gamma_t$. Then $\check\Gamma_t \to \check\Gamma_\infty$ locally smoothly, where $\check\Gamma_\infty(0) = 0$ (recall that $0 \in \mathbb{R}^{2m+2n}$ is the origin of the normal coordinate chart $\Phi$ for the metric $g_{\C^m} + g_{Y,z_\infty}$ at $(0,\hat{y}_\infty)$, and that $\check\D_\infty$ is the Levi-Civita connection of this metric). Thus, the $C^{0}(B_\sigma(0))$ norm of $\check\Gamma_t$ goes to zero as $t \to \infty$ and $\sigma\to 0$, and for all $k \geq 1$ the $C^k(B_\sigma(0))$ norm of $\check{\Gamma}_t$ remains uniformly bounded as $t\to\infty$ and $\sigma\to 0$. By ODE estimates, the same statement holds for the $C^k(B_\sigma(0) \times B_\sigma(0))$ norms of $\check{\P}_t - P$. Thus, when we replace $\partial$ and $P$ by $\check{\D}_t$ and $\check{\P}_t$ in \eqref{doublyanal}, the terms of highest order in $\eta$ contained in the {error} (i.e., the $C^{a,\alpha}$ seminorm terms) come with a coefficient which is $o(1)$ as $t \to \infty$ and $\sigma \to 0$. All of the other error terms come with an explicit factor $\leq C(R-\rho)^\alpha$ compared to the already existing terms of the same order in $\eta$ in \eqref{doublyanal}. Thus, we obtain an estimate of the form
\begin{equation}
\begin{split}
&\biggl(\sum_{b=0}^{a} (R-\rho)^{b-a-\alpha}\|\check\D_t^b\eta \|_{L^\infty(\check{B}_t(\rho))}\biggr) + [\check{\D}_t^{a}\eta]_{C^\alpha(\check{B}_t(\rho))}\\
&\leq C\biggl((R-\rho)^{-a-\alpha}\|\eta\|_{L^\infty(\check{B}_t(R))} + [\check{\D}_t^a {\rm tr}^{\check{\omega}_t^\sharp}\eta]_{C^\alpha(\check{B}_t(R))}\biggr)\\
&+ o(1)\biggl(\biggl(\sum_{b=0}^{a} (R-\rho)^{b-a-\alpha}\|\check\D_t^b\eta \|_{L^\infty(\check{B}_t(R))}\biggr) + [\check{\D}_t^{a}\eta]_{C^\alpha(\check{B}_t(R))}\biggr),
\end{split}
\end{equation}
where the $o(1)$ is uniform as $t \to \infty$ and $\sigma\to 0$. Thus, fixing $\sigma \in (0,\sigma_0]$ sufficiently small and increasing $t_0$ if necessary, the desired result (for radii $0<\rho<R\leq R_0$) now follows from the iteration Lemma \ref{HT2-l:iterate}.

Lastly, we prove \eqref{bakunina} for all radii $0<\rho<R$. We may assume that $R\geq R_0$. Consider first the case when $\rho<\frac{R_0}{2}$. In this case we have $R_0-\rho>\frac{R_0}{2}>\rho$, so we can apply \eqref{bakunina} with larger radius $R_0-\rho<R_0\leq R,$ and get
\begin{equation}\begin{split}
[\check{\D}_t^a\eta]_{C^\alpha({B}_{\check{g}_t}((0,\check{y}_t),\rho))}&\leq
C[\check{\D}_t^a\tr{\check\omega_t^\sharp}{\eta}]_{C^\alpha({B}_{\check{g}_t}((0,\check{y}_t),R_0-\rho))}
+C(R_0-\rho)^{-a-\alpha}\|\eta\|_{L^\infty({B}_{\check{g}_t}((0,\check{y}_t),R_0-\rho))}\\
&\leq C[\check{\D}_t^a\tr{\check\omega_t^\sharp}{\eta}]_{C^\alpha({B}_{\check{g}_t}((0,\check{y}_t),R))}
+C(R-\rho)^{-a-\alpha}\|\eta\|_{L^\infty({B}_{\check{g}_t}((0,\check{y}_t),R))},
\end{split}
\end{equation}
where in the last line we used that $R_0-\rho\geq \frac{R_0}{2}\geq C^{-1}R>C^{-1}(R-\rho)$, since we allow $C$ to depend on $R$.

We can then assume that $\rho\geq \frac{R_0}{2}$.  Given two points $\check{x}_1=(\check{z}_1,\check{y}_1)$ and $\check{x}_2=(\check{z}_2,\check{y}_2)$ in $B_{\check{g}_t}((0,\check{y}_t),\rho)$, let $r=d^{\check{g}_t}(\check{x}_1,\check{x}_2)$. We can choose a sequence of points $\check{x}_1=\check{w}_0,\check{w}_1,\dots,\check{w}_N=\check{x}_2$ inside $B_{\check{g}_t}((0,\check{y}_t),\rho)$, where $N$ is at most $CR_0^{-1}$ (hence uniform), such that $r_j:=d^{\check{g}_t}(\check{w}_j,\check{w}_{j+1})< \frac{R_0}{4}$ is uniformly comparable to $r$ for $0\leq j\leq N-1$, and also $\check{w}_{j-1},\check{w}_{j+1}\in B_{\check{g}_t}(\check{w}_j,\frac{R_0}{4})\subset B_{\check{g}_t}((0,\check{y}_t),\rho)$ for $1\leq j\leq N-1$. Thus, for each $1\leq j\leq N-1$, the ball $B_{\check{g}_t}(\check{w}_j,\frac{R_0}{4}+\min(R-\rho,\frac{R_0}{4}))$ is contained in $B_{\check{g}_t}((0,\check{y}_t),R)$ and its radius is at most $\frac{R_0}{2}$, so we can apply \eqref{bakunina} to the balls $B_{\check{g}_t}(\check{w}_j,\frac{R_0}{4})$ and $B_{\check{g}_t}(\check{w}_j,\frac{R_0}{4}+\min(R-\rho,\frac{R_0}{4}))$ and estimate
\begin{equation}\label{merde}
\begin{split}
&\frac{|\check{\D}_t^a\eta(\check{w}_{j-1})-\mathbb{P}_{t,\check{w}_j\check{w}_{j-1}}(\check{\D}_t^a\eta(\check{w}_j))|_{\check{g}_t}}{r_j^\alpha}\\
&\leq C[\check{\D}_t^a\tr{\check\omega_t^\sharp}{\eta}]_{C^\alpha({B}_{\check{g}_t}(\check{w}_j,\frac{R_0}{4}+\min(R-\rho,\frac{R_0}{4}))}
+C\left(\min\left(R-\rho,\frac{R_0}{4}\right)\right)^{-a-\alpha}\|\eta\|_{L^\infty({B}_{\check{g}_t}(\check{w}_j,\frac{R_0}{4}+\min(R-\rho,\frac{R_0}{4}))}\\
&\leq C[\check{\D}_t^a\tr{\check\omega_t^\sharp}{\eta}]_{C^\alpha({B}_{\check{g}_t}((0,\check{y}_t),R))}
+C(R-\rho)^{-a-\alpha}\|\eta\|_{L^\infty({B}_{\check{g}_t}((0,\check{y}_t),R))}.
\end{split}
\end{equation}
where we used again that $\frac{R_0}{4}\geq C^{-1}R>C^{-1}(R-\rho)$. An analogous bound holds for $|\check{\D}_t^a\eta(\check{w}_{j+1})-\mathbb{P}_{t,\check{w}_j\check{w}_{j+1}}(\check{\D}_t^a\eta(\check{w}_j))|_{\check{g}_t}/r_j^\alpha$, and then to estimate the $C^\alpha$ difference quotient of $\check{\D}^a_t\eta$ between $\check{x}_1$ and $\check{x}_2$ we use \eqref{merde}, the triangle inequality, the uniform equivalence of $r_j$ and $r$, and the bound on the operator norm of $\P_t$ from Section \ref{s:tridiagonal} to bound
\begin{equation}
\begin{split}
\frac{|\check{\D}_t^a\eta(\check{x}_1)-\mathbb{P}_{t,\check{x}_2\check{x}_1}(\check{\D}_t^a\eta(\check{x}_2))|_{\check{g}_t}}{r^\alpha}&\leq
C\sum_{j=0}^{N-1}\frac{|\check{\D}_t^a\eta(\check{w}_j)-\mathbb{P}_{t,\check{w}_{j+1}\check{w}_{j}}(\check{\D}_t^a\eta(\check{w}_{j+1}))|_{\check{g}_t}}{r_j^\alpha}\\
&\leq C[\check{\D}_t^a\tr{\check\omega_t^\sharp}{\eta}]_{C^\alpha({B}_{\check{g}_t}((0,\check{y}_t),R))}
+C(R-\rho)^{-a-\alpha}\|\eta\|_{L^\infty({B}_{\check{g}_t}((0,\check{y}_t),R))}.
\end{split}
\end{equation}
and taking the supremum over $\check{x}_1$ and $\check{x}_2$ in $B_{\check{g}_t}((0,\check{y}_t),\rho)$ concludes the proof of \eqref{bakunina}.
\end{proof}

\section{Selection of obstruction functions}\label{sectref}

The main result of this section, the Selection Theorem \ref{ghost}, roughly speaking identifies a list of smooth functions $G_{j,p,k}, j\geq 2,$ which arise due to the unboundedness of the RHS of the Monge-Amp\`ere equation in the shrinking $C^{j,\alpha}$ norm. In order to even state precisely the properties that these functions satisfy, we first need to discuss how to turn a set of smooth functions on the total space into a fiberwise $L^2$ orthonormal set (with a suitable small error), which will be done in Section \ref{rueckert}, and we need to construct a certain approximate Green's operator in Section \ref{grz}.

\subsection{Approximate fiberwise Gram-Schmidt}\label{rueckert}

Let $B$ be a ball centered at the origin in $\mathbb{R}^d$ with real linear coordinates $z_1, \ldots, z_d$. Let $Y$ be a smooth compact manifold without boundary. Let $\Upsilon$ be a smooth fiberwise volume form on $B \times Y$, with fiberwise total volume equal to $1$. For clarity we will denote by $\Upsilon_z$ the induced volume form the fiber $\{z\}\times Y$. In our later applications, we will take $\Upsilon$ so that $\Upsilon_z$ is the unit-volume fiberwise Calabi-Yau volume form.

\begin{proposition}\label{iamdeathincarnate}
Suppose we are given an integer $J\geq 0$ and functions $F_1,\dots,F_N,$ $H_1,\dots,H_L\in C^\infty(B\times Y)$. Suppose also that the functions $\left\{H_k|_{\{z\}\times Y}\right\}_{1\leq k\leq L}$ are $L^2$-orthonormal (w.r.t. $\Upsilon$) for all $z\in B$.
Then we can find a concentric ball $B'\subset B$ and functions $G_{j}\in C^\infty(B'\times Y), 1\leq j\leq N'$ such that the functions $\left\{G_j|_{\{z\}\times Y},H_k|_{\{z\}\times Y}\right\}_{1\leq j\leq N',1\leq k\leq L}$ are $L^2$-orthonormal (w.r.t. $\Upsilon$) for all $z\in B'$, and there are functions $p_{ij},q_{ik}\in C^\infty(B'), 1\leq i\leq N, 1\leq j\leq N',1\leq k\leq L$ and $K_{i,\alpha} \in C^\infty(B'\times Y)$ ($\alpha\in\N^d$, $|\alpha| = J+1$) such that on $B'\times Y$ we have
\begin{equation}\label{buddha}
F_i(z,y) = \sum_{j=1}^{N'} p_{ij}(z) G_j(z,y)+\sum_{k=1}^L q_{ik}(z)H_k(z,y)+\sum_{|\alpha|=J+1} z^\alpha K_{i,\alpha}(z,y)
\end{equation}
for $1\leq i\leq N$.
\end{proposition}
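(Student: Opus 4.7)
The plan is to reduce to finite-dimensional linear algebra on $Y$ via Taylor expansion in $z$. Let $V \subset L^2(Y, \Upsilon_0)$ denote the finite-dimensional subspace spanned by the Taylor coefficients
\[
\tfrac{1}{\beta!}(\partial_z^\beta F_i)(0,\cdot),\quad \tfrac{1}{\gamma!}(\partial_z^\gamma H_k)(0,\cdot),\qquad 1\le i\le N,\;1\le k\le L,\;|\beta|,|\gamma|\le J,
\]
and let $W=\mathrm{span}\{H_1(0,\cdot),\dots,H_L(0,\cdot)\}\subset V$, an $L$-dimensional subspace because these restrictions are orthonormal. Choose an $L^2(\Upsilon_0)$-orthonormal basis $g_1,\dots,g_{N'}$ of the orthogonal complement $W^\perp\cap V$; then $\{g_1,\dots,g_{N'},H_1(0,\cdot),\dots,H_L(0,\cdot)\}$ is an orthonormal basis of $V$ with each $g_j\perp H_k(0,\cdot)$.

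Next I would produce the fiberwise orthonormal family $G_j$. Extend $g_j$ trivially as $\tilde G_j(z,y):=g_j(y)$ on $B\times Y$. The Gram matrix of $\{\tilde G_j(z,\cdot),H_k(z,\cdot)\}$ with respect to $\Upsilon_z$ depends smoothly on $z$ and equals the identity at $z=0$, hence on a concentric ball $B'\subset B$ it remains close to the identity. Apply fiberwise Gram-Schmidt (keeping the $H_k$'s fixed) to obtain smooth functions $G_j(z,y)$ such that $\{G_j(z,\cdot),H_k(z,\cdot)\}$ is $L^2(\Upsilon_z)$-orthonormal for every $z\in B'$, with $G_j(0,y)=g_j(y)$. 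Crucially, the Gram-Schmidt procedure expresses $G_j(z,y)$ as a smooth $z$-dependent real-linear combination of $g_1(y),\dots,g_{N'}(y)$ and $H_1(z,y),\dots,H_L(z,y)$, so every Taylor coefficient $\tfrac{1}{\epsilon!}\partial_z^\epsilon G_j(0,\cdot)$ with $|\epsilon|\le J$ lies in $V$.

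I would then construct the polynomials $p_{ij},q_{ik}$ of degree $\le J$ by matching Taylor coefficients in $z$ at the origin, inductively on $|\gamma|$. Expanding the desired identity $F_i(z,y)=\sum_j p_{ij}(z)G_j(z,y)+\sum_k q_{ik}(z)H_k(z,y)+O(|z|^{J+1})$ in $z$, the order-$\gamma$ equation takes the schematic form
\[
\tfrac{1}{\gamma!}(\partial_z^\gamma F_i)(0,\cdot)-\sum_{\substack{\delta+\epsilon=\gamma \\ \delta\neq\gamma}}\Bigl(\sum_j P_{ij,\delta}G_{j,\epsilon}+\sum_k Q_{ik,\delta}H_{k,\epsilon}\Bigr)=\sum_j P_{ij,\gamma}\,g_j+\sum_k Q_{ik,\gamma}\,H_k(0,\cdot),
\]
where the capital letters are Taylor coefficients of $p_{ij},q_{ik}$ and the lower-case ones those of $G_j,H_k$ at $z=0$. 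By the inductive hypothesis together with the closure property from Step~2, the left-hand side lies in $V$, and the orthogonal splitting $V=\mathrm{span}\{g_j\}\oplus W$ uniquely determines $P_{ij,\gamma},Q_{ik,\gamma}$ as the $L^2(\Upsilon_0)$ inner products of the LHS with $g_j$ and $H_k(0,\cdot)$ respectively. Once all such coefficients of $p_{ij},q_{ik}$ up to order $J$ are fixed, the residual $S_i:=F_i-\sum_j p_{ij}G_j-\sum_k q_{ik}H_k$ is smooth on $B'\times Y$ and satisfies $\partial_z^\gamma S_i|_{z=0}=0$ for all $|\gamma|\le J$; Taylor's theorem with integral remainder then writes $S_i=\sum_{|\alpha|=J+1}z^\alpha K_{i,\alpha}(z,y)$ with $K_{i,\alpha}\in C^\infty(B'\times Y)$.

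The main obstacle is the rank-drop issue for naive fiberwise Gram-Schmidt applied to $\{F_i\}$: the fiberwise span of $\{F_i(z,\cdot)\}$ modulo $\mathrm{span}\{H_k(z,\cdot)\}$ can have jumping dimension on $B$, so a fiberwise orthonormal family need not vary smoothly. The device above circumvents this by doing Gram-Schmidt once and for all in the fixed Hilbert space $L^2(Y,\Upsilon_0)$ on a finite list that is \emph{closed} under the operations of the subsequent Taylor-matching induction, and only afterwards enforcing the fiberwise orthonormality against $H_k(z,\cdot)$ by a smooth perturbation. The essential point that makes the induction go through is the inclusion of all derivatives $\partial_z^\gamma H_k(0,\cdot)$ in $V$, without which the Taylor coefficients of the perturbed $G_j$ would leave $V$ and the order-$\gamma$ equation could fail to be solvable.
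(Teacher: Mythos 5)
Your proof is correct, and it takes a genuinely different route from the paper's. The paper processes the $F_i$ one at a time and builds an ordered tree by iterated Hadamard decomposition ($F = z_1 F_1 + \cdots + z_d F_d$), collecting obstruction functions at each node via a local Gram--Schmidt against the running list, and reads off the expansion by a depth-first traversal; the depth bound $J+1$ then gives the remainder. Your approach instead forms, once and for all, the finite-dimensional subspace $V \subset L^2(Y,\Upsilon_0)$ spanned by \emph{all} Taylor coefficients of $F_i$ and $H_k$ up to order $J$ at the central fiber, orthogonalizes against $W = \mathrm{span}\{H_k(0,\cdot)\}$ inside $V$, trivially extends the resulting $g_j$ and enforces fiberwise orthonormality by a single perturbative Gram--Schmidt, and then determines the polynomial coefficients $p_{ij},q_{ik}$ by matching Taylor coefficients order by order. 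The crucial closure property that makes your induction close --- every Taylor coefficient of $G_j$ up to order $J$ lies in $V$, because $G_j$ is a smooth $z$-combination of the fixed $g_{j'}$ and the $H_k(z,\cdot)$ and all $\partial_z^\gamma H_k(0,\cdot)$ with $|\gamma|\le J$ were included in $V$ --- is the analogue of the paper's inclusion of previously selected $G$'s in the orthonormal set $\mathbf{O}$ at each step. Both approaches circumvent the rank-drop problem the same way in essence (Gram--Schmidt only on a set that stays linearly independent near $z=0$), but yours does it in a single pass whereas the paper iterates; your version is arguably cleaner and more transparent, at the cost of potentially producing a larger list $G_1,\dots,G_{N'}$ (bounded by $\dim V - L$ rather than the paper's at most $d^{J+1}$), which is harmless for the statement. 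Your argument also transfers verbatim to the real-analytic category, matching the paper's remark.
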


\begin{rk}
The same statement holds in the real-analytic category if all the data are real-analytic, and the proof is verbatim the same.
\end{rk}

\begin{rk}
If we choose $H_1$ to be the constant function $1$, then $H_2, \ldots, H_L, G_1,\ldots,G_{N'}$ will have fiberwise average zero with respect to $\Upsilon$. If in addition some $F_i$ has fiberwise average zero, then the coefficient $q_{i1}(z)$ in the expansion \eqref{buddha} will be zero.
\end{rk}

\begin{rk}\label{miservi}
The errors $z^\alpha K_{i,\alpha}(z,y)$ in \eqref{buddha} have the following crucial property: if define diffeomorphisms
\begin{equation}\label{sigmat2}
\Sigma_t: B_{e^{\frac{t}{2}}} \times Y \to B \times Y, \;\, (z,y) = \Sigma_t(\check{z},\check{y}) =(e^{-\frac{t}{2}}\check{z},\check{y}),
\end{equation}
then we have that
\begin{equation}\label{marrow}
e^{\frac{Jt}{2}}\Sigma_t^*(z^\alpha K_{i,\alpha})\to 0,
\end{equation}
smoothly on $B_R\times Y$ for any fixed $R$. Indeed, trivially,
\begin{equation}\label{morrow}
\sup_{B_R}|\D^\iota \Sigma_t^*z^\alpha|\leq C(\iota,R)e^{-\frac{J+1}{2}t}
\end{equation}
for all $\iota\geq 0$. Using \eqref{morrow} it is then easy to conclude that \eqref{marrow} holds.
\end{rk}

\begin{proof}
It is enough to prove this for one single $F$ because if there are several then we can just process them one by one, adding the newly gained $G$s to the previously given list of $H$s in each step. Then begin by replacing $F$ by its fiberwise orthogonal projection onto the orthogonal complement of $H_1,\ldots,H_L$. If this projection (which we shall call $F$ from now on) is not identically zero on the central fiber, then we divide $F$ by its fiberwise $L^2$ norm (after concentrically shrinking the base if necessary) and stop---the proposition is proved, with one single function $G(z,y) := F(z,y)/\|F(z,\cdot)\|_{L^2}$ and with $K_\alpha \equiv 0$. So we may assume from now on that $F$ vanishes identically on the central fiber.

We will now construct an ordered tree such that the desired expansion can be read off by performing a depth-first traversal of the tree. An \emph{ordered tree} is a tree for which the children of each node are ordered, and \emph{depth-first search} is a way of ordering the whole set of nodes, see Figure \ref{figtree}. In particular, it makes sense to talk about previous and subsequent nodes with respect to depth-first search. In a depth-first search, at any given node  ${\bf n}$ of an ordered tree $T$, traversal of $T$ proceeds by ordering the whole set of nodes of the subtree $S$ of $T$ whose root is ${\bf n}$ (before moving on to nodes outside of $S$). Moreover, the first node visited after traversal of $S$ is the first unvisited child (with respect to the given ordering of the children) of the most recent ancestor of ${\bf n}$ not all of whose children have been visited.

\begin{figure}\label{figtree}
\includegraphics[width=80mm]{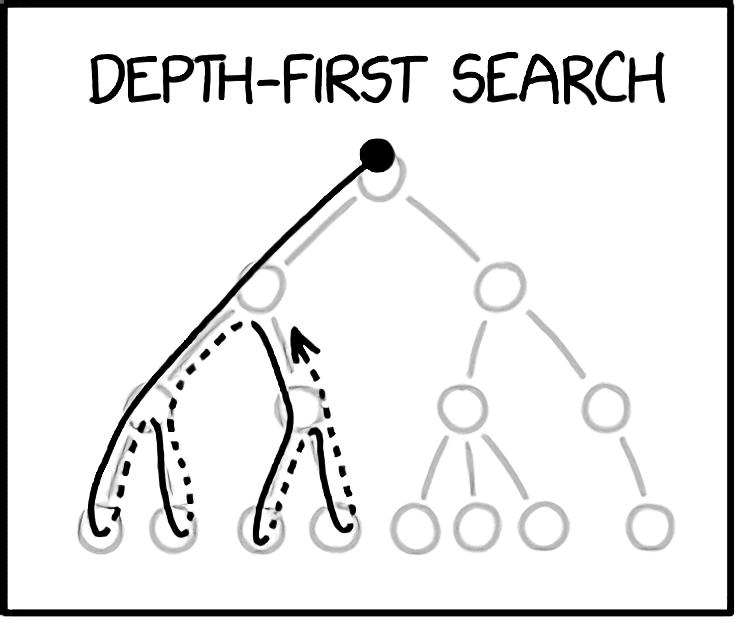}
\caption{Source: https://xkcd.com/2407/.}
\end{figure}

The nodes of our tree come in two flavors: {undecorated} and decorated. An \emph{undecorated node} is one of the form $(F)$, where $F$ is a smooth function on $B \times Y$ vanishing along the central fiber $\{0\}\times Y$. A \emph{decorated node} is one of the form $(F\,|\,G_1,\ldots,G_k)$, where $F$ is as before and $G_1, \ldots, G_k$ are smooth functions on $B \times Y$ that are fiberwise orthonormal in $L^2$. By construction, $G$s appearing as decoration in different nodes will be fiberwise orthogonal to each other and to the initially given list of $H$s, and the $F$ in each node (decorated or undecorated) will be fiberwise orthogonal to the span of the $H$s and all the $G$s appearing as decoration in all previous nodes (with respect to depth-first search). Also by construction, $k \in \{0,1,\ldots,d\}$.

We put an undecorated node $(F)$ as the root of the tree, where $F$ is the function constructed in the first paragraph of this proof. Then we construct our tree by iterating the following procedure.
\begin{itemize}
\item[1.] If there is no undecorated node of \emph{depth} $\leq J$ (i.e., descended from the root of the tree in at most $J$ generations), then stop. Otherwise, go to the first such node, $(F)$, with respect to the depth-first order of the tree that has been constructed so far, and continue as follows.
\item[2.] Define functions $F_1,\ldots,F_d \in C^\infty(B \times Y)$, where for $i \in \{1,\ldots,d\}$,
\begin{equation}
F_i(z,y) := \int_0^1 \frac{\partial F}{\partial z_i}(tz,y)\,dt.
\end{equation}
Moreover, let $\mathbf{O}$ be the set of all the initial $H$s, union the set of all the $G$s from all decorated nodes that come before $(F)$ in the depth-first ordering of the tree that has been constructed so far. By induction, this is a fiberwise $L^2$ orthonormal set of functions in $C^\infty(B \times Y)$.
\item[3.] On the central fiber, consider the finite-dimensional subspace $\mathcal{V} \subset C^\infty(\{0\}\times Y)$ spanned by the restrictions of elements of $\mathbf{O} \cup \{F_1,  \ldots, F_d\}$. The functions from $\mathbf{O}$ are an $L^2$-orthonormal subset of $\mathcal{V}$. Complete this set to a basis of $\mathcal{V}$ by adding elements from $F_1, \ldots, F_d$, which (after reordering) we may assume are $F_1, \ldots, F_{d_0}$ for some $d_0 \in \{0,1,\ldots,d\}$. Then, after shrinking $B$ concentrically, $F_{1}, \ldots, F_{{d_0}}$ will remain fiberwise linearly independent of each other and of $\mathbf{O}$ over every point of $B$. Thus, by applying Gram-Schmidt, we can construct a fiberwise orthonormal set $\{G_1, \ldots, G_{d_0}\} \subset C^\infty(B \times Y)$, fiberwise orthogonal to $\mathbf{O}$, such that the fiberwise span of $\mathbf{O} \cup \{G_1, \ldots, G_{d_0}\}$ is the same as the fiberwise span of $\mathbf{O} \cup \{F_{1},\ldots,F_{d_0}\}$ (which, on the central fiber, is the space $\mathcal{V}$). Change the undecorated node $(F)$ to the decorated node $(F\,|\,G_1,\ldots,G_{d_0})$.
\item[4.] Replace $F_{d_0+1},\ldots,F_{d}$ by their fiberwise orthogonal projections onto the fiberwise orthogonal complement of the fiberwise span of $\mathbf{O} \cup \{G_1,\ldots,G_{d_0}\}$. For each of the resulting functions $F_i \in C^\infty(B \times Y)$ ($i \in \{d_0+1,\ldots,d\}$), all of which vanish identically on the central fiber, we now create an undecorated child node $(F_i)$ of the decorated parent node $(F\,|\,G_1,\ldots,G_{d_0})$. Observe that the children are indeed naturally ordered at this point, even though the chosen order is arbitrary and any choice of an order would have been fine.
\end{itemize}

This procedure ends after finitely many steps because by construction the tree keeps growing in each step (unless the procedure ends) but can only contain at most $d^{J+1}$ nodes of either kind in total. Thus, eventually, there will be no undecorated nodes of depth $\leq J$ left. In particular, at the final stage, the only undecorated nodes will be of depth $=$ $J+1$, and will have no children.

We now obtain the desired expansion of $F$ as follows. Traverse the tree depth-first. At each decorated node $(F\,|\,G_1,\ldots,G_k)$, we may by construction write $F = z_1F_1 + \cdots + z_dF_d$, where each $F_i$ is {\bf either} a $C^\infty(B)$-linear combination of the $H$s, the decorating $G$s from previous nodes, and the decorating $G$s from the current node, {\bf or} equal to the $F$ stored in one of the (decorated or undecorated) children of the current node. We can ignore the contribution of $F_i$s of the first type because this is exactly what the non-remainder terms of the expansion \eqref{buddha} look like. For each $F_i$ of the second type, we wait until the depth-first traversal which continues with the first of these has arrived at the corresponding child node, and then either do nothing (if the child node is undecorated of depth $=$ $J+1$), or iterate the argument by further expanding $F_i$.

It remains to check that once the procedure has ended, the remainder terms are indeed of the form $\sum_{|\alpha|=J+1} z^\alpha K_\alpha(z,y)$ with $K_\alpha \in C^\infty(B \times Y)$. This can be done by induction on $J$. Indeed, for $J = 0$ the situation is clear. Assuming the proposition holds for $J-1$ instead of $J \geq 1$, consider the nodes of depth $1$, i.e., those (decorated) nodes $(F_i \,|\, \cdots )$ whose parent is the root $(F \,|\,\cdots)$ of the tree. Each of these nodes is the root of a subtree, and depth-first search on the whole tree restricts to depth-first search on the subtree. Depths of nodes in the subtree are one less than their depths in the whole tree. Thus, the inductive hypothesis applies to the subtree, producing remainders of the form $\sum_{|\alpha|=J} z^\alpha K_\alpha(z,y)$ in the expansion of each $F_i$. But each $F_i$ comes with a coefficient $z_i$ in the expansion of $F$.
\end{proof}

\subsection{Approximate Green operators}\label{grz}

Recall now the definition of $\omega_F$ which was given in the Introduction: first, we define $\omega_F=\omega_X+\ddbar\rho$ where $\rho$ is the smooth function on $B\times Y$ defined by the fact that $\omega_X|_{\{z\}\times Y}+\ddbar\rho(z,\cdot)$ is the unique Ricci-flat K\"ahler metric on $\{z\}\times Y$ cohomologous to the restriction of $\omega_X$, and by the normalization $\int_{\{z\}\times Y}\rho(z,y)\omega_X^n(y)=0$.

In this section we will also denote by $\omega_{\rm can}$ a fixed K\"ahler metric on $B$ (with no special properties), which in our later application to Theorems A and B will be the restriction to our ball $B$ of the unique global solution of the Monge-Amp\`ere equation \eqref{ma_limit}.

Given a smooth function $H$ with $\int_{\{z\}\times Y}H\omega_F^n=0$ for all $z\in B$ and a $(1,1)$-form $\alpha$ on $B\times Y$ we define a smooth function on $B$ by
\begin{equation}\label{proietto}
P_{t,H}(\alpha):=n({\rm pr}_B)_*\left(H\alpha\wedge\omega_F^{n-1}\right) + e^{-t}{\rm tr}^{\omega_{\rm can}}({\rm pr}_B)_*(H \alpha\wedge \omega_F^n).
\end{equation}
We think of this as an approximation to the fiberwise $L^2$-projection of ${\rm tr}^{\omega_t^\natural}\alpha$ onto ${\rm span}\,H$ (after scaling and stretching so that the fibers have unit size). Here are two simple yet crucial properties of this operator:
\begin{lemma}
Let $G$ be a smooth function on $B\times Y$. Then we have
\begin{equation}\label{dr1}
P_{t,H}(\ddbar G)(z)=\int_{\{z\}\times Y}H\mathfrak{L}_t(G)\;\omega_F^n,
\end{equation}
where we have defined $\mathfrak{L}_t(G)$ so that along $\{z\}\times Y$ it equals
\begin{equation}\label{dr2}
\mathfrak{L}_t(G):=\Delta^{{\omega}_F|_{\{z\}\times Y}}G+ e^{-t}\frac{((\sum_{\iota=1}^m Z_\iota \wedge \overline{Z}_\iota)\,\lrcorner\,(i\partial\overline\partial G \wedge \omega_F^n))|_{\{z\}\times Y}}{\omega_F^n|_{\{z\}\times Y}},
\end{equation}
where $Z_\iota$ denotes an orthonormal $(1,0)$ frame field with respect to $\omega_{\rm can}$ on $\C^m$, trivially extended to $\C^m\times Y$. Also, for every $(1,1)$-form $\beta$ on $B$ we have
\begin{equation}\label{frombase}
P_{t,H}({\rm pr}_B^*\beta)=0.
\end{equation}
\end{lemma}
\begin{proof}
The identity \eqref{dr1} is clear thanks to the definition \eqref{dr2}. For \eqref{frombase}, when we apply $P_{t,H}$ to ${\rm pr}_B^*\beta$ the first term in \eqref{proietto} vanishes by definition, and the second term is also seen to vanish by using the projection formula
$({\rm pr}_B)_*(H{\rm pr}_B^*\beta\wedge \omega_F^n)=\beta\wedge({\rm pr}_B)_*(H\omega_F^n),$ and the fact that $H$ has fiberwise average zero.
\end{proof}
For functions $A$ on the base and $G$ on the total space (with fiberwise average zero), we now define a new function $\mathfrak{G}_{t,k} = \mathfrak{G}_{t,k}(A,G)$. This should be thought of as an approximate right inverse of $\Delta^{\omega_t^\natural}$ applied to $AG$ (after scaling and stretching so that the fibers have unit size). Here we want to view $G$ as being fixed, while $A$ is variable except for the fact that $A$ is expected to be a polynomial or to behave like one. The precise properties of $\mathfrak{G}_{t,k}$ that we need will be proved in Lemmas \ref{paraminatrix}--\ref{Gstructure}. Somewhat strangely, $\mathfrak{G}_{t,k}$ will be a differential rather than an integral operator in the base directions.

Fix any $z\in B$. The aim is to define $\mathfrak{G}_{t,k}$ at all points $(z,y)$ in the fiber over $z$. We inductively define two sequences of functions $u_0, u_1, \ldots$ and $v_0, v_1, \ldots$, where the desired $\mathfrak{G}_{t,k}(z,y)$ will be given by $u_{k}(z,y)$.
Given $A,G$ as above, let
\begin{align}
u_0 := A(\Delta^{\omega_F|_{\{z\}\times Y}})^{-1}G,\label{voland1}\\
v_i := \mathfrak{L}_{t}(u_{i})-AG,\label{voland2}\\
u_{i+1} :=u_i  - (\Delta^{\omega_F|_{\{z\}\times Y}})^{-1}(v_i - \underline{v_i}),\label{voland3}
\end{align}
where $(\Delta^{\omega_F|_{\{z\}\times Y}})^{-1}$ applied to a function with fiberwise average zero denotes the unique solution with fiberwise average zero of the fiberwise Poisson equation. And finally,
\begin{equation}{\mathfrak{G}}_{t,k}({z},{y}) := u_{k}(z,y).\end{equation}

We immediately observe that $\mathfrak{G}_{t,k}(A,G)$ is bilinear in its two arguments, and it has fiberwise average zero.

\begin{rk}\label{prod1}
As an example, when the complex structure on $B\times Y$ is a product, $\omega_{\rm can}$ is just a Euclidean metric $\omega_{\C^m}$, $\omega_F$ is the pullback of a fixed Ricci-flat K\"ahler metric $\omega_Y$ on $Y$, and $G$ is also pulled back from $Y$, then we can compute that
\begin{equation}\label{inutilis}
\mathfrak{G}_{t,k}(A,G) = \sum_{\ell = 0}^{k} (-1)^\ell e^{-\ell t} (\Delta^{\C^m})^{\ell}A \cdot (\Delta^Y)^{-\ell-1}G,
\end{equation}
where $(\Delta^Y)^{-1}$ denotes the unique solution of the fiberwise Poisson equation (with zero average), see Lemma \ref{Gstructure} below and Remark \ref{prod2} for a sketch of proof. Applying the Laplacian of the product metric $\omega_{\C^m}+e^{-t}\omega_Y$ to this then clearly gives
\begin{equation}\label{inutilis2}
\Delta^{\omega_{\C^m}+e^{-t}\omega_Y}\mathfrak{G}_{t,k}(A,G) = e^tAG+(-1)^{k}e^{-k t}(\Delta^{\C^m})^{k+1}A \cdot (\Delta^Y)^{-k-1}G,
\end{equation}
and the last term vanishes if $A$ is a polynomial of degree $<2k+2$. This formula was our original motivation for the general construction of $\mathfrak{G}_{t,k}$.
\end{rk}

\begin{lemma}\label{paraminatrix}
For all functions $G,H$ on $B\times Y$ with  fiberwise average zero, there exist functions $\Phi_{\iota,k}(G,H)$ independent of $t$ on the base such that for all functions $A$ on the base and all $t$,
\begin{equation}\label{e:paraminatrix}
P_{t,H}(\ddbar(\mathfrak{G}_{t,k}(A,G))) = A\int_{\{z\}\times Y} GH\,\omega_F^n +  e^{-(2k+2)\frac{t}{2}} \sum_{\iota=0}^{2k+2} \Phi_{\iota,k}(G,H) \circledast \D^{\iota}A.
\end{equation}
\end{lemma}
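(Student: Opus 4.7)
The plan is to transport the construction to the original (unstretched) picture. Setting $u_i := \Sigma_t^{-1*}\check{u}_i$ and $v_i := \Sigma_t^{-1*}\check{v}_i$, the iteration \eqref{voland1}--\eqref{voland3} is equivalent to
\begin{equation*}
u_0 = A\tilde{G}_0,\quad \tilde{G}_0 := (\Delta^{\omega_F|_{\{z\}\times Y}})^{-1}G,\quad v_i = \mathfrak{L}_t u_i - AG,\quad u_{i+1} = u_i - (\Delta^{\omega_F|_{\{z\}\times Y}})^{-1}(v_i - \underline{v_i}),
\end{equation*}
with $\mathfrak{G}_{t,k}(A,G) = u_k$; the equivalence uses that $\mathfrak{L}_t$ and the fiberwise inverse Laplacian conjugate cleanly under the stretching $\Sigma_t$. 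The first key observation is that \eqref{dr2} gives the splitting $\mathfrak{L}_t = \Delta^{\omega_F|_{\{z\}\times Y}} + e^{-t}L$, where $L$ is a fixed ($t$-independent) second-order differential operator on $B\times Y$. Expanding $L$ applied to a product $A(z)\,f(z,y)$ by Leibniz in the base direction yields $L(A\cdot f) = \sum_{\iota=0}^{2} L_\iota(f)\circledast\D^\iota A$ for fixed operators $L_\iota$ acting only on $f$.

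The core of the argument is an induction on $i$ showing that
\begin{equation*}
v_i - \underline{v_i} \;=\; e^{-(i+1)t}\sum_{\iota=0}^{2(i+1)} F_{\iota,i}(z,y) \circledast \D^\iota A(z),
\end{equation*}
with $F_{\iota,i}\in C^\infty(B\times Y)$ independent of $t$. The base case $i=0$ is immediate from $\mathfrak{L}_t(A\tilde{G}_0) = A\,\Delta^{\omega_F}\tilde{G}_0 + e^{-t}L(A\tilde{G}_0) = AG + e^{-t}\sum_\iota L_\iota(\tilde{G}_0)\circledast\D^\iota A$. For the induction step, the splitting collapses the recursion to
\begin{equation*}
v_{i+1} \;=\; \underline{v_i} \,-\, e^{-t}L\bigl[(\Delta^{\omega_F|_{\{z\}\times Y}})^{-1}(v_i - \underline{v_i})\bigr].
\end{equation*}
Inserting the inductive form, inverting $\Delta^{\omega_F}$ fiberwise (which merely modifies the fiber-dependent coefficients while preserving the structure), and then applying $e^{-t}L$ with Leibniz produces one additional factor of $e^{-t}$ and at most two further base derivatives on $A$. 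Passing to $v_{i+1} - \underline{v_{i+1}}$ eliminates the base-only term $\underline{v_i}$ and preserves the claimed form.

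To finish, I would plug into \eqref{dr1}:
\begin{equation*}
P_{t,H}(\ddbar u_k)(z) \;=\; \int_{\{z\}\times Y} H\,\mathfrak{L}_t u_k\,\omega_F^n \;=\; A(z)\int_{\{z\}\times Y} HG\,\omega_F^n + \int_{\{z\}\times Y} H v_k\,\omega_F^n.
\end{equation*}
Splitting $v_k = \underline{v_k} + (v_k - \underline{v_k})$, the $\underline{v_k}$ piece integrates to zero against $H$ by the fiberwise-mean-zero assumption on $H$, and what remains is exactly $e^{-(2k+2)t/2}\sum_\iota \Phi_{\iota,k}(G,H)\circledast\D^\iota A$ with $\Phi_{\iota,k}(z) := \int_{\{z\}\times Y} H\,F_{\iota,k}\,\omega_F^n$ manifestly $t$-independent. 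The delicate point is the bookkeeping in the induction: one needs $(\Delta^{\omega_F|_{\{z\}\times Y}})^{-1}$ to be applied only to fiberwise-mean-zero expressions (automatic for $v_i - \underline{v_i}$) and one needs the $F_{\iota,i}$ to genuinely carry no hidden $t$-dependence, which is precisely what the factorization $\mathfrak{L}_t = \Delta^{\omega_F|_{\{z\}\times Y}} + e^{-t}L$ with $L$ a fixed operator delivers.
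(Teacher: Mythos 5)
Your proposal is correct and is essentially the same argument as the paper's, transported through $\Sigma_t$ to the unstretched picture: the paper establishes the analogue of your inductive formula for $\check v_i$ in the check picture (keeping the base contributions $\underline{\check v_i}$ as separate $\Psi$-terms that get killed by orthogonality at the very end), whereas you work directly with $v_i-\underline{v_i}$ and exploit the clean factorization $\mathfrak{L}_t=\Delta^{\omega_F|_{\{z\}\times Y}}+e^{-t}L$, which makes the $e^{-(i+1)t}$ power counting immediate. One small point worth making explicit: for the statement that $(\Delta^{\omega_F|_{\{z\}\times Y}})^{-1}$ ``merely modifies the fiber-dependent coefficients'' to be literally true, you should choose the $F_{\iota,i}$ to be fiberwise mean-zero; this is always possible (replace $F_{\iota,i}$ by $F_{\iota,i}-\underline{F_{\iota,i}}$, noting that the base contribution $\sum_\iota \underline{F_{\iota,i}}\circledast\D^\iota A$ must vanish since the left-hand side has zero fiber average), and it is exactly the reduction the paper performs by subtracting $\underline{\check v_i}$ before inverting the fiberwise Laplacian.
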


\begin{proof}
Fix any point $z\in B$ and aim to verify \eqref{e:paraminatrix} at $z$. We compute
\begin{align}
P_{t,H}(\ddbar\mathfrak{G}_{t,k})(z) &= \int_{\{z\}\times Y} H \mathfrak{L}_{t}(\mathfrak{G}_{t,k})\,\omega_F^n\\
&= A(z) \int_{\{z\}\times Y} GH\,\omega_F^n + \int_{\{z\}\times Y} H v_{k}\,\omega_F^n,\label{iwonderwhy}
\end{align}
where both lines hold simply by definition from \eqref{dr1} and \eqref{voland2}. So we need to determine the structure of the second term in \eqref{iwonderwhy}. For this, observe that we can write
\begin{equation}\label{krasavice}
\mathfrak{L}_t(G) = (1+e^{-t}{\rm tr}^{\omega_{\rm can}}\omega_{F,\mathbf{bb}})\Delta^{\omega_F|_{\{z\}\times Y}}G+ e^{-t}\Delta^{\omega_{\rm can}}G+e^{-t}\D^2_{\mathbf{bf}}G\circledast \omega_{F,\mathbf{bf}},
\end{equation}
and so using \eqref{voland1}--\eqref{voland3}, along $\{z\}\times Y$,
\begin{equation}\begin{split}\label{wehret}
v_0 &= [e^{-t}\Delta^{\omega_{\rm can}} +  e^{-t}({\rm tr}^{\omega_{\rm can}}\omega_{F,\mathbf{bb}})\Delta^{\omega_F|_{\{z\}\times Y}}]
(A \cdot (\Delta^{\omega_F|_{\{z\}\times Y}})^{-1}G)\\
&+e^{-t}\D^2_{\mathbf{bf}}(A \cdot (\Delta^{\omega_F|_{\{z\}\times Y}})^{-1}G)\circledast \omega_{F,\mathbf{bf}},\end{split}\end{equation}
and for all $i \geq 0$, again along $\{z\}\times Y$,
\begin{align}
v_{i+1} &= \mathfrak{L}_{t}(u_{i+1}) - AG\label{strt}\\
&=v_i - \mathfrak{L}_{t}[(\Delta^{\omega_F|_{\{z\}\times Y}})^{-1}(v_i - \underline{v_i})]\label{e:koroviev}\\
&= \underline{v_i}- [e^{-t}\Delta^{\omega_{\rm can}} + e^{-t} ({\rm tr}^{\omega_{\rm can}}\omega_{F,\mathbf{bb}})
\Delta^{\omega_F|_{\{z\}\times Y}}](\Delta^{\omega_F|_{\{z\}\times Y}})^{-1}(v_i - \underline{v_i})\label{e:behemoth}\\
&-e^{-t}\D^2_{\mathbf{bf}}[(\Delta^{\omega_F|_{\{z\}\times Y}})^{-1}(v_i - \underline{v_i})]\circledast \omega_{F,\mathbf{bf}}.\label{nd}
\end{align}
It is now straightforward to prove by induction on $i$ that
\begin{equation}\label{yabbadabbadoo}
v_i = \sum_{\iota=0}^{2i+2} e^{-(i+1)t}\Phi_{\iota,i}(G) \circledast \D^{\iota}A + \sum_{\ell=1}^i\sum_{\iota=0}^{2\ell}e^{-\ell t}\Psi_{\iota,\ell}(G)\circledast \D^\iota A,
\end{equation}
where the $\Phi$'s live on the total space but the $\Psi$'s are from the base.
Indeed, the base case of the induction $i=0$ follows immediately from \eqref{wehret} remembering that $A$ is pulled back from the base. As for the induction step, we assume \eqref{yabbadabbadoo} for $i$ which gives
\begin{equation}\label{yabbadabbadoo2}
v_i-\underline{v_i} = \sum_{\iota=0}^{2i+2} e^{-(i+1)t}\Phi_{\iota,i}(G) \circledast \D^{\iota}A,
\end{equation}
\begin{equation}\label{yabbadabbadoo3}
\underline{v_i} = \sum_{\ell=1}^{i+1}\sum_{\iota=0}^{2\ell}e^{-\ell t}\Psi_{\iota,\ell}(G)\circledast \D^\iota A,
\end{equation}
where the functions $\Phi_{\iota,i}, \Psi_{\iota,\ell}$ have changed. We need to bound $v_{i+1}$, which is given by  \eqref{strt}--\eqref{nd}. The term $\underline{v_i}$ is controlled by \eqref{yabbadabbadoo3}, while for the other terms we first observe that thanks to \eqref{yabbadabbadoo2} we can write
\begin{equation}\label{yabbadabbadoo5}
(\Delta^{\omega_F|_{\{z\}\times Y}})^{-1}(v_i-\underline{v_i}) = \sum_{\iota=0}^{2i+2} e^{-(i+1)t}\Phi_{\iota,i}(G) \circledast \D^{\iota} A,
\end{equation}
where again $\Phi_{\iota,i}$ have changed, and using this it is straightforward to check that
\begin{equation}\label{yabbadabbadoo4}\begin{split}
&- [e^{-t}\Delta^{\omega_{\rm can}} + e^{-t} ({\rm tr}^{\omega_{\rm can}}\omega_{F,\mathbf{bb}})
\Delta^{\omega_F|_{\{z\}\times Y}}](\Delta^{\omega_F|_{\{z\}\times Y}})^{-1}(v_i - \underline{v_i})\\
&-e^{-t}\D^2_{\mathbf{bf}}[(\Delta^{\omega_F|_{\{z\}\times Y}})^{-1}(v_i - \underline{v_i})]\circledast \omega_{F,\mathbf{bf}}=\sum_{\iota=0}^{2i+4} e^{-(i+2)t}\Phi_{\iota,i}(G) \circledast \D^{\iota}A,
\end{split}\end{equation}
and \eqref{yabbadabbadoo} for $i+1$ now follows.

Setting $i =k$ in \eqref{yabbadabbadoo} and integrating it against $H$ over the fiber, the terms with $\Psi$ from the second sum disappear, and we are done.
\end{proof}

As a corollary of the above computations, we also obtain the following structure of $\mathfrak{G}_{t,k}$ itself, which is a direct generalization of \eqref{inutilis} in the product case.

\begin{lemma}\label{Gstructure}
For all smooth functions $G$ on the total space with  fiberwise average zero, there exist smooth functions $\Phi_{\iota,i}(G)$ on the total space, independent of $t$, such that for all functions $A$ on the base and all $t$,
\begin{equation}\label{e:Gstructure}
\mathfrak{G}_{t,k}(A,G) =  \sum_{\iota=0}^{2k}\sum_{i=\lceil \frac{\iota}{2} \rceil}^{k} e^{-it}(\Phi_{\iota,i}(G)\circledast \D^\iota A),
\end{equation}
and furthermore
\begin{equation}\label{sangennaro}
\Phi_{0,0}(G)=(\Delta^{\omega_F|_{\{\cdot\}\times Y}})^{-1}G.
\end{equation}
\end{lemma}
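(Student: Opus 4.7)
The plan is to prove \eqref{e:Gstructure} by feeding the explicit bigraded description \eqref{yabbadabbadoo} of $\check{v}_i$ already established in the proof of Lemma \ref{paraminatrix} into a telescoped form of the recursion \eqref{voland3}, and then translating back to the undecorated picture. Iterating \eqref{voland3} gives
\begin{equation*}
\check{u}_k = \check{u}_0 - \sum_{i=0}^{k-1}(\Delta^{\Sigma_t^*\omega_F|_{\{\check{z}\}\times Y}})^{-1}(\check{v}_i - \underline{\check{v}_i}),
\end{equation*}
where $\check{u}_0 = \check{A}\cdot(\Delta^{\Sigma_t^*\omega_F|_{\{\check{z}\}\times Y}})^{-1}\check{G}$ by \eqref{voland1}. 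Since the ``$\check{\Psi}$ sum'' in \eqref{yabbadabbadoo} is constant along fibers, it coincides with its own fiber average, so
\begin{equation*}
\check{v}_i - \underline{\check{v}_i} = \sum_{\iota=0}^{2i+2}e^{(\iota-(2i+2))\frac{t}{2}}\bigl(\check{\Phi}_{\iota,i}(G) - \underline{\check{\Phi}_{\iota,i}(G)}\bigr)\circledast \D^{\iota}\check{A}.
\end{equation*}
The fiberwise Poisson inverse $(\Delta^{\Sigma_t^*\omega_F|_{\{\check{z}\}\times Y}})^{-1}$ is a purely fiberwise operation and commutes with multiplication by the base function $\D^{\iota}\check{A}$, so it simply replaces each fiber coefficient by its fiberwise Poisson primitive while leaving every exponential factor untouched.

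Translating back to the undecorated picture via $\D^{\iota}\check{A} = e^{-\iota t/2}\D^{\iota}A$, and identifying fiber functions pulled back under $\Sigma_t$ with fixed functions on $B\times Y$ (these are $t$-independent because they are built from $\Delta^{\omega_F|_{\{z\}\times Y}}$, $(\Delta^{\omega_F|_{\{z\}\times Y}})^{-1}$, $\Delta^{\omega_{\rm can}}$, and multiplication by the $t$-independent tensors ${\rm tr}^{\omega_{\rm can}}\omega_{F,\mathbf{bb}}$ and $\omega_{F,\mathbf{bf}}$), a typical correction term $e^{(\iota-(2i+2))t/2}\widetilde\Phi(G)\circledast \D^{\iota}\check{A}$ becomes $e^{-(i+1)t}\widetilde\Phi(G)\circledast \D^{\iota}A$. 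Hence the correction contributed by index $i$ has fiber-exponent $j := i+1 \in \{1,\ldots,k\}$ with base-derivative order $\iota \in \{0,1,\ldots,2j\}$, i.e.\ $\lceil \iota/2\rceil \leq j$. Combined with $\check{u}_0$, which pushes down to the single $(\iota,j)=(0,0)$ term $A\cdot(\Delta^{\omega_F|_{\{z\}\times Y}})^{-1}G$, a reindexing yields exactly \eqref{e:Gstructure}. Since no correction touches $j=0$, the $(0,0)$-coefficient is precisely $\Phi_{0,0}(G) = (\Delta^{\omega_F|_{\{\cdot\}\times Y}})^{-1}G$, establishing \eqref{sangennaro}.

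The main (and essentially only) obstacle is careful bookkeeping: one must track how the exponential prefactors transform under the stretching identity $\D^{\iota}\check{A} = e^{-\iota t/2}\D^{\iota}A$, so that the sharp upper bound $\iota \leq 2i+2$ on the number of base derivatives appearing in $\check{v}_i$ (already established as part of \eqref{yabbadabbadoo}) translates into the sharp constraint $i \geq \lceil \iota/2\rceil$ claimed in \eqref{e:Gstructure} for $\mathfrak{G}_{t,k}$. Once this is verified, everything else is routine algebra: the lemma is a structural corollary of Lemma \ref{paraminatrix} together with the telescoped recursion \eqref{voland3}.
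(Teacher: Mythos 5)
Your proof is correct and follows essentially the same route as the paper's: both start from the bigraded description of $\check{v}_i$ established in Lemma \ref{paraminatrix} (equation \eqref{yabbadabbadoo}), discard the fiber-constant $\Psi$ sum as it equals its own fiberwise average, feed the remainder through the telescoped recursion \eqref{voland3} (the paper iterates step-by-step to \eqref{beneficiodicristocrocifisso}, you telescope directly — same thing), and convert via $\D^\iota\check{A}=e^{-\iota t/2}\D^\iota A$ to obtain the index constraint $i\geq\lceil\iota/2\rceil$ and isolate the $(0,0)$-term from $\check{u}_0$. Your version is slightly more explicit about the fiberwise Poisson inverse commuting past $\D^\iota\check{A}$ and the $t$-independence of the coefficients, but these are the same implicit steps in the paper's argument.
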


\begin{proof}
We combine \eqref{voland3} with \eqref{yabbadabbadoo5} to get
\begin{equation}\label{beneficiodicristocrocifisso}
u_{i+1} = u_i + \sum_{\iota=0}^{2i+2} e^{-(i+1)t}(\Phi_{\iota,i}(G)\circledast\D^\iota A).
\end{equation}
Hence, iterating, using \eqref{voland1}, and switching the order of summation,
\begin{equation}\label{feuerstein}
\mathfrak{G}_{t,k} = u_{k} = \sum_{\iota=0}^{2k}\sum_{i=\lceil\frac{\iota}{2}\rceil}^{k} e^{-it}(\Phi_{\iota,i}(G)\circledast\D^\iota A).
\end{equation}

To prove \eqref{sangennaro}, we are interested in the terms in the sum in \eqref{e:Gstructure} with $i=0$, and observe that none of these can come from the last term in \eqref{beneficiodicristocrocifisso} (with index $i$ there ranging from $0$ to $k-1$) since these terms have a coefficient at least as small as $e^{-t}$. Thus, the term with $i=0$ in \eqref{e:Gstructure} come purely from \eqref{voland1}, and \eqref{sangennaro} follows.
\end{proof}

\begin{rk}\label{prod2}
In the product situation described in Remark \ref{prod1}, \eqref{wehret} and \eqref{e:behemoth} imply by induction that
\begin{equation}v_i = v_i - \underline{v_i} = (-1)^i e^{-(i+1)t}(\Delta^{\C^m})^{i+1} A \cdot (\Delta^Y)^{-i-1}G.\end{equation}
In the proof of Lemma \ref{Gstructure} we then see that in the outer sum over $\iota$ we only have the even numbers $\iota = 2\ell$ ($0 \leq \ell \leq k$) and in the inner sum over $i$ we only see the leading term $i = \lceil \frac{\iota}{2}\rceil = \ell$, and \eqref{inutilis} follows.
\end{rk}

\subsection{A simple interpolation for polynomials}

The following simple lemma will be used extensively in this section.

\begin{lemma}\label{mushy}
For all $d,k \in \N_{\geq 1}$ there exists a constant $C = C(d,k)$ such that for all $0 \leq \iota < \kappa \leq k$, all $ R> 0$, and all polynomials $p$ of degree at most $k$ on $\R^d$ it holds that
\begin{equation}\label{shroom}
\sup_{B_R(0)} |\mathbb{D}^\kappa p| \leq C R^{\iota-\kappa}\sup_{B_R(0)}|\mathbb{D}^\iota p|.
\end{equation}
\end{lemma}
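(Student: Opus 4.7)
The plan is to reduce to the case $R = 1$ by scaling and then invoke the equivalence of norms on a finite-dimensional vector space.

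First, given any $R > 0$ and any polynomial $p$ of degree at most $k$, set $q(x) := p(Rx)$. This is again a polynomial of degree at most $k$, and by the chain rule
\begin{equation}
(\mathbb{D}^j q)(x) = R^j (\mathbb{D}^j p)(Rx),
\end{equation}
so that $\sup_{B_1(0)}|\mathbb{D}^j q| = R^j\sup_{B_R(0)}|\mathbb{D}^j p|$ for every $j$. Substituting this for $j = \iota$ and $j = \kappa$ into the desired estimate \eqref{shroom}, the explicit factors of $R$ cancel and the inequality is seen to be equivalent to its own special case $R = 1$. It therefore suffices to prove
\begin{equation}\label{spore}
\sup_{B_1(0)}|\mathbb{D}^\kappa p| \leq C \sup_{B_1(0)}|\mathbb{D}^\iota p|
\end{equation}
for a constant $C = C(d,k)$ depending only on $d$ and $k$.

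Second, denote by $\mathcal{P}_k$ the finite-dimensional vector space of polynomials on $\mathbb{R}^d$ of degree at most $k$, and consider the two seminorms $N_\iota(p) := \sup_{B_1(0)}|\mathbb{D}^\iota p|$ and $N_\kappa(p) := \sup_{B_1(0)}|\mathbb{D}^\kappa p|$ on $\mathcal{P}_k$. If $N_\iota(p) = 0$ for some $p \in \mathcal{P}_k$, then $\mathbb{D}^\iota p$ vanishes on $B_1(0)$; since $\mathbb{D}^\iota p$ is itself a polynomial, it then vanishes identically on $\mathbb{R}^d$, which forces $\deg p \leq \iota - 1 < \kappa$, and hence $N_\kappa(p) = 0$ as well. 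Passing to the quotient $\mathcal{P}_k / \mathcal{P}_{\iota-1}$, the seminorm $N_\iota$ becomes a genuine norm and $N_\kappa$ a seminorm on this finite-dimensional space. By the equivalence of all (semi)norms on a finite-dimensional vector space, there exists $C = C(d,k,\iota,\kappa)$ with $N_\kappa \leq C N_\iota$, i.e.\ \eqref{spore} holds. Taking the maximum of $C$ over the finitely many pairs $0 \leq \iota < \kappa \leq k$ yields a uniform constant $C = C(d,k)$, which completes the proof.

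There is no real obstacle in this argument; the only ingredients are the homogeneity of $\mathbb{D}^j$ under dilation and the compactness of the unit sphere in a finite-dimensional normed space. In particular, no quantitative Markov- or Bernstein-type inequality is required.
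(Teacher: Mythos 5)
Your proof is correct and uses essentially the same ideas as the paper's: scaling to reduce to $R=1$, then invoking compactness/equivalence of norms on a finite-dimensional polynomial space. The paper first reduces further to $\iota=0$, $\kappa=1$ (proving that case via the $\ell^1$ norm of coefficients, then iterating on the component functions of $\mathbb{D}^\iota p$), whereas you handle general $\iota<\kappa$ in one step via the quotient $\mathcal{P}_k/\mathcal{P}_{\iota-1}$; this is only a stylistic difference.
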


\begin{proof}
It is enough to prove this for $R = 1$, $\iota = 0$ and $\kappa = 1$, with $\mathbb{D}^\kappa p = \mathbb{D} p$ replaced by $\mathbb{D}_{\mathbf{e}_i}p$ on the left-hand side for any one of the standard basis vectors $\mathbf{e}_1, \ldots, \mathbf{e}_d$ on $\mathbb{R}^d$. Indeed, the case of a general $R > 0$ follows by scaling, and the case of a general $0 \leq \iota < \kappa \leq k$ follows by considering the component functions of $\mathbb{D}^\iota p$, which are again polynomials of degree at most $k$, and taking their iterated derivatives in directions $\mathbf{e}_1, \ldots, \mathbf{e}_d$. To prove the simplified statement, write $p(z) = \sum_{|\alpha|\leq k} p_\alpha z^\alpha$. Then
\begin{equation}
\frac{1}{k}\sup_{B_1(0)}|\mathbb{D}_{\mathbf{e}_i}p| = \sup_{|z|\leq 1}\left|\sum_{|\alpha|\leq k}  \frac{\alpha_i}{k} p_{\alpha} z^{\alpha-\mathbf{e}_i}\right| \leq \sum_{|\alpha|\leq k} |p_\alpha| =: \|p\|.
\end{equation}
Note that $\|p\|$ is simply the $\ell^1$ norm of $p$ with respect to the standard basis of the vector space $P$ of polynomials of degree at most $k$. Now observe that $p \mapsto \sup_{B_1(0)} |p|$ is also a norm on $P$ because a real polynomial that vanishes on an open set is the zero polynomial. So $\|p\| \leq C\sup_{B_1(0)}|p|$ as desired.
\end{proof}

\subsection{Statement of the Selection Theorem}

In this section we have two natural numbers $0\leq j\leq k$, fixed throughout, and a ball $B\subset\mathbb{C}^m$ centered at the origin (which in later applications in the blowup argument will be translated to be centered at our point of interest). We also denote by $\Upsilon=\omega_F^n$ the fiberwise Calabi-Yau volume form (scaled to fiberwise volume $1$ for convenience), and let $\Upsilon_z=\Upsilon|_{\{z\}\times Y}$.

Given $t,k$ and two smooth functions $A\in C^\infty(B,\mathbb{R})$ and $G\in C^\infty(B\times Y,\mathbb{R})$ with $\int_{\{z\}\times Y}G\Upsilon_z=0$ and $\int_{\{z\}\times Y}G^2\Upsilon_z=1$
for all $z\in B$, we constructed in Section \ref{grz} a function $\mathfrak{G}_{t,k}(A,G)\in C^\infty(B\times Y,\mathbb{R})$ with $\int_{\{z\}\times Y}\mathfrak{G}_{t,k}(A,G)\Upsilon_z=0$ for all $z\in B$, which is in some sense an approximate right inverse of $\Delta^{\omega^\natural_t}$ applied to $AG$ (cf. Lemma \ref{paraminatrix}).

Now, for all $i<j$ we suppose that we have smooth functions $G_{i,p,k}\in C^\infty(B\times Y,\mathbb{R}), 1\leq p\leq N_{i,k}$, which have fiberwise average zero and are fiberwise $L^2$ orthonormal, with $G_{i,p,k}=0$ when $i=0,1$. The main goal is to find smooth functions $G_{j,p,k}$ which satisfy a certain property that takes some work to describe.

First, we let $\delta_t>0$ be any fixed family of scalars which goes to zero as $t\to\infty$ but so that $\lambda_t:=\delta_te^{\frac{t}{2}}\to\infty$.

Consider now the diffeomorphisms as in \eqref{sigmat2}
\begin{equation}\label{sigmat}
\Sigma_t: B_{e^{\frac{t}{2}}} \times Y \to B \times Y, \;\, (z,y) = \Sigma_t(\check{z},\check{y}) =(e^{-\frac{t}{2}}\check{z},\check{y}),
\end{equation}
and for any function $u$ on $B\times Y$ we will write $\check{u}=\Sigma_t^*u$, and for a $2$-form $\alpha$ we will write $\check{\alpha}=e^t\Sigma_t^*\alpha$. In particular, note that $\check{\omega}_{\rm can}=e^t\Sigma_t^*\omega_{\rm can}$ is a K\"ahler metric uniformly equivalent to Euclidean (independent of $t$).

We will also need an intermediate ``hat picture'', obtained by factoring $\Sigma_t=\Psi_t\circ\Xi_t$ where
\begin{equation}
\Xi_t: B_{e^{\frac{t}{2}}} \times Y \to B_{\lambda_t} \times Y, \;\, (\hat{z},\hat{y}) = \Xi_t(\check{z},\check{y}) =(\delta_t\check{z},\check{y}),\end{equation}
\begin{equation}
\Psi_t: B_{\lambda_t} \times Y \to B \times Y, \;\, (z,y) = \Psi_t(\hat{z},\hat{y}) =(\lambda_t^{-1}\hat{z},\hat{y}),\end{equation}
and again given a function $u$ on $B\times Y$ we will write $\hat{u}=\Psi_t^*u$, and for a $2$-form $\alpha$ we will write $\hat{\alpha}=\lambda_t^2\Psi_t^*\alpha$.

We need a few more pieces of data.
Let $\eta^\ddagger_t$ be an arbitrary smooth family of $(1,1)$-forms on $B$ with coefficients polynomials of degree at most $j$ which satisfy $\hat{\eta}^\ddagger_t\to 0$ locally smoothly for $j>0$ (note that this implies that $\check{\eta}^\ddagger_t\to 0$ locally smoothly as well), and which converge to some constant $(1,1)$-form for $j=0$, and for $2\leq i\leq j$ let $A^\sharp_{t,i,p,k}$ be arbitrary polynomials of degree at most $j$ on $B$ such that $\hat{A}^\sharp_{t,i,p,k}=\lambda_t^2\Psi_t^*A^\sharp_{t,i,p,k}$ and $\check{A}^\sharp_{t,i,p,k}=e^t\Sigma_t^*A^\sharp_{t,i,p,k}$ satisfy that there is some $0<\alpha_0<1$ such that given any $R>0$ there is $C>0$ with
\begin{equation}\label{linftyyy0}
\|\D^\iota \hat{A}^\sharp_{t,i,p,k}\|_{L^\infty(\hat{B}_{R}\times Y,\hat{g}_t)}\leq C\delta_t^2 e^{-\alpha_0\frac{t}{2}},
\end{equation}
or equivalently
\begin{equation}\label{linftyyy}
\|\D^\iota \check{A}^\sharp_{t,i,p,k}\|_{L^\infty(\check{B}_{R\delta_t^{-1}}\times Y,\check{g}_t)}\leq C\delta_t^\iota e^{-\alpha_0\frac{t}{2}},
\end{equation}
for all $0\leq \iota\leq j$ and $t\geq 0$. With these, we define for $2\leq i\leq j$
\begin{equation}\label{dugger}
\gamma^{\sharp}_{t,i,k}=\sum_{p=1}^{N_{i,k}}\ddbar\mathfrak{G}_{t,k}(A^\sharp_{t,i,p,k},G_{i,p,k}),
\end{equation}
(compare \eqref{nuov} below), so $\gamma^{\sharp}_{t,j,k}$ depends on how we choose the functions $G_{j,p,k}$. It is proved by the arguments in \eqref{crocifisso4}, \eqref{crocifisso4bis} below that our assumption \eqref{linftyyy0} on $A^\sharp_{t,i,p,k}$ implies that for any $R>0$ there is $C>0$ with
\begin{equation}\label{crocifisso4hatto}
\|\D^\iota\hat{\gamma}^{\sharp}_{t,i,k}\|_{L^\infty(\hat{B}_{R}\times Y,\hat{g}_t)}\leq C\delta_t^{-\iota}e^{-\alpha_0 \frac{t}{2}},
\end{equation}
or equivalently
\begin{equation}\label{crocifisso4check}
\|\D^\iota\check{\gamma}^{\sharp}_{t,i,k}\|_{L^\infty(\check{B}_{R\delta_t^{-1}}\times Y,\check{g}_t)}\leq Ce^{-\alpha_0 \frac{t}{2}},
\end{equation}
for all $\iota\geq 0, t\geq 0$ and all $2\leq i\leq j$ %, as well as
%\begin{equation}\label{crocifisso4hattobase}
%\|\D^\iota_{\mathbf{b}\cdots\mathbf{b}}\hat{\gamma}^{\sharp}_{t,i,k}\|_{L^\infty(\hat{B}_{R}\times Y,\hat{g}_t)}\leq Ce^{-\alpha_0 \frac{t}{2}},
%\end{equation}
%or equivalently
%\begin{equation}\label{crocifisso4checkbase}
%\|\D^\iota_{\mathbf{b}\cdots\mathbf{b}}\check{\gamma}^{\sharp}_{t,i,k}\|_{L^\infty(\check{B}_{R\delta_t^{-1}}\times Y,\check{g}_t)}\leq C\delta_t^\iota e^{-\alpha_0 \frac{t}{2}},
%\end{equation}
%where $\D^\iota_{\mathbf{b}\cdots\mathbf{b}}$ denotes base derivatives only
(these estimates can be just taken as an assumption for now). Observe that the constants $C$ in \eqref{crocifisso4hatto}, \eqref{crocifisso4check} depend on the choice of the functions $G_{i,p,k}$ but the exponent $\alpha_0$ does not. We also define
\begin{equation}
\eta^\dagger_t=\sum_{i=2}^j\gamma^{\sharp}_{t,i,k},
\end{equation}
and
\begin{equation}
\omega^\sharp_t=\omega_{\rm can}+e^{-t}\omega_F+\eta^\dagger_t+\eta^\ddagger_t,
\end{equation}
and we assume that $\hat{\omega}^\sharp_t$ is a K\"ahler metric on $B_R\times Y$ for all $R>0$ and $t$ sufficiently large. This assumption is in fact automatically satisfied when $j>0$, using that $\hat{\eta}^\ddagger_t$ is pulled back from $B$ and goes to zero locally uniformly, and the estimate \eqref{crocifisso4hatto} with $\iota=0$ for $\hat{\eta}^\dagger_t$.

There is one more piece of background data, namely positive constants $c_t$ which are a polynomial in $e^{-t}$ of degree at most $m$ with constant coefficient $\binom{m+n}{n}$.

The key quantity we are interested in is then
\begin{equation}\label{stronzo}
\delta_t^{-j-\alpha}\left(c_t \frac{\check{\omega}_{\rm can}^m\wedge(\Sigma_t^*\omega_F)^n}{(\check\omega_t^\sharp)^{m+n}}-1\right).
\end{equation}

The following Selection Theorem allows us to find the desired obstruction functions:

\begin{theorem}[Selection Theorem]\label{ghost}
Suppose we are given $0\leq j\leq k$ and when $j>2$ we are also given smooth functions $G_{i,p,k}, 2\leq i\leq j-1, 1\leq p\leq N_{i,k},$ on $B\times Y$ which are fiberwise $L^2$ orthonormal, and have fiberwise average zero. Then there are a concentric ball $B'\subset B$ and smooth functions $G_{j,p,k}, 1\leq p\leq N_{j,k},$ on $B'\times Y$ (identically zero if $j=0,1$), with fiberwise average zero so that the $G_{i,p,k}, 2\leq i\leq j$ are all fiberwise $L^2$ orthonormal, with the following property:
if $\delta_t$ is any family of scalars with $\delta_t\to 0$ and $\delta_te^{\frac{t}{2}}\to\infty$, and if $A^\sharp_{t,i,p,k}, \eta^\dagger_t,\eta^\ddagger_t,\omega^\sharp_t,c_t$ are as above, and if \eqref{stronzo} converges locally uniformly on $\C^m\times Y$ to some limiting function $\mathcal{F}$,
then on $\Sigma_t^{-1}(B'\times Y)$ we can write \eqref{stronzo} as
\begin{equation}\label{stronza}
\delta_t^{-j-\alpha}\Sigma_t^*\left(f_{t,0}+\sum_{i=2}^{j}\sum_{p=1}^{N_{i,k}} f_{t,i,p}G_{i,p,k}\right) + o(1),
\end{equation}
where $f_{t,0},f_{t,i,p}$ are functions pulled back from $B'$ such that $\hat{f}_{t,0}=\Psi_t^*f_{t,0}$ and $\hat{f}_{t,i,p}=\Psi_t^*f_{t,i,p}$ converge locally smoothly to zero when $j>0$ (and which converge smoothly to some limit when $j=0$), and $o(1)$ is a term that converges locally smoothly to zero. Lastly, \eqref{stronzo} converges to $\mathcal{F}$ locally smoothly.
\end{theorem}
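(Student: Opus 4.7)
The plan is to combine a direct expansion of the Monge--Amp\`ere quotient with the fiberwise Gram-Schmidt procedure of Proposition \ref{iamdeathincarnate}, controlling the resulting Taylor-type remainders via the decay mechanism of Remark \ref{miservi}.

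First I would expand the quotient before pulling back by $\Sigma_t$. Writing $\omega^\sharp_t=(\omega_{\rm can}+e^{-t}\omega_F)+(\eta^\dagger_t+\eta^\ddagger_t)$ and using that $\omega_{\rm can}^{m+1}=0$ on $B\times Y$ (so that $(\omega_{\rm can}+e^{-t}\omega_F)^{m+n}=\binom{m+n}{n}e^{-nt}\omega_{\rm can}^m\wedge\omega_F^n$), a binomial expansion followed by inversion as a finite geometric sum (convergent after shrinking $B$, using $\check\eta^\ddagger_t\to 0$ smoothly and \eqref{crocifisso4check}) yields
\begin{equation*}
R_t := c_t\frac{\omega_{\rm can}^m\wedge\omega_F^n}{(\omega^\sharp_t)^{m+n}}-1 = \tfrac{c_t-\binom{m+n}{n}}{\binom{m+n}{n}} + \sum_{r,s\geq 0}c^{r,s}_t\,\frac{(\eta^\ddagger_t)^r\wedge(\eta^\dagger_t)^s\wedge(\omega_{\rm can}+e^{-t}\omega_F)^{m+n-r-s}}{\binom{m+n}{n}e^{-nt}\omega_{\rm can}^m\wedge\omega_F^n},
\end{equation*}
in which all fiber dependence is concentrated in the $(\eta^\dagger_t)^s$ factors, since $\eta^\ddagger_t$ and $c_t$ come from the base.

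Next I would use Lemma \ref{Gstructure} and the definition $\gamma_{t,i,k}=\sum_p\ddbar\mathfrak{G}_{t,k}(A^\sharp_{t,i,p,k},G_{i,p})$ to observe that each bidegree component of $\eta^\dagger_t$ depends on the fiber only through the universal finite collection $\mathcal{C}$ consisting of the functions $\Phi_{\iota,i'}(G_{i,p})$ for $2\leq i\leq j$, $0\leq\iota\leq 2k$, $\lceil\iota/2\rceil\leq i'\leq k$, $1\leq p\leq N_i$, together with their first and second $Y$-directional derivatives. Expanding the wedge products, every fiber-dependent scalar in $R_t$ is a pointwise product of entries of $\mathcal{C}$; I would then enlarge $\mathcal{C}$ under all such finite products (still finitely many functions) and apply Proposition \ref{iamdeathincarnate} with $H_1:=1$ and $H_2,\dots,H_L$ the previously fixed $G_{i,p}$ ($2\leq i\leq j-1$), taking the $F$'s to be those elements of $\mathcal{C}$ which first appear at stage $j$. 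I would choose the truncation index $J$ large enough that $\delta_t^{-j-\alpha}e^{-(J+1)t/2}\to 0$, which is possible because $\delta_te^{t/2}\to\infty$. The proposition then yields the concentric ball $B'\subset B$ and the fiberwise $L^2$-orthonormal, average-zero functions $G_{j,1},\dots,G_{j,N_j}$, together with an expansion of type \eqref{buddha} for each $F\in\mathcal{C}$.

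Substituting these expansions back into the structural formula for $R_t$ and collecting terms produces the decomposition
\begin{equation*}
R_t = f_{t,0}(z) + \sum_{i=2}^{j}\sum_{p=1}^{N_i}f_{t,i,p}(z)\,G_{i,p}(z,y) + \sum_{|\alpha|=J+1}z^\alpha K_{t,\alpha}(z,y),
\end{equation*}
where the base coefficients $f_{t,\cdot,\cdot}$ are explicit polynomial expressions in $c_t-\binom{m+n}{n}$, in $\eta^\ddagger_t$, in $\D^\iota A^\sharp_{t,\cdot,\cdot,k}$ ($0\leq \iota\leq 2k+2$), and in fixed $t$-independent fiberwise $L^2$-pairings of elements of $\mathcal{C}$. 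Using \eqref{linftyyy}, the smooth convergence of $\check\eta^\ddagger_t$ to zero, and $c_t-\binom{m+n}{n}=O(e^{-t})$, the pullbacks $\Sigma_t^*f_{t,0}$ and $\Sigma_t^*f_{t,i,p}$ tend to zero smoothly. The remainders satisfy $\|\Sigma_t^*(z^\alpha K_{t,\alpha})\|_{C^\ell}=O(e^{-(J+1)t/2})$ for every $\ell$ by Remark \ref{miservi}, so after multiplication by $\delta_t^{-j-\alpha}$ they contribute to the $o(1)$ term in \eqref{stronza}. Finally, the local smooth convergence of $\delta_t^{-j-\alpha}R_t$ to $\mathcal{F}$ follows by recovering each coefficient $\delta_t^{-j-\alpha}f_{t,\cdot,\cdot}$, after pullback, as a base-level fiberwise $L^2$-pairing of the scaled quantity, whose smooth convergence is inherited from the explicit polynomial structure and the smoothness of all background data, upgrading local uniform to local smooth convergence.

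The main technical obstacle will be the precise bookkeeping that shows the universal finite collection $\mathcal{C}$ together with the inductively given $G_{i,p}$ ($i<j$) actually exhausts every fiber-dependent scalar appearing in $R_t$ for arbitrary admissible data. This rests crucially on the $A^\sharp_{t,i,p,k}$ being pulled back from the base (so that every $\D^\iota A^\sharp$ contributes only to the base coefficients $f_{t,\cdot,\cdot}$) combined with the explicit control on the structure of $\mathfrak{G}_{t,k}$ provided by Lemma \ref{Gstructure}.
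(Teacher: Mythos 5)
There is a genuine gap, and it is precisely the central difficulty the paper's proof is organized around. Your expansion of $R_t$ treats $\eta^\dagger_t = \sum_{i=2}^j\gamma_{t,i,k}$ (and hence $\omega^\sharp_t$) as given data whose fiber dependence can be read off through the collection $\mathcal{C}$ of functions $\Phi_{\iota,i'}(G_{i,p})$, $2\leq i\leq j$. But $\gamma_{t,j,k}=\sum_p\ddbar\mathfrak{G}_{t,k}(A^\sharp_{t,j,p,k},G_{j,p})$ depends on the very functions $G_{j,p}$ that Theorem~\ref{ghost} is asking you to produce, and so do all the $\Phi_{\iota,i'}(G_{j,p})$ you include in $\mathcal{C}$. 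Your prescription ``apply Proposition~\ref{iamdeathincarnate} taking the $F$'s to be those elements of $\mathcal{C}$ which first appear at stage $j$'' is therefore circular: those $F$'s are unknown until after the theorem is proved. One might hope to first apply Gram--Schmidt only to the fiber functions coming from $\gamma_{t,2,k},\dots,\gamma_{t,j-1,k}$, obtain a tentative list $G_{j,p}$, and stop. This does not work, because once these $G_{j,p}$ are plugged into $\omega^\sharp_t$, the resulting $R_t$ contains the new fiber functions $\Phi_{\iota,\ell}(G_{j,p})$ (e.g., through the quadratic terms $(\eta^\dagger_t)^2$ and through the higher Green coefficients in \eqref{e:Gstructure} with $(\iota,\ell)\neq(0,0)$), and these are in general not in the span of the orthonormal family assembled so far. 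The paper breaks this circularity with the iterative procedure $\kappa\mapsto\kappa+1$ around \eqref{kulprit}--\eqref{panacea}: at each stage the error generated by the newly added $G_{j,p}^{[\kappa]}$'s is controlled to be a factor of $\delta_t^{\alpha_0}$ smaller than the term it came from (the estimates \eqref{zatan}, \eqref{zatan2}, \eqref{panacea2}), so that after $\overline\kappa=\lceil(j+\alpha)/\alpha_0\rceil$ steps the residual is $o(1)$. Your proposal contains no mechanism that ensures the one-pass application terminates with a remainder small enough after multiplication by $\delta_t^{-j-\alpha}$, and there is no obvious substitute.

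Separately, your identity $(\omega_{\rm can}+e^{-t}\omega_F)^{m+n}=\binom{m+n}{n}e^{-nt}\omega_{\rm can}^m\wedge\omega_F^n$ is false: $\omega_F$ is a $(1,1)$-form on the total space with base--base and base--fiber components, so $\omega_F^{m+n-q}\neq 0$ for $q<m$, and the expansion of $(\omega_{\rm can}+e^{-t}\omega_F)^{m+n}$ contains terms with fewer than $m$ powers of $\omega_{\rm can}$, as in the paper's \eqref{verylong}. These extra terms are $O(e^{-t})$ relative to the leading term and the argument can likely absorb them, but the claim as written is incorrect. Your broader ideas --- use of Proposition~\ref{iamdeathincarnate} to produce the orthonormal $G_{j,p}$'s, Remark~\ref{miservi} to discard the $z^\alpha K_\alpha$ Taylor remainders, and the observation that base--pulled coefficients contribute only to $f_{t,0}$ --- all do appear in the paper's proof, but without the iteration the argument does not close.
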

The functions $f_{t,0},f_{t,i,p}$ also depend on $k$, but for simplicity we have not made this explicit in the notation. We will refer to the functions $G_{i,p,k}$ as the obstruction functions. By definition they only arise for $i\geq 2$. The Selection Theorem is a cornerstone of our arguments, and will be used twice in the proof of our main results by blowup. The first time it will be employed in a fast-forming singularity scenario (subcase A below), where we will only use the improvement of regularity that \eqref{stronzo} having a uniform limit implies that it converges locally smoothly. The second time we will be in a regular-forming scenario (subcase B below), and there the full strength of \eqref{stronza} will be used. The crucial input which makes Theorem \ref{ghost} work is the exponential decay assumption in \eqref{crocifisso4check}.%, \eqref{crocifisso4checkbase}.

\begin{rk}
It will be clear from the proof of the Selection Theorem \ref{ghost} that if instead of assuming that \eqref{stronzo} converges locally uniformly we just assume that it is locally uniformly bounded, then the conclusion \eqref{stronza} still holds, and the quantity in \eqref{stronzo} is locally uniformly bounded in all $C^k$ norms.
\end{rk}

\subsection{Proof of the Selection Theorem \ref{ghost}}
The proof is by induction on $j$, and since the base case $j=0$ and the next case $j=1$ are treated identically we present them together.
\subsubsection{The cases $j=0,1$}

In this case what we need to prove is simply that if \eqref{stronzo} converges locally uniformly to a limit then \eqref{stronza} holds with all $G_{i,p,k}=0$, and \eqref{stronzo} converges locally smoothly. By definition, when $j<2$ we have $\eta^\dagger_t=0$.
Since $\delta_te^{\frac{t}{2}}\to \infty$ and $j<2$, it follows that
$\delta_t^{-j-\alpha}e^{-t}\to 0.$ In particular, up to an error which is $O(\delta_t^{-j-\alpha}e^{-t})=o(1)$, we can replace $c_t$ in \eqref{stronzo} by $\binom{m+n}{n}$ (its leading term). Next, we can write with somewhat imprecise notation
\begin{equation}\label{imprecise}
\Sigma_t^*\omega_F=(\Sigma_t^*\omega_F)_{\mathbf{bb}}+(\Sigma_t^*\omega_F)_{\mathbf{bf}}+(\Sigma_t^*\omega_F)_{\mathbf{ff}}=:
e^{-t}\check\omega_{F,\mathbf{bb}}+e^{-\frac{t}{2}}\check\omega_{F,\mathbf{bf}}+\check\omega_{F,\mathbf{ff}},
\end{equation}
where the functions $\check\omega_{F,\mathbf{bb}},\check\omega_{F,\mathbf{bf}},\check\omega_{F,\mathbf{ff}}$ so defined are uniformly bounded on $B_{e^{\frac{t}{2}}}\times Y$. We then observe that
\begin{equation}\begin{split}
(\check\omega_t^\sharp)^{m+n}&=(\check{\omega}_{\rm can}+\Sigma_t^*\omega_F+\check{\eta}^\ddagger_t)^{m+n}=(\check{\omega}_{\rm can}+e^{-t}\check\omega_{F,\mathbf{bb}}+\check\omega_{F,\mathbf{ff}}+\check{\eta}^\ddagger_t)^{m+n}+O(e^{-t})\\
&=\binom{m+n}{n}(\check{\omega}_{\rm can}+e^{-t}\check\omega_{F,\mathbf{bb}}+\check{\eta}^\ddagger_t)^{m}(\Sigma_t^*\omega_F)_{\mathbf{ff}}^n+O(e^{-t}),
\end{split}\end{equation}
and so
\begin{equation}\begin{split}
\frac{\binom{m+n}{n}\check{\omega}_{\rm can}^m\wedge(\Sigma_t^*\omega_F)^n-(\check\omega_t^\sharp)^{m+n}}{(\check\omega_t^\sharp)^{m+n}}
&=\frac{\check{\omega}_{\rm can}^m\wedge(\Sigma_t^*\omega_F)^n-(\check{\omega}_{\rm can}+e^{-t}\check\omega_{F,\mathbf{bb}}+\check{\eta}^\ddagger_t)^{m}(\Sigma_t^*\omega_F)_{\mathbf{ff}}^n}{(\check{\omega}_{\rm can}+e^{-t}\check\omega_{F,\mathbf{bb}}+\check{\eta}^\ddagger_t)^{m}(\Sigma_t^*\omega_F)_{\mathbf{ff}}^n}+O(e^{-t})\\
&=\frac{\check{\omega}_{\rm can}^m-(\check{\omega}_{\rm can}+e^{-t}\check\omega_{F,\mathbf{bb}}+\check{\eta}^\ddagger_t)^m}{(\check{\omega}_{\rm can}+e^{-t}\check\omega_{F,\mathbf{bb}}+\check{\eta}^\ddagger_t)^m}+O(e^{-t})\\
&=\Sigma_t^*f_t+O(e^{-t}),
\end{split}\end{equation}
where the $O(e^{-t})$ is in the locally smooth topology and $f_t$ is some family of smooth functions pulled back from $B$. Passing to the hat picture, as described above, we let $\hat{f}_t=\Psi_t^*f_t$, so that $\Sigma_t^*f_t=\Xi_t^*\hat{f}_t$. When $j>0$ we have that $\hat{\eta}^\ddagger_t\to 0$ locally smoothly, and so it follows that $\hat{f}_t$ converge locally smoothly to zero, while when $j=0$ the constant forms $\hat{\eta}^\ddagger_t$ converge to some limit constant $(1,1)$-form on $B$, hence $\hat{f}_t$ converge locally smoothly to some limit. Multiplying this by $\delta_t^{-j-\alpha}$ and using that
$\delta_t^{-j-\alpha}e^{-t}=o(1)$ we see that indeed \eqref{stronza} holds with $G_{i,p,k}=0$, and of course the limit of \eqref{stronzo} is purely from the base. Lastly, since by assumption \eqref{stronzo} converges locally uniformly to $\mathcal{F}$, the same is true for $\delta_t^{-j-\alpha}\Sigma_t^*f_t$, and we need to prove that this convergence is locally smooth.

For this, we work in $\C^m$ in the hat picture, and expand in Taylor series
\begin{equation}\label{herrtaylor}
\hat{f}_t(\hat{z})=\sum_{|I|\leq j}\hat{a}_{I,t} \hat{z}^I+\sum_{|I|=j+1} \hat{b}_{I,t}(\hat{z}) \hat{z}^I=:\hat{p}_t(\hat{z})+\hat{r}_t(\hat{z}),
\end{equation}
where the Taylor coefficients $\hat{a}_{I,t}$ go to zero as $t\to\infty$ when $j>0$ and are bounded when $j=0$, and where the coefficients $\hat{b}_{I,t}(\hat{z})$ of the remainder $\hat{r}_t(\hat{z})$ are given by the usual formula
\begin{equation}
\hat{b}_{I,t}(\hat{z})=\frac{j+1}{I!}\int_0^1(1-s)^j D^I\hat{f}_t(s\hat{z})ds,
\end{equation}
from which we see that they go to zero locally smoothly as $t\to\infty$ when $j>0$ and are bounded when $j=0$. Then
\begin{equation}\label{resto}
(\Sigma_t^*f_t)(\check{z})=\hat{f}_t(\delta_t\check{z})=\sum_{|I|\leq j} \hat{a}_{I,t}\delta_t^{|I|}\check{z}^I+ \delta_t^{j+1}\sum_{|I|=j+1} \hat{b}_{I,t}(\delta_t \check{z}) \check{z}^I=(\Xi_t^*\hat{p}_t)(\check{z})+(\Xi_t^*\hat{r}_t)(\check{z}),
\end{equation}
from which we see that $\delta_t^{-j-\alpha}\Xi_t^*\hat{r}_t\to 0$ locally smoothly. Since by assumption $\delta_t^{-j-\alpha}\Sigma_t^*f_t$ converges locally uniformly to $\mathcal{F}$, it follows that the polynomials $\delta_t^{-j-\alpha}\Xi_t^*\hat{p}_t$ also converges locally uniformly to $\mathcal{F}$, and by Lemma \ref{mushy} this convergence is locally smooth as desired.

\subsubsection{The case $j\geq 2$: the initial list.}

As in the statement of the Selection Theorem \ref{ghost}, we suppose that the smooth functions $G_{i,p,k}$ with $2\leq i<j$ have already been selected on $B'\times Y$ (they are fiberwise orthonormal, and fiberwise orthogonal to the constants), and we now need to select the $G_{j,p,k}$'s. The process goes as follows: first we will select an initial list of $G_{j,p,k}$'s (this is the content of the current subsection). Then we will construct in \S \ref{iterr} a procedure that given these functions, it gives us some new functions to add to the list. This procedure will then need to be iterated a number of times (shrinking the ball $B'$ at each step without changing its notation) until the final list of functions is complete, and we then verify in \S \ref{veriff} that these satisfy the conclusion of the Selection Theorem \ref{ghost}.

We introduce a new index $\kappa$ to remember the number of the iteration that we are in, so initially $\kappa=0$. For each $\kappa$ we suppose we are given a ``list of obstruction functions at generations up to $\kappa$'' $G_{j,p,k}^{[q]}, 1\leq q\leq \kappa, 1\leq p\leq N^{[q]}_{j,k}$ which are smooth functions on $B'\times Y$ with fiberwise average zero and fiberwise orthonormal, with this list being the empty list when $\kappa=0$ (and $G_{j,p,k}^{[q]}$ depend only on $q$, not on $\kappa$). As in the statement of the Selection Theorem \ref{ghost}, these functions will be chosen independently of the auxiliary data $\delta_t,\eta^\ddagger_t,A^\sharp_{t,i,p,k},c_t$. Given these functions, and given also stretched polynomials $\check{A}^{\sharp,[q]}_{t,j,p,k}, 1\leq q\leq\kappa,$ of degree at most $j$ satisfying the assumption \eqref{linftyyy}, we construct as in \eqref{dugger}
\begin{equation}
\check{\gamma}^{\sharp,[q]}_{t,j,k}=\sum_{p=1}^{N^{[q]}_{j,k}}\ddbar\check{\mathfrak{G}}_{t,k}(\check{A}^{\sharp,[q]}_{t,j,p,k},\check{G}^{[q]}_{j,p,k}),
\end{equation}
for $1\leq q\leq\kappa$ (and set $\check{\gamma}^{\sharp,[0]}_{t,j,k}=0$), and let
\begin{equation}
\check{\omega}^{\sharp,[\kappa]}_t=\check{\omega}_{\rm can}+\Sigma_t^*\omega_F+\check{\gamma}^{\sharp}_{t,2,k}+\cdots+\check{\gamma}^\sharp_{t,j-1,k}+
\check{\gamma}^{\sharp,[1]}_{t,j,k}+\cdots+\check{\gamma}^{\sharp,[\kappa]}_{t,j,k}+\check{\eta}^\ddagger_t,
\end{equation}
which is a K\"ahler metric on $B\times Y$ for all $t$ large, using the bounds \eqref{crocifisso4check} for $\check{\gamma}^{\sharp,[q]}_{t,j,k}$. Of course, $\check{\omega}^{\sharp,[\kappa]}_t$ also depends on $j,k$, but we will not make this explicit in the notation for simplicity.

For each $\kappa\geq 0$ we will then consider the function
\begin{equation}\label{kulprit}
\mathcal{B}_t^{[\kappa]}:=\delta_t^{-j-\alpha}\left(c_t\frac{\check{\omega}_{\rm can}^m\wedge(\Sigma_t^*\omega_F)^n}{(\check\omega^{\sharp,[\kappa]}_t)^{m+n}}-1\right),
\end{equation}
which again implicitly also depends on $j,k$.
For convenience, we will say that a $t$-dependent function on $\Sigma_t^{-1}(B'\times Y)$ satisfies condition $(\star)$ if it is equal to
\begin{equation}\label{starr}
\delta_t^{-j-\alpha}\Sigma_t^*\left(f_{t,0}+\sum_{i=1}^N f_{t,i}h_i\right) + o(1),
\end{equation}
where the functions $f_{t,0},f_{t,i}$ are smooth and pulled back from $B'$ (and smooth in $t$), the functions $h_i$ are smooth on $B'\times Y$ with fiberwise average zero (with respect to $(\omega_F|_{\{z\}\times Y})^n$), the functions  $\hat{f}_{t,0}=\Psi_t^*f_{t,0}, \hat{f}_{t,i}=\Psi_t^*f_{t,i}$ converge locally smoothly to zero, and the $o(1)$ is a term that converges locally smoothly to zero. This definition is tailored to our desired conclusion in \eqref{stronza}.

\begin{rk}\label{usami}
Observe that if a $t$-dependent function satisfies $(\star)$ and it converges locally uniformly on $\C^m\times Y$ as $t\to\infty$, then this convergence is actually locally smooth. To see this, apply Proposition \ref{iamdeathincarnate} to these (with parameter $J=j+1$, and with the empty list of functions $H_k$), so up to shrinking $B'$ we may assume that the $h_i$ are fiberwise $L^2$ orthonormal (the errors coming from $\delta_t^{-j-\alpha}\Sigma_t^*(z^\alpha K_{i,\alpha})$ in \eqref{buddha} go to zero locally smoothly thanks to Remark \ref{miservi}, since $\delta_t^{-1}e^{-\frac{t}{2}}=o(1)$).
Then, as in \eqref{herrtaylor}, we expand in Taylor series in $\C^m$ in the hat picture
\begin{equation}\label{herrtaylor2}
\hat{f}_{t,i}(\hat{z})=\sum_{|I|\leq j}\hat{a}_{i,I,t} \hat{z}^I+\sum_{|I|=j+1} \hat{b}_{i,I,t}(\hat{z}) \hat{z}^I=:\hat{p}_{t,i}(\hat{z})+\hat{r}_{t,i}(\hat{z}),\quad 0\leq i\leq N,
\end{equation}
where the coefficients $\hat{a}_{i,I,t},\hat{b}_{i,I,t}(\hat{z})$ go to zero locally smoothly as $t\to\infty$, and as in \eqref{resto} we have that
$\delta_t^{-j-\alpha}\Xi_t^*\hat{r}_{t,i}\to 0$ locally smoothly. It follows that
\begin{equation}
\delta_t^{-j-\alpha}\Sigma_t^*\left(r_{t,0}+\sum_{i=1}^N r_{t,i}h_i\right)=\delta_t^{-j-\alpha}\Xi_t^*\left(\hat{r}_{t,0}+\sum_{i=1}^N \hat{r}_{t,i}\hat{h}_i\right)\to 0,
\end{equation}
locally smoothly as well, and so
\begin{equation}
\delta_t^{-j-\alpha}\Sigma_t^*\left(p_{t,0}+\sum_{i=1}^N p_{t,i}h_i\right)=\delta_t^{-j-\alpha}\Xi_t^*\left(\hat{p}_{t,0}+\sum_{i=1}^N \hat{p}_{t,i}\hat{h}_i\right)
\end{equation}
converges locally uniformly. Taking the fiberwise $L^2$ inner product against the constant $1$ and against each $h_i$ shows that $\delta_t^{-j-\alpha}\Sigma_t^*p_{t,i}, 0\leq i\leq N,$ also converges locally uniformly, and by Lemma \ref{mushy} this convergence is locally smooth. It then follows that $\delta_t^{-j-\alpha}\Sigma_t^*\left(p_{t,0}+\sum_{i=1}^N p_{t,i}h_i\right)$ converges locally smoothly, as desired.
\end{rk}

\begin{lemma}\label{solomon}
Suppose either $\kappa=0$ or $\kappa\geq 1$ and we have selected the functions $G_{j,p,k}^{[q]}$ as above for $1\leq q\leq \kappa.$ Then the function $\mathcal{B}_t^{[\kappa]}$ satisfies $(\star)$. Furthermore, we have
\begin{equation}\label{wulprit}
\delta_t^{j+\alpha}\mathcal{B}_t^{[\kappa]}=O(e^{-\alpha_0\frac{t}{2}})+o(1)_{\rm from\ base},
\end{equation}
where the term $O(e^{-\alpha_0\frac{t}{2}})$ is in $L^\infty_{\rm loc}$, while the last term is a function from the base which goes to zero locally smoothly.
\end{lemma}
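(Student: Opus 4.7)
The plan is to prove the case $j\geq 2$ (the cases $j\in\{0,1\}$ have just been handled) by decomposing $\check{\omega}^{\sharp,[\kappa]}_t = \check{\omega}^\flat_t + \Theta_t$, where $\check{\omega}^\flat_t := \check{\omega}_{\rm can} + \Sigma_t^*\omega_F + \check{\eta}^\ddagger_t$ is the ``base'' reference metric already analyzed in the $j\leq 1$ argument, and $\Theta_t := \sum_{i=2}^{j-1}\check{\gamma}_{t,i,k} + \sum_{q=1}^{\kappa}\check{\gamma}^{[q]}_{t,j,k}$ collects the obstruction corrections. The hypothesis \eqref{linftyyy} combined with the explicit structure of $\check{\mathfrak{G}}_{t,k}$ from Lemma \ref{Gstructure} shows, as in the argument leading to \eqref{crocifisso4check}, that $\|\D^\iota\Theta_t\|_{L^\infty(\check{B}_{C\delta_t^{-1}},\check{g}_t)} = O(e^{-\alpha_0 t/2})$ for every $\iota\geq 0$. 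Hence the binomial expansion of the denominator gives $(\check{\omega}^{\sharp,[\kappa]}_t)^{m+n} = (\check{\omega}^\flat_t)^{m+n}(1+X_t)$ with $X_t = O(e^{-\alpha_0 t/2})$ locally smoothly, and $(1+X_t)^{-1} = 1 - X_t + O(X_t^2)$.

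For the base factor $c_t\check{\omega}_{\rm can}^m\wedge(\Sigma_t^*\omega_F)^n/(\check{\omega}^\flat_t)^{m+n} - 1$, the calculation from the $j\leq 1$ case extracts the pulled-back leading piece $\Sigma_t^*\tilde{f}_t$ (with $\tilde{f}_t$ smooth on $B$ and $\to 0$ smoothly), leaving an error $E_t$ of size $O(e^{-t})$ locally smoothly. The key refinement is that all of the $(\check{z},\check{y})$-dependence of $E_t$ enters through $\Sigma_t^*$-pullback of background tensors on $B\times Y$ (essentially $\omega_F$ components, together with $\eta^\ddagger_t$ in the denominator), so each $\check{z}$-derivative produces a chain-rule factor $e^{-t/2}$. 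Taylor expanding $E_t$ in $\check{z}$ about $0$ through order $j$ and writing $\check{z}^I = e^{|I|t/2}\Sigma_t^*z^I$, each leading term rewrites as $\Sigma_t^*[f_{t,I}(z)\, h_I(y)]$, where $h_I$ belongs to a $t$-independent finite list of smooth $y$-profiles (derivatives of $(\omega_F)_{\mu\bar{\nu}}$ along the central fiber) and $f_{t,I}$ is a polynomial on $B$ whose scalar coefficient tends to zero smoothly (carrying the combined $e^{-t}$, $c_t-\binom{m+n}{n}$, and $\check{\eta}^\ddagger_t$ factors). The Taylor remainder receives a quantitative bound $O(e^{-(j+3)t/2})$ from the $\omega_F$-derived pieces plus an additive $o(e^{-t})$ correction from the $\check{\eta}^\ddagger_t$-derived pieces (using $\check{\eta}^\ddagger_t\to 0$ smoothly); both are $o(\delta_t^{j+\alpha})$ thanks to $\delta_t e^{t/2}\to\infty$, and therefore become $o(1)$ after multiplication by $\delta_t^{-j-\alpha}$.

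The $\Theta_t$-contribution is treated analogously. By Lemma \ref{Gstructure}, every summand of $\Theta_t$ is a $\ddbar$ of a finite sum $\sum_{\iota,\ell}e^{(\iota-2\ell)t/2}\,\Sigma_t^*\Phi_{\iota,\ell}(G_{i,p})\circledast\D^\iota\check{A}^\sharp_{t,i,p,k}$, in which $\D^\iota\check{A}^\sharp_{t,i,p,k}$ is a polynomial of degree $\leq j-\iota$ in $\check{z}$ with coefficients of size $O(\delta_t^\iota e^{-\alpha_0 t/2})$, and the $\Sigma_t^*\Phi_{\iota,\ell}(G_{i,p})$-factors again enjoy clean derivative-scaling in $\check{z}$. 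Taylor expanding those factors in $\check{z}$ to the appropriate order, wedging against forms from $\check{\omega}^\flat_t$, tracing, dividing by $(\check{\omega}^\flat_t)^{m+n}$, and combining with $(1+X_t)^{-1}$, we assemble $\delta_t^{j+\alpha}\mathcal{B}_t^{[\kappa]}$ as a finite sum of $\Sigma_t^*[f_{t,i}(z)h_i(z,y)]$-terms plus a locally smooth remainder of size $O(e^{-(1+\alpha_0+(j+1))t/2})$. Each $h_i$ comes from the $t$-independent finite list $\{\Phi_{\iota,\ell}(G_{i,p})\}$ and its $z$-derivative profiles on the central fiber; replacing $h_i$ by $h_i - \underline{h_i}$ and dumping the fiberwise averages into $f_{t,0}$ enforces fiber-mean-zero. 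Multiplying by $\delta_t^{-j-\alpha}$, all remainders are $o(1)$ locally smoothly because $\delta_t^{-j-\alpha}e^{-(1+\alpha_0+(j+1))t/2} = (\delta_t e^{t/2})^{-(j+\alpha)}\, e^{-(1+\alpha_0-\alpha)t/2}\to 0$, which establishes $(\star)$.

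The tail estimate \eqref{wulprit} falls out directly from the same expansion: $\delta_t^{j+\alpha}\mathcal{B}_t^{[\kappa]}$ is the \emph{unscaled} ratio, equal to $\Sigma_t^*\tilde{f}_t + (E_t - X_t) + O(X_t^2) + O(X_t(\Sigma_t^*\tilde{f}_t + E_t))$, whose first piece is exactly the $o(1)_{\rm from\ base}$ contribution (pulled back from $B$ and smoothly $\to 0$), while all remaining pieces are bounded by $O(e^{-\alpha_0 t/2}) + O(e^{-t}) \subset O(e^{-\alpha_0 t/2})$ since $0<\alpha_0<1$. The main obstacle is the combinatorial bookkeeping required to confirm that after all the Taylor and binomial expansions, every $y$-factor $h_i$ winds up in a fixed, $t$-independent finite list of smooth fiber-mean-zero functions on $B'\times Y$, with all $t$-dependence absorbed into the base coefficients $f_{t,i}$; this succeeds precisely because the only carriers of $y$-dependence are the $t$-independent form $\omega_F$ and the $t$-independent family $\{\Phi_{\iota,\ell}(G_{i,p})\}$ provided by Lemma \ref{Gstructure}.
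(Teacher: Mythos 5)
Your decomposition $\check{\omega}^{\sharp,[\kappa]}_t = \check{\omega}^\flat_t + \Theta_t$, the geometric-series expansion of $(1+X_t)^{-1}$, and the argument for \eqref{wulprit} all match the paper's strategy (your $\check{\omega}^\flat_t$ is the paper's $\check{\omega}^\square_t$, and your expansion is \eqref{werylong2}). The gap is in your treatment of $(\star)$ by Taylor expansion. You Taylor-expand the $O(e^{-t})$ error $E_t$ in $\check{z}$ and claim that the remainder coming from the $\check{\eta}^\ddagger_t$-derived pieces, which you correctly size as $o(e^{-t})$, is $o(\delta_t^{j+\alpha})$ ``thanks to $\delta_te^{t/2}\to\infty$.'' This inference is false exactly in the regime $j\ge 2$ that this lemma must cover: the hypothesis $\delta_te^{t/2}\to\infty$ gives only $\delta_t^{j+\alpha}\gg e^{-(j+\alpha)t/2}$, and when $j+\alpha>2$ one has $e^{-(j+\alpha)t/2}\ll e^{-t}$, so $\delta_t^{j+\alpha}$ can be far smaller than $e^{-t}$ (take $\delta_t=te^{-t/2}$; then $e^{-t}/\log t$ is $o(e^{-t})$ but not $o(\delta_t^{j+\alpha})$). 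The structural reason you cannot repair this bound is that, unlike the $\omega_F$-derived pieces (which do gain a factor $e^{-t/2}$ per $\check{z}$-derivative, giving $O(e^{-(j+3)t/2})$), the $\check{\eta}^\ddagger_t$-derived pieces do not: the hypothesis is only that $\check{\eta}^\ddagger_t\to 0$ locally smoothly in the stretched variable $\check{z}$, which gives no usable control on unstretched $z$-derivatives of $\eta^\ddagger_t$.

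The paper avoids this entirely by not Taylor expanding at all. Condition $(\star)$ is designed so that $h_i$ may be smooth functions on all of $B'\times Y$ (not merely on $Y$) and $f_{t,i}$ may be arbitrary smooth base functions converging to zero with no prescribed rate; the $(\star)$-structure is then read off directly from the rational-function expansion \eqref{verylong} of the Monge--Amp\`ere density, with the $e^{-qt}$ and $\eta^\ddagger_t$-derived factors absorbed as-is into the $f_{t,i}$ and the $t$-independent $\omega_{F,\mathbf{bb}}$-derived factors serving as the $h_i$, leaving no remainder whatsoever. Your approach attempts to also extract purely fiberwise profiles $h_I(y)$ and polynomial coefficients $f_{t,I}(z)$; this is stronger than $(\star)$ requires and is precisely what generates the Taylor remainder you cannot bound.
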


\begin{proof}
For clarity we will split the proof into two steps.
For ease of notation, let us write
\begin{equation}
\omega^\square_t=\omega_{\rm can}+e^{-t}\omega_F+\eta^\ddagger_t,
\end{equation}
which is such that $\hat{\omega}^\square_t$ is a K\"ahler metric on $B_R\times Y$ for all $R>0$ and $t$ large,
and
\begin{equation}
\mathcal{C}_t=\left(c_t\frac{\omega_{\rm can}^m\wedge(e^{-t}\omega_F)^n}{(\omega^\square_t)^{m+n}}-1\right).
\end{equation}
The first step is to show that $\delta_t^{-j-\alpha}\Sigma_t^*\mathcal{C}_t$ satisfies $(\star)$.
Observe that
\begin{equation}\begin{split}
(\omega_t^\square)^{m+n}&=(\omega_{\rm can}+e^{-t}\omega_F+\eta^\ddagger_t)^{m+n}=
\binom{m+n}{n}e^{-nt}(\omega_{\rm can}+e^{-t}\omega_{F,\mathbf{bb}}+\eta^\ddagger_t)^m\wedge\omega_{F,\mathbf{ff}}^n\\
&+\sum_{q=1}^m
e^{-(n+q)t}\binom{m+n}{n+q}(\omega_{\rm can}+e^{-t}\omega_{F,\mathbf{bb}}+\eta^\ddagger_t)^{m-q}\wedge(\omega_{F,\mathbf{ff}}+\omega_{F,\mathbf{bf}})^{n+q}\\
&=:\binom{m+n}{n}e^{-nt}\left[(\omega_{\rm can}+e^{-t}\omega_{F,\mathbf{bb}}+\eta^\ddagger_t)^m\wedge\omega_{F,\mathbf{ff}}^n+e^{-t}\mathcal{D}_t\right],
\end{split}\end{equation}
and recalling that $c_t=\binom{m+n}{n}(1+e^{-t}\ti{c}_t)$ with $\ti{c}_t$ smoothly convergent, we can write $\mathcal{C}_t$ as
\begin{equation}\label{verylong}\begin{split}
&\frac{c_te^{-nt}\omega_{\rm can}^m\wedge\omega_F^n-(\omega^\square_t)^{m+n}}{(\omega^\square_t)^{m+n}}
=\frac{(1+e^{-t}\ti{c}_t)\omega_{\rm can}^m\wedge\omega_{F,\mathbf{ff}}^n
-(\omega_{\rm can}+e^{-t}\omega_{F,\mathbf{bb}}+\eta^\ddagger_t)^{m}
\wedge\omega_{F,\mathbf{ff}}^{n}-e^{-t}\mathcal{D}_t}{(\omega_{\rm can}+e^{-t}\omega_{F,\mathbf{bb}}+\eta^\ddagger_t)^{m}
\wedge\omega_{F,\mathbf{ff}}^{n}+e^{-t}\mathcal{D}_t}\\
&=\frac{(1+e^{-t}\ti{c}_t)\omega_{\rm can}^m-(\omega_{\rm can}+e^{-t}\omega_{F,\mathbf{bb}}+\eta^\ddagger_t)^m}{(\omega_{\rm can}+e^{-t}\omega_{F,\mathbf{bb}}+\eta^\ddagger_t)^m}\left(1+\frac{e^{-t}\mathcal{D}_t}{(\omega_{\rm can}+e^{-t}\omega_{F,\mathbf{bb}}+\eta^\ddagger_t)^{m}
\wedge\omega_{F,\mathbf{ff}}^{n}}\right)^{-1}\\
&-\frac{e^{-t}\mathcal{D}_t}{(\omega_{\rm can}+e^{-t}\omega_{F,\mathbf{bb}}+\eta^\ddagger_t)^{m}
\wedge\omega_{F,\mathbf{ff}}^{n}}\left(1+\frac{e^{-t}\mathcal{D}_t}{(\omega_{\rm can}+e^{-t}\omega_{F,\mathbf{bb}}+\eta^\ddagger_t)^{m}
\wedge\omega_{F,\mathbf{ff}}^{n}}\right)^{-1}\\
&=\left(e^{-t}\ti{c}_t+\sum_{q=0}^m\sum_{p=0}^{m-1}\frac{m!}{p!q!(m-p-q)!}\frac{\omega_{\rm can}^{p}e^{-qt}\omega_{F,\mathbf{bb}}^q(\eta^\ddagger_t)^{m-p-q}}{\omega_{\rm can}^m}\right)\\
&\left(1+\sum_{q=0}^m\sum_{p=0}^{m-1}\frac{m!}{p!q!(m-p-q)!}\frac{\omega_{\rm can}^{p}e^{-qt}\omega_{F,\mathbf{bb}}^q(\eta^\ddagger_t)^{m-p-q}}{\omega_{\rm can}^m}\right)^{-1}
\left(1+\frac{e^{-t}\mathcal{D}_t}{(\omega_{\rm can}+e^{-t}\omega_{F,\mathbf{bb}}+\eta^\ddagger_t)^{m}
\wedge\omega_{F,\mathbf{ff}}^{n}}\right)^{-1}\\
&-\frac{e^{-t}\mathcal{D}_t}{(\omega_{\rm can}+e^{-t}\omega_{F,\mathbf{bb}}+\eta^\ddagger_t)^{m}
\wedge\omega_{F,\mathbf{ff}}^{n}}\left(1+\frac{e^{-t}\mathcal{D}_t}{(\omega_{\rm can}+e^{-t}\omega_{F,\mathbf{bb}}+\eta^\ddagger_t)^{m}
\wedge\omega_{F,\mathbf{ff}}^{n}}\right)^{-1},\\
\end{split}\end{equation}
and from this we can see that $\delta_t^{-j-\alpha}\Sigma_t^*\mathcal{C}_t$ satisfies $(\star)$ as follows. First note that the functions
\begin{equation}\label{culprit}
\frac{\omega_{\rm can}^{p}e^{-qt}\omega_{F,\mathbf{bb}}^q(\eta^\ddagger_t)^{m-p-q}}{\omega_{\rm can}^m},
\end{equation}
with $q=0$ are pulled back from $B'$, while when $q>0$ this is not the case, but they are visibly of the form
$f_{t,0}+\sum_{i=1}^N f_{t,i}h_i$ with the same notation as above, where the $\hat{f}_{t,0},\hat{f}_{t,i}$ converge smoothly to zero at least as $O(e^{-qt})$, and the functions $h_i$ have fiberwise average zero and do not depend on the choice of $\eta^\ddagger_t$. An analogous statement holds for
\begin{equation}
\frac{e^{-t}\mathcal{D}_t}{(\omega_{\rm can}+e^{-t}\omega_{F,\mathbf{bb}}+\eta^\ddagger_t)^{m}
\wedge\omega_{F,\mathbf{ff}}^{n}},
\end{equation}
and so expanding $(1+x)^{-1}$ using the geometric sum, we thus see that $\delta_t^{-j-\alpha}\Sigma_t^*\mathcal{C}_t$ satisfies $(\star)$, and that
\begin{equation}
\mathcal{C}_t=O(e^{-t})+o(1)_{\rm from\ base},
\end{equation}
where $O(e^{-t})$ is in $L^\infty_{\rm loc}$.

The second step is then to write (using now the check picture)
\begin{equation}\label{werylong2}\begin{split}
&\delta_t^{j+\alpha}\mathcal{B}_t^{[\kappa]}\\
&=\Sigma_t^*\mathcal{C}_t-(\Sigma_t^*\mathcal{C}_t+1)\left(\frac{(\check\omega_t^{\sharp,[\kappa]})^{m+n}-(\check\omega_t^\square)^{m+n}}{(\check\omega_t^{\sharp,[\kappa]})^{m+n}}\right)\\
&=\Sigma_t^*\mathcal{C}_t-(\Sigma_t^*\mathcal{C}_t+1)\left(\sum_{i=1}^{m+n}\binom{m+n}{i}\frac{(\check{\gamma}^\sharp_{t,2,k}+\cdots+\check{\gamma}^\sharp_{t,j-1,k}+
\check{\gamma}^{\sharp,[1]}_{t,j,k}+\cdots+\check{\gamma}^{\sharp,[\kappa]}_{t,j,k}
)^i\wedge(\check\omega^\square_t)^{m+n-i}}{(\check\omega_t^\square)^{m+n}}\right)\\
&\left(1+\sum_{i=1}^{m+n}\binom{m+n}{i}\frac{(\check{\gamma}^\sharp_{t,2,k}+\cdots+\check{\gamma}^\sharp_{t,j-1,k}+
\check{\gamma}^{\sharp,[1]}_{t,j,k}+\cdots+\check{\gamma}^{\sharp,[\kappa]}_{t,j,k})^i\wedge(\check\omega_t^\square)^{m+n-i}}{(\check\omega_t^\square)^{m+n}}\right)^{-1},
\end{split}\end{equation}
and for all the terms with $\check{\gamma}^\sharp_{t,2,k}+\cdots+\check{\gamma}^\sharp_{t,j-1,k}+
\check{\gamma}^{\sharp,[1]}_{t,j,k}+\cdots+\check{\gamma}^{\sharp,[\kappa]}_{t,j,k}$ we recall from \eqref{dugger} that we have
\begin{equation}
\check{\gamma}^\sharp_{t,i,k}=\sum_{p=1}^{N_{i,k}}\ddbar\check{\mathfrak{G}}_{t,k}(\check{A}^\sharp_{t,i,p,k},\check{G}_{i,p,k}),\quad
\check{\gamma}^{\sharp,[q]}_{t,j,k}=\sum_{p=1}^{N^{[q]}_{j,k}}\ddbar\check{\mathfrak{G}}_{t,k}(\check{A}^{\sharp,[q]}_{t,j,p,k},\check{G}^{[q]}_{j,p,k}),
\end{equation}
with the bounds \eqref{crocifisso4hatto}, \eqref{crocifisso4check}, where the approximate Green operator $\check{\mathfrak{G}}_{t,k}$ is given schematically by
\begin{equation}\label{blobus}
\check{\mathfrak{G}}_{t,k}(\check{A},\check{G}) = \sum_{\iota=0}^{j}\sum_{\ell=\lceil \frac{\iota}{2} \rceil}^{k} e^{-\left(\ell-\frac{\iota}{2}\right) t}(\check{\Phi}_{\iota,\ell}(\check{G})\circledast \D^\iota \check{A}).
\end{equation}
by Lemma \ref{Gstructure} (since $\check{A}$ here is a polynomial of degree at most $j$). Plugging this into \eqref{werylong2} and arguing as we did above for $\delta_t^{-j-\alpha}\Sigma_t^*\mathcal{C}_t$ reveals that $\mathcal{B}_t^{[\kappa]}$ satisfies $(\star)$ and that \eqref{wulprit} holds (using also \eqref{linftyyy0} to show that $\hat{f}_{t,0},\hat{f}_{t,i}\to 0$ locally smoothly).
\end{proof}

For our initial list $G_{j,p,k}^{[1]}$ we would then naively like to take the $h_i$'s in \eqref{starr} for $\mathcal{B}_t^{[0]}$.  More precisely, we apply Proposition \ref{iamdeathincarnate} with the functions $F_i$ there equal to the $h_i$'s, and with the $H_k$ there equal to the $G_{i,p,k}, 2\leq i<j$ and with parameter $J=j+1$. Up to shrinking the ball $B'$ we thus obtain our desired list $G_{j,p,k}^{[1]}$ so that these together with the $G_{i,p,k}, 2\leq i<j$ are fiberwise orthonormal and orthogonal to the constants, and the $h_i$'s lie in the fiberwise linear span of the $G_{i,p,k}, 2\leq i<j$ together with the $G_{j,p,k}^{[1]}$ and the constants. Indeed, the errors coming from $\delta_t^{-j-\alpha}\Sigma_t^*(z^\alpha K_{i,\alpha})$ in \eqref{buddha} go to zero locally smoothly thanks to Remark \ref{miservi}, since $\delta_t^{-1}e^{-\frac{t}{2}}=o(1)$, and so they can be moved into the $o(1)$ term in the expansion \eqref{starr} for $\mathcal{B}_t^{[0]}$.

\subsubsection{The iterative procedure}\label{iterr}

Here for a given $\kappa\geq 1$ we assume we are given the list $G_{i,p,k}$ for $2\leq i<j$ and we have have constructed the list $G_{j,p,k}^{[q]}$ for $1\leq q\leq \kappa$, and hence we have the function $\mathcal{B}_t^{[\kappa]}$ in \eqref{kulprit}. Then we have
\begin{equation}\label{werylong}\begin{split}
\delta_t^{j+\alpha}(\mathcal{B}_t^{[\kappa]}-\mathcal{B}_t^{[\kappa-1]})=&-(\delta_t^{j+\alpha}\mathcal{B}_t^{[\kappa-1]}+1)\left(\frac{(\check\omega_t^{\sharp,[\kappa]})^{m+n}-(\check\omega_t^{\sharp,[\kappa-1]})^{m+n}}{(\check\omega_t^{\sharp,[\kappa]})^{m+n}}\right)\\
&=-(\delta_t^{j+\alpha}\mathcal{B}_t^{[\kappa-1]}+1)\left(\sum_{i=1}^{m+n}\binom{m+n}{i}\frac{(\check{\gamma}^{\sharp,[\kappa]}_{t,j,k}
)^i\wedge(\check\omega_t^{\sharp,[\kappa-1]})^{m+n-i}}{(\check\omega_t^{\sharp,[\kappa-1]})^{m+n}}\right)\\
&\left(1+\sum_{i=1}^{m+n}\binom{m+n}{i}\frac{(\check{\gamma}^{\sharp,[\kappa]}_{t,j,k})^i\wedge(\check\omega_t^{\sharp,[\kappa-1]})^{m+n-i}}{(\check\omega_t^{\sharp,[\kappa-1]})^{m+n}}\right)^{-1}.
\end{split}\end{equation}
Let us thus first look in detail at the term
\begin{equation}\label{wlog}
\sum_{i=1}^{m+n}\binom{m+n}{i}\frac{(\check{\gamma}^{\sharp,[\kappa]}_{t,j,k})^i\wedge(\check\omega_t^{\sharp,[\kappa-1]})^{m+n-i}}{(\check\omega_t^{\sharp,[\kappa-1]})^{m+n}}.
\end{equation}
Thanks to \eqref{crocifisso4check} we can write
\begin{equation}
\check\omega_t^{\sharp,[\kappa-1]}=\check{\omega}_{\rm can}+(\Sigma_t^*\omega_F)_{\mathbf{ff}}+O(e^{-\alpha_0\frac{t}{2}})+o(1)_{\rm from\ base},
\end{equation}
(here $O(\cdot), o(1)$ are in the locally smooth topology on $\C^m\times Y$) and so
\begin{equation}\begin{split}
\frac{(\check{\gamma}^{\sharp,[\kappa]}_{t,j,k})^i\wedge(\check\omega_t^{\sharp,[\kappa-1]})^{m+n-i}}{(\check\omega_t^{\sharp,[\kappa-1]})^{m+n}}
=\frac{(\check{\gamma}^{\sharp,[\kappa]}_{t,j,k})^i\wedge(\check{\omega}_{\rm can}+(\Sigma_t^*\omega_F)_{\mathbf{ff}})^{m+n-i}}{\binom{m+n}{n}\check{\omega}_{\rm can}^m\wedge(\Sigma_t^*\omega_F)^n_{\mathbf{ff}}}
(1+O(e^{-\alpha_0\frac{t}{2}})+o(1)_{\rm from\ base}).
\end{split}\end{equation}
We thus study the terms
\begin{equation}\label{vlog}
\frac{(\check{\gamma}^{\sharp,[\kappa]}_{t,j,k})^i\wedge(\check{\omega}_{\rm can}+(\Sigma_t^*\omega_F)_{\mathbf{ff}})^{m+n-i}}{\check{\omega}_{\rm can}^m\wedge(\Sigma_t^*\omega_F)^n_{\mathbf{ff}}},
\end{equation}
with $1\leq i\leq m+n$.
To do this, we use again Lemma \ref{Gstructure} and write schematically (for $1\leq q\leq \kappa$)
\begin{equation}\label{blob}
\check{\gamma}^{\sharp,[q]}_{t,j,k} = \sum_{p=1}^{N^{[q]}_{j,k}}\sum_{\iota=0}^{j}\sum_{\ell=\lceil \frac{\iota}{2} \rceil}^{k} e^{-\left(\ell-\frac{\iota}{2}\right) t}i\partial\overline\partial(\check{\Phi}_{\iota,\ell}(\check{G}^{[q]}_{j,p,k})\circledast \D^\iota \check{A}^{\sharp,[q]}_{t,j,p,k}),
\end{equation}
with the key relation
\begin{equation}\label{sangennaro2}
\check{\Phi}_{0,0}(\check{G})=(\Delta^{\omega_F|_{\{\cdot\}\times Y}})^{-1}\check{G},
\end{equation}
proved in \eqref{sangennaro}.
Decompose \eqref{blob} schematically into the sum of $6$ pieces
\begin{equation}\label{blobbe}\begin{split}
\check{\gamma}^{\sharp,[q]}_{t,j,k}&= \sum_{p=1}^{N^{[q]}_{j,k}}\sum_{\iota=0}^{j}\sum_{\ell=\lceil \frac{\iota}{2} \rceil}^{k} e^{-\left(\ell-\frac{\iota}{2}\right) t}\bigg\{(i\partial\overline\partial\check{\Phi}_{\iota,\ell}(\check{G}^{[q]}_{j,p,k}))_{\mathbf{ff}}\circledast \D^\iota \check{A}^{\sharp,[q]}_{t,j,p,k}
+
(i\partial\overline\partial\check{\Phi}_{\iota,\ell}(\check{G}^{[q]}_{j,p,k}))_{\mathbf{bf}}\circledast \D^\iota \check{A}^{\sharp,[q]}_{t,j,p,k}\\
&+
(i\partial\overline\partial\check{\Phi}_{\iota,\ell}(\check{G}^{[q]}_{j,p,k}))_{\mathbf{bb}}\circledast \D^\iota \check{A}^{\sharp,[q]}_{t,j,p,k}
+(i\partial\check{\Phi}_{\iota,\ell}(\check{G}^{[q]}_{j,p,k}))_{\mathbf{b}}\circledast \db\D^\iota \check{A}^{\sharp,[q]}_{t,j,p,k}
+(i\partial\check{\Phi}_{\iota,\ell}(\check{G}^{[q]}_{j,p,k}))_{\mathbf{f}}\circledast \db\D^\iota\check{A}^{\sharp,[q]}_{t,j,p,k}\\
&+\check{\Phi}_{\iota,\ell}(\check{G}^{[q]}_{j,p,k})\circledast i\de\db\D^\iota \check{A}^{\sharp,[q]}_{t,j,p,k}\bigg\}=:\sum_{\iota=0}^{j}\sum_{\ell=\lceil \frac{\iota}{2} \rceil}^{k} {\rm I}^{[q]}_{\iota,\ell}+\cdots+{\rm VI}^{[q]}_{\iota,\ell}.
\end{split}\end{equation}
Again the terms ${\rm I}^{[q]}_{\iota,\ell},\dots,{\rm VI}^{[q]}_{\iota,\ell}$ also depend on $j,k$ but we do not make this explicit.
Then we claim that the term in \eqref{wlog} is equal to (setting $q=\kappa$)
\begin{equation}\label{qulprit}\begin{split}
&\sum_{i=1}^{m+n}\binom{m+n}{i}\frac{(\check{\gamma}^{\sharp,[q]}_{t,j,k})^i\wedge(\check\omega_t^{\sharp,[q-1]})^{m+n-i}}{(\check\omega_t^{\sharp,[q-1]})^{m+n}}\\
&=\frac{(m+n)}{\binom{m+n}{n}}\frac{{\rm I}^{[q]}_{0,0}\wedge(\check{\omega}_{\rm can}+(\Sigma_t^*\omega_F)_{\mathbf{ff}})^{m+n-1}}{\check{\omega}_{\rm can}^m\wedge(\Sigma_t^*\omega_F)^n_{\mathbf{ff}}}
(1+O(e^{-\alpha_0\frac{t}{2}})+o(1)_{\rm from\ base})+F^{[q]}_t\\
&=\left(\sum_{p=1}^{N^{[q]}_{j,k}}\check{G}^{[q]}_{j,p,k}\check{A}^{\sharp,[q]}_{t,j,p,k}\right)
(1+O(e^{-\alpha_0\frac{t}{2}})+o(1)_{\rm from\ base})+ F^{[q]}_t,
\end{split}
\end{equation}
where given any $R>0$ there is a $C>0$ such that
\begin{equation}\label{qulprit2}
\|F^{[q]}_t\|_{L^\infty(\check{B}_{R\delta_t^{-1}}\times Y)}\leq C\delta_t^{\alpha_0}\|I^{[q]}_{0,0}\|_{L^\infty(\check{B}_{R\delta_t^{-1}}\times Y,\check{g}_t)},
\end{equation}
holds for all $t\geq 0$.

To prove \eqref{qulprit}, we first show that there is $C$ such that for all $t$ and all $R\leq C\delta_t^{-1}$ and all $1\leq q\leq \kappa$ we have
\begin{equation}\label{catilina}
C^{-1}\sum_{p=1}^{N^{[q]}_{j,k}}\|\check{A}^{\sharp,[q]}_{t,j,p,k}\|_{L^\infty(\check{B}_R)}
\leq \|{\rm I}^{[q]}_{0,0}\|_{L^\infty(\check{B}_R\times Y,\check{g}_t)}\leq C \sum_{p=1}^{N^{[q]}_{j,k}}\|\check{A}^{\sharp,[q]}_{t,j,p,k}\|_{L^\infty(\check{B}_R)}.
\end{equation}
The second inequality is obvious, as for the first one observe first that for every $z\in \check{B}_R$ the $(1,1)$-forms $\{\ddbar (\check{\Phi}_{0,0}(\check{G}^{[q]}_{j,p,k})|_{\{z\}\times Y})\}_{p}$ on $\{z\}\times Y$ are $\mathbb{R}$-linearly independent since using \eqref{sangennaro2} and taking the fiberwise trace, a nontrivial linear dependence among these would give a nonexistent linear dependence among the $\{\check{G}^{[q]}_{j,p,k}|_{\{z\}\times Y}\}_{p}$. But then
\begin{equation}
\|{\rm I}^{[q]}_{0,0}\|_{L^\infty(\{z\}\times Y,\check{g}_t)}=\sup_{\{z\}\times Y} \left|\sum_{p}\check{A}^{\sharp,[q]}_{t,j,p,k}(z)\ddbar (\check{\Phi}_{0,0}(\check{G}^{[q]}_{j,p,k})|_{\{z\}\times Y})\right|_{\omega_F|_{\{z\}\times Y}}
\end{equation}
and the RHS is a norm on the finite-dimensional $\mathbb{R}$-vector space of $(1,1)$-forms on $\{z\}\times Y$ spanned by $\{\ddbar (\check{\Phi}_{0,0}(\check{G}^{[q]}_{j,p,k})|_{\{z\}\times Y})\}_{p}$, which is therefore uniformly equivalent to $\sum_{p}|\check{A}^{\sharp,[q]}_{t,j,p,k}(z)|$, and \eqref{catilina} follows since for each $1\leq r\leq N^{[q]}_{j,k}$ we clearly have $\|\check{A}^{\sharp,[q]}_{t,j,r,k}\|_{L^\infty(\check{B}_R)}\leq \sup_{z\in \check{B}_R}\sum_{p}|\check{A}^{\sharp,[q]}_{t,j,p,k}(z)|$.

Next, for $(\iota,\ell)\neq (0,0)$ we can apply Lemma \ref{mushy} to balls of radius $R\delta_t^{-1}$ and get
\begin{equation}\begin{split}
\|{\rm I}^{[q]}_{\iota,\ell}\|_{L^\infty(\check{B}_{R\delta_t^{-1}}\times Y,\check{g}_t)}&\leq Ce^{-\left(\ell-\frac{\iota}{2}\right) t}\sum_{p}\|\D^\iota\check{A}^{\sharp,[q]}_{t,j,p,k}\|_{L^\infty(\check{B}_{R\delta_t^{-1}})}\leq Ce^{-\left(\ell-\frac{\iota}{2}\right)t}\delta_t^\iota\sum_{p}\|\check{A}^{\sharp,[q]}_{t,j,p,k}\|_{L^\infty(\check{B}_{R\delta_t^{-1}})}\\
&\leq Ce^{-\left(\ell-\frac{\iota}{2}\right)t}\delta_t^\iota\|{\rm I}^{[q]}_{0,0}\|_{L^\infty(\check{B}_{R\delta_t^{-1}}\times Y,\check{g}_t)},
\end{split}\end{equation}
using also \eqref{catilina}.

Arguing in the same way, and using also that $\check{\Phi}_{\iota,\ell}(\check{G}^{[q]}_{j,p,k})=\Sigma_t^*\Phi_{\iota,\ell}(G^{[q]}_{j,p,k})$ gains a factor of $e^{-\frac{t}{2}}$ for each base differentiation, we see that for all $\iota,\ell$ as in \eqref{blob} we have
\begin{equation}\begin{split}
\|{\rm II}^{[q]}_{\iota,\ell}\|_{L^\infty(\check{B}_{R\delta_t^{-1}}\times Y,\check{g}_t)}&\leq Ce^{-\frac{t}{2}}e^{-\left(\ell-\frac{\iota}{2}\right) t}\sum_{p}\|\D^\iota\check{A}^{\sharp,[q]}_{t,j,p,k}\|_{L^\infty(\check{B}_{R\delta_t^{-1}})}\leq Ce^{-\left(\ell-\frac{\iota}{2}+\frac{1}{2}\right)t}\delta_t^\iota\|{\rm I}^{[q]}_{0,0}\|_{L^\infty(\check{B}_{R\delta_t^{-1}}\times Y,\check{g}_t)},
\end{split}\end{equation}
and similarly
\begin{equation}
\|{\rm III}^{[q]}_{\iota,\ell}\|_{L^\infty(\check{B}_{R\delta_t^{-1}}\times Y,\check{g}_t)}\leq Ce^{-\left(\ell-\frac{\iota}{2}+1\right)t}\delta_t^\iota\|{\rm I}^{[q]}_{0,0}\|_{L^\infty(\check{B}_{R\delta_t^{-1}}\times Y,\check{g}_t)},
\end{equation}
\begin{equation}
\|{\rm IV}^{[q]}_{\iota,\ell}\|_{L^\infty(\check{B}_{R\delta_t^{-1}}\times Y,\check{g}_t)}\leq Ce^{-\left(\ell-\frac{\iota}{2}+\frac{1}{2}\right)t}\delta_t^{\iota+1}\|{\rm I}^{[q]}_{0,0}\|_{L^\infty(\check{B}_{R\delta_t^{-1}}\times Y,\check{g}_t)},
\end{equation}
\begin{equation}
\|{\rm V}^{[q]}_{\iota,\ell}\|_{L^\infty(\check{B}_{R\delta_t^{-1}}\times Y,\check{g}_t)}\leq Ce^{-\left(\ell-\frac{\iota}{2}\right)t}\delta_t^{\iota+1}\|{\rm I}^{[q]}_{0,0}\|_{L^\infty(\check{B}_{R\delta_t^{-1}}\times Y,\check{g}_t)},
\end{equation}
\begin{equation}
\|{\rm VI}^{[q]}_{\iota,\ell}\|_{L^\infty(\check{B}_{R\delta_t^{-1}}\times Y,\check{g}_t)}\leq Ce^{-\left(\ell-\frac{\iota}{2}\right)t}\delta_t^{\iota+2}\|{\rm I}^{[q]}_{0,0}\|_{L^\infty(\check{B}_{R\delta_t^{-1}}\times Y,\check{g}_t)},
\end{equation}
and so putting all of these together, and recalling that $\delta_t\gg e^{-\frac{t}{2}},$ we see that
\begin{equation}\label{zatan}
\|\bullet^{[q]}_{\iota,\ell}\|_{L^\infty(\check{B}_{R\delta_t^{-1}}\times Y,\check{g}_t)}\leq C\delta_t\|{\rm I}^{[q]}_{0,0}\|_{L^\infty(\check{B}_{R\delta_t^{-1}}\times Y,\check{g}_t)},
\end{equation}
whenever $\bullet\neq {\rm I}$ or $(\iota,\ell)\neq (0,0)$. Thus
\begin{equation}\label{reallyuseless}
\|\check{\gamma}^{\sharp,[q]}_{t,j,k}-{\rm I}^{[q]}_{0,0}\|_{L^\infty(\check{B}_{R\delta_t^{-1}}\times Y,\check{g}_t)}\leq C\delta_t\|{\rm I}^{[q]}_{0,0}\|_{L^\infty(\check{B}_{R\delta_t^{-1}}\times Y,\check{g}_t)}.
\end{equation}
Equation \eqref{reallyuseless} thus shows that to leading order only ${\rm I}^{[q]}_{0,0}$ contributes to $\check{\gamma}^{\sharp,[q]}_{t,j,k}$. For all $i\geq 2$, we can use \eqref{crocifisso4check} and obtain
\begin{equation}\label{zatan2}
\|(\check{\gamma}^{\sharp,[q]}_{t,j,k})^i\|_{L^\infty(\check{B}_{R\delta_t^{-1}}\times Y,\check{g}_t)}\leq  C\delta_t^{\alpha_0}
\|\check{\gamma}^{\sharp,[q]}_{t,j,k}\|_{L^\infty(\check{B}_{R\delta_t^{-1}}\times Y,\check{g}_t)}\leq C\delta_t^{\alpha_0}\|{\rm I}^{[q]}_{0,0}\|_{L^\infty(\check{B}_{R\delta_t^{-1}}\times Y,\check{g}_t)},
\end{equation}
and combining \eqref{zatan} and \eqref{zatan2} with \eqref{blobbe} easily gives the first equality in \eqref{qulprit} and \eqref{qulprit2}. The second equality in \eqref{qulprit} then follows from
\begin{equation}\label{sangue}
\frac{(m+n)}{\binom{m+n}{n}}\frac{{\rm I}^{[q]}_{0,0}\wedge(\check{\omega}_{\rm can}+(\Sigma_t^*\omega_F)_{\mathbf{ff}})^{m+n-1}}{\check{\omega}_{\rm can}^m\wedge(\Sigma_t^*\omega_F)^n_{\mathbf{ff}}}=\sum_{p=1}^{N^{[q]}_{j,k}}\check{G}^{[q]}_{j,p,k}\check{A}^{\sharp,[q]}_{t,j,p,k},
\end{equation}
which is a consequence of \eqref{sangennaro2}.

Next, combining \eqref{crocifisso4check} with \eqref{qulprit}, \eqref{qulprit2}, \eqref{catilina} gives
\begin{equation}\label{qulprit3}
\left\|\sum_{i=1}^{m+n}\binom{m+n}{i}\frac{(\check{\gamma}^{\sharp,[q]}_{t,j,k})^i\wedge(\check\omega_t^{\sharp,[q-1]})^{m+n-i}}{(\check\omega_t^{\sharp,[q-1]})^{m+n}}\right\|_{L^\infty(\check{B}_{R\delta_t^{-1}}\times Y,\check{g}_t)}\leq  C\delta_t^{\alpha_0},
\end{equation}
so inserting \eqref{wulprit}, \eqref{qulprit}, \eqref{qulprit2}, \eqref{qulprit3} into \eqref{werylong} gives
\begin{equation}\label{panacea}
\mathcal{B}_t^{[\kappa]}=\mathcal{B}_t^{[\kappa-1]}-\delta_t^{-j-\alpha}
\left(\sum_{p=1}^{N^{[\kappa]}_{j,k}}\check{G}^{[\kappa]}_{j,p,k}\check{A}^{\sharp,[\kappa]}_{t,j,p,k}\right)(1+o(1)_{\rm from\ base})
+\delta_t^{-j-\alpha}E^{[\kappa]}_t+o(1),
\end{equation}
where the error function $E^{[\kappa]}_t$ satisfies the estimate
\begin{equation}\label{panacea2}
\|E^{[\kappa]}_t\|_{L^\infty(\check{B}_{R\delta_t^{-1}}\times Y)}\leq C\delta_t^{\alpha_0}\|I^{[\kappa]}_{0,0}\|_{L^\infty(\check{B}_{R\delta_t^{-1}}\times Y,\check{g}_t)}.
\end{equation}
Since by Lemma \ref{solomon} the functions $\mathcal{B}_t^{[\kappa]}$ and $\mathcal{B}_t^{[\kappa-1]}$ satisfy $(\star)$, so does the error term $\delta_t^{-j-\alpha}E^{[\kappa]}_t$. We thus consider the functions $h_i$ that arise in the expression \eqref{starr} for $\delta_t^{-j-\alpha}E^{[\kappa]}_t$. We then apply Proposition \ref{iamdeathincarnate} with the functions $F_i$ there equal to the $h_i$'s, and with the $H_k$ there equal to the $\check{G}_{i,p,k}, 2\leq i<j$ together with the $\check{G}_{j,p,k}^{[q]}, 1\leq q\leq\kappa$, and with parameter $J=j+1$. Up to shrinking the ball $B'$ again we thus obtain our desired list $\check{G}_{j,p,k}^{[\kappa+1]}$ so that these together with the $\check{G}_{i,p,k}, 2\leq i<j$  and the $\check{G}_{j,p,k}^{[q]}, 1\leq q\leq\kappa$ are fiberwise orthonormal and orthogonal to the constants, and the functions $h_i$ in the expression \eqref{starr} for $\delta_t^{-j-\alpha}E^{[\kappa]}_t$ lie in the fiberwise linear span of all these $\check{G}$'s and the constants.  Again, the errors coming from $\delta_t^{-j-\alpha}\Sigma_t^*(z^\alpha K_{i,\alpha})$ in \eqref{buddha} go to zero locally smoothly thanks to Remark \ref{miservi}, since $\delta_t^{-1}e^{-\frac{t}{2}}=o(1)$, and so they can be moved into the $o(1)$ term in the expansion \eqref{starr} for $\delta_t^{-j-\alpha}E^{[\kappa]}_t$. This is the step from $\kappa$ to $\kappa+1$ in the iterative procedure to select these functions.

Iterating \eqref{panacea} gives
\begin{equation}\label{panacea3}
\mathcal{B}_t^{[\kappa]}=\mathcal{B}_t^{[0]}-\delta_t^{-j-\alpha}
\left(\sum_{q=1}^\kappa\sum_{p=1}^{N^{[q]}_{j,k}}\check{G}^{[q]}_{j,p,k}\check{A}^{\sharp,[q]}_{t,j,p,k}\right)(1+o(1)_{\rm from\ base})
+\delta_t^{-j-\alpha}\sum_{q=1}^\kappa E^{[q]}_t+o(1),
\end{equation}
with
\begin{equation}\label{panacea4}
\|E^{[q]}_t\|_{L^\infty(\check{B}_{R\delta_t^{-1}}\times Y)}\leq C\delta_t^{\alpha_0}\|I^{[q]}_{0,0}\|_{L^\infty(\check{B}_{R\delta_t^{-1}}\times Y,\check{g}_t)},
\end{equation}
with \eqref{wulprit} implying in particular the crude estimate
\begin{equation}\label{lucium}
\|\mathcal{B}_t^{[0]}\|_{L^\infty(\check{B}_{R\delta_t^{-1}}\times Y)}\leq C\delta_t^{-j-\alpha},
\end{equation}
and by construction for each $1\leq q\leq\kappa$ the functions $h_i$ in the expression \eqref{starr} for $\delta_t^{-j-\alpha}E^{[q]}_t$ lie in the fiberwise linear span of the
$\check{G}_{i,p,k}, 2\leq i<j$ and $\check{G}_{j,p,k}^{[r]}, 1\leq r\leq q+1$
together with the constants.

\subsubsection{Iteration and conclusion}\label{veriff}

We now repeat the iterative step that we just described (shrinking also $B'$ at each step) until step $\ov{\kappa}:=\lceil\frac{j+\alpha}{\alpha_0}\rceil$ and then we stop, so the last set of functions which are added to the list are the $G_{j,p,k}^{[\ov{\kappa}+1]}$. Our choice of $\ov{\kappa}$ is made so that $\delta_t^{-j-\alpha}\delta_t^{(\ov{\kappa}+1)\alpha_0}\to 0$. The resulting $G_{j,p,k}^{[q]}$ with $1\leq q\leq\ov{\kappa}+1$ are then renamed simply $G_{j,p,k}$. These, together with the $G_{i,p,k}, 2\leq i<j$, are the obstruction functions that we seek. It remains to show that the statement of the Selection Theorem \ref{ghost} holds with this choice of obstruction functions. By definition, the quantity in \eqref{stronzo} equals $\mathcal{B}_t^{[\ov{\kappa}+1]}$, which satisfies $(\star)$ thanks to Lemma \ref{solomon}. From Remark \ref{usami} we see that if it converges locally uniformly, then it converges locally smoothly, which is the last claim in the Selection Theorem \ref{ghost}. We are then left with showing that if $\mathcal{B}_t^{[\ov{\kappa}+1]}$ converges locally uniformly, then \eqref{stronza} holds.

For this, we go back to \eqref{panacea3} setting $\kappa=\ov{\kappa}+1$, and write it as
\begin{equation}
\mathcal{B}_t^{[\ov{\kappa}+1]}=\mathcal{B}_t^{[0]}+\sum_{q=1}^{\ov{\kappa}+1}(D^{[q]}_t+\ti{E}^{[q]}_t)+o(1),
\end{equation}
where $o(1)$ is in the smooth topology and we have set
\begin{equation}
D^{[q]}_t=-\delta_t^{-j-\alpha}\left(\sum_{p=1}^{N^{[q]}_{j,k}}\check{G}^{[q]}_{j,p,k}\check{A}^{\sharp,[q]}_{t,j,p,k}\right)(1+o(1)_{\rm from\ base}),\quad \ti{E}^{[q]}_t=\delta_t^{-j-\alpha}E^{[q]}_t,
\end{equation}
which thanks to \eqref{catilina}, \eqref{sangue} and \eqref{panacea4} satisfy
\begin{equation}\label{lutto}
\|D^{[q]}_t\|_{L^\infty(\check{B}_{R\delta_t^{-1}}\times Y)}\leq C \delta_t^{-j-\alpha}\|{\rm I}^{[q]}_{0,0}\|_{L^\infty(\check{B}_{R\delta_t^{-1}}\times Y,\check{g}_t)},\quad
\|\ti{E}^{[q]}_t\|_{L^\infty(\check{B}_{R\delta_t^{-1}}\times Y)}\leq C\delta_t^{\alpha_0}\|D^{[q]}_t\|_{L^\infty(\check{B}_{R\delta_t^{-1}}\times Y)}.
\end{equation}
Thanks to our definition of the $G_{i,p,k}$, the quantity
\begin{equation}
\mathcal{B}_t^{[0]}+\sum_{q=1}^{\ov{\kappa}+1}D^{[q]}_t+\sum_{q=1}^{\ov{\kappa}}\ti{E}^{[q]}_t,
\end{equation}
is already of the desired form \eqref{stronza} (and lies in the fiberwise span of the $G_{i,p,k}, 2\leq i\leq j$), and so we will be done if we can show that
\begin{equation}\label{mainclaim}
\|\ti{E}^{[\ov{\kappa}+1]}_t\|_{L^\infty(\check{B}_{R\delta_t^{-1}}\times Y)}=o(1),
\end{equation}
for any given $R$. Indeed, we have shown earlier that $\ti{E}^{[\ov{\kappa}+1]}_t$ satisfies $(\star)$, so \eqref{mainclaim} together with Remark \ref{usami} would imply that it is $o(1)$ locally smoothly.

To prove \eqref{mainclaim}, for $1\leq q\leq \ov{\kappa}+1$ we denote by $P^{[q]}$ the fiberwise $L^2$ orthogonal projection operator onto the span of the $\check{G}_{j,p,k}^{[q]}$'s (although these functions had been renamed, here we revert to the previous notation as it will make the rest of the proof clearer). Since by assumption the $\mathcal{B}_t^{[\ov{\kappa}+1]}$ converges locally uniformly, so does $\mathcal{B}_t^{[0]}+\sum_{q=1}^{\ov{\kappa}+1}(D^{[q]}_t+\ti{E}^{[q]}_t)$ and so does $P^{[q]}$ applied to it, hence by definition
\begin{equation}\label{hook1}
\left\|P^{[1]}{\mathcal{B}}^{[0]}+D^{[1]}_t+P^{[1]}\sum_{q=1}^{\ov{\kappa}+1}\ti{E}^{[q]}_t\right\|_{L^\infty(\check{B}_{R\delta_t^{-1}}\times Y)}\leq C,
\end{equation}
and for $2\leq r\leq \ov{\kappa}+1,$
\begin{equation}\label{hook2}
\left\|D^{[r]}_t+P^{[r]}\sum_{q=r-1}^{\ov{\kappa}+1}\ti{E}^{[q]}_t\right\|_{L^\infty(\check{B}_{R\delta_t^{-1}}\times Y)}\leq C,
\end{equation}
where we used that by construction $P^{[r]}{\mathcal{B}}^{[0]}=0$ for $r\geq 2$ and $P^{[r]}\ti{E}^{[q]}_t=0$ for $q<r-1$ (since the functions $h_i$ that arise in the expression \eqref{starr} for $\ti{E}^{[q]}_t$ lie in the fiberwise linear span of the $\check{G}_{i,p,k}, 2\leq i<j$ together with the $\check{G}_{j,p,k}^{[s]}, 1\leq s\leq q+1$). We then claim that by induction on $1\leq r\leq \ov{\kappa}+1$ we have
\begin{equation}\label{hook3}
\left\|D^{[r]}_t\right\|_{L^\infty(\check{B}_{R\delta_t^{-1}}\times Y)}\leq C+C\delta_t^{(r-1)\alpha_0}\left\|\mathcal{B}_t^{[0]}\right\|_{L^\infty(\check{B}_{R\delta_t^{-1}}\times Y)}
+C\delta_t^{\alpha_0}\sum_{q=r+1}^{\ov{\kappa}+1}\left\|D^{[q]}_t\right\|_{L^\infty(\check{B}_{R\delta_t^{-1}}\times Y)},
\end{equation}
for all $t$ sufficiently large. Once this is proved then \eqref{mainclaim} follows by taking $r=\ov{\kappa}+1$
\begin{equation}
\left\|D^{[\ov{\kappa}+1]}_t\right\|_{L^\infty(\check{B}_{R\delta_t^{-1}}\times Y)}\leq C+C\delta_t^{\ov{\kappa}\alpha_0}\left\|\mathcal{B}_t^{[0]}\right\|_{L^\infty(\check{B}_{R\delta_t^{-1}}\times Y)},
\end{equation}
and using \eqref{lutto} and \eqref{lucium} we obtain
\begin{equation}
\left\|\ti{E}^{[\ov{\kappa}+1]}_t\right\|_{L^\infty(\check{B}_{R\delta_t^{-1}}\times Y)}\leq C\delta_t^{\alpha_0}+C\delta_t^{(\ov{\kappa}+1)\alpha_0}\left\|\mathcal{B}_t^{[0]}\right\|_{L^\infty(\check{B}_{R\delta_t^{-1}}\times Y)}\leq C\delta_t^{\alpha_0}
+C\delta_t^{(\ov{\kappa}+1)\alpha_0}\delta_t^{-j-\alpha}=o(1),
\end{equation}
as desired.

To prove \eqref{hook3} we first show the case $r=1$. For this, we use \eqref{hook1} and \eqref{lutto} to bound
\begin{equation}\begin{split}
\left\|D^{[1]}_t\right\|_{L^\infty(\check{B}_{R\delta_t^{-1}}\times Y)}&\leq C+\|P^{[1]}{\mathcal{B}}^{[0]}\|_{L^\infty(\check{B}_{R\delta_t^{-1}}\times Y)}+\left\|P^{[1]}\sum_{q=1}^{\ov{\kappa}+1}\ti{E}^{[q]}_t\right\|_{L^\infty(\check{B}_{R\delta_t^{-1}}\times Y)}\\
&\leq C+C\|{\mathcal{B}}^{[0]}\|_{L^\infty(\check{B}_{R\delta_t^{-1}}\times Y)}+C\sum_{q=1}^{\ov{\kappa}+1}\left\|\ti{E}^{[q]}_t\right\|_{L^\infty(\check{B}_{R\delta_t^{-1}}\times Y)}\\
&\leq C+C\|{\mathcal{B}}^{[0]}\|_{L^\infty(\check{B}_{R\delta_t^{-1}}\times Y)}+C\delta_t^{\alpha_0}\sum_{q=1}^{\ov{\kappa}+1}\left\|D^{[q]}_t\right\|_{L^\infty(\check{B}_{R\delta_t^{-1}}\times Y)},
\end{split}\end{equation}
and we may assume that $C\delta_t^{\alpha_0}\leq\frac{1}{2}$ so the term with $q=1$ in the sum on the RHS can be absorbed by the LHS, thus proving \eqref{hook3} with $r=1$. As for the induction step, for $r\geq 1$ we can use \eqref{hook2} with $r+1$ instead of $r$, together with \eqref{lutto} and the induction hypothesis \eqref{hook3} to bound
\begin{equation}\begin{split}
\left\|D^{[r+1]}_t\right\|_{L^\infty(\check{B}_{R\delta_t^{-1}}\times Y)}&\leq C+\left\|P^{[r+1]}\sum_{q=r}^{\ov{\kappa}+1}\ti{E}^{[q]}_t\right\|_{L^\infty(\check{B}_{R\delta_t^{-1}}\times Y)}\\
&\leq C+C\sum_{q=r}^{\ov{\kappa}+1}\left\|\ti{E}^{[q]}_t\right\|_{L^\infty(\check{B}_{R\delta_t^{-1}}\times Y)}\\
&\leq C+C\delta_t^{\alpha_0}\sum_{q=r}^{\ov{\kappa}+1}\left\|D^{[q]}_t\right\|_{L^\infty(\check{B}_{R\delta_t^{-1}}\times Y)}\\
&\leq C+C\delta_t^{\alpha_0}\left\|D^{[r]}_t\right\|_{L^\infty(\check{B}_{R\delta_t^{-1}}\times Y)}+C\delta_t^{\alpha_0}\left\|D^{[r+1]}_t\right\|_{L^\infty(\check{B}_{R\delta_t^{-1}}\times Y)}\\
&\ \ \ +C\delta_t^{\alpha_0}\sum_{q=r+2}^{\ov{\kappa}+1}\left\|D^{[q]}_t\right\|_{L^\infty(\check{B}_{R\delta_t^{-1}}\times Y)}\\
&\leq C+C\delta_t^{r\alpha_0}\left\|\mathcal{B}_t^{[0]}\right\|_{L^\infty(\check{B}_{R\delta_t^{-1}}\times Y)}
+C\delta_t^{\alpha_0}\left\|D^{[r+1]}_t\right\|_{L^\infty(\check{B}_{R\delta_t^{-1}}\times Y)}\\
&\ \ \ +C\delta_t^{\alpha_0}\sum_{q=r+2}^{\ov{\kappa}+1}\left\|D^{[q]}_t\right\|_{L^\infty(\check{B}_{R\delta_t^{-1}}\times Y)},
\end{split}\end{equation}
and again we may assume that $C\delta_t^{\alpha_0}\leq\frac{1}{2}$ and absorb the term $C\delta_t^{\alpha_0}\left\|D^{[r+1]}_t\right\|_{L^\infty(\check{B}_{R\delta_t^{-1}}\times Y)}$ on the RHS by the LHS, thus proving \eqref{hook3}. This completes the proof of \eqref{mainclaim} and hence also of the Selection Theorem \ref{ghost}.

\section{The asymptotic expansion theorem}\label{s:mainest}

This section contains the proof of the asymptotic expansion Theorem \ref{shutupandsuffer}, and is the main part of the paper.

\subsection{Statement of the asymptotic expansion}

Let us first explain the precise assumptions for the theorem. We work on $B\times Y$, where $B\subset \C^m$ is the unit ball and $Y$ is a closed real $2n$-fold. The product $B\times Y$ is equipped with a complex structure $J$ such that ${\rm pr}_B$ is $(J,J_{\C^m})$-holomorphic, and with a K\"ahler metric $\omega_X$, and such that the fibers $\{z\}\times Y$ are Calabi-Yau $n$-folds. When later in Section \ref{sectmain} we will use the expansion to prove Theorems A and B, we will have a compact Calabi-Yau manifold $X$ with fiber space structure $f$ as in Section \ref{sectsetup}, and $B$ is a ball in the base whose closure does not meet the critical locus $f(S)$, over which $f$ is smoothly trivial and so $f^{-1}(B)$ is diffeomorphic to $B\times Y$ and the complex structure of $X$ defines our complex structure $J$.

As in \eqref{pstwlrllk} we define a semi-Ricci-flat form $\omega_F$ on $B\times Y$ by $\omega_F=\omega_X+\ddbar\rho$ where $\rho_z=\rho(z,\cdot)$ is such that $\omega_X|_{\{z\}\times Y}+\ddbar\rho_z$ is Ricci-flat K\"ahler on $\{z\}\times Y$ and $\int_{\{z\}\times Y}\rho_z\omega_X^n=0$. We suppose that we have a K\"ahler metric $\omega_{\rm can}$ on $B$, and define
$\omega^\natural_t=\omega_{\rm can}+e^{-t}\omega_F$, which we assume is a K\"ahler metric for all $t\geq 0$. We also assume we have Ricci-flat K\"ahler metrics $\omega^\bullet_t,t\geq 0,$ on $B\times Y$, of the form $\omega^\bullet_t=\omega^\natural_t+\ddbar\psi_t$ which solve the complex Monge-Amp\`ere equation
\begin{equation}\label{ma}
(\omega^\bullet_t)^{m+n}=(\omega^\natural_t+\ddbar\psi_t)^{m+n}=c_te^{-nt}\omega_{\rm can}^m\wedge\omega_F^n,
\end{equation}
where $c_t$ is a polynomial in $e^{-t}$ of degree at most $m$ with constant coefficient $\binom{m+n}{n}$. We also crucially assume that
on $B\times Y$ we have
\begin{equation}\label{fucker}
C^{-1}\omega^\natural_t\leq \omega^\bullet_t\leq C\omega^\natural_t,
\end{equation}
and that
\begin{equation}\label{fucker2}
\psi_t\to 0,
\end{equation}
weakly as distributions as $t\to\infty$.

We are given $0\leq j\leq k$, and given any point $z\in B$, fix a ball $B_r(z)\subset B$ and apply the Selection Theorem \ref{ghost} to it. This produces for us a much smaller ball $B'=B_{r'}(z)\subset B$ and a list $G_{i,p,k},2\leq i\leq j, 1\leq p\leq N_{i,k}$ of smooth functions on $B'\times Y$. For each of these, we define
\begin{equation}\label{proietta}
P_{t,i,p,k}=P_{t,G_{i,p,k}},
\end{equation}
as in \eqref{proietto}. Recall also that we defined an approximate Green's operator $\mathfrak{G}_{t,k}$ in \S \ref{grz}, and that for a smooth function $F$ on $B\times Y$ we denote by $\underline{F}\in C^\infty(B)$ its fiberwise average with respect to $\omega_F|_{\{z\}\times Y}$.

Throughout the whole proof we will also fix a family of shrinking product Riemannian metrics $g_t$ on $B\times Y$, for example $g_t=g_{\C^m}+e^{-t}g_{Y,z_0}$ for some fixed $z_0\in B$. These will only be used to measure norms and distances, so the particular choice of $g_t$ will not matter. Lastly, the H\"older seminorms that we use are those defined in \S \ref{holderiano}.

\begin{theorem}\label{shutupandsuffer}
For all $0\leq j \leq k$, given any $z\in B$ we can find a ball $B'\subset B$ centered at $z$ and functions $G_{i,p,k}$ as above, such that on $B'\times Y$ we have a decomposition
\begin{equation}\label{decomponiti}
\omega^\bullet_t = \omega_t^\natural + \gamma_{t,0} + \gamma_{t,1,k} + \cdots + \gamma_{t,j,k} + \eta_{t,j,k},
\end{equation}
with the following properties. First of all, for every $0<\alpha<1$ and every smaller ball $B''\Subset B'$ there is $C>0$ such that on $B''\times Y$ we have for all $t\geq 0$:
\begin{equation}\label{eins}
\|\D^\iota\eta_{t,j,k}\|_{L^\infty(B''\times Y, g_t)}\leq Ce^{\frac{\iota-j-\alpha}{2}t}\;\,\text{for all}\;\,0\leq \iota\leq j,
\end{equation}
\begin{equation}\label{zwei}
[\D^j\eta_{t,j,k}]_{C^\alpha(B''\times Y,g_t)}\leq C.
\end{equation}
Furthermore, we have
\begin{equation}\label{fuenf}
\gamma_{t,0}=\ddbar\underline{\psi_{t}},\quad \gamma_{t,1,k}=0,\quad \gamma_{t,i,k} = \sum_{p=1}^{N_{i,k}}\ddbar\mathfrak{G}_{t,k}(A_{t,i,p,k},G_{i,p,k}) \;\,(2\leq i \leq j),
\end{equation}
where the
\begin{equation}\label{sechs}
A_{t,i,p,k} = P_{t,i,p,k}(\eta_{t,i-1,k}) \;\,(2\leq i \leq j),
\end{equation} are functions from the base, and we have the estimates
\begin{equation}\label{vier3}
\|\D^\iota\gamma_{t,0}\|_{L^\infty(B'',g_{\C^m})} = o(1) \;\,(0 \leq \iota \leq j),
\end{equation}
\begin{equation}\label{vier2}
\quad [\D^j\gamma_{t,0}]_{C^\alpha(B'',g_{\C^m})}\leq C,
\end{equation}
\begin{equation}\label{drei2}
\|\D^\iota A_{t,i,p,k}\|_{L^\infty(B'',g_{\C^m})}\leq Ce^{-(i+1+\alpha)(1-\frac{\iota}{j+2+\alpha})\frac{t}{2}}
\;\,(0 \leq \iota \leq j+2,\;\, 2 \leq i \leq j,\;\,1\leq p\leq N_{i,k}),
\end{equation}
\begin{equation}\label{drei3a}
\|\D^{j+2+\iota} A_{t,i,p,k}\|_{L^\infty(B'',g_{\C^m})}\leq Ce^{\iota\frac{t}{2}},
\;\,(0 \leq \iota \leq 2k,\;\,2 \leq i \leq j,\;\,1\leq p\leq N_{i,k}),
\end{equation}
\begin{equation}\label{sieben}\begin{split}
\sup_{x,x'\in B''\times Y}\sum_{i=2}^j \sum_{p=1}^{N_{i,k}}\sum_{\iota = -2}^{2k} &e^{-\iota\frac{t}{2}}\Bigg(\frac{|\D^{j+2+\iota} A_{t,i,p,k}(x)-\P_{x'x}(\D^{j+2+\iota}A_{t,i,p,k}(x'))|_{g_t}}{d^{g_t}(x,x')^\alpha}\Bigg)\leq C,
\end{split}
\end{equation}
where as usual the supremum in \eqref{sieben} is taken over pairs of points that are either horizontally or vertically joined.
\end{theorem}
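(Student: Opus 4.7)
The plan is to prove Theorem \ref{shutupandsuffer} by induction on $j$, with $k$ fixed throughout. The base case $j=0$ is essentially our earlier $C^{0,\alpha}$ result in \cite{HT2}: the obstruction list is empty, $\gamma_{t,0}=i\partial\bar\partial\underline{\psi_t}$, $\eta_{t,0,k}=\omega^\bullet_t-\omega^\natural_t-\gamma_{t,0}$, and the estimates follow from a contradiction/blowup argument combined with Theorem \ref{prop55} to handle the base-base component separately. For the inductive step, assuming the statement at level $j-1$, we first apply the selection Theorem \ref{ghost} on a slightly smaller ball to obtain the new obstruction functions $G_{j,p}$; then, consistently with \eqref{fuenf}--\eqref{sechs}, we define $A_{t,j,p,k}:=P_{t,j,p}(\eta_{t,j-1,k})$, $\gamma_{t,j,k}:=\sum_p i\partial\bar\partial\,\mathfrak{G}_{t,k}(A_{t,j,p,k},G_{j,p})$, and $\eta_{t,j,k}:=\eta_{t,j-1,k}-\gamma_{t,j,k}$. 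The content of the theorem is then the estimates \eqref{eins}--\eqref{sieben}, which are proved simultaneously by contradiction.

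Assume these estimates fail along a sequence $t_\ell\to\infty$. Normalize by the maximum $Q_\ell\to\infty$ of the various LHS's and, from the $C^\alpha$-seminorm realizing the maximum, extract two points $x_\ell,x'_\ell$ with $d_\ell:=d^{g_{t_\ell}}(x_\ell,x'_\ell)$. Comparing $d_\ell$ with $e^{-t_\ell/2}$ yields three cases: \emph{fast-forming} ($d_\ell e^{t_\ell/2}\to 0$), \emph{regular-forming} ($d_\ell e^{t_\ell/2}\sim 1$), and \emph{slow-forming} ($d_\ell e^{t_\ell/2}\to\infty$). In the first two, stretching by $d_\ell^{-1}$ or by $e^{t_\ell/2}$ via the diffeomorphisms $\Sigma_{t_\ell}$ of \eqref{sigmat}, the backgrounds converge locally smoothly to $\C^{m+n}$ or to $\C^m\times Y$ respectively. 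Dividing the Monge-Amp\`ere equation \eqref{ma} by $c_te^{-nt}\omega_{\rm can}^m\wedge\omega_F^n$ yields a rescaled PDE whose RHS is controlled by Theorem \ref{ghost}; known regularity and Liouville theorems for the Monge-Amp\`ere equation on $\C^{m+n}$ and on $\C^m\times Y$ (the latter as in \cite{HT2}), together with the orthogonality of the $G_{i,p}$ to each other and to fiber averages, force the normalized limit to vanish, contradicting $Q_\ell\to\infty$.

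The substantive new content is the slow-forming case, where the background collapses further to $\C^m$. Here Theorem \ref{prop55} immediately eliminates $\eta_{t,j,k}$ as a source of failure, since its $C^{j,\alpha}$-seminorm bound at scale $e^{-t/2}$ controls all lower $\D$-derivatives with precisely the decay rates \eqref{eins}. One is then reduced to showing that $x_\ell$ and $x'_\ell$ cannot collide; arguing by contradiction a second time, we split every component of \eqref{decomponiti} into its $j$-jet at the basepoint $x_\ell$ plus a Taylor remainder, with precise estimates on both pieces obtained via the interpolation Proposition \ref{l:higher-interpol} and the bounds on each component. The jets of $\gamma_{t,i,k}$ ($2\le i\le j$) are absorbed into the reference metric, producing the $\omega^\sharp_t$ of \eqref{mak}, and the LHS of \eqref{mak} is expanded as its linearization at $\omega^\sharp_t$ plus nonlinearities; the nonlinearities are absorbable at the current scale, while the RHS retains the almost-explicit structure \eqref{stronza} from Theorem \ref{ghost}. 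A further sub-blowup yields three sub-cases: the fast sub-case is closed by the Schauder estimates of Proposition \ref{sonomatematicofrancese} on balls in $\C^{m+n}$; the regular sub-case uses the excellent Taylor-remainder convergence to obtain a limit PDE on $\C^m\times Y$, where the orthogonality structure produced by Theorem \ref{ghost} and the approximate Green identity \eqref{totalfuckinglie}, together with \eqref{sangennaro}, force the Taylor remainders of $\gamma_{t,i,k}$ to be polynomials of degree $\le j$ and the trace of $\gamma_{t,0}+\eta_{t,j,k}$ a polynomial of degree $\le j$, contradicting Liouville; in the slow sub-case the linearized PDE itself shows that $\gamma_{t,i,k}$ ($i\ge 2$) contribute negligibly, Theorem \ref{prop55} kills $\eta_{t,j,k}$, and only $\gamma_{t,0}$ survives, eventually contradicting Liouville on $\C^m$.

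The main obstacle, and the genuine novelty relative to \cite{HT2}, is this slow-forming sub-blowup step: synchronizing the jet/Taylor split across all components of the decomposition, verifying that the quadratic-and-higher corrections coming from linearizing the complex Monge-Amp\`ere equation are genuinely negligible at the relevant scale, and arranging the bookkeeping between the uniform estimates \eqref{eins}--\eqref{drei2} and the non-uniform auxiliary estimates \eqref{drei3a}--\eqref{sieben} so that the induction closes. The auxiliary bounds must be strong enough for the high-order jets of $A_{t,i,p,k}$ to remain (thanks to Lemma \ref{mushy}-type rigidity) polynomials in the limit, yet weak enough to be reproducible at level $j+1$; and the orthogonality content of Theorem \ref{ghost} must pair with the fiberwise $L^2$-geometry in the limit to prevent any non-obstruction leakage from the RHS of the linearized Monge-Amp\`ere equation.
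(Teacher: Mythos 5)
Your proposal captures the paper's proof strategy: an induction on $j$ with the decomposition built from Theorem \ref{ghost} and the Green operator $\mathfrak{G}_{t,k}$, a primary blowup/contradiction argument with a trichotomy according to how the blowup scale compares to the fiber size $e^{-t/2}$ (limits $\C^{m+n}$, $\C^m\times Y$, $\C^m$), and a secondary (linear) blowup in the slow-forming case after splitting all solution components into jets (absorbed into $\omega_t^\sharp$) and Taylor remainders, with the sub-trichotomy closed by the Schauder estimate of Proposition \ref{sonomatematicofrancese}, by orthogonality from Theorem \ref{ghost} together with Liouville on $\C^m\times Y$, and by Theorem \ref{prop55} plus Liouville on $\C^m$. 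This is essentially the same route as the paper, with the only omitted ingredient of substance being the noncancellation inequality (Proposition \ref{noncancell}) which is what makes the fast sub-case close.
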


\begin{rk}
To help understand this statement, morally each piece $\gamma_{t,i,k}$ should be thought of as having $\de\db$-potential of the form $e^{-\frac{i+2}{2}t}$ $\times$ (fixed function on total space), so $\gamma_{t,i,k}$ would be then bounded in the shrinking $C^i(g_t)$ but not in the shrinking $C^{i,\alpha}(g_t)$ for any $\alpha>0$. In practice, the potential of $\gamma_{t,i,k}$ is not quite of this form, but recalling \eqref{e:Gstructure} we see that its ``leading term'' is indeed of the form $\sum_p A_{t,i,p,k}(\mathrm{fixed}\ \mathrm{function})_p$, and the $L^\infty$ bound for $A_{t,i,p,k}$ in \eqref{drei2} just barely falls short of $e^{-\frac{i+2}{2}t}$. Furthermore, as we will see in the proof of Theorem \ref{mthmB}, the functions $A_{t,2,p,k}$ do satisfy the ``optimal'' $L^\infty$ bound by $e^{-2t}$. As for the estimates in \eqref{sieben}, they give in particular bounds for the H\"older seminorms of derivatives of $A_{t,i,p,k}$ of order $j+3$ to $j+2+2k$ which blow up exponentially fast, and will be used in Section \ref{sectmain} to derive uniform $C^{j}(g_X)$ bounds for the pieces $\gamma_{t,i,k}, 2\leq i\leq j$, since these are defined using the approximate Green's operator $\mathfrak{G}_{t,k}$ that involves a high number of derivatives (cf. Lemma \ref{Gstructure}).
\end{rk}
\begin{rk}
Observe also that when $j=k=0$ the statement of theorem is analogous to our earlier work \cite[Thm 1.4]{HT2}, although not exactly identical since a different parallel transport is used there. In this very special case, the proof of Theorem \ref{shutupandsuffer} gives a more streamlined and improved proof of that result.
\end{rk}

\begin{rk}
When $j\leq 1$, the decomposition \eqref{decomponiti} simplifies, since $\gamma_{t,1,k}=0$, and the theorem just says that $\omega^\bullet_t-\omega^\natural_t$ is bounded in a shrinking $C^{j,\alpha}$-type norm, and goes to zero in shrinking $C^j$. However, already $\gamma_{t,2,k}$ does not vanish in general (see the proof of Theorem \ref{mthmB}, and the corresponding discussion in the Introduction), and so this strong statement that holds for $j=0,1,$ fails for $j\geq 2$. This is one of the main reasons why the statement and proof of Theorem \ref{shutupandsuffer} are complicated.
\end{rk}

The proof of Theorem \ref{shutupandsuffer} occupies the rest of this section (indeed, essentially all of the remainder of the paper), and will be divided into subsections.

\subsection{Set-up of an inductive scheme, and initial reductions}

We start with the given point $z\in B$ with a ball $B'\subset B$ centered at $z$. For a given $k = 0,1,2,\ldots$ the proof proceeds by induction on $j = 0,1,\ldots,k$.
We will treat both the base case and the induction step at once, so, given $k$, we work at some $0 \leq j \leq k$ where if $j>0$
we assume we already have the decomposition \eqref{decomponiti} at step $j-1$ which satisfies \eqref{eins}---\eqref{sieben}, and we aim to refine the decomposition \eqref{decomponiti} to step $j$ (and define it for $j=0$) and prove \eqref{eins}---\eqref{sieben} if $j>0$ and \eqref{eins}, \eqref{zwei}, \eqref{vier3}, \eqref{vier2} if $j=0$.

First we give the details on how the decomposition \eqref{decomponiti} is constructed.  When $j=0$ we simply set $\gamma_{t,0}=\ddbar\underline{\psi_{t}}, \eta_{t,0,k}=\ddbar(\psi_t-\underline{\psi_{t}})$, so that \eqref{decomponiti} holds. For $j>0$ we assume by induction that we have the decomposition \eqref{decomponiti} at step $j-1$
\begin{equation}\label{decomponitiolde}
\omega^\bullet_t = \omega_t^\natural + \gamma_{t,0} + \gamma_{t,2,k} + \cdots + \gamma_{t,j-1,k} + \eta_{t,j-1,k},
\end{equation}
on some ball $B'$ centered at $z$, and we wish to decompose $\eta_{t,j-1,k}=\eta_{t,j,k}+\gamma_{t,j,k}$ on some possibly smaller ball.

As indicated above, when $j>2$ up to shrinking the ball $B'$ we may assume that we have already selected smooth functions $G_{i,p,k}, 2\leq i\leq j-1, 1\leq p\leq N_{i,k}$, which are fiberwise $L^2$ orthonormal, and have fiberwise average zero. When $j\geq 2$ we then apply the Selection Theorem \ref{ghost} which up to shrinking $B'$ gives us a list of functions $G_{j,p,k}, 1\leq p\leq N_{j,k}$ on $B'\times Y$ with fiberwise average zero and so that the $G_{i,p,k}, 2\leq i\leq j$ are all fiberwise $L^2$ orthonormal, and the conclusion of the Selection Theorem \ref{ghost} holds. For ease of notation, we will rename $B'$ to $B$ in all of the following.

With these functions, we have the projections $P_{t,i,p,k}=P_{t,G_{i,p,k}}$ as in \eqref{proietto}, and we then define
$A_{t,j,p,k} = P_{t,j,p,k}(\eta_{t,j-1,k})$ and $\gamma_{t,j,k} = \sum_{p=1}^{N_{j,k}}\ddbar\mathfrak{G}_{t,k}(A_{t,j,p,k},G_{j,p,k})$, as in \eqref{fuenf}, \eqref{sechs}, where $\mathfrak{G}_{t,k}$ was constructed in Section \ref{grz}. Lastly, we define $\eta_{t,j,k}=\eta_{t,j-1,k}-\gamma_{t,j,k},$ so that \eqref{decomponiti} holds at step $j$, together with \eqref{fuenf}, \eqref{sechs}.

The goal is thus to prove the estimates \eqref{eins}, \eqref{zwei}, \eqref{vier2}, \eqref{drei2}, \eqref{drei3a} and \eqref{sieben} if $j>0$, and estimates \eqref{eins}, \eqref{zwei}, \eqref{vier3} and \eqref{vier2} if $j=0$.

\subsubsection{The bounds that hold thanks to the induction hypothesis}

First, we need to use the induction hypothesis \eqref{eins} to obtain the uniform bound in \eqref{drei2} with $\iota=0$ for the functions $A_{t,i,p,k}$ ($2 \leq i \leq j \leq k$), which was defined by \eqref{sechs}. Of course, we only need to establish this bound for $i=j$, since those for $2\leq i\leq j-1$ are already known to hold by induction, but since it makes no difference here we work with an arbitrary $2\leq i\leq j$.
Note that for any $(1,1)$-form $\alpha$ on the total space,
\begin{equation}\label{veryclever}
\|P_{t,i,p,k}(\alpha)\|_{L^\infty(B)}\leq Ce^{-t}\|\alpha\|_{L^\infty(B\times Y,g_t)}.
\end{equation}
Indeed, if we call $K=\|\alpha\|_{L^\infty(B\times Y,g_t)}$ then on $B\times Y$ we have
\begin{equation}|\alpha_{\mathbf{ff}}| \leq CKe^{-t},\end{equation}
\begin{equation}|\alpha_{\mathbf{bf}}| \leq CKe^{-\frac{t}{2}},\end{equation}
\begin{equation}|\alpha_{\mathbf{bb}}| \leq CK,\end{equation}
which can be then fed into \eqref{proietto} to obtain \eqref{veryclever}.

By induction \eqref{eins} (and its analogs for all $2\leq i \leq j-1$) we know that
\begin{equation}\|\eta_{t,i-1,k}\|_{L^\infty(B\times Y,g_t)}\leq C e^{\frac{-i+1-\beta}{2}t},\end{equation}
for all $\beta<1$. For the rest of the proof, it suffices to take $\beta=\alpha$, say, which we will do from now on.
Then \eqref{veryclever} gives
\begin{equation}\|P_{t,i,p,k}(\eta_{t,i-1,k})\|_{L^\infty(B)}\leq Ce^{\frac{-i-1-\alpha}{2}t}.\end{equation}
Putting these together we get
\begin{equation}\label{uniforme}
\|A_{t,i,p,k}\|_{L^\infty(B)}\leq Ce^{\frac{-i-1-\alpha}{2}t},
\end{equation}
for all $2 \leq i \leq j$, $1\leq p\leq N_{i,k}$.

\subsubsection{Proving \eqref{eins}, \eqref{vier3}, \eqref{drei2} and \eqref{drei3a}}

The logic now is the following. Suppose first that we have proved \eqref{zwei}, \eqref{vier2} and \eqref{sieben}, and use them to quickly establish \eqref{eins}, \eqref{vier3}, \eqref{drei2}, \eqref{drei3a} for $j>0$, and \eqref{eins}, \eqref{vier3} for $j=0$. And then, and this is the main task that will be carried out in the later subsections, we shall establish
that \eqref{zwei}, \eqref{vier2} and \eqref{sieben} do hold (this last one is of course vacuous when $j=0,1$).

First, let us dispense with the case $j=0$, where by assumption we assume that \eqref{zwei} and \eqref{vier2} hold. Then Theorem \ref{prop55} applies to $\eta_{t,0,k}$ and it shows that \eqref{eins} follows from \eqref{zwei}. Next, recall that $\gamma_{t,0}=\ddbar\underline{\psi_{t}}$ and $\psi_t\to 0$ in the weak topology by \eqref{fucker2}. This implies that $\underline{\psi_{t}}\to 0$ weakly, and hence also
\begin{equation}\label{fucker3}
\gamma_{t,0}\to 0,
\end{equation}
weakly. On the other hand, the bound \eqref{fucker} and the fiber integration argument in \cite[p.436]{To} give
\begin{equation}\label{caz4}
\|\gamma_{t,0}\|_{L^\infty(B,g_{\C^m})}\leq C.
\end{equation}
Combining this with \eqref{vier2} then gives a uniform $C^{\alpha}$ bound for $\gamma_{t,0}$ (up to shrinking $B$) and so $\gamma_{t,0}\to 0$ in $C^{\alpha'}$ for any $\alpha'<\alpha$, which proves \eqref{vier3}, and completes our discussion of the case when $j=0$.

We assume then that $j>0$. First observe that the exact same arguments show that \eqref{eins} follows from \eqref{zwei} and Theorem \ref{prop55} applied to $\eta_{t,j,k}$, and that \eqref{vier3} follows from \eqref{vier2} and \eqref{fucker}, \eqref{fucker2}.

The remaining task is then to prove \eqref{drei2}, \eqref{drei3a}, making use of \eqref{sieben} together with \eqref{uniforme}, that we have just established holds thanks to induction hypotheses.

First, we discuss \eqref{drei2}. We feed the $\D^{j+2}$ part of \eqref{sieben} and the uniform bound \eqref{uniforme} into the interpolation inequality in Proposition \ref{l:higher-interpol} (here $R$ is the radius of $B$, which is comparable to $1$) and obtain
\begin{equation}\sum_{\iota=1}^{j+2}(R-\rho)^\iota \|\D^\iota A_{t,i,p,k}\|_{L^\infty(B_\rho,g_{\C^m})}\leq C(R-\rho)^{j+2+\alpha}+Ce^{\frac{-i-1-\alpha}{2}t},\end{equation}
and choosing
\begin{equation}R-\rho=e^{-\frac{i+1+\alpha}{j+2+\alpha}\frac{t}{2}}.\end{equation}
proves \eqref{drei2}, and it also in particular gives a uniform $C^{j+2,\alpha}$ bound on the $A_{t,i,p,k}$'s. Note that here we have only used the $\D^{j+2}$ part of \eqref{sieben}. It is possible to do slightly better for $\iota \leq j+1$ by using the full strength of \eqref{sieben} and some more elaborate interpolations. While similar improvements will in fact be important below, we do not pursue them here because they are more awkward to state and don't make enough of a difference to the statement of Theorem \ref{shutupandsuffer}.

Finally, we discuss \eqref{drei3a}.
From \eqref{sieben} we get in particular that
\begin{equation}[\D^{j+2+\iota}A_{t,i,p,k}]_{C^\alpha(B,g_{\C^m})}\leq Ce^{\frac{\iota}{2}t},\quad 0\leq \iota \leq 2k,\end{equation}
and doing interpolation between this and $|\D^{j+2} A_{t,i,p,k}|=o(1)$ (which is a slightly suboptimal consequence of \eqref{drei2}) we get in particular the bound
\begin{equation}\label{stroh}
\|\D^{j+2+\iota}A_{t,i,p,k}\|_{L^\infty(B',g_{\C^m})}=o(e^{\frac{\iota}{2}t}),\quad  0\leq \iota\leq 2k,
\end{equation}
which is even stronger than the statement of \eqref{drei3a}.

\subsubsection{Setting up the proof of \eqref{zwei}, \eqref{vier2} and \eqref{sieben}}

Thanks to the previous section, it remains to establish the estimates \eqref{zwei}, \eqref{vier2} and \eqref{sieben}. This is the main task and will occupy almost all the rest of the paper.

To this effect, suppose we had a uniform bound on the sum of the difference quotients
\begin{equation}\label{putative}\begin{split}
\mathcal{D}_{t,j}(x,x')&:=\sum_{i=2}^j \sum_{p=1}^{N_{i,k}}\sum_{\iota = -2}^{2k} \Bigg( e^{-\iota\frac{t}{2}}\frac{|\D^{j+2+\iota}A_{t,i,p,k}(x)-\P_{x'x}(\D^{j+2+\iota}A_{t,i,p,k}(x'))|_{g_{t}(x)}}{d^{g_{t}}(x,x')^\alpha}\Bigg)\\
&+\frac{|\D^j\gamma_{t,0}(x)-\P_{x'x}(\D^j\gamma_{t,0})(x'))|_{g_{t}(x)}}{d^{g_{t}}(x,x')^\alpha}+\frac{|\D^j\eta_{t,j,k}(x)-\P_{x'x}(\D^j\eta_{t,j,k})(x'))|_{g_{t}(x)}}{d^{g_{t}}(x,x')^\alpha}.
\end{split}\end{equation}
for all $x\neq x'\in B''\times Y$ which are either horizontally or vertically joined, and all $t\geq 0$, then it is clear that the bounds \eqref{zwei}, \eqref{vier2} and \eqref{sieben} hold. Observe that for each fixed $t$ we have $\lim_{x'\to x}\mathcal{D}_{t,j}(x,x')=0$ since all the objects are smooth.

\subsection{Set-up of the primary (nonlinear) blowup argument}\label{mavaffa}

Thus, to complete the proof of Theorem \ref{shutupandsuffer} it suffices to prove a uniform bound for \eqref{putative}. We shall assume from now on that $B=B_1$ w.r.t. the Euclidean metric $\omega_{\C^m}$, and by replacing $B$ with a slightly smaller ball we will assume without loss that all our objects are defined (and satisfy the assumptions of Theorem \ref{shutupandsuffer}) on a concentric ball $B_{1+\sigma}$ for some $\sigma>0$. To bound \eqref{putative}, define a function $\mu_{j,t}$ on $B \times Y$ (with usual variables $x=(z,y)$) by
\begin{align}
\mu_{j,t}(x) = ||z|-1|^{j+\alpha} \sup_{\substack{x'=(z',y')\ s.t.\ |z'-z|<\frac{1}{4}||z|-1|\\x'\ \mathrm{and}\ x\ \mathrm{horizontally}\ \mathrm{or}\ \mathrm{vertically}\ \mathrm{joined}}} \mathcal{D}_{t,j}(x,x').
\end{align}
Then it is clearly sufficient to prove the inequality
\begin{align}\label{strongk2}
\max_{B\times Y} \mu_{j,t} \leq C.
\end{align}

If \eqref{strongk2} is false, then
$\limsup_{t\to\infty} \max_{B\times Y} \mu_{j,t}=\infty.$
For simplicity of notation we will assume that $\lim_{t\to\infty}\max_{B\times Y} \mu_{j,t}=\infty$ (usually this will be true only along some sequence $t_i\to \infty$). Choose $x_t = (z_t, y_t) \in B\times Y$ such that the maximum of $\mu_{j,t}$ is achieved at $x_t$, and define $\lambda_t$ by
\begin{equation}\lambda_t^{j+\alpha}=\sup_{\substack{x'=(z',y')\ s.t.\ |z'-z_t|<\frac{1}{4}||z_t|-1|\\x'\ \mathrm{and}\ x_t\ \mathrm{horizontally}\ \mathrm{or}\ \mathrm{vertically}\ \mathrm{joined}}}\mathcal{D}_{t,j}(x_t,x').\end{equation}
Let us note for later purposes that after passing to a subsequence,
\begin{align}\label{whoops666}
z_t \to z_\infty \in \overline{B}, \;\, y_t \to y_\infty \in Y.
\end{align}
Now $\lambda_t \to \infty$ since otherwise $\max_{B\times Y} \mu_{j,t}$ would be uniformly bounded. Let us also choose any point
$x_t' = (z_t', y_t')$  with $|z'_t-z_t|\leq \frac{1}{4}||z_t|-1|$ realizing the sup in the definition of $\lambda_t$. Since  $\lim_{x'\to x}\mathcal{D}_{t,j}(x,x')=0$, we may assume without loss that $x_t'\neq x_t$.
Consider the diffeomorphisms
\begin{equation}\Psi_t:B_{\lambda_t}\times Y\to B\times Y, \;\, (z,y) = \Psi_t(\hat{z},\hat{y})=(\lambda_t^{-1}\hat{z},\hat{y}),\end{equation}
pull back any contravariant $2$-tensor via $\Psi_t^*$, rescale it by $\lambda_t^2$ and denote the new object with a hat, for example $\hat{\omega}^\bullet_t=\lambda_t^2\Psi_t^*\omega_t^\bullet$, etc. Define also $\hat{A}_{t,i,p,k} = \lambda_t^{2}\Psi_t^*A_{t,i,p,k}, \;\, \hat{x}_t=\Psi_t^{-1}(x_t), \;\, \hat{x}'_t=\Psi_t^{-1}(x'_t).$
We thus have
\begin{align}
\label{caz01}\hat{g}_t = g_{\C^m} +  \lambda_t^{2}e^{-t}g_{Y,z_0}, \;\,
\hat{\omega}_t^\natural = \hat{\omega}_{\rm can} +  \lambda_t^{2}e^{-t}\Psi_t^*\omega_F,\;\,
C^{-1}\hat{\omega}^\natural_t \leq \hat{\omega}^\bullet_t \leq C\hat{\omega}^\natural_t.
\end{align}
Also, from \eqref{caz4} we get
\begin{equation}\label{caz4new}
\|\hat{\gamma}_{t,0}\|_{L^\infty(B_{\lambda_t},g_{\C^m})}\leq C,
\end{equation}
and from \eqref{uniforme}
\begin{equation}\label{caz5new}
\|\hat{A}_{t,i,p,k}\|_{L^\infty(B_{\lambda_t})}\leq C\lambda_t^2e^{\frac{-i-1-\alpha}{2}t}=C\delta_t^2e^{\frac{-i+1-\alpha}{2}t},
\end{equation}
for all $2 \leq i \leq j$, $1\leq p\leq N_{i,k}$, where we define
\begin{equation}\delta_t=\lambda_te^{-\frac{t}{2}}.\end{equation}
For simplicity of notation, we will not decorate the $\D$ operator with a hat, since its usage will always be clear from the context.
We also define a stretched projection operator $\hat{P}_{t,i,p,k}$, in analogy with \eqref{proietto} and \eqref{proietta}, by
\begin{equation}\label{proiettohat}
\hat{P}_{t,i,p,k}(\alpha)=n({\rm pr}_B)_*\left(\hat{G}_{i,p,k}\alpha\wedge\Psi_t^*\omega_F^{n-1}\right) + \delta_t^2{\rm tr}^{\hat{\omega}_{\rm can}}({\rm pr}_B)_*(\hat{G}_{i,p,k} \alpha\wedge \Psi_t^*\omega_F^n),
\end{equation}
where $\hat{G}_{i,p,k}=\Psi_t^*G_{i,p,k}$. Its relation with the unstretched projection $P_{t,i,p,k}$ is the following: if $\beta$ is a $(1,1)$-form on $B\times Y$, and if we let $\Gamma_t:B_{\lambda_t}\to B$ be given by $\Gamma_t(\hat{z})=\lambda_t^{-1}\hat{z}$, then it is immediate to verify that we have
\begin{equation}
\hat{P}_{t,i,p,k}(\Psi_t^*\beta)=\Gamma_t^*P_{t,i,p,k}(\beta).
\end{equation}
Observe that
\begin{align}
\lambda_t^{j+\alpha}&=\mathcal{D}_{t,j}(x_t,x'_t)=\sum_{i=2}^j\sum_{p=1}^{N_{i,k}}\sum_{\iota = -2}^{2k} \Bigg( e^{-\iota\frac{t}{2}}\frac{|\D^{j+2+\iota}A_{t,i,p,k}(x)-\P_{x'_tx_t}(\D^{j+2+\iota}A_{t,i,p,k}(x'))|_{g_{t}(x)}}{d^{g_{t}}(x_t,x'_t)^\alpha}\Bigg)\\
&+\frac{|\D^j\gamma_{t,0}(x_t)-\P_{x'_tx_t}(\D^j\gamma_{t,0}(x'_t))|_{g_{t}(x_t)}}{d^{g_{t}}(x_t,x'_t)^\alpha}
+\frac{|\D^j\eta_{t,j,k}(x_t)-\P_{x'_tx_t}(\D^j\eta_{t,j,k}(x'_t))|_{g_{t}(x_t)}}{d^{g_{t}}(x_t,x'_t)^\alpha}\\
&=\sum_{i=2}^j\sum_{p=1}^{N_{i,k}}\sum_{\iota = -2}^{2k}
\Bigg(e^{-\iota\frac{t}{2}}\frac{|\D^{j+2+\iota}\hat{A}_{t,i,p,k}(\hat{x}_t)-\P_{\hat{x}'_t\hat{x}_t}(\D^{j+2+\iota}\hat{A}_{t,i,p,k}(\hat{x}'_t))|_{\Psi_t^*g_{t}(\hat{x}_t)}}{d^{\Psi_t^*g_{t}}(\hat{x}_t,\hat{x}'_t)^\alpha}
\Bigg) \lambda_t^{-2}\\
&+\frac{|\D^j\hat{\gamma}_{t,0}(\hat{x}_t)-\P_{\hat{x}'_t\hat{x}_t}(\D^j\hat{\gamma}_{t,0}(\hat{x}'_t))
|_{\Psi_t^*g_{t}(\hat{x}_t)}}{d^{\Psi_t^*g_{t}}(\hat{x}_t,\hat{x}'_t)^\alpha} \lambda_t^{-2}
+\frac{|\D^j\hat{\eta}_{t,j,k}(\hat{x}_t)-\P_{\hat{x}'_t\hat{x}_t}(\D^j\hat{\eta}_{t,j,k}(\hat{x}'_t))
|_{\Psi_t^*g_{t}(\hat{x}_t)}}{d^{\Psi_t^*g_{t}}(\hat{x}_t,\hat{x}'_t)^\alpha} \lambda_t^{-2}\\
&= \sum_{i=2}^j \sum_{p=1}^{N_{i,k}}\sum_{\iota= -2}^{2k} \Bigg(
\delta_t^{\iota}\frac{|\D^{j+2+\iota}\hat{A}_{t,i,p,k}(\hat{x}_t)-\P_{\hat{x}'_t\hat{x}_t}(\D^{j+2+\iota}\hat{A}_{t,i,p,k}(\hat{x}'_t))|_{\hat{g}_t(\hat{x}_t)}}{d^{\hat{g}_t}(\hat{x}_t,\hat{x}'_t)^\alpha}\Bigg)\lambda_t^{j+\alpha}\\
&+\frac{|\D^j\hat{\gamma}_{t,0}(\hat{x}_t)-\P_{\hat{x}'_t\hat{x}_t}(\D^j\hat{\gamma}_{t,0}(\hat{x}'_t))
|_{\hat{g}_t(\hat{x}_t)}}{d^{\hat{g}_t}(\hat{x}_t,\hat{x}'_t)^\alpha} \lambda_t^{j+\alpha}
+\frac{|\D^j\hat{\eta}_{t,j,k}(\hat{x}_t)-\P_{\hat{x}'_t\hat{x}_t}(\D^j\hat{\eta}_{t,j,k}(\hat{x}'_t))
|_{\hat{g}_t(\hat{x}_t)}}{d^{\hat{g}_t}(\hat{x}_t,\hat{x}'_t)^\alpha} \lambda_t^{j+\alpha},
\end{align}
which implies that
\begin{equation}\label{blow22}\begin{split}
&\sum_{i=2}^j\sum_{p=1}^{N_{i,k}}\sum_{\iota = -2}^{2k} \Bigg(
\delta_t^{\iota}\frac{|\D^{j+2+\iota}\hat{A}_{t,i,p,k}(\hat{x}_t)-\P_{\hat{x}'_t\hat{x}_t}(\D^{j+2+\iota}\hat{A}_{t,i,p,k}(\hat{x}'_t))|_{\hat{g}_t(\hat{x}_t)}}{d^{\hat{g}_t}(\hat{x}_t,\hat{x}'_t)^\alpha}\Bigg)\\
&+\frac{|\D^j\hat{\gamma}_{t,0}(\hat{x}_t)-\P_{\hat{x}'_t\hat{x}_t}(\D^j\hat{\gamma}_{t,0}(\hat{x}'_t))
|_{\hat{g}_t(\hat{x}_t)}}{d^{\hat{g}_t}(\hat{x}_t,\hat{x}'_t)^\alpha}
+\frac{|\D^j\hat{\eta}_{t,j,k}(\hat{x}_t)-\P_{\hat{x}'_t\hat{x}_t}(\D^j\hat{\eta}_{t,j,k}(\hat{x}'_t))
|_{\hat{g}_t(\hat{x}_t)}}{d^{\hat{g}_t}(\hat{x}_t,\hat{x}'_t)^\alpha}
=1.
\end{split}
\end{equation}

Now recall that $\hat{x}_t'$ was chosen to maximize the difference quotient of \eqref{blow22} among all points $\hat{x}_t' = (\hat{z}_t', \hat{y}_t')$  with $|\hat{z}'_t-\hat{z}_t|<\frac{1}{4}||\hat{z}_t|-\lambda_t|$ which are horizontally or vertically joined to $\hat{x}_t$.
Moreover, the point $\hat{x}_t$ itself maximizes the quantity
\begin{align}
\mu_{j,t}(\Psi_t(\hat{x}))= ||\hat{z}|-\lambda_t|^{j+\alpha} \sup_{\substack{\hat{x}'=(\hat{z}',\hat{y}')\ s.t.\ |\hat{z}'-\hat{z}|<\frac{1}{4}||\hat{z}|-\lambda_t|\\\hat{x}'\ \mathrm{and}\ \hat{x}\ \mathrm{horizontally}\ \mathrm{or}\ \mathrm{vertically}\ \mathrm{joined}}} \hat{\mathcal{D}}_{t,j}(\hat{x},\hat{x}')
\end{align}
among all $\hat{x} = (\hat{z},\hat{y}) \in B_{\lambda_t} \times Y$,
where we used the obvious notation
\begin{equation}\begin{split}
\hat{\mathcal{D}}_{t,j}(\hat{x},\hat{x}')&:=\sum_{i=2}^j \sum_{p=1}^{N_{i,k}}\sum_{\iota = -2}^{2k} \Bigg( \delta_t^\iota\frac{|\D^{j+2+\iota}\hat{A}_{t,i,p,k}(\hat{x})-\P_{\hat{x}'\hat{x}}(\D^{j+2+\iota}\hat{A}_{t,i,p,k}(\hat{x}'))|_{\hat{g}_{t}(\hat{x})}}{d^{\hat{g}_{t}}(\hat{x},\hat{x}')^\alpha}\Bigg)\\
&+\frac{|\D^j\hat{\gamma}_{t,0}(\hat{x})-\P_{\hat{x}'\hat{x}}(\D^j\hat{\gamma}_{t,0})(\hat{x}'))|_{\hat{g}_{t}(\hat{x})}}{d^{\hat{g}_{t}}(\hat{x},\hat{x}')^\alpha}
+\frac{|\D^j\hat{\eta}_{t,j,k}(\hat{x})-\P_{\hat{x}'\hat{x}}(\D^j\hat{\eta}_{t,j,k})(\hat{x}'))|_{\hat{g}_{t}(\hat{x})}}{d^{\hat{g}_{t}}(\hat{x},\hat{x}')^\alpha}.
\end{split}\end{equation}
We observe that
\begin{align}\label{fuck_the_boundary2}
||\hat{z}_t|-\lambda_t|=\max \mu_{j,t}^{\frac{1}{j+\alpha}} \to \infty,
\end{align}
and since by definition $|\hat{z}_t-\hat{z}'_t|\leq \frac{1}{4}||\hat{z}_t|-\lambda_t|$, we also have
\begin{align}\label{fuck_the_boundary}
||\hat{z}'_t|-\lambda_t|\geq\frac{3}{4}||\hat{z}_t|-\lambda_t|\to\infty.
\end{align}
This tells us that if we pass to a pointed limit with basepoint $\hat{x}_t$, the boundary moves away to infinity and the limit space will be complete. We also learn that for all $\hat{x} = (\hat{z}, \hat{y}) \in B_{\lambda_t} \times Y$,
\begin{align}\label{e:hubbabubba}
 \sup_{\substack{\hat{x}'=(\hat{z}',\hat{y}')\ s.t.\ |\hat{z}'-\hat{z}|<\frac{1}{4}||\hat{z}|-\lambda_t|\\\hat{x}'\ \mathrm{and}\ \hat{x}\ \mathrm{horizontally}\ \mathrm{or}\ \mathrm{vertically}\ \mathrm{joined}}} \hat{\mathcal{D}}_{t,j}(\hat{x},\hat{x}')\leq \frac{||\hat{z}_t|-\lambda_t|^{j+\alpha}}{||\hat{z}|-\lambda_t|^{j+\alpha}}.
 \end{align}

Using the triangle inequality and \eqref{fuck_the_boundary2}, \eqref{fuck_the_boundary}, we deduce in particular that
there exists a $C$ such that for all $R > 0$ there exists a $t_R$ and for $t \geq t_R$ and $\hat{z}^\bullet_t=\hat{z}_t$ or $\hat{z}'_t$, we have
\begin{align}
\label{saves_our_asses2}
\sup_{\substack{\hat{x},\hat{x}' \in \hat{B}_R(\hat{z}^\bullet_t)\times Y\\\hat{x}'\ \mathrm{and}\ \hat{x}\ \mathrm{horizontally}\ \mathrm{or}\ \mathrm{vertically}\ \mathrm{joined}}} \hat{\mathcal{D}}_{t,j}(\hat{x},\hat{x}')\leq C,
\end{align}
where here and in all the following, $\hat{B}_R(\hat{z})$ will denote a Euclidean ball in $\C^m$ (the hat is just a decoration to remind the reader that we are in the hat picture). Furthermore, if the center is one of the two points $\hat{z}_t$ and $\hat{z}'_t$ we may omit the center and the $Y$ factor and write simply $\hat{B}_R$ for short. Obviously \eqref{saves_our_asses2} implies
\begin{equation}\label{saves_our_asses}
\sum_{i=2}^j \sum_{p=1}^{N_{i,k}}\sum_{\iota = -2}^{2k} \Bigg( \delta_t^\iota[\D^{j+2+\iota}\hat{A}_{t,i,p,k}]_{C^\alpha(\hat{B}_R,\hat{g}_t)}\Bigg)
+[\D^j\hat{\gamma}_{t,0}]_{C^\alpha(\hat{B}_R,\hat{g}_t)}+[\D^j\hat{\eta}_{t,j,k}]_{C^\alpha(\hat{B}_R,\hat{g}_t)}\leq C,
\end{equation}
for every fixed $R$.

\subsection{The non-escaping property}

\begin{proposition}
Assume that $\delta_t\leq C$. Then we have the upper bound
\begin{align}\label{blow223}
d^{\hat{g}_t}(\hat{x}_t, \hat{x}_t')\leq C.
\end{align}
\end{proposition}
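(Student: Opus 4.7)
My plan is to argue by contradiction: suppose $d_t := d^{\hat{g}_t}(\hat{x}_t,\hat{x}_t') \to \infty$ along a subsequence. The vertical case is immediate, since the fiber diameter in $\hat{g}_t$ is $\delta_t\,\mathrm{diam}(Y,g_{Y,z_0}) \leq C$ by the standing assumption $\delta_t \leq C$; so the two points must be horizontally joined, with $\hat{y}_t = \hat{y}_t'$ and $d_t = |\hat{z}_t - \hat{z}_t'|$. The constraint $|\hat{z}_t'-\hat{z}_t| \leq \frac{1}{4}||\hat{z}_t|-\lambda_t|$ then forces $||\hat{z}_t|-\lambda_t| \geq 4d_t \to \infty$ and $||\hat{z}_t'|-\lambda_t| \geq 3d_t \to \infty$; in particular $\lambda_t \to \infty$ and the stretched base $\hat{B}_{\lambda_t}$ exhausts $\C^m$.

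The strategy is then to show that each of the finitely many difference quotients contributing to the sum on the left-hand side of \eqref{blow22} (which equals $1$) tends to zero, yielding the desired contradiction. Each such quotient has the form $|\tau(\hat{x}_t)-\P_{\hat{x}_t'\hat{x}_t}\tau(\hat{x}_t')|_{\hat{g}_t}/d_t^\alpha$ for a tensor $\tau \in \{\D^j\hat{\gamma}_{t,0},\ \D^j\hat{\eta}_{t,j,k},\ \delta_t^\iota\D^{j+2+\iota}\hat{A}_{t,i,p,k}\}$. By the triangle inequality and the uniform bound on the operator norm of $\P$-transport from Section \ref{s:tridiagonal}, each quotient is bounded by $C\|\tau\|_{L^\infty(\{\hat{x}_t,\hat{x}_t'\})}/d_t^\alpha$, so it suffices to obtain a uniform-in-$t$ bound on $\|\tau\|_{L^\infty}$ near the two points.

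For each such $\tau$ I would apply the interpolation inequality of Proposition \ref{l:higher-interpol} on balls $\hat{B}_{R_0}(\hat{x}_t)$ and $\hat{B}_{R_0}(\hat{x}_t')$ of some fixed radius $R_0$. The required H\"older control comes from \eqref{e:hubbabubba}: since $||\hat{z}_t|-\lambda_t|$ and $||\hat{z}_t'|-\lambda_t|$ both tend to infinity, any pair of points in such a ball satisfies $|\hat{z}-\hat{z}'| \leq 2R_0 \ll \frac{1}{4}||\hat{z}|-\lambda_t|$ for large $t$, so \eqref{e:hubbabubba} gives a uniform bound on $\hat{\mathcal{D}}_{t,j}$ restricted to those pairs. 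The companion $L^\infty$ bound on the underlying tensors comes from \eqref{caz4new} for $\hat{\gamma}_{t,0}$, from \eqref{caz5new} combined with interpolation for the high-derivative terms involving $\hat{A}_{t,i,p,k}$ (whose $L^\infty$ norm is exponentially small and more than compensates for the $\delta_t^\iota\D^{j+2+\iota}$ factor), and, for $\hat{\eta}_{t,j,k}$, from the decomposition $\hat{\eta}_{t,j,k} = (\hat{\omega}_t^\bullet - \hat{\omega}_t^\natural) - \hat{\gamma}_{t,0} - \sum_{i=2}^j\hat{\gamma}_{t,i,k}$ together with $\|\hat{\omega}_t^\bullet - \hat{\omega}_t^\natural\|_{L^\infty(\hat{g}_t)} \leq C$ from \eqref{caz01} and $L^\infty$ control on each $\hat{\gamma}_{t,i,k}$.

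The main obstacle will be this last ingredient, the uniform $L^\infty$ bound on the pieces $\hat{\gamma}_{t,i,k}$ ($2\leq i\leq j$), since their potentials are not simply pulled back from the base. I would address this by using the structural expansion of Lemma \ref{Gstructure}, $\mathfrak{G}_{t,k}(A_{t,i,p,k},G_{i,p}) = \sum_{\iota,\ell} e^{-\ell t}\,\Phi_{\iota,\ell}(G_{i,p}) \circledast \D^\iota A_{t,i,p,k}$, combined with the smallness of $A_{t,i,p,k}$ from \eqref{uniforme} and interpolation-based bounds on its base derivatives; taking $\ddbar$, pulling back by $\Psi_t$ and rescaling then yields the desired uniform $L^\infty$ bound on each $\hat{\gamma}_{t,i,k}$, so that all the terms in \eqref{blow22} go to zero and the contradiction is achieved.
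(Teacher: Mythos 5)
Your overall strategy is sound and, stripped of the contradiction framing, essentially matches the paper's: both reduce the matter to bounding the numerators of the difference quotients appearing in \eqref{blow22} uniformly in $t$, using the triangle inequality plus the operator norm control on $\P$-transport from Section \ref{s:tridiagonal}, and then invoking \eqref{saves_our_asses} (equivalently, \eqref{e:hubbabubba}) for H\"older control and Proposition \ref{l:higher-interpol} for interpolation. Your treatment of $\hat{\gamma}_{t,0}$ (interpolate \eqref{caz4new} against the seminorm bound) and of the $\hat{A}_{t,i,p,k}$ terms (interpolate \eqref{caz5new} against the seminorm bounds, choosing the intermediate radius carefully) coincides with what the paper does to obtain \eqref{610}--\eqref{ferentes} and hence \eqref{timeo}, \eqref{timeo3}. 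These computations are delicate — the exponent bookkeeping in \eqref{ferentes} is what ensures the $\delta_t^\iota$ factors are absorbed — but you have correctly identified the ingredients.

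The one place where you genuinely diverge is the treatment of $\hat{\eta}_{t,j,k}$. You propose to bound $\|\hat{\eta}_{t,j,k}\|_{L^\infty}$ via the decomposition $\hat{\eta}_{t,j,k} = (\hat{\omega}_t^\bullet - \hat{\omega}_t^\natural) - \hat{\gamma}_{t,0} - \sum_{i}\hat{\gamma}_{t,i,k}$, which requires first establishing $L^\infty(\hat{g}_t)$ bounds on the pieces $\hat{\gamma}_{t,i,k}$ via Lemma \ref{Gstructure} and the derivative estimates on $\hat{A}_{t,i,p,k}$, and then interpolating against $[\D^j\hat{\eta}_{t,j,k}]_{C^\alpha} \leq C$. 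This is achievable: the bounds \eqref{610}--\eqref{ferentes} can be propagated through Lemma \ref{Gstructure} to bound $\|\hat{\gamma}_{t,i,k}\|_{L^\infty(\hat{g}_t)}$, much as the paper does later in its proof of \eqref{crocifisso4check}. However, the paper instead applies Theorem \ref{prop55} directly to $\hat{\eta}_{t,j,k}$ (which is of the form $i\partial\overline\partial\vp$ with $\underline{\vp}=0$, by construction of the decomposition \eqref{decomponiti}), getting the sharper bound $\|\D^j\hat{\eta}_{t,j,k}\|_{L^\infty(\hat{B}_R,\hat{g}_t)} \leq C\delta_t^\alpha$ in one stroke; this is where the paper's key technical result earns its keep, and it avoids the circular-looking detour of bounding $\hat\eta_{t,j,k}$ by bounding its complement. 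For the purposes of the present proposition both routes deliver the constant bound $\leq C$ needed for the numerator, so your proof works; but recognizing that Theorem \ref{prop55} is applicable here simplifies the argument considerably.
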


When $\delta_t\to\infty$ the proof below does not work, but we will soon show in \eqref{strabik} that an even better bound holds in that case.

\begin{proof}
To start, \eqref{saves_our_asses} gives in particular
\begin{equation}\label{danaos}
[\D^{j+2+\iota}\hat{A}_{t,i,p,k}]_{C^\alpha(\hat{B}_R,\hat{g}_t)}\leq C\delta_t^{-\iota},\quad -2\leq \iota\leq 2k,
\end{equation}
for any given $R$,
while from \eqref{caz5new}
\begin{equation}\label{kaz5new}
\|\hat{A}_{t,i,p,k}\|_{L^\infty(\hat{B}_R,\hat{g}_t)}\leq C\delta_t^2e^{\frac{-i+1-\alpha}{2}t},
\end{equation}
and we can interpolate between these two, first taking \eqref{danaos} with $\iota=-2$ to get
\begin{align}
\sum_{\ell=1}^{j}(R-\rho)^\ell\|\D^\ell\hat{A}_{t,i,p,k}\|_{L^\infty(\hat{B}_\rho)}&\leq C(R-\rho)^{j+\alpha}[\D^{j} \hat{A}_{t,i,p,k}]_{C^\alpha(\hat{B}_R)}+C\delta_t^2e^{\frac{-i+1-\alpha}{2}t}\\
&\leq C(R-\rho)^{j+\alpha}\delta_t^2+C\delta_t^2e^{\frac{-i+1-\alpha}{2}t},
\end{align}
so picking $R-\rho\sim \left(e^{\frac{-i+1-\alpha}{2}t}\right)^{\frac{1}{j+\alpha}}$ (which is small) we get
\begin{equation}\label{610}
\|\D^\ell\hat{A}_{t,i,p,k}\|_{L^\infty(\hat{B}_R,\hat{g}_t)}\leq C\delta_t^2\left(e^{\frac{-i+1-\alpha}{2}t}\right)^{1-\frac{\ell}{j+\alpha}},\quad 0\leq \ell\leq j,
\end{equation}
for any fixed $R$. Next, we take \eqref{danaos} with $0<\iota+2=:\ell$ and interpolate
\begin{align}
(R-\rho)^\ell\|\D^{j+\ell}\hat{A}_{t,i,p,k}\|_{L^\infty(\hat{B}_\rho)}&\leq C(R-\rho)^{\ell+\alpha}[\D^{j+\ell} \hat{A}_{t,i,p,k}]_{C^\alpha(\hat{B}_R)}+C\|\D^j\hat{A}_{t,i,p,k}\|_{L^\infty(\hat{B}_R)}\\
&\leq C(R-\rho)^{\ell+\alpha}\delta_t^{-\ell+2}+
C\delta_t^2\left(e^{\frac{-i+1-\alpha}{2}t}\right)^{\frac{\alpha}{j+\alpha}},
\end{align}
so picking $R-\rho\sim \delta_t^{\frac{\ell}{\ell+\alpha}}\left(e^{\frac{-i+1-\alpha}{2}t}\right)^{\frac{\alpha}{(j+\alpha)(\ell+\alpha)}}$ (which is small because of our assumption $\delta_t\leq C$) we get
\begin{equation}\label{611}
\|\D^{j+\ell}\hat{A}_{t,i,p,k}\|_{L^\infty(\hat{B}_R,\hat{g}_t)}\leq C\delta_t^{-\ell+2}\left(\delta_t^\ell \left(e^{\frac{-i+1-\alpha}{2}t}\right)^{\frac{\alpha}{j+\alpha}}
\right)^{\frac{\alpha}{\ell+\alpha}},\quad 1\leq \ell\leq 2k+2,
\end{equation}
for any fixed $R$,
and so combining \eqref{610} with $\ell=j$ and \eqref{611} we see that
\begin{equation}\label{ferentes}
\delta_t^\iota\|\D^{j+2+\iota}\hat{A}_{t,i,p,k}\|_{L^\infty(\hat{B}_R,\hat{g}_t)}=o\left(\delta_t^{\frac{(\iota+2)\alpha}{\iota+2+\alpha}}\right)=o(1),\quad -2\leq \iota \leq  2k,
\end{equation}
for any fixed $R$, and applying this to balls centered at $\hat{z}_t$ and $\hat{z}'_t$ gives in particular
\begin{equation}\label{timeo}
\sum_{i=2}^j\sum_{p=1}^{N_{i,k}}\sum_{\iota = -2}^{2k}
\delta_t^{\iota}|\D^{j+2+\iota}\hat{A}_{t,i,p,k}(\hat{x}_t)-\P_{\hat{x}'_t\hat{x}_t}(\D^{j+2+\iota}\hat{A}_{t,i,p,k}(\hat{x}'_t))|_{\hat{g}_t(\hat{x}_t)}=
o(1).
\end{equation}

Next, again thanks to \eqref{saves_our_asses}, for any fixed $R$ we have
\begin{equation}\label{chiodofisso}
[\D^j\hat{\eta}_{t,j,k}]_{C^\alpha(\hat{B}_R,\hat{g}_t)}\leq C,
\end{equation}
and Theorem \ref{prop55} gives (in the case when $\delta_t$ does not go to zero we need to assume here that $R$ is sufficiently large, which is of course allowed)
\begin{equation}\label{chiodini}
\|\D^j\hat{\eta}_{t,j,k}\|_{L^\infty(\hat{B}_R,\hat{g}_t)}\leq C\delta_t^\alpha,
\end{equation}
and applying this to balls centered at $\hat{z}_t$ and $\hat{z}'_t$ gives in particular (by also invoking Section \ref{s:tridiagonal})
\begin{equation}\label{timeo2}
|\D^j\hat{\eta}_{t,j,k}(\hat{x}_t)-\P_{\hat{x}'_t\hat{x}_t}(\D^j\hat{\eta}_{t,j,k}(\hat{x}'_t))
|_{\hat{g}_t(\hat{x}_t)}
\leq C\delta_t^\alpha.
\end{equation}

Similarly, from \eqref{saves_our_asses} we get
\begin{equation}
[\D^j\hat{\gamma}_{t,0}]_{C^\alpha(\hat{B}_R,\hat{g}_t)}\leq C,
\end{equation}
for any fixed $R$, and interpolating between this and \eqref{caz4new} easily gives
\begin{equation}\label{timeo3}
|\D^j\hat{\gamma}_{t,0}(\hat{x}_t)-\P_{\hat{x}'_t\hat{x}_t}(\D^j\hat{\gamma}_{t,0}(\hat{x}'_t))
|_{\hat{g}_t(\hat{x}_t)}
\leq C.
\end{equation}

Then combining \eqref{blow22} with \eqref{timeo}, \eqref{timeo2} and \eqref{timeo3} we obtain the desired bound \eqref{blow223}.
\end{proof}

We are now in a position to study the possible complete pointed limit spaces of
\begin{equation}(B_{\lambda_t}\times Y, \hat{g}_t,\hat{x}_t)\end{equation}
as $t\to\infty$. Modulo translations in the $\mathbb{C}^m$ factor, we may assume that $\hat{x}_t = (0,\hat{y}_t)\in \mathbb{C}^m\times Y$. Recall here that $\hat{y}_t\to \hat{y}_\infty\in Y$ by \eqref{whoops666}. At this point three cases need to be considered, again up to passing to a subsequence: (1) $\delta_t\to\infty$; (2) $\delta_t \to \delta \in (0,\infty)$, and without loss of generality $\delta = 1$; and (3) $\delta_t\to 0$. Observe that thanks to \eqref{whoops666}, it holds in all three cases that the complex structure converges locally smoothly to the product complex structure $J_{\C^m} + J_{Y,z_\infty}.$

\subsection{Case 1: the blowup is $\C^{m+n}$} In this case we assume that $\delta_t\to \infty$.

Recall from \eqref{fuck_the_boundary2} and \eqref{fuck_the_boundary} that the two points $\hat{z}_t$ (which we have translated to be equal to $0$) and $\hat{z}'_t$ satisfy that
\begin{equation}
||\hat{z}_t|-\lambda_t|\to\infty, \quad ||\hat{z}'_t|-\lambda_t|\geq \frac{3}{4}||\hat{z}_t|-\lambda_t|\to\infty,
\quad |\hat{z}_t-\hat{z}'_t|\leq \frac{1}{4}||\hat{z}_t|-\lambda_t|.
\end{equation}
Apply the diffeomorphism
\begin{equation}\Xi_t:  B_{e^{\frac{t}{2}}} \times Y \to B_{\lambda_t} \times Y, \;\, (\hat{z},\hat{y}) = \Xi_t(\check{z},\check{y}) =(\delta_t\check{z},\check{y}),\end{equation}
and as usual multiply all the pulled back contravariant $2$-tensors by $\delta_t^{-2}$, and denote the new objects by a check. Denote also $\check{A}_{t,i,p,k}=\delta_t^{-2}\Xi_t^*\hat{A}_{t,i,p,k}$. Then $\check{g}_t$ is locally uniformly Euclidean, and the points $\check{z}_t$ and $\check{z}'_t$ satisfy
\begin{equation}
||\check{z}_t|-e^{\frac{t}{2}}|=\delta_t^{-1}||\hat{z}_t|-\lambda_t|, \quad ||\check{z}'_t|-e^{\frac{t}{2}}|=\delta_t^{-1}||\hat{z}'_t|-\lambda_t|,
\quad |\check{z}_t-\check{z}'_t|=\delta_t^{-1}|\hat{z}_t-\hat{z}'_t|.
\end{equation}
Now the balls $\check{B}_1(\check{z}_t), \check{B}_1(\check{z}'_t)$ need not be contained in $B_{e^{\frac{t}{2}}}$, since we do not have any relation among $||\hat{z}_t|-\lambda_t|$ and $\delta_t$, but they are compactly contained in the larger ball $B_{(1+\sigma)e^{\frac{t}{2}}}$ for $t$ sufficiently large (cf. the beginning of \S \ref{mavaffa}), and on this larger ball times $Y$ the metric $\check{\omega}^\bullet_t$ is Ricci-flat and uniformly Euclidean, so standard local estimates for the complex Monge-Amp\`ere equation \cite[Proposition 2.3]{HT2} give us uniform $C^\infty$ estimates for $\check{\omega}^\bullet_t$ on compact subsets of $B_{(1+\sigma)e^{\frac{t}{2}}} \times Y$, and in particular on $\check{B}_1(\check{z}_t)\times Y$ and $\check{B}_1(\check{z}'_t)\times Y$.

Thanks to the definitions \eqref{fuenf}, \eqref{sechs} as fiber integrations, we deduce easily from this that the following objects (proved in this order) also have uniform uniform $C^\infty$ estimates on $\check{B}_1(\check{z}_t)\times Y$ and $\check{B}_1(\check{z}'_t)\times Y$: \begin{equation}\check{\gamma}_{t,0}, \check{\eta}_{t,0,k},\check{A}_{t,2,p,k},\check{\gamma}_{t,2,k},\check{\eta}_{t,2,k},\dots,
\check{A}_{t,j,p,k},\check{\gamma}_{t,j,k},\check{\eta}_{t,j,k},\end{equation}
where the stretched projection $\check{P}_{t,i,p,k}$ which is used here to define $\check{A}_{t,i,p,k}$ is defined in an analogous manner as in \eqref{proiettohat}, with $\check{G}_{i,p,k}=\Xi_t^*\hat{G}_{i,p,k}$.
Transferring these estimates back to the hat picture we obtain in particular that for $\hat{p}=\hat{z}_t,\hat{z}'_t,$
\begin{equation}\begin{split}\label{kk2}&\|\D^j\hat{\gamma}_{t,0}\|_{L^\infty(\hat{B}_{\delta_t}(\hat{p})\times Y,\hat{g}_t)}+\|\D^j\hat{\eta}_{t,j,k}\|_{L^\infty(\hat{B}_{\delta_t}(\hat{p})\times Y,\hat{g}_t)}\\
&+\sum_{i=2}^j\sum_{p=1}^{N_{i,k}}\sum_{\iota=-2}^{2k}\delta_t^\iota\|\D^{j+2+\iota}\hat{A}_{t,i,p,k}\|_{L^\infty(\hat{B}_{\delta_t}(\hat{p})\times Y,\hat{g}_t)}\leq C\delta_t^{-j},
\end{split}\end{equation}
\begin{equation}\begin{split}
\label{kk1}&[\D^j\hat{\gamma}_{t,0}]_{C^\alpha(\hat{B}_{\delta_t}(\hat{p})\times Y,\hat{g}_t)}+[\D^j\hat{\eta}_{t,j,k}]_{C^\alpha(\hat{B}_{\delta_t}(\hat{p})\times Y,\hat{g}_t)}\\
&+\sum_{i=2}^j\sum_{p=1}^{N_{i,k}}\sum_{\iota=-2}^{2k}\delta_t^\iota[\D^{j+2+\iota}\hat{A}_{t,i,p,k}]_{C^\alpha(\hat{B}_{\delta_t}(\hat{p})\times Y,\hat{g}_t)}\leq C\delta_t^{-j-\alpha}.
\end{split}\end{equation}
Using \eqref{kk2} and the triangle inequality to bound the numerators in \eqref{blow22} (using also the discussion in Section \ref{s:tridiagonal} to bound uniformly the operator norm of $\P$) gives
\begin{equation}\label{strabik}
d^{\hat{g}_t}(\hat{x}_t,\hat{x}'_t)^\alpha\leq C \delta_t^{-j},
\end{equation}
so the two points $\hat{z}_t$ and $\hat{z}'_t$ are nonescaping, and colliding when $j>0$. Thus $\hat{x}'_t$ belongs to $\hat{B}_{\delta_t}(\hat{z}_t)\times Y$ for all $t$ large, and so applying \eqref{kk1} shows that the quantity in \eqref{blow22}, which equals $1$, is also bounded above by $C\delta_t^{-j-\alpha}$, an obvious contradiction.

\subsection{Case 2: the blowup is $\C^m \times Y$}

 In this case we have that $\delta_t\to 1$, without loss of generality. We now have that
\begin{equation} \hat{g}^\natural_t \to g_{\rm can}(z_{\infty}) + g_{Y,z_\infty}=:g_P, \;\,   {\rm in}\;\,C^\infty_{\rm loc}(\C^m \times Y),\end{equation}
and the complex structure converges to a product, where recall that we have arranged that $z_\infty=0$.
We also know that $d^{\hat{g}_t}(\hat{x}_t, \hat{x}_t') \leq C$ by \eqref{blow223}, so passing to a subsequence we may assume that $\hat{x}_t' \to \hat{x}_\infty'$.

Thanks to \eqref{caz01}, we can apply standard local estimates for the complex Monge-Amp\`ere equation \cite[Proposition 2.3]{HT2} on small balls to obtain $C^\infty_{\rm loc}(\C^m \times Y)$ bounds for $\hat\omega_t^\bullet$. As in Case 1, thanks to the definitions as fiber integrations, we deduce easily from this that the following objects (proved in this order) also have uniform $C^\infty$ estimates:
\begin{equation}\hat{\gamma}_{t,0}, \hat{\eta}_{t,0,k},\hat{A}_{t,2,p,k},\hat{\gamma}_{t,2,k},\hat{\eta}_{t,2,k},\dots,
\hat{A}_{t,j,p,k},\hat{\gamma}_{t,j,k},\hat{\eta}_{t,j,k}.\end{equation}
This has many useful consequences. First, going into \eqref{blow22} (and using also Remark \ref{archiloco} to compare H\"older norms), we can see that all the objects appearing there are $C^\infty_{\rm loc}(\C^m \times Y)$ bounded, so by estimating the $C^\alpha$ difference quotients in \eqref{blow22} by $C^\beta$ ones for any $\beta > \alpha$, we conclude that the two points $\hat{x}_t = (0,\hat{y}_t)$ and $\hat{x}_t'$ have $\hat{g}_t$-distance uniformly bounded away from zero. Second, up to passing to a subsequence, we may assume that all of the above objects converge in $C^\infty_{\rm loc}(\C^m \times Y)$.

Let us denote by $\hat{\gamma}_{\infty,0}$ the smooth limit of $\hat{\gamma}_{t,0}$. We first consider the case $j>0$, where we claim that $\hat{\gamma}_{\infty,0}=0$. Indeed, by \eqref{vier3} and \eqref{vier2} with $j=0$, which hold by induction, we have a locally uniform $C^{0,\alpha}$ bound for $\gamma_{t,0}$, and since this also weakly converges to zero (thanks to \eqref{fucker3}), we conclude in particular that $\gamma_{t,0}\to 0$ locally uniformly, and this statement implies directly that $\hat{\gamma}_{t,0}\to 0$ locally uniformly (and hence locally smoothly), as desired.

We also know that $\hat{A}_{t,i,p,k}\to 0$ locally uniformly (thanks to \eqref{caz5new}),
and so it follows that $\hat{A}_{t,i,p,k}$ also goes to zero locally smoothly.

Recalling that by definition we have
\begin{equation}\label{expa}
\hat{\gamma}_{t,i,k} = \sum_{p=1}^{N_{i,k}}i\partial\overline\partial \hat{\mathfrak{G}}_{t,k}(\hat{A}_{t,i,p,k},\hat{G}_{i,p,k}),
\end{equation}
and that Lemma \ref{Gstructure} writes this as
\begin{equation}\label{expa2}
\hat{\gamma}_{t,i,k} = i\partial\overline\partial\sum_{p=1}^{N_{i,k}}\sum_{\iota=0}^{2k}\sum_{\ell=\lceil \frac{\iota}{2} \rceil}^{k} e^{-\left(\ell-\frac{\iota}{2}\right) t}\delta_t^{\iota}(\hat{\Phi}_{\iota,\ell}(\hat{G}_{i,p,k})\circledast \D^\iota \hat{A}_{t,i,p,k} ),
\end{equation}
we conclude that $\hat{\gamma}_{t,i,k}, 2\leq i\leq j,$ also go to zero locally smoothly.

The $C^\infty_{\rm loc}(\C^m \times Y)$ limit $\hat\omega_\infty^\bullet = \omega_P + \hat\eta_{\infty,j,k}$ of $\hat\omega_t^\bullet = \hat\omega^\natural_{t} + \hat\gamma_{t,0} + \hat\gamma_{t,2,k} +\cdots+\hat{\gamma}_{t,j,k}+ \hat\eta_{t,j,k}$ will be a Ricci-flat K\"ahler metric uniformly equivalent to the standard $\omega_P$. By the Liouville Theorem from \cite{He} (see also \cite{LLZ}), we see that $\nabla^{g_P}\hat\omega_\infty^\bullet = 0$, and hence
\begin{equation}\label{lioufuck}
\nabla^{g_P}\hat\eta_{\infty,j,k} = 0.
\end{equation}

Going back to \eqref{blow22}, we thus obtain a contradiction because the left-hand side of \eqref{blow22} manifestly converges to zero: the denominators converge to a strictly positive constant, and the numerators all go to zero (thanks to the above $C^\infty_{\rm loc}$ vanishing of the $\hat{A}_{t,i,p,k}$'s and $\hat{\gamma}_{t,0}$, and \eqref{lioufuck}). This completes the proof of Case 2 when $j>0$.

Assume now that $j=0$. Passing to the limit as above we obtain
\begin{equation}\hat{\omega}^\bullet_\infty=\omega_P+\hat{\gamma}_{\infty,0}+\hat{\eta}_{\infty,0,k},
\end{equation}
and we know that $\hat{\gamma}_{\infty,0}+\hat{\eta}_{\infty,0,k}$ is $\de\db$-exact by \cite[Proposition 3.11]{HT2}, say $\hat{\gamma}_{\infty,0}+\hat{\eta}_{\infty,0,k}=\ddbar\Upsilon$ for some smooth function $\Psi$ on $\C^m\times Y$. But recall that by definition $\hat{\eta}_{t,0,k}=\ddbar(\hat{\psi}_t-\underline{\hat{\psi}_t})$ has local uniform $C^\infty$ bounds, and since its potentials have fiberwise average zero, up to passing to a subsequence they converge locally smoothly to a limit smooth function $\zeta$ on $\C^m\times Y$ with fiberwise average zero, and $\hat{\eta}_{\infty,0,k}=\ddbar\zeta$. We conclude that $\hat{\gamma}_{\infty,0}=\ddbar (\Upsilon-\zeta)$, but we also know that $\hat{\gamma}_{\infty,0}$ is pulled back from the base $\mathbb{C}^m$, so we can write it as $\hat{\gamma}_{\infty,0}=\ddbar\Xi$ for some function smooth $\Xi$ on $\mathbb{C}^m$. Comparing these, it follows that $\Upsilon=\zeta+\Xi+(\text{pluriharmonic function})$, and since $Y$ is compact the pluriharmonic is also pulled back from $\mathbb{C}^m$. Taking the fiber average we thus conclude that
\begin{equation}
\ddbar\underline{\Upsilon}=\ddbar\Xi=\hat{\gamma}_{\infty,0}.
\end{equation}
As above the Liouville Theorem from \cite{He} gives $\nabla^{g_P}\hat\omega_\infty^\bullet = 0$, and so $\nabla^{g_P}\ddbar\Upsilon=0$. Since
\begin{equation}
\ddbar\underline{\Upsilon}=({\rm pr}_{\mathbb{C}^m})_*(\ddbar\Upsilon),
\end{equation}
applying $\nabla^{g_P}$ to this shows that
\begin{equation}
0=\nabla^{g_P}\ddbar\underline{\Upsilon}=\nabla^{g_P}\hat{\gamma}_{\infty,0},
\end{equation}
and so we again have that \eqref{lioufuck} holds and we conclude as in the case $j>0$ above.

\subsection{Case 3: the blowup is $\C^m$ (modulo linear regularity)}

We finally assume that $\delta_t\to 0$. This is by far the hardest case.

First, we can interpolate between the uniform bounds \eqref{caz4new} and the seminorm bound $[\D^j\hat{\gamma}_{t,0}]_{C^\alpha(B_R)}\leq C$ from \eqref{saves_our_asses}, to see that $\hat{\gamma}_{t,0}$ has a uniform $C^{j,\alpha}_{\rm loc}(\C^m)$ bound. By Ascoli-Arzel\`a, up to passing to a sequence $t_i\to\infty$, $\hat{\gamma}_{t,0}$ converges in $C^{j,\beta}_{\rm loc}(\C^m)$ for $\beta<\alpha$ to some limit $\hat{\gamma}_{\infty,0}$. When $j>0$ the limit $\hat{\gamma}_{\infty,0}$ is in fact zero, by the same argument that we used in Case 2 above using \eqref{fucker3}, but we cannot conclude that this holds when $j=0$.
Thus in particular this gives
\begin{equation}\label{pettytheft}
\|\D^\iota\hat{\gamma}_{t,0}\|_{L^\infty(\hat{B}_{R},\hat{g}_t)}=\begin{cases}O(1),\quad \iota=j=0,\\
o(1),\quad j>0,\ 0\leq \iota\leq j,
\end{cases}\end{equation}
for all $0\leq\iota\leq j$ and fixed $R$, where as usual $\hat{B}_R$ here can be centered at $\hat{z}_t$ or $\hat{z}'_t$.
In particular, \begin{equation}\label{timeo4}
|\D^j\hat{\gamma}_{t,0}(\hat{x}_t)-\P_{\hat{x}'_t\hat{x}_t}(\D^j\hat{\gamma}_{t,0}(\hat{x}'_t))
|_{\hat{g}_t(\hat{x}_t)}=\begin{cases}O(1),\quad \iota=j=0,\\
o(1),\quad j>0,\ 0\leq\iota\leq j.
\end{cases}
\end{equation}
Now the key claim is the following non-colliding estimate: there exists an $\epsilon>0$ such that for all $t$ it holds that
\begin{equation}\label{noncolla}
d^{\hat{g}_t}(\hat{x}_t, \hat{x}_t') \geq \epsilon.
\end{equation}
Assuming \eqref{noncolla}, let us quickly complete the proof of Theorem \ref{shutupandsuffer}.
Indeed from \eqref{blow22} together with \eqref{noncolla} we know that
\begin{equation}\label{cokka}\begin{split}
&\sum_{i=2}^j\sum_{p=1}^{N_{i,k}}\sum_{\iota = -2}^{2k}
\delta_t^{\iota}|\D^{j+2+\iota}\hat{A}_{t,i,p,k}(\hat{x}_t)-\P_{\hat{x}'_t\hat{x}_t}(\D^{j+2+\iota}\hat{A}_{t,i,p,k}(\hat{x}'_t))|_{\hat{g}_t(\hat{x}_t)}\\
&+|\D^j\hat{\gamma}_{t,0}(\hat{x}_t)-\P_{\hat{x}'_t\hat{x}_t}(\D^j\hat{\gamma}_{t,0}(\hat{x}'_t))
|_{\hat{g}_t(\hat{x}_t)}
+|\D^j\hat{\eta}_{t,j,k}(\hat{x}_t)-\P_{\hat{x}'_t\hat{x}_t}(\D^j\hat{\eta}_{t,j,k}(\hat{x}'_t))
|_{\hat{g}_t(\hat{x}_t)}\geq\ve^\alpha,
\end{split}\end{equation}
but from \eqref{timeo}, \eqref{timeo2} and \eqref{timeo4} we see that all terms on the LHS go to zero when $j>0$, which is a contradiction. On the other hand, when $j=0$, we see from \eqref{chiodini} that the $C^\beta_{\rm loc}(\C^m\times Y)$ limit $\hat{\omega}^\bullet_\infty$ of $\hat{\omega}^\bullet_t$ is of the form
\begin{equation}
\hat{\omega}^\bullet_\infty=\omega_{\C^m}+\hat{\gamma}_{\infty,0},
\end{equation}
and by \eqref{cokka} together with \eqref{timeo2} we see that $\hat{\omega}^\bullet_\infty$ is not constant on $\mathbb{C}^m$. We also know that $\hat{\omega}^\bullet_\infty$ has $C^\alpha_{\rm loc}(\C^m)$ coefficients and is uniformly equivalent to $\omega_{\C^m}$ by \eqref{caz01}. The argument in \cite[\S 5.3.3]{HT2}, which originates from \cite[Theorem 4.1]{To}, then shows that $\hat{\omega}^\bullet_\infty$ is a smooth Ricci-flat K\"ahler metric on $\C^m$, and we thus obtain a contradiction to the Liouville Theorem \cite[Theorem 2.4]{HT2}.

Thus, the proof of Theorem \ref{shutupandsuffer} is now complete modulo the crucial linear regularity claim \eqref{noncolla}, which will occupy almost all the rest of the paper.

\subsection{Set-up of the secondary (linear) blowup argument in Case 3}

If the desired estimate \eqref{noncolla} was false, then, since $\hat{x}_t \neq \hat{x}_t'$ for all $t$, there would exist a sequence $t_i \to \infty$ such that $d_{t_i} = d^{\hat{g}_{t_i}}(\hat{x}_{t_i}, \hat{x}_{t_i}') \to 0$. As usual, we will pretend that $d_t = d^{\hat{g}_t}(\hat{x}_t, \hat{x}_t') \to 0$. Define also a new parameter
\begin{equation}
\ve_t=d_t^{-1}\delta_t,
\end{equation}
and consider the diffeomorphisms
\begin{equation}\Theta_t:  B_{d_t^{-1}\lambda_t} \times Y \to B_{\lambda_t} \times Y, \;\, (\hat{z},\hat{y}) = \Theta_t(\tilde{z},\tilde{y}) = (d_t \tilde{z}, \tilde{y}).\end{equation}
Pull back all our objects under $\Theta_t$, multiply the metrics and $2$-forms by $d_t^{-2}$, and denote the resulting objects by the same letters with each hat replaced by a tilde. Define also $\ti{A}_{t,i,p,k}=d_t^{-2}\Theta_t^*\hat{A}_{t,i,p,k}$. Then first of all
\begin{align}\tilde{g}_{t} = g_{\C^m} +  \epsilon_t^2 g_{Y,z_0},\quad \tilde{\omega}_t^\natural = \ti{\omega}_{\rm can} + \epsilon_t^2\Theta_t^*\Psi_t^*\omega_F.
\end{align}
Secondly, thanks to \eqref{ma},
\begin{align}
(\ti{\omega}^\bullet_t)^{m+n}=(\tilde{\omega}^\natural_t + \ti{\gamma}_{t,0}+\ti{\gamma}_{t,2,k}+\cdots+\ti{\gamma}_{t,j,k}+\tilde{\eta}_{t,j,k})^{m+n} = c_t e^{\tilde{H}_t}(\tilde{\omega}^\natural_{t})^{m+n},\label{zumteufel151}
\end{align}
where the constants $c_t$ converge to $\binom{m+n}{n}$ and
\begin{align}\label{calmund}
\tilde{H}_t =  \log \frac{\ti{\omega}_{\rm can}^m \wedge (\epsilon_t^2\Theta_t^*\Psi_t^*\omega_F)^n}{(\ti{\omega}_{\rm can}+\epsilon_t^2\Theta_t^*\Psi_t^*\omega_F)^{m+n}}.
\end{align}

Next, from \eqref{blow22}, \eqref{saves_our_asses}, there is $C$ such that for all $R$ there is $t_R$ such that for all $t\geq t_R$
\begin{align}
\sum_{i=2}^j \sum_{p=1}^{N_{i,k}}\sum_{\iota = -2}^{2k} \Bigg( \ve_t^\iota[\D^{j+2+\iota}\ti{A}_{t,i,p,k}]_{C^\alpha(\ti{B}_{Rd_t^{-1}}(\ti{z}_t)\times Y,\ti{g}_t)}\Bigg)\nonumber\\
+[\D^j\ti{\gamma}_{t,0}]_{C^\alpha(\ti{B}_{Rd_t^{-1}}(\ti{z}_t)\times Y,\ti{g}_t)}+[\D^j\ti{\eta}_{t,j,k}]_{C^\alpha(\ti{B}_{Rd_t^{-1}}(\ti{z}_t)\times Y,\ti{g}_t)}\leq Cd_t^{j+\alpha},\label{whoknows112}\\
\sum_{i=2}^j\sum_{p=1}^{N_{i,k}}\sum_{\iota = -2}^{2k}\ve_t^\iota\frac{|\D^{j+2+\iota}\ti{A}_{t,i,p,k}(\ti{x}_t) -  \P_{\ti{x}'_t\ti{x}_t}(\D^{j+2+\iota}\ti{A}_{t,i,p,k}(\ti{x}'_t))|_{\ti{g}_t(\ti{x}_t)}}{d^{\ti{g}_t}(\ti{x}_t,\ti{x}'_t)^\alpha}\nonumber\\
+\frac{|\D^j\ti{\gamma}_{t,0}(\ti{x}_t) -  \P_{\ti{x}'_t\ti{x}_t}(\D^j\ti{\gamma}_{t,0}(\ti{x}'_t))|_{\ti{g}_t(\ti{x}_t)}}{d^{\ti{g}_t}(\ti{x}_t,\ti{x}'_t)^\alpha}
+\frac{|\D^j\ti\eta_{t,j,k}(\ti{x}_t) -  \P_{\ti{x}'_t\ti{x}_t}(\D^j\ti\eta_{t,j,k}(\ti{x}'_t))|_{\ti{g}_t(\ti{x}_t)}}{d^{\ti{g}_t}(\ti{x}_t,\ti{x}'_t)^\alpha}
=d_t^{j+\alpha},\label{zumteufel161}\\
d^{\tilde{g}_t}(\tilde{x}_t,\tilde{x}_t') = 1.\label{zumteufel162}
\end{align}

It is now the time to separate each of our objects into a jet part and a remainder. As it turns out, this separation for $\ti{\eta}_{t,j,k}$ will not be needed (this is an improvement over the analogous point in \cite{HT2}).

First, from now on $\ti{B}_R$ will always denote $\ti{B}_R(\ti{z}_t)\times Y$. This will always include the other blowup point $\ti{z}'_t$ provided $R>1$. As usual, $[\cdot]_{C^\alpha(\ti{B}_R,\ti{g}_t)}$ will denote the $\ti{g}_t$-seminorm defined using $\P$ as in \eqref{e:holderdef}, and we will also write
\begin{equation}
[\tau]_{C^\alpha_{\rm base}(\ti{B}_R,\ti{g}_t)} := \sup \left\{\frac{|\tau(x_0) - \P^\gamma_{t_1,t_0}(\tau(x_1))|_{\ti{g}_t}}{d^{\ti{g}_t}(x_0,x_1)^\alpha} \;\Bigg|\;
\parbox{59mm}{$\gamma: [t_0,t_1] \to \ti{B}_R(\ti{z}_t)\times Y$ horizontal\\ $\P$-geodesic
$x_i := \gamma(t_i)$ for $i = 0,1$ } \right\}
\end{equation}
which is a $\ti{g}_t$-seminorm where we only consider pairs of points that are horizontally joined.

Let us then discuss the jet subtraction for $\ti{A}_{t,i,p,k}$. Define a polynomial function $\ti{A}^\sharp_{t,i,p,k}$ as the $j$-jet of $\ti{A}_{t,i,p,k}$ at $\ti{z}_t=0$  with respect to the standard coordinates on $\C^m$, and define \begin{equation}\ti{A}^*_{t,i,p,k}=\ti{A}_{t,i,p,k}-\ti{A}^\sharp_{t,i,p,k},\end{equation}
so that $\ti{A}^*_{t,i,p,k}$ vanishes to order $j+1$ at $\ti{z}_t=0$.

Recall also that $\ti{\omega}_{\rm can}=d_t^{-2}\lambda_t^2\Theta_t^*\Psi_t^*\omega_{\rm can}$ where $\Psi_t\circ\Theta_t(z,y)=(d_t\lambda_t^{-1}z,y)$.
For all $\iota\geq 0$ and $0<\beta<1$ we have
\begin{equation}\label{krk}\begin{split}
&\|\D^\iota \ti{\omega}_{\rm can}\|_{L^\infty(\ti{B}_{\lambda_td_t^{-1}}(0),\ti{g}_t)}=d_t^{\iota}\lambda_t^{-\iota}\|\D^\iota\omega_{\rm can}\|_{L^\infty(B_{1}(0),g_t)}\leq Cd_t^{\iota}\lambda_t^{-\iota},\\
&[\D^\iota \ti{\omega}_{\rm can}]_{C^\beta(\ti{B}_{\lambda_td_t^{-1}}(0),\ti{g}_t)}=d_t^{\iota+\beta}\lambda_t^{-\iota-\beta}[\D^\iota\omega_{\rm can}]_{C^\beta(B_{1}(0),g_t)}\leq Cd_t^{\iota+\beta}\lambda_t^{-\iota-\beta}.\end{split}
\end{equation}

Defining $\ti{\psi}_t=d_t^{-2}\lambda_t^2\Theta_t^*\Psi_t^*\psi_t$, and letting $\underline{\ti{\psi}_t}$ be its fiberwise average, we have $\ti{\gamma}_{t,0}=\ddbar\underline{\ti{\psi}_t}$.
We again need to perform a jet subtraction to $\ti{\gamma}_{t,0}$ by defining a polynomial function $\ti{\psi}^\sharp_t$ as the $(j+2)$-jet of $\underline{\ti{\psi}_t}$ at $\ti{z}_t=0$  with respect to the standard coordinates on $\C^m$ and letting
\begin{equation}\ti{\eta}^\ddagger_t=\ddbar \ti{\psi}^\sharp_t,\quad \ti{\eta}^\diamond_t=\ti{\gamma}_{t,0}-\ti{\eta}^\ddagger_t.\end{equation}
so that $\ti{\eta}^\diamond_t$ vanishes to order $j+1$ at $x_t$.

Let us introduce some new notation.
Recall that we have defined
\begin{equation}\ti{\gamma}_{t,i,k} = \sum_{p=1}^{N_{i,k}}\ddbar\ti{\mathfrak{G}}_{t,k}(\ti{A}_{t,i,p,k},\ti{G}_{i,p,k}),\end{equation}
so let us split $\ti{A}_{t,i,p,k}=\ti{A}_{t,i,p,k}^*+\ti{A}_{t,i,p,k}^\sharp$ and define
\begin{equation}\label{nuov}\ti{\eta}^\circ_t=\sum_{i=2}^j\sum_{p=1}^{N_{i,k}}\ddbar\ti{\mathfrak{G}}_{t,k}(\ti{A}^*_{t,i,p,k},\ti{G}_{i,p,k}),\quad
\ti{\eta}^\dagger_t=\sum_{i=2}^j\sum_{p=1}^{N_{i,k}}\ddbar\ti{\mathfrak{G}}_{t,k}(\ti{A}^\sharp_{t,i,p,k},\ti{G}_{i,p,k}),\end{equation}
so that we have
\begin{equation}\ti{\omega}^\bullet_t=\ti{\omega}^\natural_t+\ti{\eta}^\ddagger_t+\ti{\eta}^\diamond_t+\ti{\eta}^\circ_t+\ti{\eta}^\dagger_t+\ti{\eta}_{t,j,k}.\end{equation}
Let us also write
\begin{equation}\ti{\omega}^\sharp_t =\ti{\omega}^\natural_t+\ti{\eta}^\dagger_t+\ti{\eta}^\ddagger_t,\end{equation}
so that
\begin{equation}\label{bunny}\ti{\omega}^\bullet_t=\ti{\omega}^\sharp_t+\ti{\eta}^\diamond_t+\ti{\eta}^\circ_t+\ti{\eta}_{t,j,k}.\end{equation}

Clearly equations \eqref{whoknows112} and \eqref{zumteufel161} hold verbatim with $\ti{A}_{t,i,p,k}$ and $\ti{\gamma}_{t,0}$ replaced by $\ti{A}^*_{t,i,p,k}$ and $\ti{\eta}^\diamond_t$ respectively, i.e. for any fixed $R$

\begin{equation}\label{whoknows113}\begin{split}
\sum_{i=2}^j \sum_{p=1}^{N_{i,k}}\sum_{\iota = -2}^{2k} \Bigg( \ve_t^\iota[\D^{j+2+\iota}\ti{A}^*_{t,i,p,k}]_{C^\alpha(\ti{B}_{Rd_t^{-1}},\ti{g}_t)}\Bigg)
+[\D^j\ti{\eta}^\diamond_t]_{C^\alpha(\ti{B}_{Rd_t^{-1}},\ti{g}_t)}+[\D^j\ti{\eta}_{t,j,k}]_{C^\alpha(\ti{B}_{Rd_t^{-1}},\ti{g}_t)}\leq Cd_t^{j+\alpha},
\end{split}
\end{equation}
\begin{equation}\label{zumteufel163}
\begin{split}
d_t^{-j-\alpha}\Bigg(&\sum_{i=2}^j\sum_{p=1}^{N_{i,k}}\sum_{\iota = -2}^{2k}\ve_t^\iota\frac{|\D^{j+2+\iota}\ti{A}^*_{t,i,p,k}(\ti{x}_t) -  \P_{\ti{x}'_t\ti{x}_t}(\D^{j+2+\iota}\ti{A}^*_{t,i,p,k}(\ti{x}'_t))|_{\ti{g}_t(\ti{x}_t)}}{d^{\ti{g}_t}(\ti{x}_t,\ti{x}'_t)^\alpha}\\
&+\frac{|\D^j\ti{\eta}^\diamond_t(\ti{x}_t) -  \P_{\ti{x}'_t\ti{x}_t}(\D^j\ti{\eta}^\diamond_t(\ti{x}'_t))|_{\ti{g}_t(\ti{x}_t)}}{d^{\ti{g}_t}(\ti{x}_t,\ti{x}'_t)^\alpha}
+\frac{|\D^j\ti{\eta}_{t,j,k}(\ti{x}_t) -  \P_{\ti{x}'_t\ti{x}_t}(\D^j\ti{\eta}_{t,j,k}(\ti{x}'_t))|_{\ti{g}_t(\ti{x}_t)}}{d^{\ti{g}_t}(\ti{x}_t,\ti{x}'_t)^\alpha}\Bigg)
=1.
\end{split}
\end{equation}

\subsection{Estimates on the solution components and on the background data }

The following section is the technical heart of the paper. Having split up the Monge-Amp\`ere equation into background, jets, and ``good'' parts as above, we now derive precise estimates on the various components, which will ultimately allow us to expand and linearize the Monge-Amp\`ere equation.

In the following sections, the radius $R$ will be any fixed radius, unless otherwise specified.

\subsubsection{Estimates for $\ti{\eta}_{t,j,k}$}

Recalling \eqref{saves_our_asses}, we can apply Theorem \ref{prop55} to get
\begin{equation}\label{crocefesso}\|\D^\iota\hat{\eta}_{t,j,k}\|_{L^\infty(\hat{B}_{R},\hat{g}_t)}\leq C\delta_t^{j+\alpha-\iota},\quad [\D^\iota\hat{\eta}_{t,j,k}]_{C^\alpha(\hat{B}_{R},\hat{g}_t)}\leq C \delta_t^{j-\iota},\end{equation}
for $0\leq \iota\leq j$, which in the tilde picture becomes
\begin{equation}\label{crocifisso7}
d_t^{-\iota}\|\D^\iota\ti{\eta}_{t,j,k}\|_{L^\infty(\ti{B}_{Rd_t^{-1}},\ti{g}_t)}\leq C\delta_t^{j+\alpha-\iota},\quad d_t^{-\iota-\alpha}[\D^\iota\ti{\eta}_{t,j,k}]_{C^\alpha(\ti{B}_{Rd_t^{-1}},\ti{g}_t)}\leq C \delta_t^{j-\iota},
\end{equation}
for $0\leq \iota\leq j,$ or equivalently
\begin{equation}\label{utilissimo}
d_t^{-j-\alpha}\|\D^\iota\ti{\eta}_{t,j,k}\|_{L^\infty(\ti{B}_{Rd_t^{-1}},\ti{g}_t)}\leq C\ve_t^{j+\alpha-\iota},\quad d_t^{-j-\alpha}[\D^\iota\ti{\eta}_{t,j,k}]_{C^\alpha(\ti{B}_{Rd_t^{-1}},\ti{g}_t)}\leq C \ve_t^{j-\iota}.
\end{equation}

\subsubsection{Estimates for $\ti{\gamma}_{t,0}, \ti{\eta}^\ddagger_t$ and $\ti{\eta}^\diamond_t$}

For any fixed $R$, thanks to \eqref{pettytheft} we have
\begin{equation}\label{beneficio2rep}
\|\D^\iota\hat{\gamma}_{t,0}\|_{L^\infty(\hat{B}_{R},\hat{g}_t)}=\begin{cases}O(1),\quad \iota=j=0,\\
o(1),\quad j>0,\ 0\leq \iota\leq j,
\end{cases}
\end{equation}
and from \eqref{saves_our_asses} we get
\begin{equation}\label{dicristo2}
[\D^j\hat{\gamma}_{t,0}]_{C^\alpha(\hat{B}_{R},\hat{g}_t)}\leq C,
\end{equation}
and so for $0\leq \iota\leq j,$
\begin{equation}\label{crocifisso2}
d_t^{-\iota}\|\D^\iota\ti{\gamma}_{t,0}\|_{L^\infty(\ti{B}_{Rd_t^{-1}},\ti{g}_t)}=\begin{cases}O(1),\quad \iota=j=0,\\
o(1),\quad j>0,\ 0\leq\iota\leq j,
\end{cases} \quad d_t^{-\iota-\alpha}[\D^\iota\ti{\gamma}_{t,0}]_{C^\alpha(\ti{B}_{Rd_t^{-1}},\ti{g}_t)}=\begin{cases}o(1),\quad 0\leq \iota<j,\\
O(1),\quad \iota=j.
\end{cases}
\end{equation}
Since $\ti{\eta}^\ddagger_t$ is annihilated by $[\D^j\cdot]$, it follows from \eqref{dicristo2} that
\begin{equation}\label{dicristo8}
[\D^j\ti{\eta}^\diamond_t]_{C^\alpha(\ti{B}_{Rd_t^{-1}},\ti{g}_t)}=[\D^j\ti{\gamma}_{t,0}]_{C^\alpha(\ti{B}_{Rd_t^{-1}},\ti{g}_t)}\leq Cd_t^{j+\alpha},
\end{equation}
which integrating along segments (starting at $\ti{x}_t$ where $\ti{\eta}^\diamond_t$ vanishes to order $j+1$) gives
\begin{equation}\label{crocifisso8}
d_t^{-\iota}\|\D^\iota\ti{\eta}^\diamond_t\|_{L^\infty(\ti{B}_S,\ti{g}_t)}\leq Cd_t^{j+\alpha-\iota}S^{j+\alpha-\iota}, \quad d_t^{-\iota-\alpha}[\D^\iota\ti{\eta}^\diamond_t]_{C^\alpha(\ti{B}_S,\ti{g}_t)}\leq Cd_t^{j-\iota}S^{j-\iota},
\end{equation}
for $0\leq \iota\leq j$ and $S\leq Rd_t^{-1}$. In particular, taking $S=\ti{R} \ve_t$ for $\ti{R}\leq R\delta_t^{-1}$ and get
\begin{equation}\label{crocifisso8bis}
d_t^{-\iota}\|\D^\iota\ti{\eta}^\diamond_t\|_{L^\infty(\ti{B}_{\ti{R}\ve_t},\ti{g}_t)}\leq C\delta_t^{j+\alpha-\iota}\ti{R}^{j+\alpha-\iota}, \quad d_t^{-\iota-\alpha}[\D^\iota\ti{\eta}^\diamond_t]_{C^\alpha(\ti{B}_{\ti{R}\ve_t},\ti{g}_t)}\leq C\delta_t^{j-\iota}\ti{R}^{j-\iota},
\end{equation}
which will prove useful when $\ve_t\geq C^{-1}$.

On the other hand, from its definition as a jet $\ti{\eta}^\ddagger_t$ inherits from \eqref{crocifisso2} the bounds
\begin{equation}\label{crocifisso9}
d_t^{-\iota}\|\D^\iota\ti{\eta}^\ddagger_t\|_{L^\infty(\ti{B}_{S},\ti{g}_t)}=\begin{cases}O(1),\quad \iota=j=0,\\
o(1),\quad j>0,\ 0\leq\iota\leq j,\\
0,\quad \iota>j,
\end{cases} \quad d_t^{-\iota-\alpha}[\D^\iota\ti{\eta}^\ddagger_t]_{C^\alpha(\ti{B}_{S},\ti{g}_t)}=\begin{cases}o(1),\quad 0\leq \iota<j,\\
0,\quad \iota\geq j.
\end{cases}
\end{equation}
for $S\leq Rd_t^{-1}$.

\subsubsection{Estimates for $\ti{A}^*_{t,i,p,k}$}

Since $\hat{A}^\sharp_{t,i,p,k}$ is a polynomial from the base of degree at most $j$, it follows from \eqref{saves_our_asses} that
\begin{equation}\label{lilith}
[\D^j\hat{A}^*_{t,i,p,k}]_{C^\alpha(\hat{B}_R,\hat{g}_t)}\leq C\delta_t^2,
\end{equation}
for all fixed $R$, which integrating along segments (starting at $\hat{x}_t$ where $\hat{A}^*_{t,i,p,k}$ vanishes to order $j+1$) gives
\begin{equation}\label{noreasontolabelthistoo}
\|\D^\iota\hat{A}^*_{t,i,p,k}\|_{L^\infty(\hat{B}_R,\hat{g}_t)}\leq CR^{j+\alpha-\iota}\delta_t^2,\quad [\D^\iota\hat{A}^*_{t,i,p,k}]_{C^\alpha(\hat{B}_R,\hat{g}_t)}\leq CR^{j-\iota}\delta_t^2,
\end{equation}
for $0\leq \iota\leq j$, and taking the radius $R=S d_t$ we can translate to the tilde picture
\begin{equation}\label{coveted}
d_t^{-\iota+2}\|\D^\iota\ti{A}^*_{t,i,p,k}\|_{L^\infty(\ti{B}_S,\ti{g}_t)}\leq Cd_t^{j+\alpha-\iota}\delta_t^2 S^{j+\alpha-\iota},\quad d_t^{-\iota+2-\alpha}[\D^\iota\ti{A}^*_{t,i,p,k}]_{C^\alpha(\ti{B}_S,\ti{g}_t)}\leq Cd_t^{j-\iota}\delta_t^2S^{j-\iota},
\end{equation}
for $0\leq \iota\leq j$ and $S\leq Rd_t^{-1}$.

For derivatives of order higher than $j$, recall that from \eqref{saves_our_asses} we have the bounds
\begin{equation}\label{ass_saved}
[\D^{j+\ell}\hat{A}^*_{t,i,p,k}]_{C^\alpha(\hat{B}_R,\hat{g}_t)}\leq C\delta_t^{2-\ell},
\end{equation}
for $0\leq\ell\leq 2k+2$ and $R$ fixed. For $1\leq\ell\leq 2k+2$ we then interpolate between this and the bound \eqref{noreasontolabelthistoo} with $\iota=j$ as follows:
\begin{equation}\label{intermed}\begin{split}
(R-\rho)^\ell\|\D^{j+\ell}\hat{A}^*_{t,i,p,k}\|_{L^\infty(\hat{B}_\rho,\hat{g}_t)}&\leq C(R-\rho)^{\ell+\alpha}[\D^{j+\ell}\hat{A}^*_{t,i,p,k}]_{C^\alpha(\hat{B}_R,\hat{g}_t)}+C\|\D^j\hat{A}^*_{t,i,p,k}\|_{L^\infty(\hat{B}_R,\hat{g}_t)}\\
&\leq C(R-\rho)^{\ell+\alpha}\delta_t^{2-\ell}+CR^{\alpha}\delta_t^{2}.
\end{split}\end{equation}
We choose $\rho=S\delta_t$ with $S>1$ fixed, let $A>1$ solve $\frac{A^{\frac{\alpha}{\ell+\alpha}}}{A-1}=S^{\frac{\ell}{\ell+\alpha}}$ and define $R=AS\delta_t$, so that
$R-\rho=\delta_t(AS)^{\frac{\alpha}{\ell+\alpha}}=(R^\alpha\delta_t^\ell)^{\frac{1}{\ell+\alpha}}$, and so we obtain
\begin{equation}\label{notlabeled}\|\D^{j+\ell}\hat{A}^*_{t,i,p,k}\|_{L^\infty(\hat{B}_{S\delta_t},\hat{g}_t)}\leq C_{S}\delta_t^{2-\ell+\alpha},\end{equation}
which in the tilde picture becomes
\begin{equation}\label{coveted2a}
d_t^{-j-\ell+2}\|\D^{j+\ell}\ti{A}^*_{t,i,p,k}\|_{L^\infty(\ti{B}_{S\ve_t},\ti{g}_t)}\leq C_{S} \delta_t^{2-\ell+\alpha} \end{equation}
for all $1\leq \ell\leq 2k+2$ and $S>1$ (hence also trivially for all $S$ fixed).
It will also be useful to rewrite \eqref{coveted} and \eqref{coveted2a} as
\begin{equation}\label{ottimo_a}\begin{split}
&d_t^{-j-\alpha}\|\D^{\iota}\ti{A}^*_{t,i,p,k}\|_{L^\infty(\ti{B}_S,\ti{g}_t)}\leq C \ve_t^2S^{j+\alpha-\iota},\\
&d_t^{-j-\alpha}\|\D^{j+\ell}\ti{A}^*_{t,i,p,k}\|_{L^\infty(\ti{B}_{S\ve_t},\ti{g}_t)}\leq C_{S} \ve_t^{2-\ell+\alpha},
\end{split}\end{equation}
where $0\leq \iota\leq j, 1\leq \ell\leq 2k+2$ and $S$ is fixed.

These are useful when $\ve_t\geq C^{-1}$. If on the other hand $\ve_t\to 0$, then in \eqref{intermed} we choose $R=2Sd_t$ with $S$ fixed, and pick $\rho=R-(R^\alpha\delta_t^\ell)^{\frac{1}{\ell+\alpha}}\geq \frac{R}{2}=Sd_t$ (provided $t$ sufficiently large, since $(R^\alpha\delta_t^\ell)^{\frac{1}{\ell+\alpha}}=(2S)^{\frac{\alpha}{\ell+\alpha}}d_t\ve_t^{\frac{\ell}{\ell+\alpha}}$; the lower bound for $t$ here does not even depend on $S$ provided $S\geq 1$) to obtain
\begin{equation}\label{covetedx}\|\D^{j+\ell}\hat{A}^*_{t,i,p,k}\|_{L^\infty(\hat{B}_{Sd_t},\hat{g}_t)}\leq C \delta_t^{2-\ell}(d_t^\alpha\delta_t^\ell)^{\frac{\alpha}{\ell+\alpha}}S^{\frac{\alpha^2}{\ell+\alpha}}=C\delta_t^{2-\ell}d_t^\alpha\ve_t^{\frac{\ell\alpha}{\ell+\alpha}} S^{\frac{\alpha^2}{\ell+\alpha}},\end{equation}
which in the tilde picture becomes
\begin{equation}\label{coveted2b}
d_t^{-j-\ell+2}\|\D^{j+\ell}\ti{A}^*_{t,i,p,k}\|_{L^\infty(\ti{B}_{S},\ti{g}_t)}\leq C\delta_t^{2-\ell}d_t^\alpha\ve_t^{\frac{\ell\alpha}{\ell+\alpha}} S^{\frac{\alpha^2}{\ell+\alpha}},
\end{equation}
for all $1\leq \ell\leq 2k+2$ and fixed $S$.
Again we can rewrite \eqref{coveted} and \eqref{coveted2b} as
\begin{equation}\label{ottimo}\begin{split}
&d_t^{-j-\alpha}\|\D^{\iota}\ti{A}^*_{t,i,p,k}\|_{L^\infty(\ti{B}_S,\ti{g}_t)}\leq C \ve_t^2S^{j+\alpha-\iota},\\
&d_t^{-j-\alpha}\|\D^{j+\ell}\ti{A}^*_{t,i,p,k}\|_{L^\infty(\ti{B}_S,\ti{g}_t)}\leq C \ve_t^{2-\ell}(S^\alpha \ve_t^\ell)^{\frac{\alpha}{\ell+\alpha}},
\end{split}\end{equation}
where $0\leq \iota\leq j, 1\leq \ell\leq 2k+2$ and $S$ fixed.

\subsubsection{Estimates for $\ti{A}^\sharp_{t,i,p,k}$}

From \eqref{610} we have
\begin{equation}
\|\D^\iota\hat{A}_{t,i,p,k}\|_{L^\infty(\hat{B}_R,\hat{g}_t)}\leq C\delta_t^2\left(e^{\frac{-i+1-\alpha}{2}t}\right)^{1-\frac{\iota}{j+\alpha}},\quad 0\leq \iota\leq j,
\end{equation}
for all given $R$ (with $C$ independent of $R$), and since $\hat{A}^\sharp_{t,i,p,k}$ is the $j$-jet of $\hat{A}_{t,i,p,k}$ at $\hat{z}_t=0$ then in particular all the coefficients of the polynomial $\hat{A}^\sharp_{t,i,p,k}$ have size bounded by $C\delta_t^2e^{\frac{-i+1-\alpha}{2}\frac{\alpha}{j+\alpha}t}$, and so
\begin{equation}\label{prechecazzo}\begin{split}
&\|\D^\iota\hat{A}^\sharp_{t,i,p,k}\|_{L^\infty(\hat{B}_R,\hat{g}_t)}\leq C\max(1,R^{j-\iota})\delta_t^2e^{\frac{-i+1-\alpha}{2}\frac{\alpha}{j+\alpha}t},\\
&[\D^\iota\hat{A}^\sharp_{t,i,p,k}]_{C^\beta(\hat{B}_R,\hat{g}_t)}\leq C\max(1,R^{j-\iota-\beta})\delta_t^2e^{\frac{-i+1-\alpha}{2}\frac{\alpha}{j+\alpha}t},
\end{split}\end{equation}
for $0\leq \iota\leq j, 0<\beta<1$, and so taking $R=Sd_t$ we obtain in the tilde picture
\begin{equation}\label{checazzo}
d_t^{-\iota+2}\|\D^\iota\ti{A}^\sharp_{t,i,p,k}\|_{L^\infty(\ti{B}_S,\ti{g}_t)}\leq C\max(1,S^{j-\iota}d_t^{j-\iota})\delta_t^2e^{\frac{-i+1-\alpha}{2}\frac{\alpha}{j+\alpha}t},
\end{equation}
\begin{equation}\label{checazzo2}
d_t^{-\iota+2-\beta}[\D^\iota\ti{A}^\sharp_{t,i,p,k}]_{C^\beta(\ti{B}_S,\ti{g}_t)}\leq C\max(1,S^{j-\iota-\beta}d_t^{j-\iota-\beta})\delta_t^2e^{\frac{-i+1-\alpha}{2}\frac{\alpha}{j+\alpha}t},
\end{equation}
for $0\leq \iota\leq j, 0<\beta<1, S\leq Cd_t^{-1}$.

\subsubsection{Estimates for $\ti{\eta}^\circ_t$}

By definition, we seek to bound derivatives of
\begin{equation}\label{krummung}\hat{\eta}^\circ_t=\sum_{i=2}^j\sum_{p=1}^{N_{i,k}}\ddbar\hat{\mathfrak{G}}_{t,k}(\hat{A}^*_{t,i,p,k},\hat{G}_{i,p,k}),\end{equation}
where from Lemma \ref{Gstructure} we have
\begin{equation}\label{expa3}
\hat{\eta}^\circ_t = i\partial\overline\partial\sum_{i=2}^j\sum_{p=1}^{N_{i,k}}\sum_{\iota=0}^{2k}\sum_{\ell=\lceil \frac{\iota}{2} \rceil}^{k} e^{-\left(\ell-\frac{\iota}{2}\right) t}\delta_t^{\iota}(\hat{\Phi}_{\iota,\ell}(\hat{G}_{i,p,k})\circledast \D^\iota \hat{A}^*_{t,i,p,k}),
\end{equation}
and we will apply $\D^r$ to this ($r\leq j$). Note that in general we do not have that $\ddbar$ is schematically of type $\D^2$, because the product metrics $g_{z,t}$ that are used to define $\D$ are in general not K\"ahler with respect to the complex structure (which after our stretchings we denote by $\hat{J}^\natural_t$, as in \cite[\S 5]{HT2}). Rather, using that for a function $u$ we have $\ddbar u =\frac{1}{2}d(\hat{J}^\natural_t du),$ we can write schematically
\begin{equation}\label{preQ}
\ddbar = \hat{J}^\natural_t\circledast\D^2 + (\D \hat{J}^\natural_t)\circledast \D,
\end{equation}
and hence for $r\geq 0$
\begin{equation}\label{Q2}
\D^r\ddbar = \sum_{s=0}^{r+1}(\D^{r+1-s} \hat{J}^\natural_t)\circledast \D^{s+1}.
\end{equation}
Observe that for every $\iota\geq 0$ and fixed $R$ we have the bounds
\begin{equation}\label{Q}
\|\D^\iota \hat{J}^\natural_t\|_{L^\infty(\hat{B}_R,\hat{g}_t)}\leq C\delta_t^{-\iota},\quad [\D^\iota \hat{J}^\natural_t]_{C^\alpha(\hat{B}_R,\hat{g}_t)}\leq C\delta_t^{-\iota-\alpha},
\end{equation}
which are straightforward after noting that
we have $(\D^\iota\hat{J}^\natural_t)_{\mathbf{f}\cdots\mathbf{f}}^{\mathbf{b}}=0$ because the fibers of ${\rm pr}_B$ are $\hat{J}^\natural_t$-complex and when differentiating purely vertically (say in the fiber over $\hat{z}$) $\D^\iota$ is the same as $\nabla^{\hat{z},\iota}$, the iterated covariant derivative of the product metric Riemannian metric $g_{\C^m}+g_{Y,\hat{z}}$.
Using \eqref{Q2} we have the schematics
\begin{equation}\label{schemas}
\D^{r}\ddbar\left(\hat{\Phi}_{\iota,\ell}(\hat{G}_{i,p,k})\circledast \D^\iota \hat{A}^*_{t,i,p,k}\right)=\sum_{s=0}^{r+1}\sum_{i_1+i_2=s+1}(\D^{r+1-s} \hat{J}^\natural_t)\circledast\D^{i_1}\hat{\Phi}_{\iota,\ell}(\hat{G}_{i,p,k})\circledast \D^{i_2+\iota}\hat{A}^*_{t,i,p,k},
\end{equation}
and so we obtain
\begin{equation}\label{idiot2}
\D^r\hat{\eta}^\circ_t=\sum_{i=2}^j\sum_{p=1}^{N_{i,k}}\sum_{\iota=0}^{2k}\sum_{\ell=\lceil \frac{\iota}{2} \rceil}^{k}\sum_{s=0}^{r+1}\sum_{i_1+i_2=s+1} e^{-\left(\ell-\frac{\iota}{2}\right) t}\delta_t^{\iota}(\D^{r+1-s} \hat{J}^\natural_t)\circledast\D^{i_1}\hat{\Phi}_{\iota,\ell}(\hat{G}_{i,p,k})\circledast \D^{i_2+\iota}\hat{A}^*_{t,i,p,k}.
\end{equation}
To estimate these sums we will also use the simple bounds (worst-case scenario)
\begin{equation}\|\D^i\hat{\Phi}\|_{L^\infty(\hat{B}_R,\hat{g}_t)}\leq C\delta_t^{-i},\quad [\D^i\hat{\Phi}]_{C^\alpha(\hat{B}_R,\hat{g}_t)}\leq C\delta_t^{-i-\alpha}.\end{equation}
Taking \eqref{noreasontolabelthistoo} with radius $R=\ti{R}\delta_t$ with $\ti{R}$ fixed, together with \eqref{notlabeled} gives
\begin{equation}
\|\D^q\hat{A}^*_{t,i,p,k}\|_{L^\infty(\hat{B}_{\ti{R}\delta_t},\hat{g}_t)}\leq C_{\ti{R}}\delta_t^{j+2+\alpha-q},\quad 0\leq q\leq 2k+2+j,
\end{equation}
and similarly from \eqref{noreasontolabelthistoo} and \eqref{ass_saved}
\begin{equation}
[\D^q\hat{A}^*_{t,i,p,k}]_{C^\alpha(\hat{B}_{\ti{R}\delta_t},\hat{g}_t)}\leq C_{\ti{R}}\delta_t^{j+2-q},\quad 0\leq q\leq 2k+2+j,
\end{equation}
and so in \eqref{idiot2} using also \eqref{Q} we can bound
\begin{equation}\label{zena}
\|(\D^{r+1-s} \hat{J}^\natural_t)\circledast\D^{i_1}\hat{\Phi}_{\iota,\ell}(\hat{G}_{i,p,k})\circledast \D^{i_2+\iota}\hat{A}^*_{t,i,p,k}\|_{L^\infty(\hat{B}_{\ti{R}\delta_t},\hat{g}_t)}\leq C_{\ti{R}}\delta_t^{-r-1+s}\delta_t^{j+2+\alpha-i_1-i_2-\iota}=C_{\ti{R}}\delta_t^{j+\alpha-r-\iota},
\end{equation}
\begin{equation}\begin{split}\label{zdva}
[(\D^{r+1-s}& \hat{J}^\natural_t)\circledast\D^{i_1}\hat{\Phi}_{\iota,\ell}(\hat{G}_{i,p,k})\circledast \D^{i_2+\iota}\hat{A}^*_{t,i,p,k}]_{C^\alpha(\hat{B}_{\ti{R}\delta_t},\hat{g}_t)}\\
&\leq C_{\ti{R}}\delta_t^{-r-1+s-\alpha}\delta_t^{j+2+\alpha-i_1-i_2-\iota}+C_{\ti{R}}\delta_t^{-r-1+s}\delta_t^{j+2-i_1-i_2-\iota}\leq
C_{\ti{R}}\delta_t^{j-r-\iota},
\end{split}\end{equation}
and combining \eqref{idiot2}, \eqref{zena}, \eqref{zdva} gives
\begin{equation}\label{crocifisso3ghost}
\|\D^r\hat{\eta}^\circ_t\|_{L^\infty(\hat{B}_{\ti{R}\delta_t},\hat{g}_t)}\leq C_{\ti{R}}\delta_t^{j+\alpha-r},\quad [\D^r\hat{\eta}^\circ_t]_{C^\alpha(\hat{B}_{\ti{R}\delta_t},\hat{g}_t)}\leq C_{\ti{R}}\delta_t^{j-r},
\end{equation}
for $0\leq r\leq j$, which in the tilde picture becomes
\begin{equation}\label{crocifisso3}
d_t^{-r}\|\D^r\ti{\eta}^\circ_t\|_{L^\infty(\ti{B}_{\ti{R}\ve_t},\ti{g}_t)}\leq C_{\ti{R}}\delta_t^{j+\alpha-r},\quad d_t^{-r-\alpha}[\D^r\ti{\eta}^\circ_t]_{C^\alpha(\ti{B}_{\ti{R}\ve_t},\ti{g}_t)}\leq C_{\ti{R}}\delta_t^{j-r},
\end{equation}
and which we can rewrite as
\begin{equation}\label{utilissimo2}
d_t^{-j-\alpha}\|\D^r\ti{\eta}^\circ_t\|_{L^\infty(\ti{B}_{\ti{R}\ve_t},\ti{g}_t)}\leq C_{\ti{R}}\ve_t^{j+\alpha-r},\quad d_t^{-j-\alpha}[\D^r\ti{\eta}^\circ_t]_{C^\alpha(\ti{B}_{\ti{R}\ve_t},\ti{g}_t)}\leq C_{\ti{R}} \ve_t^{j-r}.
\end{equation}

These estimates will be useful in the case when $\ve_t\geq C^{-1}$. On the other hand when $\ve_t\to 0$, we shall take only derivatives and difference quotient in the base directions, however the $\de\db$ in \eqref{expa3} can still be in all directions, so in the term
\begin{equation}
(\D^{r+1-s} \hat{J}^\natural_t)\circledast\D^{i_1}\hat{\Phi}_{\iota,\ell}(\hat{G}_{i,p,k}),
\end{equation}
in \eqref{idiot2} we can have at most two fiber derivatives. To bound this term, observe that if we take $u$ fiber derivatives with $u\leq 2$ (we shall denote this by $\D^{i_1}_{\#\mathbf{f}=u}$)
and the remaining $i_1-u$ derivatives are in base directions, then recalling that $\hat{\Phi}_{\iota,\ell}=\Psi_t^*\Phi_{\iota,\ell}, \hat{J}^\natural_t=\Psi_t^*J$ we have
\begin{equation}
\|\D^{i_1}_{\#\mathbf{f}=u}\hat{\Phi}_{\iota,\ell}(\hat{G}_{i,p,k})\|_{L^\infty(\hat{B}_R,\hat{g}_t)}\leq C\lambda_t^{-i_1+u}\delta_t^{-u},\quad
[\D^{i_1}_{\#\mathbf{f}=u}\hat{\Phi}_{\iota,\ell}(\hat{G}_{i,p,k})]_{C^\alpha_{\rm base}(\hat{B}_{R},\hat{g}_t)}\leq C\lambda_t^{-i_1+u-\alpha}\delta_t^{-u},
\end{equation}
\begin{equation}\label{Q3}
\|\D^\iota_{\#\mathbf{f}=u} \hat{J}^\natural_t\|_{L^\infty(\hat{B}_R,\hat{g}_t)}\leq C\lambda_t^{-\iota+u}\delta_t^{-u},\quad [\D^\iota_{\#\mathbf{f}=u} \hat{J}^\natural_t]_{C^\alpha_{\rm base}(\hat{B}_R,\hat{g}_t)}\leq C\lambda_t^{-\iota+u-\alpha}\delta_t^{-u},
\end{equation}
\begin{equation}\label{Q4}
\|((\D^{r+1-s} \hat{J}^\natural_t)\circledast\D^{i_1}\hat{\Phi}_{\iota,\ell}(\hat{G}_{i,p,k}))_{\#\mathbf{f}=u}\|_{L^\infty(\hat{B}_R,\hat{g}_t)}\leq C\lambda_t^{-r-1+s-i_1+u}\delta_t^{-u},
\end{equation}
\begin{equation}\label{Q5}
[((\D^{r+1-s} \hat{J}^\natural_t)\circledast\D^{i_1}\hat{\Phi}_{\iota,\ell}(\hat{G}_{i,p,k}))_{\#\mathbf{f}=u}]_{C^\alpha_{\rm base}(\hat{B}_{R},\hat{g}_t)}\leq C\lambda_t^{-r-1+s-i_1+u-\alpha}\delta_t^{-u},
\end{equation}
and so we can use \eqref{noreasontolabelthistoo} with radius $R=S d_t$ with $S$ fixed, together with \eqref{covetedx} to bound
\begin{equation}\label{ena2}\begin{split}
&\delta_t^\iota\|((\D^{r+1-s} \hat{J}^\natural_t)\circledast\D^{i_1}\hat{\Phi}_{\iota,\ell}(\hat{G}_{i,p,k}))_{\#\mathbf{f}=u}\circledast \D^{i_2+\iota}\hat{A}^*_{t,i,p,k}\|_{L^\infty(\hat{B}_{Sd_t},\hat{g}_t)}\\
&\leq C_S\delta_t^\iota\lambda_t^{-r-1+s-i_1+u}\delta_t^{-u}\begin{cases}
\delta_t^2 d_t^{j+\alpha-i_2-\iota}, \quad i_2+\iota\leq j\\
d_t^\alpha\delta_t^{2+j-i_2-\iota}\ve_t^{\frac{(i_2+\iota-j)\alpha}{i_2+\iota-j+\alpha}}, \quad i_2+\iota>j
\end{cases}\\
&\leq C_Sd_t^{j+\alpha-r},
\end{split}
\end{equation}
since $i_2+u\leq r+2$ and using also $u\leq 2$ (and that $\lambda_t^{-1}$ and $\ve_t$ are bounded and only appear here with nonnegative powers). On the other hand, using \eqref{noreasontolabelthistoo}, \eqref{ass_saved} we have
\begin{equation}\label{dva2}\begin{split}
&\delta_t^\iota[((\D^{r+1-s} \hat{J}^\natural_t)\circledast\D^{i_1}\hat{\Phi}_{\iota,\ell}(\hat{G}_{i,p,k}))_{\#\mathbf{f}=u}\circledast \D^{i_2+\iota}\hat{A}^*_{t,i,p,k}]_{C^\alpha_{\rm base}(\hat{B}_{Sd_t},\hat{g}_t)}\\
&\leq
C_Sd_t^{j+\alpha-r}\lambda_t^{-\alpha}+
C_S\delta_t^\iota\lambda_t^{-r-1+s-i_1+u}\delta_t^{-u}\begin{cases}
\delta_t^2 d_t^{j-i_2-\iota}, \quad i_2+\iota\leq j\\
\delta_t^{2+j-i_2-\iota}, \quad i_2+\iota>j
\end{cases}\\
&\leq
C_Sd_t^{j-r},
\end{split}\end{equation}
and in particular we obtain
\begin{equation}\label{crocifisso3bis}
d_t^{-r}\|\D^r_{\mathbf{b\cdots b}}\ti{\eta}^\circ_t\|_{L^\infty(\ti{B}_{S},\ti{g}_t)}\leq C_Sd_t^{j+\alpha-r},\quad d_t^{-r-\alpha}[\D^r_{\mathbf{b\cdots b}}\ti{\eta}^\circ_t]_{C^\alpha_{\mathrm{base}}(\ti{B}_S,\ti{g}_t)}\leq C_Sd_t^{j-r},
\end{equation}
for $0\leq r\leq j$ and $S$ fixed.

It is important to make here the following observation: in \eqref{ena2}, whenever in the above estimates we converted a $\delta_t$ into a $d_t$ (using that $\ve_t\to 0$), or we threw away a negative power of $\lambda_t$ or the term $e^{-\left(\ell-\frac{\iota}{2}\right)t}$, the actual result is $o(d_t^{j+\alpha-r})$ rather than $O(d_t^{j+\alpha-r})$. For later use, in the case when $r=j$, we need to identify exactly which terms in \eqref{ena2} are not a priori $o(d_t^{\alpha})$. Inspecting the above bounds, we must have $i_1=u+s-j-1$ so that the factor of $\lambda_t^{-1}$ is absent, and hence $i_2=j+2-u$, and we must also have $\ell=\frac{\iota}{2}$ so that the exponential term is absent. Let us first examine the case when $i_2+\iota\leq j$.  This means that $j+2-u+\iota\leq j$ i.e. $\iota\leq u-2\leq 0$, and so we must have $\ell=\iota=0, u=2,i_2=j,j+1-s+i_1=2$, so in particular $j-1\leq s\leq j+1$ and the terms $\sum_{s=j-1}^{j+1}((\D^{j+1-s} \hat{J}^\natural_t)\circledast\D^{s+1-j}\hat{\Phi}_{0,0}(\hat{G}_{i,p,k}))_{\#\mathbf{f}=2}$ here are just equal to $(\ddbar\hat{\Phi}_{0,0}(\hat{G}_{i,p,k}))_{\mathbf{ff}}$. The resulting term is $O(d_t^\alpha)$ but not a priori smaller, and thanks to \eqref{sangennaro} it equals
\begin{equation}
\sum_{i=2}^j\sum_{p=1}^{N_{i,k}}(\ddbar(\Delta^{\Psi_t^*\omega_F|_{\{\cdot\}\times Y}})^{-1}\hat{G}_{i,p,k})_{\mathbf{ff}} \D^j\hat{A}^*_{t,i,p,k}.
\end{equation}
On the other hand, in the second case when $i_2+\iota>j$ we always get $o(d_t^\alpha)$ thanks to the term $\ve_t^{\frac{(i_2+\iota-j)\alpha}{i_2+\iota-j+\alpha}}$ which goes to zero by assumption.
So the conclusion is that in $L^\infty(\ti{B}_S,\ti{g}_t)$ we have
\begin{equation}\label{gimme}
d_t^{-j-\alpha}\D^j_{\mathbf{b\cdots b}}\ti{\eta}^\circ_t
=d_t^{-j-\alpha}\sum_{i=2}^j\sum_{p=1}^{N_{i,k}}(\ddbar(\Delta^{\Psi_t^*\omega_F|_{\{\cdot\}\times Y}})^{-1}\ti{G}_{i,p,k})_{\mathbf{ff}} \D^j\ti{A}^*_{t,i,p,k}+o(1).
\end{equation}

Lastly, let us show that when $\ve_t\to 0$ if replace the shrinking metrics $\ti{g}_t$ with the fixed metric $g_{X}$ we do get
\begin{equation}\label{crocifisso3ter}
d_t^{-r}\|\D^r\ti{\eta}^\circ_t\|_{L^\infty(\ti{B}_S,g_{X})}\leq C_Sd_t^{j+\alpha-r},\quad d_t^{-r-\alpha}[\D^r\ti{\eta}^\circ_t]_{C^\alpha(\ti{B}_S,g_{X})}\leq C_Sd_t^{j-r},
\end{equation}
for $0\leq r\leq j$ and fixed $S$.
This is proved similarly to \eqref{crocifisso3bis}, using that all derivatives of $\ti{\Phi}_{\iota,\ell}$ and $\ti{J}^\natural_t$ have uniformly bounded norm with respect to $g_X$. Briefly, in the tilde picture we have
\begin{equation}\label{krktk}
\ti{\eta}^\circ_t = i\partial\overline\partial\sum_{i=2}^j\sum_{p=1}^{N_{i,k}}\sum_{\iota=0}^{2k}\sum_{\ell=\lceil \frac{\iota}{2} \rceil}^{k} e^{-\left(\ell-\frac{\iota}{2}\right) t}\ve_t^{\iota}(\ti{\Phi}_{\iota,\ell}(\ti{G}_{i,p,k})\circledast \D^\iota \ti{A}^*_{t,i,p,k}),
\end{equation}
and we apply $\D^r$ to this ($r\leq j$), using again the schematics
\begin{equation}\label{schemata}
\D^{r}\ddbar\left(\ti{\Phi}_{\iota,\ell}(\ti{G}_{i,p,k})\circledast \D^\iota \ti{A}^*_{t,i,p,k}\right)=\sum_{s=0}^{r+1}\sum_{i_1+i_2=s+1}(\D^{r+1-s}\ti{J}^\natural_t)\circledast\D^{i_1}\ti{\Phi}_{\iota,\ell}(\ti{G}_{i,p,k})\circledast \D^{i_2+\iota}\ti{A}^*_{t,i,p,k},
\end{equation}
we can bound
\begin{equation}\begin{split}
&d_t^{-r}\ve_t^\iota\|(\D^{r+1-s}\ti{J}^\natural_t)\circledast\D^{i_1}\ti{\Phi}_{\iota,\ell}(\ti{G}_{i,p,k})\circledast \D^{i_2+\iota}\ti{A}^*_{t,i,p,k}\|_{L^\infty(\ti{B}_{S},g_X)}\\
&\leq C_Sd_t^{-r-1+s-i_1}\delta_t^\iota \begin{cases}
\delta_t^2 d_t^{j+\alpha-i_2-\iota}, \quad i_2+\iota\leq j\\
d_t^\alpha\delta_t^{2+j-i_2-\iota}\ve_t^{\frac{(i_2+\iota-j)\alpha}{i_2+\iota-j+\alpha}}, \quad i_2+\iota>j
\end{cases}\\
&\leq C_Sd_t^{j+\alpha-r},
\end{split}
\end{equation}
and
\begin{equation}\begin{split}
&d_t^{-r-\alpha}\ve_t^\iota[(\D^{r+1-s}\ti{J}^\natural_t)\circledast\D^{i_1}\ti{\Phi}_{\iota,\ell}(\ti{G}_{i,p,k})\circledast \D^{i_2+\iota}\ti{A}^*_{t,i,p,k}]_{C^\alpha(\ti{B}_{S},g_X)}\\
&\leq
C_Sd_t^{j-r}+
C_Sd_t^{-r-1+s-i_1}\delta_t^\iota\begin{cases}
\delta_t^2 d_t^{j-i_2-\iota}, \quad i_2+\iota\leq j\\
\delta_t^{2+j-i_2-\iota}, \quad i_2+\iota>j
\end{cases}\\
&\leq
C_Sd_t^{j-r},
\end{split}\end{equation}
and \eqref{crocifisso3ter} follows.

\subsubsection{Estimates for $\ti{\eta}^\dagger_t$}

We seek to bound
\begin{equation}\hat{\eta}^\dagger_t=\sum_{i=2}^j\sum_{p=1}^{N_{i,k}}\ddbar\hat{\mathfrak{G}}_{t,k}(\hat{A}^\sharp_{t,i,p,k},\hat{G}_{i,p,k}),\end{equation}
where from Lemma \ref{Gstructure} we have
\begin{equation}\label{expa4}
\hat{\eta}^\dagger_t = i\partial\overline\partial\sum_{i=2}^j\sum_{p=1}^{N_{i,k}}\sum_{\iota=0}^{2k}\sum_{\ell=\lceil \frac{\iota}{2} \rceil}^{k} e^{-\left(\ell-\frac{\iota}{2}\right) t}\delta_t^{\iota}(\hat{\Phi}_{\iota,\ell}(\hat{G}_{i,p,k})\circledast \D^\iota \hat{A}^\sharp_{t,i,p,k}),
\end{equation}
and we will apply $\D^r$ to this, for any $r\geq 0$.  Recall from \eqref{prechecazzo} that
\begin{equation}\begin{split}
&\|\D^\iota\hat{A}^\sharp_{t,i,p,k}\|_{L^\infty(\hat{B}_R,\hat{g}_t)}\leq C\max(1,R^{j-\iota})\delta_t^2e^{\frac{-i+1-\alpha}{2}\frac{\alpha}{j+\alpha}t},\\
&[\D^\iota\hat{A}^\sharp_{t,i,p,k}]_{C^\beta(\hat{B}_R,\hat{g}_t)}\leq C\max(1,R^{j-\iota-\beta})\delta_t^2e^{\frac{-i+1-\alpha}{2}\frac{\alpha}{j+\alpha}t},
\end{split}\end{equation}
for $0\leq \iota\leq j, 0<\beta<1$ and fixed $R$, while of course derivatives of order $>j$ vanish.
We have to use again the schematics analogous to \eqref{schemas} and the bounds (for $0<\beta<1$ and fixed $R$)
\begin{equation}\label{ena3}
\|(\D^{r+1-s} \hat{J}^\natural_t)\circledast\D^{i_1}\hat{\Phi}_{\iota,\ell}(\hat{G}_{i,p,k})\circledast \D^{i_2+\iota}\hat{A}^\sharp_{t,i,p,k}\|_{L^\infty(\hat{B}_R,\hat{g}_t)}\leq C\delta_t^{-r-1+s-i_1}\delta_t^2e^{\frac{-i+1-\alpha}{2}\frac{\alpha}{j+\alpha}t}\leq C\delta_t^{-r}e^{\frac{-i+1-\alpha}{2}\frac{\alpha}{j+\alpha}t},
\end{equation}
\begin{equation}\label{dva3}
[(\D^{r+1-s} \hat{J}^\natural_t)\circledast\D^{i_1}\hat{\Phi}_{\iota,\ell}(\hat{G}_{i,p,k})\circledast \D^{i_2+\iota}\hat{A}^\sharp_{t,i,p,k}]_{C^\beta(\hat{B}_R,\hat{g}_t)}\leq C\delta_t^{-r-1+s-i_1-\beta}\delta_t^2e^{\frac{-i+1-\alpha}{2}\frac{\alpha}{j+\alpha}t}\leq C\delta_t^{-r-\beta}e^{\frac{-i+1-\alpha}{2}\frac{\alpha}{j+\alpha}t}.
\end{equation}
Overall, this gives
\begin{equation}\label{crocifisso4}
d_t^{-r}\|\D^r\ti{\eta}^\dagger_t\|_{L^\infty(\ti{B}_S,\ti{g}_t)}\leq C\delta_t^{-r}e^{\frac{-1-\alpha}{2}\frac{\alpha}{j+\alpha}t},\quad
d_t^{-r-\beta}[\D^r\ti{\eta}^\dagger_t]_{C^\beta(\ti{B}_S,\ti{g}_t)}\leq C\delta_t^{-r-\beta}e^{\frac{-1-\alpha}{2}\frac{\alpha}{j+\alpha}t},
\end{equation}
for all $r\geq 0, 0<\beta<1$ and $S\leq Rd_t^{-1}$ ($R$ fixed),
while if we take only derivatives in the base directions, then in \eqref{ena3} and \eqref{dva3} there are at most $2$ fiber derivatives landing on $\hat{\Phi}$ and $\hat{J}^\natural_t$ (and the $C^\beta$ difference quotient is in the base only), which implies that the negative powers of $\delta_t$ at the end of \eqref{ena3} and \eqref{dva3} disappear and we get
\begin{equation}\label{crocifisso4bis}
d_t^{-r}\|\D^r_{\mathbf{b\cdots b}}\ti{\eta}^\dagger_t\|_{L^\infty(\ti{B}_S,\ti{g}_t)}\leq Ce^{\frac{-1-\alpha}{2}\frac{\alpha}{j+\alpha}t},\quad
d_t^{-r-\beta}[\D^r_{\mathbf{b\cdots b}}\ti{\eta}^\dagger_t]_{C^\beta_{\mathrm{base}}(\ti{B}_S,\ti{g}_t)}\leq Ce^{\frac{-1-\alpha}{2}\frac{\alpha}{j+\alpha}t},
\end{equation}
for all $r\geq 0, 0<\beta<1$, $S\leq Rd_t^{-1}$ ($R$ fixed), and all of these go to zero. On the other hand, if in \eqref{ena3}, \eqref{dva3} we use the fixed metric $g_X$ instead of $\hat{g}_t$ then all derivatives of $\hat{\Phi}_{\iota,\ell}$ and $\hat{J}^\natural_t$ are uniformly bounded and we obtain
\begin{equation}\label{crocifisso4ter}
\|\D^r\hat{\eta}^\dagger_t\|_{L^\infty(\hat{B}_R,g_X)}\leq C\delta_t^2e^{\frac{-1-\alpha}{2}\frac{\alpha}{j+\alpha}t},\quad
[\D^r\hat{\eta}^\dagger_t]_{C^\beta(\hat{B}_R,g_X)}\leq C\delta_t^2e^{\frac{-1-\alpha}{2}\frac{\alpha}{j+\alpha}t},
\end{equation}
for all $r\geq 0, 0<\beta<1$ and fixed $R$. Lastly, we will also need a similar estimate using the fixed metric $g_X$ but in the tilde picture when $\ve_t\to 0,$ which says that
\begin{equation}\label{crocifisso4quater}
\|\D^r\ti{\eta}^\dagger_t\|_{L^\infty(\ti{B}_S,g_X)}\leq C\ve_t^2e^{\frac{-1-\alpha}{2}\frac{\alpha}{j+\alpha}t},\quad
[\D^r\ti{\eta}^\dagger_t]_{C^\beta(\ti{B}_S,g_X)}\leq C\ve_t^2e^{\frac{-1-\alpha}{2}\frac{\alpha}{j+\alpha}t},
\end{equation}
for $r\geq 0, 0<\beta<1,$ fixed $S$, and again these all go to zero. Briefly, we have
\begin{equation}
\ti{\eta}^\dagger_t = i\partial\overline\partial\sum_{i=2}^j\sum_{p=1}^{N_{i,k}}\sum_{\iota=0}^{2k}\sum_{\ell=\lceil \frac{\iota}{2} \rceil}^{k} e^{-\left(\ell-\frac{\iota}{2}\right) t}\ve_t^{\iota}(\ti{\Phi}_{\iota,\ell}(\ti{G}_{i,p,k})\circledast \D^\iota \ti{A}^\sharp_{t,i,p,k}),
\end{equation}
and we apply $\D^r$ to this, using the schematics analogous to \eqref{schemata}, and using that all derivatives of $\ti{\Phi}_{\iota,\ell}$ and $\ti{J}^\natural_t$ have uniformly bounded norm with respect to $g_X$
we can bound
\begin{equation}\begin{split}
d_t^{-r}\ve_t^\iota&\|(\D^{r+1-s} \ti{J}^\natural_t)\circledast\D^{i_1}\ti{\Phi}_{\iota,\ell}(\ti{G}_{i,p,k})\circledast \D^{i_2+\iota}\ti{A}^\sharp_{t,i,p,k}\|_{L^\infty(\ti{B}_{S},g_X)}\\
&\leq Cd_t^{-r-1+s-i_1}\delta_t^\iota\delta_t^2 e^{\frac{-i+1-\alpha}{2}\frac{\alpha}{j+\alpha}t}
\leq Cd_t^{-r} \ve_t^2 e^{\frac{-i+1-\alpha}{2}\frac{\alpha}{j+\alpha}t},
\end{split}
\end{equation}
and
\begin{equation}\begin{split}
d_t^{-r-\beta}\ve_t^\iota&[(\D^{r+1-s} \ti{J}^\natural_t)\circledast\D^{i_1}\ti{\Phi}_{\iota,\ell}(\ti{G}_{i,p,k})\circledast \D^{i_2+\iota}\ti{A}^\sharp_{t,i,p,k}]_{C^\beta(\ti{B}_{S},g_X)}\\
&\leq Cd_t^{-r-1+s-i_1-\beta}\delta_t^\iota\delta_t^2 e^{\frac{-i+1-\alpha}{2}\frac{\alpha}{j+\alpha}t}
\leq Cd_t^{-r-\beta} \ve_t^2 e^{\frac{-i+1-\alpha}{2}\frac{\alpha}{j+\alpha}t},
\end{split}\end{equation}
and \eqref{crocifisso4quater} follows.

\subsubsection{Estimates for $\ti{\omega}^\sharp_t$}

Since $\tilde\omega_t^\sharp = \tilde\omega_t^\bullet - \tilde\eta_{t,j,k}-\ti{\eta}^\circ_t-\ti{\eta}^\diamond_t$, we can see, using \eqref{crocifisso7}, \eqref{crocifisso8}, \eqref{crocifisso3} and the fact that the complex structure has uniformly bounded $\ti{g}_t$-norm,
that for any given $R$, $\tilde\omega_t^\sharp$ is a K\"ahler form on $\ti{B}_R$ for all $t$ sufficiently large, with associated metric uniformly equivalent to $\tilde{g}_t$ (thanks to \eqref{fucker}).

We claim that we have the bounds
\begin{equation}\label{crocifisso5}
d_t^{-\iota}\|\D^\iota\tilde{\omega}^\sharp_t\|_{L^\infty(\ti{B}_S,\ti{g}_t)}\leq C\delta_t^{-\iota},\quad d_t^{-\iota-\beta}[\D^\iota\tilde{\omega}^\sharp_t]_{C^\beta(\ti{B}_S,\ti{g}_t)}\leq C\delta_t^{-\iota-\beta},
\end{equation}
for all $\iota\geq 0$ (possibly larger than $j$), $0<\beta<1$, and $S\leq Rd_t^{-1}$ ($R$ fixed).

Indeed, recall that
\begin{equation}\ti{\omega}^\sharp_t=\ti{\omega}_{\rm can}+\ve_t^2\Theta_t^*\Psi_t^*\omega_F+\ti{\eta}^\dagger_t+\ti{\eta}^\ddagger_t.\end{equation}
The term $\ti{\omega}_{\rm can}$ is bounded by \eqref{krk}, the term $\ti{\eta}^\dagger_t$ is bounded by \eqref{crocifisso4}, and $\ti{\eta}^\ddagger_t$ by \eqref{crocifisso9} (and this one vanishes when differentiated more than $j$ times). Lastly, for the term $\ve_t^2  \Theta_t^*\Psi_t^*\omega_F$ we have
\begin{equation}\label{lameduck}
d_t^{-\iota}\|\D^\iota(\ve_t^2\Theta_t^*\Psi_t^*\omega_F)\|_{L^\infty(\ti{B}_S,\ti{g}_t)}=
\lambda_t^{-\iota}\|\D^\iota(e^{-t}\omega_F)\|_{L^\infty(B_{Sd_t\lambda_t^{-1}},g_t)}\leq C\delta_t^{-\iota},
\end{equation}
\begin{equation}\label{tameduck}
d_t^{-\iota-\beta}[\D^\iota(\ve_t^2\Theta_t^*\Psi_t^*\omega_F)]_{C^\beta(\ti{B}_S,\ti{g}_t)}=
\lambda_t^{-\iota-\beta}[\D^\iota(e^{-t}\omega_F)]_{C^\beta(B_{Sd_t\lambda_t^{-1}},g_t)}\leq C\delta_t^{-\iota-\beta},\end{equation}
by simple ``index counting'', for all $\iota\geq 0, 0<\beta<1$, $S\leq Rd_t^{-1}$ ($R$ fixed), and \eqref{crocifisso5} now follows.

Lastly, we claim that if we move only in the base directions, then we get
\begin{equation}\label{crocifisso5bis}
d_t^{-\iota}\|\D^\iota_{\mathbf{b\cdots b}}\tilde{\omega}^\sharp_t\|_{L^\infty(\ti{B}_S,\ti{g}_t)}\leq \begin{cases}
O(1),\quad \iota=0,\\
o(1), \quad \iota>0,\end{cases}\quad d_t^{-\iota-\beta}[\D^\iota_{\mathbf{b\cdots b}}\tilde{\omega}^\sharp_t]_{C^\beta_{\mathrm{base}}(\ti{B}_S,\ti{g}_t)}=o(1),
\end{equation}
for all $\iota\geq 0, 0<\beta<1$ and fixed $S$. Indeed this follows by the same argument as above, replacing \eqref{crocifisso4} by \eqref{crocifisso4bis}, and observing that when we go base-only the bounds in \eqref{lameduck}, \eqref{tameduck} improve trivially to
\begin{equation}\label{lameduck_redux}
d_t^{-\iota}\|\D^\iota_{\mathbf{b\cdots b}}(\ve_t^2\Theta_t^*\Psi_t^*\omega_F)\|_{L^\infty(\ti{B}_S,\ti{g}_t)}\leq C\lambda_t^{-\iota},
\end{equation}
\begin{equation}\label{tameduck_redux}
d_t^{-\iota-\beta}[\D^\iota_{\mathbf{b\cdots b}}(\ve_t^2\Theta_t^*\Psi_t^*\omega_F)]_{C^\beta_{\mathrm{base}}(\ti{B}_S,\ti{g}_t)}\leq C\lambda_t^{-\iota-\beta},
\end{equation}
for all $\iota\geq 0, 0<\beta<1$, $S\leq Cd_t^{-1}$.

\subsubsection{Expansion of the Monge-Amp\`ere equation}

Using the decomposition \eqref{bunny} for the Ricci-flat metrics $\ti{\omega}^\bullet_t$, we expand the Monge-Amp\`ere equation \eqref{zumteufel151} as
\begin{align}\label{satanas}
&\tr{\tilde\omega_t^\sharp}{(\tilde\eta_{t,j,k}+\tilde\eta_t^\diamond+\tilde\eta_t^\circ)}\\
&+\sum_{i+\iota+p\geq 2}\frac{(m+n)!}{i!\iota!p!(m+n-i-\iota-p)!}\frac{(\tilde\eta_{t,j,k})^i \wedge (\tilde\eta_t^\diamond)^\iota\wedge(\tilde\eta_t^\circ)^p \wedge (\tilde\omega_t^\sharp)^{m+n-i-\iota-p}}{(\tilde\omega_t^\sharp)^{m+n}}
= c_t e^{\tilde{H}_t} \frac{(\tilde\omega^\natural_t)^{m+n}}{(\tilde\omega_t^\sharp)^{m+n}} - 1.\nonumber
\end{align}

For ease of notation, call
\begin{equation}\mathcal{C}_{i\iota p}=\frac{(\tilde\eta_{t,j,k})^i \wedge (\tilde\eta_t^\diamond)^\iota\wedge(\tilde\eta_t^\circ)^p \wedge (\tilde\omega_t^\sharp)^{m+n-i-\iota-p}}{(\tilde\omega_t^\sharp)^{m+n}}.\end{equation}

\medskip

\begin{proposition}\label{subclaim12}
For any fixed $R$, we have
\begin{equation}\label{cacanew2}
d_t^{-j-\alpha}\left[\D^j_{\mathbf{b\cdots b}}\left(c_te^{\tilde{H}_t} \frac{(\tilde\omega^\natural_t)^{m+n}}{(\tilde\omega_t^\sharp)^{m+n}}-1\right)\right]_{C^\alpha_{\rm base}(\ti{B}_R,\ti{g}_t)}=o(1),
\end{equation}
\begin{equation}\label{cacanew}
\forall i+\iota+p\geq 2:\, d_t^{-j-\alpha}[\D^j_{\mathbf{b\cdots b}}\mathcal{C}_{i\iota p}]_{C^\alpha_{\rm base}(\ti{B}_R,\ti{g}_t)}=o(1),
\end{equation}
for all $t$ sufficiently large.
Combining these with \eqref{satanas} we obtain
\begin{equation}\label{killmenow}
d_t^{-j-\alpha}[\D^j_{\mathbf{b\cdots b}}(\tr{\tilde\omega_t^\sharp}{(\tilde\eta_{t,j,k}+\ti{\eta}^\diamond_t+\ti{\eta}^\circ_t)})]_{C^\alpha_{\rm base}(\ti{B}_R,\ti{g}_t)}=o(1).
\end{equation}
On the other hand, if we assume $\ve_t\geq C^{-1}$ then for every fixed $R$ and $0\leq a\leq j$ we have
\begin{equation}\label{cacanew3}
d_t^{-j-\alpha}\left\|\D^a\left(c_te^{\tilde{H}_t} \frac{(\tilde\omega^\natural_t)^{m+n}}{(\tilde\omega_t^\sharp)^{m+n}}-1\right)\right\|_{L^\infty(\ti{B}_{R\ve_t},\ti{g}_t)}\leq C\ve_t^{j+\alpha-a},
\end{equation}
\begin{equation}\label{cacanew3b}
d_t^{-j-\alpha}\left[\D^a\left(c_te^{\tilde{H}_t} \frac{(\tilde\omega^\natural_t)^{m+n}}{(\tilde\omega_t^\sharp)^{m+n}}-1\right)\right]_{C^\alpha(\ti{B}_{R\ve_t},\ti{g}_t)}\leq C\ve_t^{j-a},
\end{equation}
\begin{equation}\label{cacanew4}
\forall i+\iota+p\geq 2:\, d_t^{-j-\alpha}\|\D^a\mathcal{C}_{i\iota p}\|_{L^\infty(\ti{B}_{R\ve_t},\ti{g}_t)}\leq C\delta_t^{j+\alpha}\ve_t^{j+\alpha-a},
\end{equation}
\begin{equation}\label{cacanew4b}
\forall i+\iota+p\geq 2:\, d_t^{-j-\alpha}[\D^a\mathcal{C}_{i\iota p}]_{C^\alpha(\ti{B}_{R\ve_t},\ti{g}_t)}\leq C\delta_t^{j+\alpha}\ve_t^{j-a},
\end{equation}
for $t$ sufficiently large.
\end{proposition}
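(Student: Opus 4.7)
The plan splits the proposition into three groups: (i) the RHS estimates \eqref{cacanew2}, \eqref{cacanew3}, \eqref{cacanew3b}, which reduce to Theorem~\ref{ghost}; (ii) the nonlinear error estimates \eqref{cacanew}, \eqref{cacanew4}, \eqref{cacanew4b}, which reduce to Leibniz applied to the quantitative bounds on $\ti{\eta}_{t,j,k}$, $\ti{\eta}^\diamond_t$, $\ti{\eta}^\circ_t$, $\ti{\omega}^\sharp_t$ established in the preceding subsections; and (iii) the combined estimate \eqref{killmenow}, which is immediate from (i) and (ii) via \eqref{satanas}. The main technical nuisance throughout will be the bookkeeping of scalings across the base, hat, tilde, and check pictures, but not a conceptual obstacle.

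For group (i), the strategy is to apply Theorem~\ref{ghost} with the natural identifications (our $A^\sharp_{t,i,p,k}$, $\eta^\dagger_t$, $\eta^\ddagger_t$, $\omega^\sharp_t$ playing the roles of the objects with the same names there): hypothesis \eqref{linftyyy} is verified by the polynomial bound \eqref{checazzo}, the smoothness $\check\eta^\ddagger_t\to 0$ follows from \eqref{crocifisso9}, and the auxiliary bound \eqref{crocifisso4check} on $\gamma_{t,i,k}$ is a Leibniz consequence of \eqref{blobbe}. Theorem~\ref{ghost} then gives $\delta_t^{-j-\alpha}\cdot(c_te^{H_t}(\omega^\natural_t)^{m+n}/(\omega^\sharp_t)^{m+n}-1)=f_{t,0}+\sum_{i,p}f_{t,i,p}G_{i,p}+o(1)$ in the check picture, with $f_{t,\bullet}$ going smoothly to zero on a fixed base ball and the $o(1)$ remainder smoothly vanishing on fixed check balls. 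Translating to the tilde picture interposes an extra base stretching by $\ve_t^{-1}$, so that each tilde base $\D$-derivative of $f_{t,\bullet}$ pulls out a factor $d_t\lambda_t^{-1}=\ve_t^{-1}e^{-t/2}$; combined with the smooth-to-zero behavior of $\D^jf_{t,\bullet}$ and with the base derivatives of the $G_{i,p}$-factors (which scale by the same factor but have only a uniformly bounded behavior), the base-only $C^{j,\alpha}$ seminorm of the tilde pullback is $o((\ve_t^{-1}e^{-t/2})^{j+\alpha})$. Multiplying by $\delta_t^{j+\alpha}$ and dividing by $d_t^{j+\alpha}$ gives $e^{-(j+\alpha)t/2}\cdot o(1)=o(1)$, establishing \eqref{cacanew2}. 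For \eqref{cacanew3}--\eqref{cacanew3b} in the $\ve_t\geq C^{-1}$ regime, the same decomposition applies on $\ti{B}_{R\ve_t}$, which in the check picture corresponds to a fixed-size ball (times appropriate radius); there the smooth convergence to the limit $\mathcal{F}$ in Theorem~\ref{ghost} combined with Lemma~\ref{mushy}-style polynomial control gives the required $\ve_t^{j+\alpha-a}$ and $\ve_t^{j-a}$ scalings.

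For group (ii), we exploit the polynomial-type structure. Each factor $\ti{\eta}_\bullet$, normalized by $d_t^{j+\alpha}$, satisfies on the relevant ball a $C^{j,\alpha}$-type polynomial bound of size $\ve_t^{j+\alpha}$ (using \eqref{utilissimo} for $\ti{\eta}_{t,j,k}$, \eqref{crocifisso8} or \eqref{crocifisso8bis} for $\ti{\eta}^\diamond_t$, and \eqref{crocifisso3bis} or \eqref{utilissimo2} for $\ti{\eta}^\circ_t$, with base-only or all-direction versions chosen appropriately). The prefactor $(\ti{\omega}^\sharp_t)^{m+n-i-\iota-p}/(\ti{\omega}^\sharp_t)^{m+n}$ has controlled base derivatives by \eqref{crocifisso5bis}, so its contribution is benign. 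Distributing $\D^a$ (resp.~$\D^j_{\mathbf{b\cdots b}}$) and the $C^\alpha$ (resp.~$C^\alpha_{\rm base}$) difference quotient across the product of $i+\iota+p\geq 2$ small factors via the discrete-continuous Leibniz rule, the product of the normalized pieces scales polynomially like $\ve_t^{2(j+\alpha)}$; un-normalizing and dividing back by $d_t^{j+\alpha}$ gives the bound $d_t^{j+\alpha}\ve_t^{2(j+\alpha)-a}=\delta_t^{j+\alpha}\ve_t^{j+\alpha-a}$ on $\ti{B}_{R\ve_t}$, matching \eqref{cacanew4}; \eqref{cacanew4b} and \eqref{cacanew} are obtained identically, the latter using only the base-only bounds and the surplus $d_t^{j+\alpha}\to 0$ factor coming from the second small factor. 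Finally, \eqref{killmenow} is immediate by solving \eqref{satanas} for $\tr{\ti{\omega}^\sharp_t}{(\ti{\eta}_{t,j,k}+\ti{\eta}^\diamond_t+\ti{\eta}^\circ_t)}$ and applying \eqref{cacanew2} and \eqref{cacanew}. The hardest step conceptually is verifying the hypotheses of Theorem~\ref{ghost} in the present setting and correctly tracking the change of scalings when passing from the check picture of that theorem to our tilde picture, which will occupy the bulk of the written-out proof.
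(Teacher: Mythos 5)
Your group (ii) and (iii) arguments are sound and agree in spirit with the paper's proof (direct Leibniz expansion of the nonlinear terms $\mathcal{C}_{i\iota p}$ using the bounds \eqref{crocifisso7}, \eqref{crocifisso8}, \eqref{crocifisso8bis}, \eqref{crocifisso3}, \eqref{crocifisso3bis} for the small factors, \eqref{crocifisso5}, \eqref{crocifisso5bis} and \eqref{kombinacja4} for the reference metric, and then solving \eqref{satanas} to pass from the nonlinear estimates to the trace estimate). Your group (i), however, has a genuine gap.

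The gap is circularity in the use of Theorem~\ref{ghost}. That theorem is a \emph{conditional} statement: its conclusion \eqref{stronza} requires, in addition to the ``input'' hypotheses \eqref{linftyyy}, $\check\eta_t^\ddagger\to 0$, \eqref{crocifisso4check} (which you do verify), the crucial hypothesis that the check-picture quantity \eqref{stronzo} \emph{converges locally uniformly} (or, by the remark after the theorem, is at least locally uniformly bounded) on fixed check balls. You never verify this. And it cannot be taken for granted at this stage: local uniform boundedness of \eqref{stronzo} is precisely $d_t^{-j-\alpha}\|\cdots\|_{L^\infty} = O(\delta_t^{j+\alpha}\cdot\delta_t^{-j-\alpha}) = O(1)$ on the corresponding tilde ball, which is exactly \eqref{cacanew3} at $a=0$, i.e.\ part of what you are trying to prove. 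The paper instead proves \eqref{cacanew2}--\eqref{cacanew3b} by a self-contained direct expansion: rewriting $e^{\tilde{H}_t}(\tilde\omega^\natural_t)^{m+n}/(\tilde\omega_t^\sharp)^{m+n}$ via \eqref{calmund}, bounding the numerator by \eqref{krk}, \eqref{lameduck_redux}, \eqref{tameduck_redux}, and the reciprocal determinant by \eqref{kombinacja4} combined with \eqref{crocifisso5bis}/\eqref{crocifisso5}. It is only \emph{later}, in the Subcase~A analysis (proof of \eqref{piece3}), that the paper first uses the already-proved \eqref{cacanew3}, \eqref{cacanew3b} to establish the uniform $C^{j,\alpha}$ bound in the check picture, then Ascoli--Arzel\`a to extract a locally uniformly convergent subsequence, and only then invokes Theorem~\ref{ghost}. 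Your proposal inverts this logical order.

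There is a second, independent obstacle to your route even for \eqref{cacanew2}: that estimate must hold in all three regimes $\ve_t\to\infty$, $\ve_t\to 1$, $\ve_t\to 0$ (no $\ve_t\geq C^{-1}$ is assumed there), but the conclusion of Theorem~\ref{ghost} only controls the $o(1)$ remainder on \emph{fixed} check balls, whereas the fixed tilde ball $\ti{B}_R$ corresponds under $\Pi_t$ to a check ball of radius $R\ve_t^{-1}$, which escapes to infinity when $\ve_t\to 0$. The paper's direct-expansion proof of \eqref{cacanew2} has no such limitation.
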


\begin{proof}
First, let us prove \eqref{cacanew2}.
Using the definition of $\ti{H}_t$, we can write
\begin{equation}e^{\ti{H}_t}\frac{(\tilde\omega^\natural_t)^{m+n}}{(\tilde\omega_t^\sharp)^{m+n}} =\frac{\ti{\omega}_{\rm can}^m\wedge(\ve_t^2\Theta_t^*\Psi_t^*\omega_F)^n}{(\tilde\omega_t^\sharp)^{m+n}}.\end{equation}
To bound the numerator we shall employ \eqref{krk}, \eqref{lameduck_redux} and \eqref{tameduck_redux}.
As for the denominator $(\tilde\omega_t^\sharp)^{m+n}$, let us temporarily use the shorthand
$b=(\tilde\omega_t^\sharp)^{m+n},$
so that we can write schematically (again omitting combinatorial factors)
\begin{equation}\label{kombinacja4}
\D^{r}(b^{-1})=\sum_{\ell=1}^{r}\sum_{j_1+\cdots+j_\ell=r}\frac{\D^{j_1}b}{b}\cdots \frac{\D^{j_{\ell}}b}{b}b^{-1},
\end{equation}
and it then suffices to use \eqref{crocifisso5bis}, and \eqref{cacanew2} follows.

Estimate \eqref{cacanew} also follows easily from \eqref{crocifisso7},  \eqref{crocifisso8} and \eqref{crocifisso3bis}, together with \eqref{crocifisso5bis} and \eqref{kombinacja4}.

On the other hand, assuming now that $\ve_t\geq C^{-1}$, estimates \eqref{cacanew4} and \eqref{cacanew4b} follow from
\eqref{crocifisso7}, \eqref{crocifisso8bis}, \eqref{crocifisso3}, together with \eqref{crocifisso5} and \eqref{kombinacja4}.

Lastly, to prove \eqref{cacanew3}, \eqref{cacanew3b}, using \eqref{satanas}, \eqref{cacanew4} and \eqref{cacanew4b} it suffices to show that
\begin{equation}
d_t^{-j-\alpha}\left\|\D^a\tr{\tilde\omega_t^\sharp}{(\tilde\eta_{t,j,k}+\tilde\eta_t^\diamond+\tilde\eta_t^\circ)}\right\|_{L^\infty(\ti{B}_{R\ve_t},\ti{g}_t)}\leq C\ve_t^{j+\alpha-a},
\end{equation}
\begin{equation}
d_t^{-j-\alpha}\left[\D^a\tr{\tilde\omega_t^\sharp}{(\tilde\eta_{t,j,k}+\tilde\eta_t^\diamond+\tilde\eta_t^\circ)}\right]_{C^\alpha(\ti{B}_{R\ve_t},\ti{g}_t)}\leq C\ve_t^{j-a},
\end{equation}
for $R$ fixed and $0\leq a\leq j$, which is also a direct consequence of
\eqref{crocifisso7}, \eqref{crocifisso8bis}, \eqref{crocifisso3}, together with \eqref{crocifisso5} and \eqref{kombinacja4}.
\end{proof}

We are now in position to derive a contradiction on each of the three possible scenarios (up to passing to a sequence $t_i\to\infty$ as usual), according to whether $\ve_t\to \infty,$ $\ve_t$ remains bounded away from $0$ and $\infty$, or $\ve_t\to 0$. \medskip\

\subsection{Subcase A: $\epsilon_t \to \infty$.}\label{endgameA}

We are aiming to obtain a contradiction by showing that the LHS of \eqref{zumteufel163} is $o(1)$. Recall that by construction $d^{\ti{g}_t}(\ti{x}_t,\ti{x}'_t)=1$. As mentioned in the Introduction, the idea in this subcase is to kill all contributions to the blowup quantity \eqref{zumteufel163} using Schauder estimates for the linearized PDE on balls in $\C^{m+n}$. The argument is quite long because of the complexity of the quantitative estimates satisfied by all the pieces in the decomposition of the solution $\ti{\eta}_t$, so we give a brief outline. The main task will be to show \eqref{fleabag2}, which morally says that the contribution ``of the whole solution $\ti{\eta}_t$'' goes to zero. Once we know this, we need to use a noncancellation property to kill the contributions of each individual term in the blowup quantity (which is a sum of norms rather than the norm of a sum). The noncancellation property roughly speaking shows that the contributions of the different pieces to the blowup quantity cannot cancel each other, and will ultimately allow us to kill their contributions one at the time in subsections \ref{jjj1}, \ref{jjj2} and \ref{jjj3}. Since the noncancellation property is self-contained and a bit lengthy, we discuss it first.

\subsubsection{The noncancellation property}

The following noncancellation property is stated and proved in the hat picture, and it will then be transferred to the tilde picture.

\begin{proposition}\label{noncancell} Let $\hat{B}_R := B^{\C^m}(\hat{z}_t, R) \times Y$. The following inequality holds for all $0<\alpha<1,$ $a \in \mathbb{N}$ and all $R$:
\begin{align}
[\D^{a}\hat{\gamma}_{t,0}]_{C^{\alpha}(\hat{B}_R)} \leq
C\left([\D^a_{\mathbf{b\cdots b}}\hat\eta_t]_{C^\alpha_{\rm base}(\hat{B}_R,g_X)} +\left(\frac{R}{\lambda_t}\right)^{1-\alpha}\sum_{b=0}^{a} \lambda_t^{-\alpha}\|\D^b \hat\eta_t \|_{L^{\infty}(\hat{B}_R,g_X)}\right), \label{fedeliallalinea}
\end{align}
The following inequality holds for all $a\in\N$, $2 \leq i \leq j$, $1 \leq p \leq N_{i,k}$:
\begin{align}
&[\D^{a}\hat{A}_{t,i,p,k}]_{C^\alpha(\hat{B}_R)} \leq
C\delta_t^2\bigg([\D^a_{\mathbf{b\cdots b}}\hat\eta_t]_{C^\alpha_{\rm base}(\hat{B}_R,\hat{g}_t)}\label{fedeliallalinea2}\\
&+\left(\frac{R}{\lambda_t}\right)^{1-\alpha}\sum_{b=0}^{a} \lambda_t^{-\alpha}\bigg(\|\D^b\hat{\eta}^\ddagger_t\|_{L^{\infty}(\hat{B}_R,\hat{g}_t)}
+\|\D^b(\hat{\eta}^\diamond_t+\hat{\eta}^\circ_t+\hat{\eta}_{t,j,k}) \|_{L^{\infty}(\hat{B}_R,\hat{g}_t)}+\delta_t^{-2}\|\D^b \hat\eta^\dagger_t \|_{L^{\infty}(\hat{B}_R,g_X)}\bigg)\bigg)\label{fedeliallalinea4}\\
&+C\sum_{\iota=2}^{i-1}\sum_{q = 1}^{N_{\iota,k}}\left(\delta_t^{2k+2}[\D^{a + 2k + 2}\hat{A}_{t,\iota,q,k}]_{C^\alpha(\hat{B}_R)} + \left(\frac{R}{\lambda_t}\right)^{1-\alpha}\sum_{b=0}^{a + 2k + 2} e^{-(2k+2)\frac{t}{2}}\lambda_t^{b-a-\alpha} \|\D^{b}\hat{A}_{t,\iota,q,k}\|_{L^{\infty}(\hat{B}_R)} \right).\label{fedeliallalinea3}
\end{align}
\end{proposition}

\begin{proof}
We begin by writing $\hat\gamma_{t,0}$ and $\hat{A}_{t,i,p,k}$ as the pushforwards of certain forms on the total space that are roughly proportional to the whole solution $\hat\eta_t$. Next, we discuss how to slide $\D^a$ into a pushforward. Then we discuss how to slide a H\"older difference quotient into a pushforward. At the end we explain how to put everything together to get the desired inequalities stated above. \medskip\

\noindent \emph{Claim 1}: We have the following representations of $\hat\gamma_{t,0}$ and $\hat{A}_{t,i,p,k}$ as pushforwards of $\hat\eta_t$:
\begin{equation}\label{fiber1oo}
\hat{\gamma}_{t,0}=({\rm pr}_B)_*\left(\hat{\eta}_t\wedge\Psi_t^*\omega_F^n\right),
\end{equation}
and, writing $\hat{P}_{t,i,p,k}$ for the stretched projection in \eqref{proiettohat},
\begin{align}
\hat{A}_{t,i,p,k} &= \hat{P}_{t,i,p,k}(\hat{\eta}_t) + e^{-(2k+2)\frac{t}{2}} \sum_{\iota=2}^{i-1}\sum_{q=1}^{N_{\iota,k}} \sum_{\kappa=0}^{2k+2}\hat{\Phi}_{\kappa,i,p,k,\iota,q}  \circledast \lambda_t^\kappa \D^\kappa \hat{A}_{t,\iota,q,k},\label{e:herrdeninger}
\end{align}
where the functions $\hat{\Phi}_{\kappa,i,p,k,\iota,q}$ are from the base.\\

\noindent \emph{Proof of Claim 1}: We can assume without loss that $\int_{\{\hat{z}\}\times Y}\Psi_t^*{\omega}_F^n|_{\{\hat{z}\}\times Y}=1$ for all $\hat{z}$, so we can write by definition
\begin{equation}\underline{\hat{\psi}_t}=({\rm pr}_B)_*\left(\hat{\psi}_t\Psi_t^*\omega_F^n\right),\end{equation}
and note that $({\rm pr}_B)_*$ commutes with $\de$ and $\db$ since ${\rm pr}_B$ is holomorphic, while recalling that $\hat{J}^\natural_t=\Psi_t^*J$ and decorating here the $\db$ operators for clarity we have
\begin{equation}
\db^{\hat{J}^\natural_t}(\Psi_t^*\omega_F^n)=\Psi_t^*(\db^J\omega_F^n)=0,
\end{equation}
(using that $\omega_F^n$ is $d$-closed and of type $(n,n)$ with respect to $J$), and similarly for $\de$ and so
\begin{equation}\label{fiber1}
\hat{\gamma}_{t,0}=\ddbar\underline{\hat{\psi}_t}=({\rm pr}_B)_*\left(\hat{\eta}_t\wedge\Psi_t^*\omega_F^n\right).
\end{equation}
Again by definition
\begin{align}\hat{A}_{t,i,p,k} &= \hat{P}_{t,i,p,k}(\hat\eta_{t,i-1,k}) = \hat{P}_{t,i,p,k}(\hat\eta_t) - \hat{P}_{t,i,p,k}(\hat\gamma_{t,0}) - \sum_{\iota=2}^{i-1} \hat{P}_{t,i,p,k}(\hat\gamma_{t,\iota,k}).
\end{align}
The term $\hat{P}_{t,i,p,k}(\hat\gamma_{t,0})$ is zero thanks to \eqref{frombase}. For the remaining terms we invoke Lemma \ref{paraminatrix}, transplanted to the hat picture:
\begin{align}
\hat{P}_{t,i,p,k}(\hat\gamma_{t,\iota,k}) &= \sum_{q=1}^{N_{\iota,k}} \hat{P}_{t,i,p,k}(i\partial\overline\partial\hat{\mathfrak{G}}_{t,k}(\hat{A}_{t,\iota,q,k},\hat{G}_{\iota,q,k}))\\
&= \sum_{q=1}^{N_{\iota,k}}\left( \hat{A}_{t,\iota,q,k}\int_{\{\hat{z}\}\times Y} \hat{G}_{i,p,k}\hat{G}_{\iota,q,k}\Psi_t^*\omega_F^n + e^{-(2k+2)\frac{t}{2}}\sum_{\kappa=0}^{2k+2}\hat\Phi_{\kappa,i,p,k}(\hat{G}_{\iota,q,k}) \circledast \lambda_t^\kappa \D^\kappa\hat{A}_{t,\iota,q,k}\right).
\end{align}
Here the first term vanishes by orthogonality of the $\hat{G}_{i,p,k}$'s, and the rest is exactly what we are claiming.\hfill$\Box$\medskip\

\noindent \emph{Claim 2}: For any form $\xi$ on $B\times Y$ of degree $(n,n)$ or $(n+1,n+1)$ and any $a\geq 0,$ at any point $z\in B$ we have
\begin{equation}\label{abc}
\D^a ({\rm pr}_B)_*\xi=({\rm pr}_B)_*\left(\nabla^{z,a}_{\mathbf{b\cdots b}}\xi\right).
\end{equation}

\noindent \emph{Proof of Claim 2}: By definition we have
\begin{equation}\label{radial}
\D^a ({\rm pr}_B)_*\xi=\de^{a}_{\mathbf{b\cdots b}}({\rm pr}_B)_*\xi,
\end{equation}
and to evaluate this at a point $z\in B$ recall that $\nabla^z$ is a product connection and in the base directions its Christoffel symbols are just zero. Assume first that $\xi$ is an $(n,n)$-form, so the RHS of \eqref{radial} equals
\begin{equation}\label{mithridates1}\begin{split}
\int_{\{z\}\times Y} \left(\nabla^{z,a}_{\mathbf{b\cdots b}}\xi\right)=({\rm pr}_B)_*\left(\nabla^{z,a}_{\mathbf{b\cdots b}}\xi\right),
\end{split}\end{equation}
as desired. Similarly, when $\xi$ is an $(n+1,n+1)$-form, the RHS of \eqref{radial} equals
\begin{equation}\label{mithridates}\begin{split}
\left(\int_{\{z\}\times Y} \nabla^{z,a}_{\mathbf{b\cdots b}}\left(\iota_{\frac{\de}{\de \ov{z}^j}}\iota_{\frac{\de}{\de z^i}}\xi\right)\right)dz^i\wedge d\ov{z}^j&=\left(\int_{\{z\}\times Y} \iota_{\frac{\de}{\de \ov{z}^j}}\iota_{\frac{\de}{\de z^i}}\left(\nabla^{z,a}_{\mathbf{b\cdots b}}\xi\right)\right)dz^i\wedge d\ov{z}^j\\
&=({\rm pr}_B)_*\left(\nabla^{z,a}_{\mathbf{b\cdots b}}\xi\right),
\end{split}\end{equation}
where in the first equality we used $\nabla^z_{\mathbf{b}}\left({\frac{\de}{\de z^i}}\right)=0$. \hfill$\Box$\medskip\

\noindent \emph{Claim 3}: Denote by $\Phi: \{\hat{z}_1\}\times Y \to \{\hat{z}_2\}\times Y$ the map given by $\Phi(\hat{z}_1,\hat{y})=(\hat{z}_2,\hat{y})$. Then there is a constant $C$ such that for any smooth form $\xi$ on the total space we have
\begin{equation}\label{e:cerebrum}
|(({\rm pr}_B)_*\xi)(\hat{z}_1) - (({\rm pr}_B)_*\xi)(\hat{z}_2) | \leq C\sup_{\hat{x}_1\in \{\hat{z}_1\}\times Y}|\xi(\hat{x}_1) - \P_{\Phi(\hat{x}_1)\hat{x}_1}\xi(\Phi(\hat{x}_1))|_{g_X(\hat{x}_1)} + C\lambda_t^{-1}(\sup_{\{\hat{z}_2\}\times Y} |\xi|_{g_X})|\hat{z}_1- \hat{z}_2|.
\end{equation}

\noindent \emph{Proof of Claim 3}: Pick any fixed orthonormal vectors $Z_1, \ldots, Z_r$ on the base ($r = ({\rm deg}\,\xi)-2n$), and denote by the same notation their trivial extension to horizontal vector fields on the total space. Then $\xi(Z_1,\ldots,Z_r, -, \ldots, -)$ is a $2n$-form on the total space, hence has at most $2n$ fiber components in each of its indecomposable terms, and only those indecomposable terms with exactly $2n$ fiber components survive the restriction. Note also note that $\P$-transport commutes with restriction since it preserves the horizontal-vertical decomposition (cf. Section \ref{s:tridiagonal}).

Now write the fiberwise restriction of $\xi(Z_1,\ldots,Z_r, -, \ldots, -)$ to any fiber $\{\hat{z}\}\times Y$ as a scalar function $f(\hat{x})$ times the fiberwise Ricci-flat volume form $\Upsilon_{\hat{z}}:=\Psi_t^*{\omega}_F^n|_{\{\hat{z}\}\times Y}$.
This now allows us to estimate
\begin{align}
|((&{\rm pr}_B)_*\xi)(\hat{z}_1) - (({\rm pr}_B)_*\xi)(\hat{z}_2) | \\
&\leq\biggl|(({\rm pr}_B)_*\xi)(\hat{z}_2)(Z_1,\ldots, Z_r) - \int_{\hat{x}_1 \in \{\hat{z}_1\}\times Y} \P_{\Phi(\hat{x}_1)\hat{x}_1}[f(\Phi(\hat{x}_1))\Upsilon_{\hat{z}_2}({\Phi(\hat{x}_1)})]\biggr|\label{lop1}\\
&+\biggl|(({\rm pr}_B)_*\xi)(\hat{z}_1)(Z_1,\ldots, Z_r) - \int_{\hat{x}_1 \in \{\hat{z}_1\}\times Y} \P_{\Phi(\hat{x}_1)\hat{x}_1}[f(\Phi(\hat{x}_1))\Upsilon_{\hat{z}_2}({\Phi(\hat{x}_1)})]\biggr|,\label{lop2}
\end{align}
and we bound the term in \eqref{lop1} by
\begin{align}&\biggl|(({\rm pr}_B)_*\xi)(\hat{z}_2)(Z_1,\ldots, Z_r) - \int_{\hat{x}_1 \in \{\hat{z}_1\}\times Y} \P_{\Phi(\hat{x}_1)\hat{x}_1}[f(\Phi(\hat{x}_1))\Upsilon_{\hat{z}_2}({\Phi(\hat{x}_1)})]\biggr| \\
&=
\biggl|\int_{\hat{x}_2 \in \{\hat{z}_2\}\times Y} f(\hat{x}_2)(\Upsilon_{\hat{z}_2}-(\Phi^{-1})^*\P_{\Phi(\hat{x}_1)\hat{x}_1}\Upsilon_{\hat{z}_2})(\hat{x}_2)\biggr|
\leq C\lambda_t^{-1}(\sup_{\{\hat{z}_2\}\times Y} |\xi|_{g_X})|\hat{z}_1 - \hat{z}_2|,
\end{align}
using here that $|\Upsilon_{\hat{z}_2}-(\Phi^{-1})^*\P_{\Phi(\hat{x}_1)\hat{x}_1}\Upsilon_{\hat{z}_2}|(\hat{x}_2)$ before stretching has size proportional to the unstretched distance $|z_1 - z_2|$, which becomes $\lambda_t^{-1}|\hat{z}_1 - \hat{z}_2|$ after stretching.
Having estimated the term in \eqref{lop1}, it remains to bound the term in \eqref{lop2} by writing it as
\begin{align}
&\biggl|\int_{\hat{x}_1 \in \{\hat{z}_1\}\times Y} \left(\xi(\hat{x}_1)(Z_1,\ldots,Z_r,-,\ldots,-)|_{\{\hat{z}_1\}\times Y}   - \P_{\Phi(\hat{x}_1)\hat{x}_1}[\xi(\Phi(\hat{x}_1))(Z_1,\ldots,Z_r,-,\ldots,-)|_{\{\hat{z}_2\}\times Y} ]\right) \biggr|\\
&\leq C\sup_{\hat{x}_1\in \{\hat{z}_1\}\times Y}|\xi(\hat{x}_1) - \P_{\Phi(\hat{x}_1)\hat{x}_1}\xi(\Phi(\hat{x}_1))|_{g_X(\hat{x}_1)},
\end{align}
which is clear.\hfill $\Box$\medskip\

With these $3$ claims we can now complete the proof of Proposition \ref{noncancell}, starting with \eqref{fedeliallalinea}. Combining Claims 1--2 gives
\begin{equation}\label{multiradial}
\D^a\hat{\gamma}_{t,0}=({\rm pr}_B)_*\left(\nabla^{z,a}_{\mathbf{b\cdots b}}(\hat{\eta}_t\wedge\Psi_t^*\omega_F^n)\right),
\end{equation}
and applying Claim 3 gives
\begin{equation}\label{uniradial}
[\D^{a}\hat{\gamma}_{t,0}]_{C^{\alpha}(\hat{B}_R)}\leq C[\nabla^{z,a}_{\mathbf{b\cdots b}}(\hat{\eta}_t\wedge\Psi_t^*\omega_F^n)]_{C^\alpha_{\rm base}(\hat{B}_R,g_X)}+C\lambda_t^{-1}R^{1-\alpha}\|\nabla^{z,a}_{\mathbf{b\cdots b}}(\hat{\eta}_t\wedge\Psi_t^*\omega_F^n)\|_{L^{\infty}(\hat{B}_R,g_X)},
\end{equation}
and we can use the Leibniz rule to bound
\begin{equation}\label{longa}\begin{split}
[\nabla^{z,a}_{\mathbf{b\cdots b}}(\hat{\eta}_t\wedge\Psi_t^*\omega_F^n)]_{C^\alpha_{\rm base}(\hat{B}_R,g_X)}&\leq C\sum_{b=0}^a
\bigg([\nabla^{z,b}_{\mathbf{b\cdots b}}\hat{\eta}_t]_{C^\alpha_{\rm base}(\hat{B}_R,g_X)}\|\nabla^{z,a-b}_{\mathbf{b\cdots b}}(\Psi_t^*\omega_F^n)\|_{L^{\infty}(\hat{B}_R,g_X)}\\
&+\|\nabla^{z,b}_{\mathbf{b\cdots b}}\hat{\eta}_t\|_{L^{\infty}(\hat{B}_R,g_X)}
[\nabla^{z,a-b}_{\mathbf{b\cdots b}}(\Psi_t^*\omega_F^n)]_{C^\alpha_{\rm base}(\hat{B}_R,g_X)}\bigg)\\
&\leq C\sum_{b=0}^a
\bigg(\lambda_t^{b-a}[\nabla^{z,b}_{\mathbf{b\cdots b}}\hat{\eta}_t]_{C^\alpha_{\rm base}(\hat{B}_R,g_X)}+\lambda_t^{b-a-1}R^{1-\alpha}\|\nabla^{z,b}_{\mathbf{b\cdots b}}\hat{\eta}_t\|_{L^{\infty}(\hat{B}_R,g_X)}\bigg)\\
&\leq C[\nabla^{z,a}_{\mathbf{b\cdots b}}\hat{\eta}_t]_{C^\alpha_{\rm base}(\hat{B}_R,g_X)}+C\left(\frac{R}{\lambda_t}\right)^{1-\alpha}\sum_{b=0}^{a} \lambda_t^{b-a-\alpha}\|\nabla^{z,b}_{\mathbf{b\cdots b}} \hat\eta_t \|_{L^{\infty}(\hat{B}_R,g_X)},
\end{split}\end{equation}
where in the last line for $b<a$ we have used
\begin{equation}
[\nabla^{z,b}_{\mathbf{b\cdots b}}\hat{\eta}_t]_{C^\alpha_{\rm base}(\hat{B}_R,g_X)}\leq R^{1-\alpha}\|\nabla^{z,b+1}_{\mathbf{b\cdots b}}\hat{\eta}_t\|_{L^\infty(\hat{B}_R,g_X)},
\end{equation}
as in Lemma \ref{kathoum}. Similarly, we bound the last term in \eqref{uniradial} by
\begin{equation}\label{duoradial}
C\lambda_t^{-1}R^{1-\alpha}\|\nabla^{z,a}_{\mathbf{b\cdots b}}(\hat{\eta}_t\wedge\Psi_t^*\omega_F^n)\|_{L^{\infty}(\hat{B}_R,g_X)}
\leq C\lambda_t^{-1}R^{1-\alpha}\sum_{b=0}^a\lambda_t^{b-a}\|\nabla^{z,b}_{\mathbf{b\cdots b}}\hat{\eta}_t\|_{L^{\infty}(\hat{B}_R,g_X)},
\end{equation}
and combining \eqref{uniradial}, \eqref{longa} and \eqref{duoradial} gives
\begin{equation}\label{prims}
[\D^{a}\hat{\gamma}_{t,0}]_{C^{\alpha}(\hat{B}_R)}\leq C[\nabla^{z,a}_{\mathbf{b\cdots b}}\hat{\eta}_t]_{C^\alpha_{\rm base}(\hat{B}_R,g_X)}+C\left(\frac{R}{\lambda_t}\right)^{1-\alpha}\sum_{b=0}^{a} \lambda_t^{b-a-\alpha}\|\nabla^{z,b}_{\mathbf{b\cdots b}} \hat\eta_t \|_{L^{\infty}(\hat{B}_R,g_X)}.
\end{equation}
To complete the proof of \eqref{fedeliallalinea} we need to convert the $\nabla^{z,p}$s into $\D^p$s.
For this we use \eqref{domineddio}, which for all $c\geq 2$ gives
\begin{equation}\label{musil1}
\D^c_{\mathbf{b\cdots b}}\hat{\eta}_t=\nabla^{z,c}_{\mathbf{b\cdots b}}\hat{\eta}_t+\sum_{p=0}^{c-2}(\nabla^{z,p}\hat{\eta}_t\circledast \nabla^{c-2-p}_{z,y,\ti{z}}\hat{\mathbf{A}})_{\mathbf{b\cdots b}},\end{equation}
where $\hat{\mathbf{A}}$ denotes the $\mathbf{A}$ tensor in the hat picture. This can be iterated as
\begin{equation}\label{musilz}\begin{split}
&\nabla^{z,c}_{\mathbf{b\cdots b}}\alpha=\D^c_{\mathbf{b\cdots b}}\alpha\\
&+\sum_{p=0}^{c-2}\sum_{r=0}^{\lfloor\frac{p}{2}\rfloor}\sum_{p_0=0}^{p-2}\sum_{p_1=0}^{p_0-2}\cdots\sum_{p_r=0}^{p_{r-1}-2}\bigg(\D^{p_r}\alpha\circledast \nabla^{c-2-p}\hat{\mathbf{A}}\circledast \nabla^{p-2-p_0}\hat{\mathbf{A}}\circledast\nabla^{p_0-2-p_1}\hat{\mathbf{A}}\cdots \nabla^{p_{r-1}-2-p_r}\hat{\mathbf{A}}\bigg)_{\mathbf{b\cdots b}}.
\end{split}\end{equation}
We have the very crude estimate
\begin{equation}\label{musil2}
\|\nabla^{a}\hat{\mathbf{A}}\|_{L^\infty(\hat{B}_R,g_X)}\leq C\lambda_t^{-1},\quad [\nabla^{a}\hat{\mathbf{A}}]_{C^\alpha_{\rm base}(\hat{B}_R,g_X)}\leq CR^{1-\alpha}\lambda_t^{-1},
\end{equation}
for any $a\geq 0$, coming from the fact that at least one of the indices of $\hat{\mathbf{A}}$ must be in the base direction (otherwise it is zero) and thanks to the stretching $\Psi_t$ this gives a factor of $\lambda_t^{-1}$. Using \eqref{musilz} and \eqref{musil2} and arguing as in \eqref{longa}, \eqref{duoradial} we obtain
\begin{equation}\label{prism}
[\nabla^{z,a}_{\mathbf{b\cdots b}}\hat{\eta}_t]_{C^\alpha_{\rm base}(\hat{B}_R,g_X)}\leq [\D^{a}_{\mathbf{b\cdots b}}\hat{\eta}_t]_{C^\alpha_{\rm base}(\hat{B}_R,g_X)}+C\left(\frac{R}{\lambda_t}\right)^{1-\alpha}\sum_{b=0}^{a} \lambda_t^{-\alpha}\|\D^{b}\hat\eta_t \|_{L^{\infty}(\hat{B}_R,g_X)},
\end{equation}
and similarly
\begin{equation}\label{pmirs}
\left(\frac{R}{\lambda_t}\right)^{1-\alpha}\sum_{b=0}^{a} \lambda_t^{b-a-\alpha}\|\nabla^{z,b}_{\mathbf{b\cdots b}} \hat\eta_t \|_{L^{\infty}(\hat{B}_R,g_X)}\leq
C\left(\frac{R}{\lambda_t}\right)^{1-\alpha}\sum_{b=0}^{a} \lambda_t^{-\alpha}\|\D^{b} \hat\eta_t \|_{L^{\infty}(\hat{B}_R,g_X)},
\end{equation}
and so \eqref{fedeliallalinea} follows from \eqref{prims}, \eqref{prism} and \eqref{pmirs}.

We then move on to the proof of \eqref{fedeliallalinea2}--\eqref{fedeliallalinea3}. Thanks to \eqref{e:herrdeninger}, we have
\begin{equation}\label{loo}
[\D^{a}\hat{A}_{t,i,p,k}]_{C^\alpha(\hat{B}_R)}= [\D^a\hat{P}_{t,i,p,k}(\hat{\eta}_t)]_{C^\alpha(\hat{B}_R)} + e^{-(2k+2)\frac{t}{2}} \sum_{\iota=2}^{i-1}\sum_{q=1}^{N_{\iota,k}} \sum_{\kappa=0}^{2k+2}\lambda_t^\kappa \left[\D^a\left(\hat{\Phi}_{\kappa,i,p,k,\iota,q}  \circledast \D^\kappa \hat{A}_{t,\iota,q,k}\right)\right]_{C^\alpha(\hat{B}_R)},
\end{equation}
and let us first discuss the first term on the RHS. Recall that
\begin{equation}
\hat{P}_{t,i,p,k}(\hat{\eta}_t)=n({\rm pr}_B)_*(\hat{G}_{i,p,k}\hat{\eta}_t\wedge\Psi_t^*\omega_F^{n-1})+
\delta_t^2{\rm tr}^{\hat{\omega}_{\rm can}}({\rm pr}_B)_*(\hat{G}_{i,p,k}\hat{\eta}_t\wedge\Psi_t^*\omega_F^{n}),
\end{equation}
and the estimate
\begin{equation}\label{pizgrgr}\begin{split}
&\delta_t^2[\D^a{\rm tr}^{\hat{\omega}_{\rm can}}({\rm pr}_B)_*(\hat{G}_{i,p,k}\hat{\eta}_t\wedge\Psi_t^*\omega_F^{n})]_{C^\alpha(\hat{B}_R)}\\
&\leq
C\delta_t^2\left([\D^a_{\mathbf{b\cdots b}}\hat\eta_t]_{C^\alpha_{\rm base}(\hat{B}_R,g_X)} +\left(\frac{R}{\lambda_t}\right)^{1-\alpha}\sum_{b=0}^{a}\lambda_t^{-\alpha}\|\D^b \hat\eta_t \|_{L^{\infty}(\hat{B}_R,g_X)}\right),
\end{split}\end{equation}
is proved exactly like in the proof of \eqref{fedeliallalinea}. We then use the decomposition
\begin{equation}\label{scrat}
\hat{\eta}_t=\hat{\eta}^\ddagger_t+\hat{\eta}^\diamond_t+\hat{\eta}^\circ_t+\hat{\eta}_{t,j,k}+\hat{\eta}^\dagger_t,\end{equation}
and bound trivially
\begin{equation}\label{pizgrgr2}
\|\D^b \hat\eta_t \|_{L^{\infty}(\hat{B}_R,g_X)}\leq\|\D^b \hat{\eta}^\ddagger_t \|_{L^{\infty}(\hat{B}_R,\hat{g}_t)}+ \|\D^b (\hat{\eta}^\diamond_t+\hat{\eta}^\circ_t+\hat{\eta}_{t,j,k}) \|_{L^{\infty}(\hat{B}_R,\hat{g}_t)}+\|\D^b \hat\eta^\dagger_t \|_{L^{\infty}(\hat{B}_R,g_X)}.
\end{equation}
On the other hand, to bound
\begin{equation}\label{mafioso}
[\D^a({\rm pr}_B)_*(\hat{G}_{i,p,k}\hat{\eta}_t\wedge\Psi_t^*\omega_F^{n-1})]_{C^\alpha(\hat{B}_R)},
\end{equation}
we use Claim 2 to get
\begin{equation}\label{crda}
\D^a ({\rm pr}_B)_*(\hat{G}_{i,p,k}\hat{\eta}_t\wedge\Psi_t^*\omega_F^{n-1})=({\rm pr}_B)_*\left(\nabla^{z,a}_{\mathbf{b\cdots b}}(\hat{G}_{i,p,k}\hat{\eta}_t\wedge\Psi_t^*\omega_F^{n-1})\right),
\end{equation}
while notice that now since the quantities inside the pushforwards are $2n$-forms, only the $\mathbf{ff}$-components of $\hat{\eta}_t$ appear. More precisely, if $\eta$ is a $2$-form on $\hat{B}_R\times Y$, we define $\eta_{\mathbf{ff}}$ as the $2$-form on $\hat{B}_R\times Y$ obtained from $\eta$ by deleting all components that are not purely fiber-fiber, using the splitting in \eqref{porra}. We then apply Claim 3 and we bound \eqref{mafioso} by
\begin{equation}\begin{split}
&C[\nabla^{z,a}_{\mathbf{b\cdots b}}(\hat{G}_{i,p,k}(\hat{\eta}_t)_{\mathbf{ff}}\wedge\Psi_t^*\omega_F^{n-1})]_{C^\alpha_{\rm base}(\hat{B}_R,g_X)}+C\lambda_t^{-1}R^{1-\alpha}\|\nabla^{z,a}_{\mathbf{b\cdots b}}(\hat{G}_{i,p,k}(\hat{\eta}_t)_{\mathbf{ff}}\wedge\Psi_t^*\omega_F^{n-1})\|_{L^\infty(\hat{B}_R,g_X)},
\end{split}\end{equation}
which as in \eqref{longa} and \eqref{duoradial} is bounded by
\begin{equation}\label{zerp}
C[\nabla^{z,a}_{\mathbf{b\cdots b}}(\hat{\eta}_t)_{\mathbf{ff}}]_{C^\alpha_{\rm base}(\hat{B}_R,g_X)}+C\left(\frac{R}{\lambda_t}\right)^{1-\alpha}\sum_{b=0}^{a} \lambda_t^{b-a-\alpha}\|\nabla^{z,b}_{\mathbf{b\cdots b}}(\hat{\eta}_t)_{\mathbf{ff}} \|_{L^{\infty}(\hat{B}_R,g_X)}.
\end{equation}
As before, for any $(1,1)$-form $\alpha$ on $\hat{B}_R\times Y$ we use the conversion
\begin{equation}\label{sgherro}
\D^c_{\mathbf{b\cdots b}}(\alpha_{\mathbf{ff}})=\nabla^{z,c}_{\mathbf{b\cdots b}}(\alpha_{\mathbf{ff}})+\sum_{p=0}^{c-2}(\nabla^{z,p}(\alpha_{\mathbf{ff}})\circledast \nabla^{c-2-p}_{z,y,\ti{z}}\hat{\mathbf{A}})_{\mathbf{b\cdots b}},\end{equation}
which can be iterated
\begin{equation}\label{scagnozzo}\begin{split}
&\nabla^{z,c}_{\mathbf{b\cdots b}}(\alpha_{\mathbf{ff}})=\D^c_{\mathbf{b\cdots b}}(\alpha_{\mathbf{ff}})\\
&+\sum_{p=0}^{c-2}\sum_{r=0}^{\lfloor\frac{p}{2}\rfloor}\sum_{p_0=0}^{p-2}\sum_{p_1=0}^{p_0-2}\cdots\sum_{p_r=0}^{p_{r-1}-2}\bigg(\D^{p_r}(\alpha_{\mathbf{ff}})\circledast \nabla^{c-2-p}\hat{\mathbf{A}}\circledast \nabla^{p-2-p_0}\hat{\mathbf{A}}\circledast\nabla^{p_0-2-p_1}\hat{\mathbf{A}}\cdots \nabla^{p_{r-1}-2-p_r}\hat{\mathbf{A}}\bigg)_{\mathbf{b\cdots b}}.
\end{split}\end{equation}
Applying this conversion with $\alpha=\hat{\eta}_t$ and bounding the $\hat{\mathbf{A}}$ tensor terms by \eqref{musil2} we see that \eqref{zerp} is bounded by
\begin{equation}\label{zerp2}
C[\D^a_{\mathbf{b\cdots b}}(\hat{\eta}_t)_{\mathbf{ff}}]_{C^\alpha_{\rm base}(\hat{B}_R,g_X)}+C\left(\frac{R}{\lambda_t}\right)^{1-\alpha}\sum_{b=0}^{a} \lambda_t^{-\alpha}\|\D^b(\hat{\eta}_t)_{\mathbf{ff}} \|_{L^{\infty}(\hat{B}_R,g_X)}.
\end{equation}
At this point we bring in the decomposition \eqref{scrat}, and noting that $(\hat{\eta}^\ddagger_t)_{\mathbf{ff}}=0$ since $\hat{\eta}^\ddagger_t$ is pulled back from the base, we can bound \eqref{zerp2} by
\begin{equation}\label{zerp3}\begin{split}
&C[\D^a_{\mathbf{b\cdots b}}(\hat{\eta}_t)_{\mathbf{ff}}]_{C^\alpha_{\rm base}(\hat{B}_R,g_X)}+C\left(\frac{R}{\lambda_t}\right)^{1-\alpha}\sum_{b=0}^{a} \lambda_t^{-\alpha}\left(\|\D^b(\hat{\eta}^\dagger_t)_{\mathbf{ff}} \|_{L^{\infty}(\hat{B}_R,g_X)}+\|\D^b(\hat{\eta}^\diamond_t+\hat{\eta}^\circ_t+\hat{\eta}_{t,j,k})_{\mathbf{ff}} \|_{L^{\infty}(\hat{B}_R,g_X)}\right)\\
&\leq C\delta_t^2\Bigg([\D^a_{\mathbf{b\cdots b}}(\hat{\eta}_t)_{\mathbf{ff}}]_{C^\alpha_{\rm base}(\hat{B}_R,\hat{g}_t)}\\
&\quad\quad\quad\quad+\left(\frac{R}{\lambda_t}\right)^{1-\alpha}\sum_{b=0}^{a} \lambda_t^{-\alpha}\left(\delta_t^{-2}\|\D^b(\hat{\eta}^\dagger_t)_{\mathbf{ff}} \|_{L^{\infty}(\hat{B}_R,g_X)}+\|\D^b(\hat{\eta}^\diamond_t+\hat{\eta}^\circ_t+\hat{\eta}_{t,j,k})_{\mathbf{ff}} \|_{L^{\infty}(\hat{B}_R,\hat{g}_t)}\right)\Bigg)\\
&\leq C\delta_t^2\left([\D^a_{\mathbf{b\cdots b}}\hat{\eta}_t]_{C^\alpha_{\rm base}(\hat{B}_R,\hat{g}_t)}+\left(\frac{R}{\lambda_t}\right)^{1-\alpha}\sum_{b=0}^{a} \lambda_t^{-\alpha}\left(\delta_t^{-2}\|\D^b\hat{\eta}^\dagger_t \|_{L^{\infty}(\hat{B}_R,g_X)}+\|\D^b(\hat{\eta}^\diamond_t+\hat{\eta}^\circ_t+\hat{\eta}_{t,j,k}) \|_{L^{\infty}(\hat{B}_R,\hat{g}_t)}\right)\right),
\end{split}
\end{equation}
where the last inequality holds because $\D$ and $\P$ preserve the three subbundles of \eqref{porra} and these subbundles are orthogonal with respect to any of our product Riemannian metrics (and $g_X$ is uniformly equivalent to a product metric), and because restriction of contravariant tensors to subspaces is norm nonincreasing.

The bound \eqref{zerp3} together with \eqref{pizgrgr} and \eqref{pizgrgr2} gives us the desired bound \eqref{fedeliallalinea2}--\eqref{fedeliallalinea4} for the first term on the RHS of \eqref{loo}. Lastly, we deal with the second term on the RHS of \eqref{loo}.
Using the Leibniz rule we can bound it by
\begin{equation}\label{noreas}\begin{split}
e^{-(2k+2)\frac{t}{2}} &\sum_{\iota=2}^{i-1}\sum_{q=1}^{N_{\iota,k}} \sum_{\kappa=0}^{2k+2}\sum_{b=0}^a\lambda_t^\kappa\bigg([\D^{a-b}\hat{\Phi}_{\kappa,i,p,k,\iota,q}]_{C^\alpha(\hat{B}_R)}
\|\D^{b+\kappa}\hat{A}_{t,\iota,q,k}\|_{L^\infty(\hat{B}_R)}\\
&+\|\D^{a-b}\hat{\Phi}_{\kappa,i,p,k,\iota,q}\|_{L^\infty(\hat{B}_R)}
[\D^{b+\kappa}\hat{A}_{t,\iota,q,k}]_{C^\alpha(\hat{B}_R)}\bigg),
\end{split}\end{equation}
and we can bound
\begin{equation}\begin{split}
\lambda_t^\kappa[\D^{a-b}\hat{\Phi}_{\kappa,i,p,k,\iota,q}]_{C^\alpha(\hat{B}_R)}&\leq \lambda_t^\kappa R^{1-\alpha}\|\D^{a-b+1}\hat{\Phi}_{\kappa,i,p,k,\iota,q}\|_{L^\infty(\hat{B}_R)}\\
&\leq
C R^{1-\alpha}\lambda_t^{b+\kappa-a-1}=C\left(\frac{R}{\lambda_t}\right)^{1-\alpha}\lambda_t^{b+\kappa-a-\alpha},
\end{split}\end{equation}
and for $b+\kappa<a+2k+2$
\begin{equation}\begin{split}
\lambda_t^\kappa\|\D^{a-b}\hat{\Phi}_{\kappa,i,p,k,\iota,q}\|_{L^\infty(\hat{B}_R)}[\D^{b+\kappa}\hat{A}_{t,\iota,q,k}]_{C^\alpha(\hat{B}_R)}&\leq C \lambda_t^{b+\kappa-a}R^{1-\alpha}\|\D^{b+\kappa+1}\hat{A}_{t,\iota,q,k}\|_{L^\infty(\hat{B}_R)}\\
&=C\left(\frac{R}{\lambda_t}\right)^{1-\alpha}\lambda_t^{b+\kappa+1-a-\alpha}\|\D^{b+\kappa+1}\hat{A}_{t,\iota,q,k}\|_{L^\infty(\hat{B}_R)},
\end{split}\end{equation}
and so \eqref{noreas} is bounded by
\begin{equation}
Ce^{-(2k+2)\frac{t}{2}} \sum_{\iota=2}^{i-1}\sum_{q=1}^{N_{\iota,k}}\bigg(\lambda_t^{2k+2}[\D^{a+2k+2}\hat{A}_{t,\iota,q,k}]_{C^\alpha(\hat{B}_R)}
+\left(\frac{R}{\lambda_t}\right)^{1-\alpha}\sum_{c=0}^{a+2k+2} \lambda_t^{c-a-\alpha}\|\D^{c}\hat{A}_{t,\iota,q,k}\|_{L^\infty(\hat{B}_R)}\bigg),
\end{equation}
which is exactly \eqref{fedeliallalinea3}.
\end{proof}

\subsubsection{Killing the contribution from $\ti{A}_{t,i,p,k}$: the main claim \eqref{sogno}}

Now that we have established the noncancellation property, the first goal is to show that the first piece of \eqref{zumteufel163} goes to zero, namely
\begin{equation}\label{agognata}
d_t^{-j-\alpha}\sum_{i=2}^j\sum_{p=1}^{N_{i,k}}\sum_{a = j}^{j+2+2k}\ve_t^{a-j-2}\frac{|\D^{a}\ti{A}_{t,i,p,k}(\ti{x}_t) -  \P_{\ti{x}'_t\ti{x}_t}(\D^{a}\ti{A}_{t,i,p,k}(\ti{x}'_t))|_{\ti{g}_t(\ti{x}_t)}}{d^{\ti{g}_t}(\ti{x}_t,\ti{x}'_t)^\alpha}=o(1),
\end{equation}
and to do this we will prove the more precise estimate
\begin{equation}\label{superagognata}
d_t^{-j-\alpha}\sum_{i=2}^j\sum_{p=1}^{N_{i,k}}\ve_t^{a-j-2}[\D^a\ti{A}_{t,i,p,k}]_{C^\beta(\ti{B}_{O(\ve_t)},\ti{g}_t)}\leq C\ve_t^{\alpha-\beta},
\end{equation}
for all $a\geq j$ and $\alpha\leq\beta<1$, where somewhat abusively in the rest of this section the notation $O(\ve_t)$ for a radius $R$ will mean that it satisfies $C_0\ve_t\leq R\leq C_1\ve_t$, where $C_0$ is the fixed large constant in \eqref{e:tilt0} (so that the $\ti{g}_t$-geodesic ball centered at $\ti{x}_t$ with radius $R$ contains a Euclidean ball of radius $R/2$ times the whole $Y$ fiber). This choice will allow us to apply the interpolation inequalities in Proposition \ref{l:higher-interpol} on $\ti{B}_{\rho}\subset\ti{B}_R$ when $R$ and $\rho$ are $O(\ve_t)$.

Observe that taking \eqref{superagognata} with $\beta>\alpha$ implies \eqref{agognata} since it gives in particular an $o(1)$ bound for the $C^\beta$ seminorm on the $\ti{g}_t$-geodesic ball centered at $\ti{x}_t$ of radius $2$, and hence an $o(1)$ bound for the $C^\alpha$ seminorm on this same ball, which includes the other blowup point $\ti{x}'_t$ and thus implies \eqref{agognata}.

To start, we wish to employ \eqref{fedeliallalinea2}--\eqref{fedeliallalinea3} with H\"older exponent equal to $\beta$ and radius $R=d_tS, S=O(\ve_t)$ (so $(R/\lambda_t)^{1-\beta}\leq Ce^{-(1-\beta)\frac{t}{2}}$). To do this, we first use  \eqref{crocifisso7}, \eqref{crocifisso8bis}, \eqref{crocifisso9}, \eqref{crocifisso3} and \eqref{crocifisso4ter} to bound the term
\begin{equation}\begin{split}
&e^{-(1-\beta)\frac{t}{2}}\sum_{b=0}^{a} \lambda_t^{-\beta}\left(\|\D^b\hat{\eta}^\ddagger_t\|_{L^{\infty}(\hat{B}_{d_tS},\hat{g}_t)}
+\|\D^b(\hat{\eta}^\diamond_t+\hat{\eta}^\circ_t+\hat{\eta}_{t,j,k}) \|_{L^{\infty}(\hat{B}_{d_tS},\hat{g}_t)}+\delta_t^{-2}\|\D^b \hat\eta^\dagger_t \|_{L^{\infty}(\hat{B}_{d_tS},g_X)}\right)\\
&\leq Ce^{-(1-\beta)\frac{t}{2}}\sum_{b=0}^{a} \lambda_t^{-\beta}\left(O(1)
+\|\D^b(\hat{\eta}^\diamond_t+\hat{\eta}^\circ_t+\hat{\eta}_{t,j,k}) \|_{L^{\infty}(\hat{B}_{d_tS},\hat{g}_t)}+Ce^{\frac{-1-\alpha}{2}\frac{\alpha}{j+\alpha}t}\right)\\
&\leq Ce^{-(1-\beta)\frac{t}{2}} \lambda_t^{-\beta}(O(1)+\delta_t^{j+\alpha-a})+Ce^{-(1-\beta)\frac{t}{2}}\sum_{b=j+1}^{a} \lambda_t^{-\beta}\|\D^b(\hat{\eta}^\diamond_t+\hat{\eta}^\circ_t+\hat{\eta}_{t,j,k}) \|_{L^{\infty}(\hat{B}_{d_tS},\hat{g}_t)}.
\end{split}\end{equation}

Using this, we can transfer \eqref{fedeliallalinea2}--\eqref{fedeliallalinea3} to the tilde picture and multiplying it by $\delta_t^{a-j-2}d_t^{\beta-\alpha}$ we then get for all $a\geq j, \alpha\leq\beta<1,$
\begin{equation}\label{bracciodiferro}\begin{split}
&d_t^{-j-\alpha}\ve_t^{a-j-2}[\D^{a}\ti{A}_{t,i,p,k}]_{C^\beta(\ti{B}_{O(\ve_t)},\ti{g}_t)} \leq
C\ve_t^{a-j}d_t^{-j-\alpha}[\D^a_{\mathbf{b\cdots b}}\ti\eta_t]_{C^\beta_{\rm base}(\ti{B}_{O(\ve_t)},\ti{g}_t)}\\
&+Ce^{-(1-\beta)\frac{t}{2}}\lambda_t^{-\beta}\delta_t^{a-j}d_t^{\beta-\alpha}(O(1)+\delta_t^{j+\alpha-a})\\
&+Ce^{-(1-\beta)\frac{t}{2}}\sum_{b=j+1}^{a} \lambda_t^{-\beta}\delta_t^{a-j}d_t^{-b+\beta-\alpha}\|\D^b(\ti{\eta}_{t,j,k}+\ti{\eta}^\diamond_t+
\ti{\eta}^\circ_t)\|_{L^\infty(\ti{B}_{O(\ve_t)},\ti{g}_t)}\\
&+C\sum_{\ell=2}^{i-1}\sum_{q = 1}^{N_{\ell,k}}\Bigg(d_t^{-j-\alpha}\ve_t^{2k+a-j}[\D^{a + 2k + 2}\ti{A}_{t,\ell,q,k}]_{C^\beta(\ti{B}_{O(\ve_t)},\ti{g}_t)}\\
&+ \sum_{b=0}^{a + 2k + 2} d_t^{-j-\alpha} \ve_t^{a-j-2}d_t^{a-b+\beta}e^{-(2k+2+1-\beta)\frac{t}{2}}\lambda_t^{b-a-\beta} \|\D^{b}\ti{A}_{t,\ell,q,k}\|_{L^{\infty}(\ti{B}_{O(\ve_t)},\ti{g}_t)} \Bigg),
\end{split}\end{equation}
where the constants depend on $a$,
so the goal \eqref{superagognata} is to show that all terms on the RHS of \eqref{bracciodiferro} are bounded by $C\ve_t^{\alpha-\beta}$,
for all $a\geq j$. This is clear for the term on the second line
\begin{equation}\label{ominide}
Ce^{-(1-\beta)\frac{t}{2}}\lambda_t^{-\beta}\delta_t^{a-j}d_t^{\beta-\alpha}(O(1)+\delta_t^{j+\alpha-a})=o(\ve_t^{\alpha-\beta}).
\end{equation}
Next we discuss the term on the first line of \eqref{bracciodiferro}. The first useful observation is that thanks to \eqref{crocifisso9} and \eqref{crocifisso4bis} for all $a\geq j$ we have
\begin{equation}\label{bstrd2}
\begin{split}
\ve_t^{a-j}d_t^{-j-\alpha}[\D^a_{\mathbf{b\cdots b}}\ti{\eta}_t]_{C^\beta_{\rm base}(\ti{B}_{O(\ve_t)},\ti{g}_t)}&=\ve_t^{a-j}d_t^{-j-\alpha}[\D^a_{\mathbf{b\cdots b}}(\ti{\eta}_{t,j,k}+\ti{\eta}^\diamond_t+\ti{\eta}^\circ_t)]_{C^\beta_{\rm base}(\ti{B}_{O(\ve_t)},\ti{g}_t)}+C\delta_t^{a-j}d_t^{\beta-\alpha}o(1)\\
&=\ve_t^{a-j}d_t^{-j-\alpha}[\D^a_{\mathbf{b\cdots b}}(\ti{\eta}_{t,j,k}+\ti{\eta}^\diamond_t+\ti{\eta}^\circ_t)]_{C^\beta_{\rm base}(\ti{B}_{O(\ve_t)},\ti{g}_t)}+o(\ve_t^{\alpha-\beta}).
\end{split}
\end{equation}
We thus proceed to bound the first term on the RHS of \eqref{bstrd2}, and the idea is to use the Schauder estimates in Proposition \ref{sonomatematicofrancese}. In order to do that we need to check that after applying the diffeomorphisms
\begin{equation}\Pi_t:  B_{e^{\frac{t}{2}}} \times Y \to B_{d_t^{-1}\lambda_t} \times Y, \;\, (\ti{z},\ti{y}) = \Pi_t(\check{z},\check{y}) =(\ve_t\check{z},\check{y}),\end{equation}
then on $\Pi_t^{-1}(\ti{B}_{\ve_t}\times Y)=\check{B}_1\times Y$ the metrics $\check{g}_t=\ve_t^{-2}\Pi_t^*\ti{g}_t$ and $\check{\omega}^\sharp_t=\ve_t^{-2}\Pi_t^*\ti{\omega}^\sharp_t$ are smoothly convergent. This is obvious for $\check{g}_t$ and for $\check{\omega}^\sharp_t$
we have
\begin{equation}
\check{\omega}^\sharp_t=\check{\omega}_{\rm can}+\Pi_t^*\Theta_t^*\Psi_t^*\omega_F+\check{\eta}^\dagger_t+\check{\eta}^\ddagger_t,
\end{equation}
where $\check{\omega}_{\rm can}+\Pi_t^*\Theta_t^*\Psi_t^*\omega_F$ is smoothly convergent, $\check{\eta}^\dagger_t$ goes smoothly to zero by \eqref{crocifisso4}, and $\check{\eta}^\ddagger_t$ goes smoothly to zero when $j>0$ by \eqref{crocifisso9}, and it (sub)converges smoothly when $j=0$ since in that case it is a sequence of constant coefficient forms with uniformly bounded $L^\infty$ norm. Thus Proposition \ref{sonomatematicofrancese} applies, so given two radii $\rho<R$ which are $O(\ve_t)$, and $\alpha\leq\beta<1$, let $\ti{R}=\rho+\frac{1}{2}(R-\rho)$ (which is of course also $O(\ve_t)$) apply the proposition to bound
\begin{equation}\label{schauder}
\begin{split}
\ve_t^{a-j}d_t^{-j-\alpha}[\D^a(\ti{\eta}_{t,j,k}+\ti{\eta}^\diamond_t+\ti{\eta}^\circ_t)]_{C^\beta(\ti{B}_\rho,\ti{g}_t)}&\leq
C\ve_t^{a-j}d_t^{-j-\alpha}[\D^a\tr{\ti\omega_t^\sharp}{(\ti{\eta}_{t,j,k}+\ti{\eta}^\diamond_t+\ti{\eta}^\circ_t)}]_{C^\beta(\ti{B}_{\ti{R}},\ti{g}_t)}\\
&+C\ve_t^{a-j}d_t^{-j-\alpha}(R-\rho)^{-a-\beta}\|\ti{\eta}_{t,j,k}+\ti{\eta}^\diamond_t+\ti{\eta}^\circ_t\|_{L^\infty(\ti{B}_{\ti{R}},\ti{g}_t)}\\
&\leq C\ve_t^{a-j}d_t^{-j-\alpha}[\D^a\tr{\ti\omega_t^\sharp}{(\ti{\eta}_{t,j,k}+\ti{\eta}^\diamond_t+\ti{\eta}^\circ_t)}]_{C^\beta(\ti{B}_{\ti{R}},\ti{g}_t)}\\
&+C\ve_t^{a+\alpha}(R-\rho)^{-a-\beta},
\end{split}
\end{equation}
where for the $L^\infty$ term we used the estimates from \eqref{utilissimo}, \eqref{crocifisso8} and \eqref{utilissimo2} which together give
\begin{equation}\label{cojone}
d_t^{-j-\alpha}\|\D^\iota(\ti{\eta}_{t,j,k}+\ti{\eta}^\diamond_t+\ti{\eta}^\circ_t)\|_{L^\infty(\ti{B}_{O(\ve_t)},\ti{g}_t)}\leq C\ve_t^{j+\alpha-\iota},\quad d_t^{-j-\alpha}[\D^\iota(\ti{\eta}_{t,j,k}+\ti{\eta}^\diamond_t+\ti{\eta}^\circ_t)]_{C^\alpha(\ti{B}_{O(\ve_t)},\ti{g}_t)}\leq C\ve_t^{j-\iota},
\end{equation}
for $0\leq\iota\leq j$.

The main claim is then that for all $a\geq j$, radii $\rho<R$ which are $O(\ve_t)$, and $\alpha\leq\beta<1$, letting $\ti{R}=\rho+\frac{1}{2}(R-\rho)$, we have the bound
\begin{equation}\label{sogno}\begin{split}
C\ve_t^{a-j}d_t^{-j-\alpha}[\D^a\tr{\ti\omega_t^\sharp}{(\ti{\eta}_{t,j,k}+\ti{\eta}^\diamond_t+\ti{\eta}^\circ_t)}]_{C^\beta(\ti{B}_{\ti{R}},\ti{g}_t)}&\leq
\frac{1}{2}\ve_t^{a-j}d_t^{-j-\alpha}[\D^a(\ti{\eta}_{t,j,k}+\ti{\eta}^\diamond_t+\ti{\eta}^\circ_t)]_{C^\beta(\ti{B}_R,\ti{g}_t)}\\
&+C\ve_t^{\alpha-\beta}+C\ve_t^{\alpha-\beta+a}(R-\rho)^{-a}.
\end{split}
\end{equation}
where the constants depend on $a,\beta$.

Before we delve into the proof of the main claim \eqref{sogno}, it is useful to derive some consequences from it. Specifically, suppose that \eqref{sogno} has been proved for some value of $a\geq j$ and $\alpha\leq\beta<1$, then combining it with \eqref{schauder} gives
\begin{equation}\begin{split}
\ve_t^{a-j}d_t^{-j-\alpha}[\D^a(\ti{\eta}_{t,j,k}+\ti{\eta}^\diamond_t+\ti{\eta}^\circ_t)]_{C^\beta(\ti{B}_\rho,\ti{g}_t)}
&\leq \frac{1}{2}\ve_t^{a-j}d_t^{-j-\alpha}[\D^a(\ti{\eta}_{t,j,k}+\ti{\eta}^\diamond_t+\ti{\eta}^\circ_t)]_{C^\beta(\ti{B}_R,\ti{g}_t)}\\
&+C\ve_t^{\alpha-\beta}+C\ve_t^{\alpha-\beta+a}(R-\rho)^{-a}+C\ve_t^{\alpha+a}(R-\rho)^{-a-\beta},
\end{split}
\end{equation}
for all radii $\rho<R$ which are $O(\ve_t)$, and then the iteration Lemma \ref{HT2-l:iterate} gives that
\begin{equation}\label{osondesto}
\ve_t^{a-j}d_t^{-j-\alpha}[\D^a(\ti{\eta}_{t,j,k}+\ti{\eta}^\diamond_t+\ti{\eta}^\circ_t)]_{C^\beta(\ti{B}_\rho,\ti{g}_t)}\leq
C\ve_t^{\alpha-\beta}+C\ve_t^{\alpha-\beta+a}(R-\rho)^{-a}+C\ve_t^{\alpha+a}(R-\rho)^{-a-\beta},
\end{equation}
for all radii $\rho<R$ which are $O(\ve_t)$ and all $a\geq j$. In other words, this means
\begin{equation}\label{osondesto2}
\ve_t^{a-j}d_t^{-j-\alpha}[\D^a(\ti{\eta}_{t,j,k}+\ti{\eta}^\diamond_t+\ti{\eta}^\circ_t)]_{C^\beta(\ti{B}_{O(\ve_t)},\ti{g}_t)}\leq
C\ve_t^{\alpha-\beta},
\end{equation}
which can be inserted into \eqref{bstrd2} and get
\begin{equation}\label{fleabag2}
\ve_t^{a-j}d_t^{-j-\alpha}[\D^a_{\mathbf{b\cdots b}}\ti{\eta}_t]_{C^\beta_{\rm base}(\ti{B}_{O(\ve_t)},\ti{g}_t)}\leq C\ve_t^{\alpha-\beta},
\end{equation}
which would give the desired bound for the first term on the RHS of \eqref{bracciodiferro}. Furthermore, we can interpolate between \eqref{osondesto2} and the $L^\infty$ bound in \eqref{cojone} using  Proposition \ref{l:higher-interpol} on balls of radius $O(\ve_t)$ and obtain
\begin{equation}\label{kkrz}
\ve_t^{b-j-\alpha}d_t^{-j-\alpha}\|\D^b(\ti{\eta}_{t,j,k}+\ti{\eta}^\diamond_t+\ti{\eta}^\circ_t)\|_{L^\infty(\ti{B}_{O(\ve_t)},\ti{g}_t)}\leq C,
\quad \ve_t^{b-j+\beta-\alpha}d_t^{-j-\alpha}[\D^b(\ti{\eta}_{t,j,k}+\ti{\eta}^\diamond_t+\ti{\eta}^\circ_t)]_{C^\beta(\ti{B}_{O(\ve_t)},\ti{g}_t)}\leq C,
\end{equation}
for $0\leq b\leq a$.

Now that we have derived some consequences from the main claim \eqref{sogno}, we proceed to prove it by induction on $a\geq j$. In the inductive step, \eqref{sogno} can be assumed to hold for smaller values of $a\geq j$, hence \eqref{kkrz} will hold for these smaller values. In the base case of the induction when $a=j$, the bounds in \eqref{kkrz} are already known to hold when $\alpha=\beta$ thanks to \eqref{cojone}, but at this point they have not yet been established when $a=j$ and $\beta>\alpha$.

Now, to prove claim \eqref{sogno} we need to convert $[\D^a\tr{\ti\omega_t^\sharp}{(\ti{\eta}_{t,j,k}+\ti{\eta}^\diamond_t+\ti{\eta}^\circ_t)}]_{C^\beta}$ into $[\D^a(\ti{\eta}_{t,j,k}+\ti{\eta}^\diamond_t+\ti{\eta}^\circ_t)]_{C^\beta}$ with a small coefficient in front, and this is where the PDE \eqref{satanas} comes in, which in the tilde picture can be written (ignoring combinatorial factors) as
\begin{equation}\label{satanasti}
d_t^{-j-\alpha}\tr{\ti\omega_t^\sharp}{(\ti{\eta}_{t,j,k}+\ti{\eta}^\diamond_t+\ti{\eta}^\circ_t)}
=d_t^{-j-\alpha}\left(c_t \frac{\ti{\omega}_{\rm can}^m\wedge(\Theta_t^*\Psi_t^*\omega_F)^n}{(\ti\omega_t^\sharp)^{m+n}}-1\right)
-d_t^{-j-\alpha}\sum_{p\geq 2}\frac{(\ti{\eta}_{t,j,k}+\ti{\eta}^\diamond_t+\ti{\eta}^\circ_t)^p\wedge (\ti\omega_t^\sharp)^{m+n-p}}{(\ti\omega_t^\sharp)^{m+n}}.
\end{equation}
We then multiply \eqref{satanasti} by $\ve_t^{a-j}, a\geq j,$ and we need to see what bounds are available for the pieces on the RHS of \eqref{satanasti}.

\subsubsection{The piece of \eqref{satanasti} with the nonlinearities}\label{k2}

This is the term
\begin{equation}\label{feixianxing}
\mathcal{N}:=C\ve_t^{a-j}d_t^{-j-\alpha}\sum_{p\geq 2}\frac{(\ti{\eta}_{t,j,k}+\ti{\eta}^\diamond_t+\ti{\eta}^\circ_t)^p\wedge (\ti\omega_t^\sharp)^{m+n-p}}{(\ti\omega_t^\sharp)^{m+n}}.
\end{equation}
For any $a\geq 0$, we take $\D^a$ of \eqref{feixianxing} and expand it schematically as
\begin{equation}\label{totalmess}
C\sum_{\substack{p\geq 2\\ \mu+\psi+\sigma=a}}(\D^\mu(\ve_t^{a-j}d_t^{-j-\alpha}(\ti{\eta}_{t,j,k}+\ti{\eta}^\diamond_t+\ti{\eta}^\circ_t)^p))(\D^\psi((\ti{\omega}^\sharp_t)^{m+n-p}))(\D^\sigma(((\ti{\omega}^\sharp_t)^{m+n})^{-1})),
\end{equation}

To bound this, first observe that from \eqref{crocifisso5} we can bound for all $\psi\geq 0$
\begin{equation}\label{plz3}
[\D^\psi\tilde{\omega}^\sharp_t]_{C^\beta(\ti{B}_{\ve_t},\ti{g}_t)}\leq C\ve_t^{-\beta-\psi},
\quad \|\D^\psi\tilde{\omega}^\sharp_t\|_{L^\infty(\ti{B}_{\ve_t},\ti{g}_t)}\leq C\ve_t^{-\psi},
\end{equation}
and similarly for the reciprocal of $(\tilde{\omega}^\sharp_t)^{m+n}$. As for $\ti{\eta}_{t,j,k}+\ti{\eta}^\diamond_t+\ti{\eta}^\circ_t$, we claim that for $\alpha\leq \beta<1$ and $0\leq \mu<a$ we have
\begin{equation}\label{plz4}
\|\D^\mu(\ti{\eta}_{t,j,k}+\ti{\eta}^\diamond_t+\ti{\eta}^\circ_t)\|_{L^\infty(\ti{B}_{O(\ve_t)},\ti{g}_t)}\leq C\delta_t^{j+\alpha}\ve_t^{-\mu},\quad
[\D^\mu(\ti{\eta}_{t,j,k}+\ti{\eta}^\diamond_t+\ti{\eta}^\circ_t)]_{C^\beta(\ti{B}_{O(\ve_t)},\ti{g}_t)}\leq C\delta_t^{j+\alpha}\ve_t^{-\beta-\mu},\end{equation}
and indeed in the base case of the induction $a=j$ these are simple consequences of \eqref{cojone} plus interpolation (Proposition \ref{l:higher-interpol}), while for $a>j$ these are given by the bounds \eqref{kkrz} which hold by induction.

Next, we take $a\geq j$ in \eqref{totalmess} and we take $[\D^a\mathcal{N}]_{C^\beta(\ti{B}_{\ti{R}},\ti{g}_t)}$, where $\ti{R}=O(\ve_t)$ is as above, and $\alpha\leq\beta<1$. The seminorm will distribute onto all the terms in the obvious sense. \\

\noindent
{\bf Case 1. }Let us first consider any term with a $\D^a$ derivative of $(\ti{\eta}_{t,j,k}+\ti{\eta}^\diamond_t+\ti{\eta}^\circ_t)$ on which the difference quotient lands, which is then multiplied by at least one other decorated $\ti{\eta}$ piece (since $p\geq 2$) which is $o(1)$ by \eqref{plz4}, and thus the sum of all such terms is bounded above by
\begin{equation}\frac{1}{4}\ve_t^{a-j}d_t^{-j-\alpha}[\D^a(\ti{\eta}_{t,j,k}+\ti{\eta}^\diamond_t+\ti{\eta}^\circ_t)]_{C^\beta(\ti{B}_R,\ti{g}_t)}.\end{equation}

\noindent
{\bf Case 2. }Next, consider any term with a $\D^a$ derivative of $(\ti{\eta}_{t,j,k}+\ti{\eta}^\diamond_t+\ti{\eta}^\circ_t)$ (we may assume $a>0$, otherwise this term has already been discussed), with the difference quotient landing somewhere else. Each such term is thus the product of two pieces, the first being $\ve_t^{a-j}d_t^{-j-\alpha}\D^a(\ti{\eta}_{t,j,k}+\ti{\eta}^\diamond_t+\ti{\eta}^\circ_t)$ and the second piece with the difference quotient landing on the product of the remaining terms (and these terms contain at least one other decorated $\ti{\eta}$ piece since $p\geq 2$). Thanks to \eqref{plz3}, \eqref{plz4} (this last one applies since $a>0$) this second piece is bounded by $C\delta_t^{j+\alpha}\ve_t^{-\beta}=o(\ve_t^{-\beta}).$ On the other hand, to bound the first piece we interpolate with Proposition \ref{l:higher-interpol} from radius $\ti{R}=\rho+\frac{1}{2}(R-\rho)$ to radius $R$
\begin{equation}\label{kuz1}\begin{split}
\ve_t^{a-j}d_t^{-j-\alpha}\|\D^a(\ti{\eta}_{t,j,k}+\ti{\eta}^\diamond_t+\ti{\eta}^\circ_t)\|_{L^\infty(\ti{B}_{\ti{R}},\ti{g}_t)}&\leq
C\ve_t^{a-j}d_t^{-j-\alpha}(R-\rho)^{\beta}[\D^a(\ti{\eta}_{t,j,k}+\ti{\eta}^\diamond_t+\ti{\eta}^\circ_t)]_{C^\beta(\ti{B}_R,\ti{g}_t)}\\
&+C\ve_t^{a-j}d_t^{-j-\alpha}(R-\rho)^{-a}\|\ti{\eta}_{t,j,k}+\ti{\eta}^\diamond_t+\ti{\eta}^\circ_t\|_{L^\infty(\ti{B}_R,\ti{g}_t)}\\
&\leq C\ve_t^{a-j}d_t^{-j-\alpha}\ve_t^{\beta}[\D^a(\ti{\eta}_{t,j,k}+\ti{\eta}^\diamond_t+\ti{\eta}^\circ_t)]_{C^\beta(\ti{B}_R,\ti{g}_t)}\\
&+C\ve_t^{a-j}d_t^{-j-\alpha}(R-\rho)^{-a}\delta_t^{j+\alpha},
\end{split}\end{equation}
so multiplying these bounds for the two pieces, in this case the sum of all such terms overall can be bounded by
\begin{equation}\begin{split}
&o(1)C\ve_t^{a-j}d_t^{-j-\alpha}[\D^a(\ti{\eta}_{t,j,k}+\ti{\eta}^\diamond_t+\ti{\eta}^\circ_t)]_{C^\beta(\ti{B}_R,\ti{g}_t)}
+o(1)\ve_t^{-\beta}C\ve_t^{a-j}d_t^{-j-\alpha}(R-\rho)^{-a}\delta_t^{j+\alpha}\\
&\leq \frac{1}{4}\ve_t^{a-j}d_t^{-j-\alpha}[\D^a(\ti{\eta}_{t,j,k}+\ti{\eta}^\diamond_t+\ti{\eta}^\circ_t)]_{C^\beta(\ti{B}_R,\ti{g}_t)}+
o(1)\ve_t^{\alpha-\beta+a}(R-\rho)^{-a}.
\end{split}\end{equation}

\noindent
{\bf Case 3. }In all the remaining terms which are not covered by cases 1 and 2, the largest number of derivatives that can land on $\ve_t^{a-j}d_t^{-j-\alpha}(\ti{\eta}_{t,j,k}+\ti{\eta}^\diamond_t+\ti{\eta}^\circ_t)$ (including difference quotient) is strictly less than $a$, i.e. it is either of the form $r$ with $0\leq r<a$, or of the form $r+\beta$ with $0\leq r<a$. Thus the inductive estimates \eqref{plz4} hold, and the piece of this term where these derivatives land is bounded by $C\ve_t^{a-r+\alpha}$ (resp. $C\ve_t^{a-r+\alpha-\beta}$), while the second piece of this term where the remaining $a+\beta-r$ (resp. $a-r$) derivatives land is bounded by $o(\ve_t^{-a-\beta+r})$ (resp. $o(\ve_t^{-a+r})$) by  \eqref{plz3}, \eqref{plz4}. Thus, every such term overall is bounded by $o(\ve_t^{\alpha-\beta}).$\\

Combining the discussion of these $3$ cases, we have thus covered all possible terms and the conclusion is that for all $a\geq j, \alpha\leq\beta<1$ we have
\begin{equation}\label{piece2}\begin{split}
[\D^a\mathcal{N}]_{C^\beta(\ti{B}_{\ti{R}},\ti{g}_t)}&\leq \frac{1}{2}\ve_t^{a-j}d_t^{-j-\alpha}[\D^a(\ti{\eta}_{t,j,k}+\ti{\eta}^\diamond_t+\ti{\eta}^\circ_t)]_{C^\beta(\ti{B}_R,\ti{g}_t)}\\
&+o(\ve_t^{\alpha-\beta})+o(\ve_t^{\alpha-\beta+a})(R-\rho)^{-a}.
\end{split}\end{equation}

\subsubsection{The piece of \eqref{satanasti} with $c_t \frac{\ti{\omega}_{\rm can}^m\wedge(\Theta_t^*\Psi_t^*\omega_F)^n}{(\ti\omega_t^\sharp)^{m+n}}-1$}\label{k3}

The claim about this is that for all $a\geq 0$ and $1>\beta\geq\alpha$ and all fixed $R$ we have
\begin{align}\label{piece3}
\ve_t^{a-j}d_t^{-j-\alpha}\left\|\D^a\left(c_t\frac{\ti{\omega}_{\rm can}^m\wedge(\Theta_t^*\Psi_t^*\omega_F)^n}{(\ti\omega_t^\sharp)^{m+n}}-1\right)\right\|_{L^\infty(\ti{B}_{R\ve_t}\times Y,\ti{g}_t)}&\leq C\ve_t^{\alpha},\\
\ve_t^{a-j}d_t^{-j-\alpha}\left[\D^a\left(c_t\frac{\ti{\omega}_{\rm can}^m\wedge(\Theta_t^*\Psi_t^*\omega_F)^n}{(\ti\omega_t^\sharp)^{m+n}}-1\right)\right]_{C^\beta(\ti{B}_{R\ve_t}\times Y,\ti{g}_t)}&\leq C\ve_t^{\alpha-\beta},\nonumber
\end{align}
where as usual the constants depend on $a,\beta$.

First, observe that \eqref{piece3} for $a\leq j$ and $\beta=\alpha$ follows immediately from \eqref{cacanew3}, \eqref{cacanew3b}.

Next, to prove \eqref{piece3} for $a\geq j$ and $1>\beta\geq\alpha$, we apply the diffeomorphism
\begin{equation}\Pi_t:  B_{e^{\frac{t}{2}}} \times Y \to B_{d_t^{-1}\lambda_t} \times Y, \;\, (\ti{z},\ti{y}) = \Pi_t(\check{z},\check{y}) =(\ve_t\check{z},\check{y}),\end{equation}
and as usual multiply all the pulled back contravariant $2$-tensors by $\ve_t^{-2}$, and denote the new objects by a check. This way $\Pi_t^{-1}(\ti{B}_{R\ve_t}\times Y)=\check{B}_R\times Y$, the metric $\check{g}_t$ is uniformly Euclidean, and we have by definition $\Psi_t\circ\Theta_t\circ\Pi_t=\Sigma_t$, where $\Sigma_t$ was defined in \eqref{sigmat}.

Transferring to the check picture gives
\begin{equation}\label{ooga}\begin{split}
&\delta_t^{-j-\alpha}\left\|\D^a\left(c_t\frac{\check{\omega}_{\rm can}^m\wedge(\Sigma_t^*\omega_F)^n}{(\check\omega_t^\sharp)^{m+n}}-1\right)\right\|_{L^\infty(\check{B}_{R}\times Y,\check{g}_t)}\\
&=\ve_t^{-\alpha}\ve_t^{a-j}d_t^{-j-\alpha}\left\|\D^a\left(c_t\frac{\ti{\omega}_{\rm can}^m\wedge(\Theta_t^*\Psi_t^*\omega_F)^n}{(\ti\omega_t^\sharp)^{m+n}}-1\right)\right\|_{L^\infty(\ti{B}_{R\ve_t}\times Y,\ti{g}_t)},
\end{split}\end{equation}
\begin{equation}\label{chaka}\begin{split}
&\delta_t^{-j-\alpha}\left[\D^a\left(c_t\frac{\check{\omega}_{\rm can}^m\wedge(\Sigma_t^*\omega_F)^n}{(\check\omega_t^\sharp)^{m+n}}-1\right)\right]_{C^\beta(\check{B}_{R}\times Y,\check{g}_t)}\\
&=\ve_t^{\beta-\alpha}\ve_t^{a-j}d_t^{-j-\alpha}\left[\D^a\left(c_t\frac{\ti{\omega}_{\rm can}^m\wedge(\Theta_t^*\Psi_t^*\omega_F)^n}{(\ti\omega_t^\sharp)^{m+n}}-1\right)\right]_{C^\beta(\ti{B}_{R\ve_t}\times Y,\ti{g}_t)}.
\end{split}\end{equation}
It then follows from \eqref{piece3} with $a\leq j,\beta=\alpha$ that
$\delta_t^{-j-\alpha}\left(c_t \frac{\check{\omega}_{\rm can}^m\wedge(\Sigma_t^*\omega_F)^n}{(\check\omega_t^\sharp)^{m+n}}-1\right)$ is locally uniformly bounded in $C^{j,\alpha}$ for the (essentially fixed) metric $\check{g}_t$, so by Ascoli-Arzel\`a up to passing to sequence it converges locally uniformly on $\C^m\times Y$. Furthermore, the functions $\hat{A}^\sharp_{t,i,p,k}$ satisfy the bounds \eqref{linftyyy0} with $\alpha_0=\frac{\alpha(1+\alpha)}{j+\alpha}$ thanks to \eqref{prechecazzo}. We can thus apply the last statement in the Selection Theorem \ref{ghost} which implies that $\delta_t^{-j-\alpha}\left(c_t \frac{\check{\omega}_{\rm can}^m\wedge(\Sigma_t^*\omega_F)^n}{(\check\omega_t^\sharp)^{m+n}}-1\right)$ converges locally smoothly, so in particular all of its derivatives are locally uniformly bounded on $\check{B}_R\times Y$. Thus the LHS of \eqref{ooga} and \eqref{chaka} with are uniformly bounded for all $a,\beta$ and $R$, and this completes the proof of \eqref{piece3}.

\subsubsection{Completion of the proof of the main claim \eqref{sogno} and killing the contribution from $\ti{A}_{t,i,p,k}$}\label{jjj1}

Combining the discussion of the 2 pieces in sections \ref{k2} and \ref{k3}, namely combining \eqref{piece2} and \eqref{piece3} with \eqref{satanasti}, completes the proof by induction of \eqref{sogno} and hence of \eqref{fleabag2}. This means that the first term on the RHS of \eqref{bracciodiferro} is $O(\ve_t^{\alpha-\beta})$.

The second line of \eqref{bracciodiferro} was already treated in \eqref{ominide}. As for the third line, which is
\begin{equation}\label{krkz}
e^{-(1-\beta)\frac{t}{2}}\sum_{b=j+1}^{a} \lambda_t^{-\beta}\delta_t^{a-j}d_t^{-b+\beta-\alpha}\|\D^b (\ti{\eta}_{t,j,k}+\ti{\eta}^\diamond_t+\ti{\eta}^\circ_t) \|_{L^{\infty}(\ti{B}_{O(\ve_t)},\ti{g}_t)},
\end{equation}
where $a\geq j+1$, we also show that this term is $o(\ve_t^{\alpha-\beta})$ by using \eqref{kkrz} which gives
\begin{equation}\begin{split}
\|\D^b(\ti{\eta}_{t,j,k}+\ti{\eta}^\diamond_t+\ti{\eta}^\circ_t)\|_{L^\infty(\ti{B}_{O(\ve_t)},\ti{g}_t)}&\leq C\ve_t^{j-b+\alpha}d_t^{j+\alpha},
\end{split}\end{equation}
for $j+1\leq b\leq a$, and so each term in the sum in \eqref{krkz} is bounded by
\begin{equation}\begin{split}
&Ce^{-(1-\beta)\frac{t}{2}}\lambda_t^{-\beta}\delta_t^{a-j}d_t^{-b+\beta-\alpha}\ve_t^{j-b+\alpha}d_t^{j+\alpha}=Ce^{-(1-\beta)\frac{t}{2}}\lambda_t^{-\beta}\delta_t^{a-b+\beta}\ve_t^{\alpha-\beta}=o(\ve_t^{\alpha-\beta}),
\end{split}\end{equation}
which establishes our claim.

To recap what we have achieved so far, combining \eqref{bracciodiferro} with the bounds \eqref{fleabag2} for the first term on the RHS and those that we just discussed for the second and third line, we obtain for $a\geq j$
\begin{equation}\label{brazodeferaza}\begin{split}
&d_t^{-j-\alpha}\ve_t^{a-j-2}[\D^{a}\ti{A}_{t,i,p,k}]_{C^\beta(\ti{B}_{O(\ve_t)},\ti{g}_t)} \leq
C\ve_t^{\alpha-\beta}\\
&+C\sum_{\ell=2}^{i-1}\sum_{q = 1}^{N_{\ell,k}}\Bigg(d_t^{-j-\alpha}\ve_t^{2k+a-j}[\D^{a + 2k + 2}\ti{A}_{t,\ell,q,k}]_{C^\beta(\ti{B}_{O(\ve_t)},\ti{g}_t)}\\
 &+ \sum_{b=0}^{a + 2k + 2} d_t^{-j-\alpha} \ve_t^{a-j-2}d_t^{a-b+\beta}e^{-(2k+2+1-\beta)\frac{t}{2}}\lambda_t^{b-a-\beta} \|\D^{b}\ti{A}_{t,\ell,q,k}\|_{L^{\infty}(\ti{B}_{O(\ve_t)},\ti{g}_t)} \Bigg),
\end{split}\end{equation}
and we now use this to prove \eqref{superagognata} by showing by induction on $2\leq i\leq j$ that
for all $a\geq j$ we have
\begin{equation}\label{agone}
d_t^{-j-\alpha}\sum_{p=1}^{N_{i,k}}\ve_t^{a-j-2}[\D^a\ti{A}_{t,i,p,k}]_{C^\beta(\ti{B}_{O(\ve_t)},\ti{g}_t)}\leq C\ve_t^{\alpha-\beta},
\end{equation}
where the constant depends on $a$.
The base of the induction $i=2$ is exactly \eqref{brazodeferaza}.
For the induction step, we assume that \eqref{agone} holds up to $i-1$, then the second line of \eqref{brazodeferaza} can also be bounded by $C\ve_t^{\alpha-\beta}$. As for the third line of \eqref{brazodeferaza}, first consider the terms with $0\leq b\leq j$. For these, we use directly \eqref{610} which gives in particular
\begin{equation}\label{fleamarket}d_t^{-b}\|\D^{b}\ti{A}_{t,\ell,q,k}\|_{L^{\infty}(\ti{B}_{\ve_t},\ti{g}_t)}=o(\ve_t^2),\end{equation}
and so we can bound these terms by
\begin{equation}d_t^{-j-\alpha} \ve_t^{a-j-2}d_t^{a+\beta}e^{-(2k+2+1-\beta)\frac{t}{2}}\lambda_t^{b-a-\beta}\ve_t^2 o(1)=
\delta_t^{a-j}d_t^{\beta-\alpha}e^{-(2k+2+1-\beta)\frac{t}{2}}\lambda_t^{b-a-\beta}o(1)=o(\ve_t^{\alpha-\beta}),\end{equation}
since $b\leq j\leq a$.
Next, look at the terms with $j<b\leq a+2k+2$. For these we use interpolation between \eqref{fleamarket} with $b=j$ and
\begin{equation}d_t^{-j-\alpha}\ve_t^{b-j-2}[\D^b\ti{A}_{t,\ell,q,k}]_{C^\beta(\ti{B}_{O(\ve_t)},\ti{g}_t)}\leq C\ve_t^{\alpha-\beta},\end{equation}
for $b\geq j$ and $\ell\leq i-1$, which comes from the induction hypothesis \eqref{agone}. Interpolating gives, for $j<b\leq  a+2k+2$,
\begin{equation}\begin{split}
(R-\rho)^b\|\D^b\ti{A}_{t,\ell,q,k}\|_{L^\infty(\ti{B}_\rho,\ti{g}_t)}&\leq C
(R-\rho)^{b+\beta}[\D^b\ti{A}_{t,\ell,q,k}]_{C^\beta(\ti{B}_R,\ti{g}_t)}+C(R-\rho)^j\|\D^j\ti{A}_{t,\ell,q,k}\|_{L^\infty(\ti{B}_R,\ti{g}_t)}\\
&\leq C(R-\rho)^{b+\beta}d_t^{j+\alpha}\ve_t^{2+j-b+\alpha-\beta}+C(R-\rho)^jd_t^{j}\ve_t^2o(1),
\end{split}\end{equation}
with $\rho,R$ which are $O(\ve_t)$ and we get
\begin{equation}
\|\D^b\ti{A}_{t,\ell,q,k}\|_{L^\infty(\ti{B}_{O(\ve_t)},\ti{g}_t)}\leq C\ve_t^{2+j+\alpha-b}d_t^{j+\alpha}+C\ve_t^{2+j-b}d_t^{j},
\end{equation}
and so the terms in the third line of \eqref{brazodeferaza} with $b>j$ can be bounded by
\begin{equation}\begin{split}
&d_t^{-j-\alpha} \ve_t^{a-j-2}d_t^{a-b+\beta}e^{-(2k+2+1-\beta)\frac{t}{2}}\lambda_t^{b-a-\beta}\ve_t^{2+j+\alpha-b}d_t^{j+\alpha}\\
&+d_t^{-j-\alpha} \ve_t^{a-j-2}d_t^{a-b+\beta}e^{-(2k+2+1-\beta)\frac{t}{2}}\lambda_t^{b-a-\beta}\ve_t^{2+j-b}d_t^{j}\\
&=d_t^{\beta-\alpha}\delta_t^{a-b+\alpha}e^{-(2k+2+1-\beta)\frac{t}{2}}\lambda_t^{b-a-\beta}+
d_t^{\beta-\alpha}\delta_t^{a-b}e^{-(2k+2+1-\beta)\frac{t}{2}}\lambda_t^{b-a-\beta}\\
&=d_t^{\beta-\alpha}\lambda_t^{\alpha-\beta}e^{-(a-b+\alpha+2k+2+1-\beta)\frac{t}{2}}
+d_t^{\beta-\alpha}e^{-(a-b+2k+2+1-\beta)\frac{t}{2}}\lambda_t^{-\beta}\\
&=o(\ve_t^{\alpha-\beta}),
\end{split}\end{equation}
since $b\leq a+2k+2$. This completes the inductive proof of \eqref{agone}, and hence this also completes the proof of \eqref{superagognata} and of \eqref{agognata}.

\subsubsection{Killing the contribution from $\ti{\eta}^\diamond_t$}\label{jjj2}
The first claim is that
\begin{equation}\label{dq1}
d_t^{-j-\alpha}[\D^j\ti{\eta}^\diamond_t]_{C^\beta(\ti{B}_{O(\ve_t)},\ti{g}_t)}\leq Cd_t^{-j-\alpha}[\D^j(\hat{\eta}^\diamond_t+\hat{\eta}^\circ_t+\hat{\eta}_{t,j,k})]_{C^\beta(\ti{B}_{O(\ve_t)},\ti{g}_t)}+o(\ve_t^{\alpha-\beta}).
\end{equation}
To prove this, we start by recalling that
\begin{equation}\label{decomposto}
\ti{\eta}_t=\ti{\eta}^\diamond_t+\ti{\eta}^\ddagger_t+\ti{\eta}^\dagger_t+\ti{\eta}^\circ_t+\ti{\eta}_{t,j,k},\end{equation}
and by taking \eqref{fedeliallalinea} on $\hat{B}_{\delta_t}$ (which translates to $\ti{B}_{\ve_t}$ in the tilde picture) we bound the term ($0\leq b\leq j$)
\begin{equation}\label{carafa}\begin{split}
\|\D^b\hat{\eta}_t\|_{L^\infty(\hat{B}_{\delta_t},g_X)}&\leq C\|\D^b(\hat{\eta}_t-\hat{\eta}^\dagger_t)\|_{L^\infty(\hat{B}_{\delta_t},\hat{g}_t)}
+\|\D^b\hat{\eta}^\dagger_t\|_{L^\infty(\hat{B}_{\delta_t},g_X)}\\
&\leq C\|\D^b(\hat{\eta}^\diamond_t+\hat{\eta}^\ddagger_t+\hat{\eta}^\circ_t+\hat{\eta}_{t,j,k})\|_{L^\infty(\hat{B}_{\delta_t},\hat{g}_t)}+C\delta_t^2e^{\frac{-1-\alpha}{2}\frac{\alpha}{j+\alpha}t}\\
&=O(1),
\end{split}
\end{equation}
using \eqref{crocefesso}, \eqref{crocifisso8}, \eqref{crocifisso9}, \eqref{crocifisso3ghost} and \eqref{crocifisso4ter} to bound all the pieces. Using this, we can transfer \eqref{fedeliallalinea} to the tilde picture and multiply it by $d_t^{\beta-\alpha}$ to get
\begin{equation}\label{fedeliz}\begin{split}
d_t^{-j-\alpha}[\D^j\ti{\eta}^\diamond_t]_{C^\beta(\ti{B}_{O(\ve_t)},\ti{g}_t)}&=d_t^{-j-\alpha}[\D^j\ti{\gamma}_{t,0}]_{C^\beta(\ti{B}_{O(\ve_t)},\ti{g}_t)}\\
&\leq Cd_t^{-j-\alpha}[\D^j_{\mathbf{b\cdots b}}\ti{\eta}_t]_{C^\beta_{\rm base}(\ti{B}_{O(\ve_t)},\ti{g}_t)}+Cd_t^{\beta-\alpha}e^{-(1-\beta)\frac{t}{2}}\lambda_t^{-\beta}\\
&= Cd_t^{-j-\alpha}[\D^j_{\mathbf{b\cdots b}}\ti{\eta}_t]_{C^\beta_{\rm base}(\ti{B}_{O(\ve_t)},\ti{g}_t)}+o(\ve_t^{\alpha-\beta}),
\end{split}
\end{equation}
and bringing in \eqref{bstrd2} with $a=j$ completes the proof of \eqref{dq1}.

We then insert \eqref{osondesto2} with $a=j$ into \eqref{dq1} and obtain
\begin{equation}\label{osonfesso}
d_t^{-j-\alpha}[\D^j\ti{\eta}^\diamond_t]_{C^\beta(\ti{B}_{O(\ve_t)},\ti{g}_t)}\leq C\ve_t^{\alpha-\beta}.
\end{equation}
Taking this with $\beta>\alpha$ gives an $o(1)$ bound for the $C^\beta$ seminorm of $d_t^{-j-\alpha}\ti{\eta}^\diamond_t$ on the $\ti{g}_t$-geodesic ball centered at $\ti{x}_t$ of radius $2$, and hence an $o(1)$ bound for the $C^\alpha$ seminorm on this same ball, which includes the other blowup point $\ti{x}'_t$ and thus the contribution of $\ti{\eta}^\diamond_t$ to \eqref{zumteufel163} goes to zero.

\subsubsection{Killing the contribution from $\ti{\eta}_{t,j,k}$}\label{jjj3}

Lastly, in order to obtain the final contradiction to \eqref{zumteufel163}, we must show that the contribution of $\ti{\eta}_{t,j,k}$ also goes to zero.
But as above,  \eqref{osondesto2} with $a=j$ implies in particular an $o(1)$ bound for the $C^\alpha$ seminorm of $\ti{\eta}_{t,j,k}+\ti{\eta}^\diamond_t+\ti{\eta}^\circ_t$ on  the $\ti{g}_t$-geodesic ball $\ti{B}_2(\ti{x}_t)$ centered at $\ti{x}_t$ of radius $2$, while \eqref{osonfesso} implies a similar $o(1)$ bound for the $C^\alpha$ seminorm of $\ti{\eta}^\diamond_t$ on $\ti{B}_2(\ti{x}_t)$. Thus, to conclude, it suffices to show that
\begin{equation}\label{kz0}
d_t^{-j-\alpha}[\D^j\ti{\eta}^\circ_t]_{C^\alpha(\ti{B}_2(\ti{x}_t),\ti{g}_t)}=o(1).
\end{equation}
Observe that the ball $\ti{B}_2(\ti{x}_t)$ is contained in what we denote by $\ti{B}_S$ for any fixed $S$ sufficiently large (since by definition this is the product of a Euclidean ball of radius $S$ in the base times $Y$).
Then recall that from Lemma \ref{Gstructure} we have
\begin{equation}\label{kleinearschloch}
\ti{\eta}^\circ_t = i\partial\overline\partial\sum_{i=2}^j\sum_{p=1}^{N_{i,k}}\sum_{\iota=0}^{2k}\sum_{\ell=\lceil \frac{\iota}{2} \rceil}^{k} e^{-\left(\ell-\frac{\iota}{2}\right) t}\ve_t^{\iota}(\ti{\Phi}_{\iota,\ell}(\ti{G}_{i,p,k})\circledast \D^\iota \ti{A}^*_{t,i,p,k}),
\end{equation}
and the bounds from \eqref{coveted}, \eqref{coveted2a}
\begin{equation}\label{sommario}
d_t^{-\iota+2}\|\D^\iota\ti{A}^*_{t,i,p,k}\|_{L^\infty(\ti{B}_{\ve_t},\ti{g}_t)}\leq C\delta_t^{j+2+\alpha-\iota},
\end{equation}
for $0\leq \iota\leq j+2+2k$.  On the other hand taking \eqref{superagognata} with $\beta>\alpha$ gives
\begin{equation}
d_t^{-\iota+2-\alpha}[\D^\iota\ti{A}^*_{t,i,p,k}]_{C^\beta(\ti{B}_{O(\ve_t)},\ti{g}_t)}\leq C\ve_t^{\alpha-\beta}\delta_t^{j+2-\iota}=o(\delta_t^{j+2-\iota}),
\end{equation}
for $j\leq\iota\leq j+2+2k$, and as before this implies that
\begin{equation}\label{eiosottoni}
d_t^{-\iota+2-\alpha}[\D^\iota\ti{A}^*_{t,i,p,k}]_{C^\alpha(\ti{B}_2(\ti{x}_t),\ti{g}_t)}=o(\delta_t^{j+2-\iota}).
\end{equation}
for $j\leq\iota\leq j+2+2k$.
Also, simply taking \eqref{coveted} with $S$ fixed gives
\begin{equation}\label{eiosottoni2}
d_t^{-\iota+2-\alpha}[\D^\iota\ti{A}^*_{t,i,p,k}]_{C^\alpha(\ti{B}_S,\ti{g}_t)}\leq Cd_t^{j-\iota}\delta_t^{2}S^{j-\iota}=C\delta_t^{j+2-\iota}\ve_t^{-j+\iota}S^{j-\iota}=o(\delta_t^{j+2-\iota}),
\end{equation}
for $0\leq \iota<j$. On the other hand, for the functions $\ti{\Phi}:=\ti{\Phi}_{\iota,\ell}(\ti{G}_{i,p,k})$ we have the estimates
\begin{equation}\label{mofo}
\|\D^\iota\ti{\Phi}\|_{L^\infty(\ti{B}_{\ve_t},\ti{g}_t)}\leq C\ve_t^{-\iota},
\end{equation}
\begin{equation}\label{mofo2}
[\D^\iota\ti{\Phi}]_{C^\alpha(\ti{B}_{S},\ti{g}_t)}\leq CS^{1-\alpha}\|\D^{\iota+1}\ti{\Phi}\|_{L^\infty(\ti{B}_{S},\ti{g}_t)}\leq CS^{1-\alpha}\ve_t^{-\iota-1}=o(\ve_t^{-\iota-\alpha}),
\end{equation}
for all $\iota\geq 0$, $0<\alpha<1$ and $S$ fixed, while for the complex structure $\ti{J}^\natural_t=\Theta_t^*\hat{J}^\natural_t$ the bounds \eqref{Q} transform to
\begin{equation}\label{Q!}
\|\D^\iota \ti{J}^\natural_t\|_{L^\infty(\ti{B}_{Rd_t^{-1}},\ti{g}_t)}\leq C\ve_t^{-\iota},
\end{equation}
for $R$ fixed, and so for fixed $S$ we get as in \eqref{mofo2}
\begin{equation}\label{Q!!}
[\D^\iota \ti{J}^\natural_t]_{C^\alpha(\ti{B}_{S},\ti{g}_t)}\leq CS^{1-\alpha}\|\D^{\iota+1}\ti{J}^\natural_t\|_{L^\infty(\ti{B}_{S},\ti{g}_t)}\leq CS^{1-\alpha}\ve_t^{-\iota-1}=o(\ve_t^{-\iota-\alpha}).
\end{equation}
Now we can use all the estimates \eqref{sommario}, \eqref{eiosottoni}, \eqref{eiosottoni2}, \eqref{mofo}, \eqref{mofo2}, \eqref{Q!} and \eqref{Q!!} to prove \eqref{kz0} by arguing like in the proof of \eqref{crocifisso3} as follows: write schematically
\begin{equation}\label{schematic}
\D^{j}\ddbar\left(\ti{\Phi}_{\iota,\ell}(\ti{G}_{i,p,k})\circledast \D^\iota \ti{A}^*_{t,i,p,k}\right)=\sum_{s=0}^{j+1}\sum_{i_1+i_2=s+1}(\D^{j+1-s}\ti{J}^\natural_t)\circledast\D^{i_1}\ti{\Phi}_{\iota,\ell}(\ti{G}_{i,p,k})\circledast \D^{i_2+\iota}\ti{A}^*_{t,i,p,k},
\end{equation}
and estimate
\begin{equation}\label{pet}\begin{split}
&d_t^{-j-\alpha}\ve_t^\iota[(\D^{j+1-s}\ti{J}^\natural_t)\circledast\D^{i_1}\ti{\Phi}_{\iota,\ell}(\ti{G}_{i,p,k})\circledast \D^{i_2+\iota}\ti{A}^*_{t,i,p,k}]_{C^\alpha(\ti{B}_2(\ti{x}_t),\ti{g}_t)}\\
&\leq Cd_t^{-j-\alpha}\ve_t^\iota o(\ve_t^{-j-1+s-i_1-\alpha})d_t^{i_2+\iota-2}\delta_t^{j+2+\alpha-i_2-\iota}+
Cd_t^{-j-\alpha}\ve_t^\iota \ve_t^{-j-1+s-i_1}d_t^{i_2+\iota-2+\alpha}o(\delta_t^{j+2-i_2-\iota})\\
&=o(1),\end{split}
\end{equation}
and combining \eqref{kleinearschloch}, \eqref{schematic} and \eqref{pet} proves \eqref{kz0}. This concludes Subcase A.

\subsection{Subcase B:  $\epsilon_{t} \to 1$ (without loss). }

Thanks to \eqref{utilissimo}, \eqref{crocifisso8} and \eqref{utilissimo2} together with Lemma \ref{arschloch}  (using also Remark \ref{archiloco} in order to compare the mildly varying $C^{j,\alpha}_{\rm loc}$ topologies) we are now able to say that
$d_t^{-j-\alpha}\tilde\eta_{t,j,k}, d_t^{-j-\alpha}\tilde\eta_t^{\diamond}$ and $d_t^{-j-\alpha}\tilde\eta_t^{\circ}$ converge in the topology of $C$\begin{small}$^{j,\beta}_{\rm loc}$\end{small}$(\C^m\times Y)$ for every $\beta<\alpha$ to limiting $2$-forms $\tilde\eta_{\infty,j,k}, \tilde\eta_\infty^\diamond$ and $\tilde\eta_\infty^\circ$ in $C^{j,\alpha}_{\rm loc}(\C^m \times Y)$, which are $O(r^{j+\alpha})$ at infinity, which are weakly closed (as a locally uniform limit of smooth closed forms) and of type $(1,1)$  with respect to $J_{\C^m} + J_{Y,z_\infty}$. Thanks to \eqref{crocifisso9} and \eqref{crocifisso4quater} we may assume that $\tilde{g}_t^\sharp \to g_{\C^m} + g_{Y,z_\infty}$ locally smoothly, where $g_{\C^m}$ is a constant K\"ahler metric on $\C^m$ (which equals $g_{\rm can}(0)$ when $j>0$ by \eqref{crocifisso9}, while for $j=0$ it equals $g_{\rm can}(0)$ plus the subsequential limit of the constant forms $\ti{\eta}^\ddagger_t$, which are uniformly bounded). As in \cite[Proposition 3.11]{HT2}, all of these limiting forms are $\ddbar$-exact on $\C^m\times Y$.

Thanks to \eqref{ottimo_a}, the functions $d_t^{-j-\alpha}\ti{A}^*_{t,i,p,k}$ converge in $C^{j+2+2k,\beta}_{\rm loc}(\C^m\times Y)$ to limiting functions $\ti{A}_{\infty,i,p,k}^*(z)$ from $\C^m$, while $\ti{G}_{i,p,k}$ converge locally smoothly to functions $\ti{G}_{\infty,i,p,k}(y)$ pulled back from $Y$, and since
\begin{equation}
\ti{\eta}^\circ_t=\sum_{i=2}^j\sum_{p=1}^{N_{i,k}}\ddbar\ti{\mathfrak{G}}_{t,k}(\ti{A}^*_{t,i,p,k},\ti{G}_{i,p,k})
\end{equation}
which can be expanded as in \eqref{krktk}, and it follows that
\begin{equation}\label{vanquish}
d_t^{-j-\alpha}\ti{\eta}_t^\circ\to \tilde\eta_\infty^\circ:=
\sum_{i=2}^j\sum_{p=1}^{N_{i,k}}\ddbar\ti{\mathfrak{G}}_{\infty,k}(\ti{A}^*_{\infty,i,p,k},\ti{G}_{\infty,i,p,k}),
\end{equation}
at least in the topology of $C^{0,\beta}_{\rm loc}(\C^m \times Y)$, where
\begin{equation}\label{futilis}\ti{\mathfrak{G}}_{\infty,k}(A,G)=\sum_{\ell=0}^{k}(-1)^\ell (\Delta^{\C^m})^\ell A\cdot (\Delta^Y)^{-\ell-1}G,\end{equation}
thanks to \eqref{inutilis}. In fact we will not use this explicit formula, but only the obvious fact that $\ti{\mathfrak{G}}_{\infty,k}(A,G)$ is bilinear, and in particular it vanishes when $A\equiv 0$.

Now passing to the limit in \eqref{killmenow} implies that the limit $\tr{\omega_{\C^m} + \omega_{Y,z_\infty}}{(\tilde\eta_{\infty,j,k}+\ti{\eta}^\diamond_\infty + \ti{\eta}^{\circ}_\infty)}$ of $d_t^{-j-\alpha}\tr{\ti{\omega}^\sharp_t}(\ti{\eta}_{t,j,k}+\ti{\eta}^\diamond_t+\ti{\eta}^\circ_t)$ is a polynomial in the $z$-variables of degree at most $j$, i.e.
\begin{equation}\label{quamolim}
\tr{\omega_{\C^m} + \omega_{Y,z_\infty}}{(\tilde\eta_{\infty,j,k}+\ti{\eta}^\diamond_\infty + \ti{\eta}^{\circ}_\infty)}=\sum_{|\gamma|\leq j}c_\gamma(y)z^\gamma=:\mathcal{F},
\end{equation}
for some functions $c_\gamma$ on $Y$.
Furthermore, thanks to \eqref{satanas} and \eqref{cacanew4}, we see that the $C^0_{\rm loc}$ limit of
\begin{equation}\label{totalshit}
d_t^{-j-\alpha}\left(c_t \frac{\ti{\omega}_{\rm can}^m\wedge(\ve_t^2\Theta_t^*\Psi_t^*\omega_F)^n}{(\tilde\omega_t^\sharp)^{m+n}} - 1\right),
\end{equation}
exists and is equal to the limit of
$d_t^{-j-\alpha}\tr{\tilde\omega_t^\sharp}{(\tilde\eta_{t,j,k}+\tilde\eta_t^\diamond+\tilde\eta_t^\circ)}$, which is $\mathcal{F}$ from \eqref{quamolim}. We can then apply the Selection Theorem \ref{ghost} (as mentioned earlier, the functions $\hat{A}^\sharp_{t,i,p,k}$ satisfy the bounds \eqref{linftyyy0} with $\alpha_0=\frac{\alpha(1+\alpha)}{j+\alpha}$ thanks to \eqref{prechecazzo}) and see that $\mathcal{F}$ is also equal to the limit of
\begin{equation}
\delta_t^{-j-\alpha}\Sigma_t^*\left(f_{t,0}+\sum_{i=2}^{j}\sum_{p=1}^{N_{i,k}} f_{t,i,p}G_{i,p,k}\right),
\end{equation}
as in \eqref{stronza}. Thus, for any function $G$ on $\C^m\times Y$ which is fiberwise $L^2$ orthogonal to the span of the functions $\{\ti{G}_{\infty,i,p,k}\}_{2\leq i\leq j,1\leq p\leq N_{i,k}}$ together with the constants, and for any $z\in\C^m$ we have
\begin{equation}
\int_{\{z\}\times Y}\mathcal{F}(z,y) G(z,y) \omega_{Y,z_\infty}^n(y)=0,
\end{equation}
which implies that we can write
\begin{equation}
\mathcal{F}(z,y)=g_{0}(z)+\sum_{i=2}^j\sum_{p=1}^{N_{i,k}}g_{i,p,k}(z)\ti{G}_{\infty,i,p,k}(y),
\end{equation}
for some functions $g_0,g_{i,p,k}$ on $\C^m$.
Since $\mathcal{F}$ is a polynomial in $z$ of degree at most $j$, by fiberwise $L^2$ projecting $\mathcal{F}$ onto each $\ti{G}_{\infty,i,p,k}$ and onto the constants we see that the coefficients $g_0(z),g_{i,p,k}(z)$ are also polynomials of degree at most $j$. Writing $g_0(z)=\sum_{|\gamma|\leq j}g_{0,\gamma}z^\gamma$ and $g_{i,p,k}(z)=\sum_{|\gamma|\leq j}g_{i,p,k,\gamma} z^\gamma$ (with $g_{0,\gamma},g_{i,p,k,\gamma}\in\mathbb{C}$), we see that
\begin{equation}\label{quamolim2}
\mathcal{F}(z,y)=\sum_{|\gamma|\leq j}\left(g_{0,\gamma}+\sum_{i=2}^j\sum_{p=1}^{N_{i,k}}g_{i,p,k,\gamma}\ti{G}_{\infty,i,p,k}(y)\right) z^\alpha,
\end{equation}
i.e. $\mathcal{F}$ is a linear combination of the functions $\ti{G}_{\infty,i,p,k}$ together with the constant $1$, with coefficients that are polynomials in $z$ of degree at most $j$. For convenience, we can rewrite this as
\begin{equation}\label{quamolim3}
\tr{\omega_{\C^m} + \omega_{Y,z_\infty}}{(\tilde\eta_{\infty,j,k}+\ti{\eta}^\diamond_\infty + \ti{\eta}^{\circ}_\infty)}=K_0(z)+\sum_q K_q(z)H_q(y),\end{equation}
where $K_0(z),K_q(z)$ are polynomials of degree at most $j$, and $H_q(y)$ are functions pulled back from the fiber $Y$ that lie in the span of the functions $\ti{G}_{\infty,i,p,k}, 2\leq i\leq j, 1\leq p\leq N_{i,k}$.

Let us now go back to the definition in \eqref{sechs}
\begin{equation}\label{tamiucunde}
d_t^{-j-\alpha}\ti{A}_{t,i,p,k}=d_t^{-j-\alpha}\ti{P}_{t,i,p,k}(\ti{\eta}_{t,i-1,k})
\end{equation}
where recall that
\begin{equation}
\tilde{P}_{t,i,p,k}(\alpha) = n({\rm pr}_B)_*(\ti{G}_{i,p,k}\alpha\wedge\Theta_t^*\Psi_t^*\omega_F^{n-1}) + \ve_t^2{\rm tr}^{\ti{\omega}_{\rm can}}({\rm pr}_B)_*(\ti{G}_{i,p,k}\alpha\wedge \Theta_t^*\Psi_t^*\omega_F^n).
\end{equation}
By definition, we can write
\begin{equation}\ti{\eta}_{t,i-1,k}=
\tilde\eta_{t,j,k}+\ti{\eta}^\diamond_t + \ti{\eta}^\circ_t+\ti{\eta}^\dagger_t+\ti{\eta}^\ddagger_t-\ti{\gamma}_{t,0}-\sum_{q=2}^{i-1}\ti{\gamma}_{t,q,k},\end{equation}
and since $\ti{P}_{t,i,p,k}$ annihilates any $(1,1)$ form from the base by \eqref{frombase}, we have
\begin{equation}\label{drek}
d_t^{-j-\alpha}\ti{A}_{t,i,p,k}=d_t^{-j-\alpha}\ti{P}_{t,i,p,k}(\tilde\eta_{t,j,k}+\ti{\eta}^\diamond_t + \ti{\eta}^\circ_t)
+d_t^{-j-\alpha}\ti{P}_{t,i,p,k}(\ti{\eta}^\dagger_t)-d_t^{-j-\alpha}\sum_{q=2}^{i-1}\ti{P}_{t,i,p,k}(\ti{\gamma}_{t,q,k}).\end{equation}
Now recall that
\begin{equation}\ti{\eta}^\dagger_t=\sum_{a=2}^j\sum_{b=1}^{N_{a,k}}\ddbar\ti{\mathfrak{G}}_{t,k}(\ti{A}^\sharp_{t,a,b,k},\ti{G}_{a,b,k}),\end{equation}
\begin{equation}\ti{\gamma}_{t,q,k}=\sum_{c=1}^{N_{q,k}}\ddbar\ti{\mathfrak{G}}_{t,k}(\ti{A}_{t,q,c,k},\ti{G}_{q,c,k}),\end{equation}
we can employ Lemma \ref{paraminatrix} (transferred to the tilde picture) and get
\begin{equation}\label{drek2}\begin{split}
\sum_{a=2}^j\sum_{b=1}^{N_{a,k}}\ti{P}_{t,i,p,k}(\ddbar(\ti{\mathfrak{G}}_{t,k}(\ti{A}^\sharp_{t,a,b,k},\ti{G}_{a,b,k}))) &= \sum_{a=2}^j\sum_{b=1}^{N_{a,k}}\ti{A}^\sharp_{t,a,b,k}\int_{\{z\}\times Y} \ti{G}_{i,p,k}\ti{G}_{a,b,k}\,\Theta_t^*\Psi_t^*\omega_F^n \\
&+  \sum_{a=2}^j\sum_{b=1}^{N_{a,k}}\sum_{\iota=0}^{j} e^{-(2k+2-\iota)\frac{t}{2}} \ti{\Phi}_{\iota,i,p,k}(\ti{G}_{a,b,k}) \circledast \D^{\iota}\ti{A}^\sharp_{t,a,b,k}\\
&=\ti{A}^\sharp_{t,i,p,k} +  \sum_{a=2}^j\sum_{b=1}^{N_{a,k}}\sum_{\iota=0}^{j}  e^{-(2k+2-\iota)\frac{t}{2}}\ti{\Phi}_{\iota,i,p,k}(\ti{G}_{a,b,k}) \circledast \D^{\iota}\ti{A}^\sharp_{t,a,b,k},
\end{split}\end{equation}
using here crucially that $\ti{A}^\sharp_{t,a,b,k}$ is a polynomial of degree at most $j$, and that the $\ti{G}_{i,p,k}$ are fiberwise orthonormal.
From \eqref{checazzo} we can in particular crudely bound
\begin{equation}\label{drek3}d_t^{-j-\alpha}e^{-(2k+2-\iota)\frac{t}{2}}\left\|\ti{\Phi}_{\iota,i,p,k}(\ti{G}_{a,b,k}) \circledast \D^{\iota}\ti{A}^\sharp_{t,a,b,k}\right\|_{L^\infty(\ti{B}_S,\ti{g}_t)}\leq
C d_t^{-j-\alpha}e^{-(2k+2-j)\frac{t}{2}}=o(1),\end{equation}
for any fixed $S$, since by assumption $j\leq k$. On the other hand, since $q<i$, we have
\begin{equation}\begin{split}
\sum_{c=1}^{N_{q,k}}\ti{P}_{t,i,p,k}(\ddbar(\ti{\mathfrak{G}}_{t,k}(\ti{A}_{t,q,c,k},\ti{G}_{q,c,k}))) &=  \sum_{c=1}^{N_{q,k}}\sum_{\iota=0}^{2k+2} e^{-(2k+2-\iota)\frac{t}{2}}\ti{\Phi}_{\iota,i,p,k}(\ti{G}_{q,c,k}) \circledast \D^{\iota}\ti{A}_{t,q,c,k}\\
&=\sum_{c=1}^{N_{q,k}}\sum_{\iota=0}^{j} e^{-(2k+2-\iota)\frac{t}{2}}\ti{\Phi}_{\iota,i,p,k}(\ti{G}_{q,c,k}) \circledast \D^{\iota}\ti{A}^\sharp_{t,q,c,k}\\
&+\sum_{c=1}^{N_{q,k}}\sum_{\iota=0}^{2k+2} e^{-(2k+2-\iota)\frac{t}{2}}\ti{\Phi}_{\iota,i,p,k}(\ti{G}_{q,c,k}) \circledast \D^{\iota}\ti{A}^*_{t,q,c,k},
\end{split}\end{equation}
and we can argue exactly as above for the terms with $\ti{A}^\sharp_{t,q,c,k}$, while for the terms with $\ti{A}^*_{t,q,c,k}$ we use \eqref{ottimo_a} which gives $d_t^{-j-\alpha}\|\D^\iota\ti{A}^*_{t,i,p,k}\|_{L^\infty(\ti{B}_S,\ti{g}_t)}\leq C$ for all $0\leq \iota\leq 2+j+2k$ and fixed $S$, and we see that
\begin{equation}\label{drek4}d_t^{-j-\alpha}\sum_{c=1}^{N_{q,k}}\ti{P}_{t,i,p,k}(\ddbar(\ti{\mathfrak{G}}_{t,k}(\ti{A}_{t,q,c,k},\ti{G}_{q,c,k}))) =
d_t^{-j-\alpha}\sum_{c=1}^{N_{q,k}} \ti{\Phi}_{2k+2,i,p,k}(\ti{G}_{q,c,k}) \circledast \D^{2k+2}\ti{A}^*_{t,q,c,k}+o(1),\end{equation}
locally uniformly, and combining \eqref{drek}, \eqref{drek2}, \eqref{drek3} and \eqref{drek4} we obtain
\begin{equation}\label{apery}\begin{split}
d_t^{-j-\alpha}\ti{A}_{t,i,p,k}&=d_t^{-j-\alpha}\ti{P}_{t,i,p,k}(\tilde\eta_{t,j,k}+\ti{\eta}^\diamond_t + \ti{\eta}^\circ_t)
+d_t^{-j-\alpha}\ti{A}^\sharp_{t,i,p,k}\\
&-d_t^{-j-\alpha}\sum_{q=2}^{i-1}\sum_{c=1}^{N_{q,k}} \ti{\Phi}_{2k+2,i,p,k}(\ti{G}_{q,c,k}) \circledast \D^{2k+2}\ti{A}^*_{t,q,c,k}+o(1).\end{split}\end{equation}

We now want to pass \eqref{apery} to the limit as $t\to\infty$ (in the $C^0_{\rm loc}$ topology say). As mentioned earlier, we have
$d_t^{-j-\alpha}\ti{A}^*_{t,i,p,k}\to \ti{A}^*_{\infty,i,p,k}$, $d_t^{-j-\alpha}\ti{\eta}_t^\circ \to \ti{\eta}_\infty^\circ, d_t^{-j-\alpha}\ti{\eta}_t^\diamond \to \ti{\eta}_\infty^\diamond, d_t^{-j-\alpha}\ti{\eta}_{t,j,k}\to \tilde\eta_{\infty,j,k}$ and $\ti{G}_{i,p,k}\to\ti{G}_{\infty,i,p,k}$, so that $d_t^{-j-\alpha}\ti{P}_{t,i,p,k}(\tilde\eta_{t,j,k}+\ti{\eta}^\diamond_t + \ti{\eta}^\circ_t)$ converges to
\begin{equation}\begin{split}
&\int_{\{z_\infty\}\times Y}\ti{G}_{\infty,i,p,k}
\tr{\omega_{Y,z_\infty}}{(\tilde\eta_{\infty,j,k}+\ti{\eta}^\diamond_\infty+\ti{\eta}_\infty^\circ)}\omega_{Y,z_\infty}^n+
{\rm tr}^{\omega_{\C^m}} ({\rm pr}_B)_*(\ti{G}_{\infty,i,p,k}(\tilde\eta_{\infty,j,k}+\ti{\eta}^\diamond_\infty+\ti{\eta}_\infty^\circ) \wedge \omega_{Y,z_\infty}^n)\\
&=\int_{\{z_\infty\}\times Y}\ti{G}_{\infty,i,p,k}
\tr{\omega_{Y,z_\infty}}{(\tilde\eta_{\infty,j,k}+\ti{\eta}^\diamond_\infty+\ti{\eta}_\infty^\circ)}\omega_{Y,z_\infty}^n+
\int_{\{z_\infty\}\times Y}\ti{G}_{\infty,i,p,k}{\rm tr}^{\omega_{\C^m}}(\tilde\eta_{\infty,j,k}+\ti{\eta}^\diamond_\infty+\ti{\eta}_\infty^\circ)\omega_{Y,z_\infty}^n\\
&=\int_{\{z_\infty\}\times Y}\ti{G}_{\infty,i,p,k}
\tr{\omega_{\C^m}+\omega_{Y,z_\infty}}{(\tilde\eta_{\infty,j,k}+\ti{\eta}^\diamond_\infty+\ti{\eta}_\infty^\circ)}\omega_{Y,z_\infty}^n,
\end{split}
\end{equation}
and so \eqref{apery} limits to
\begin{equation}\label{zetaof5}\begin{split}
\ti{A}^*_{\infty,i,p,k}&=\int_{\{z_\infty\}\times Y}\ti{G}_{\infty,i,p,k}
\tr{\omega_{\C^m}+\omega_{Y,z_\infty}}{(\tilde\eta_{\infty,j,k}+\ti{\eta}^\diamond_\infty+\ti{\eta}_\infty^\circ)}\omega_{Y,z_\infty}^n\\
&-\sum_{q=2}^{i-1}\sum_{c=1}^{N_{q,k}} \ti{\Phi}_{2k+2,i,p,k}(\ti{G}_{\infty,q,c,k}) \circledast \D^{2k+2}\ti{A}^*_{\infty,q,c,k}.
\end{split}
\end{equation}
We can then plug in \eqref{quamolim3} and we see that
\begin{equation}\label{aperi}
\ti{A}^*_{\infty,i,p,k}=Q_{i,p}(z)-\sum_{q=2}^{i-1}\sum_{c=1}^{N_{q,k}} \ti{\Phi}_{2k+2,i,p,k}(\ti{G}_{\infty,q,c,k}) \circledast \D^{2k+2}\ti{A}^*_{\infty,q,c,k},
\end{equation}
where $Q_{i,p}(z)$ is a polynomial on $\C^m$ of degree at most $j$, and using this we can show by induction on $i$ that $\ti{A}^*_{\infty,i,p,k}=0$ for all $i,p$. Indeed, in the base case of the induction $i=2$ the last term in \eqref{aperi} is not present, and so $\ti{A}^*_{\infty,2,p,k}$ is a polynomial of degree at most $j$, but since it also has vanishing $j$-jet at origin (recall that we have translated the $\mathbb{C}^m$ factor so that $\ti{x}_t=(0,\ti{y}_t)$), it must be identically zero. The induction step is then exactly the same.
Recalling \eqref{futilis} (or simply the remark after it), it then follows that $\tilde{\eta}_\infty^\circ = 0$.

It also follows that the contribution
\begin{equation}d_t^{-j-\alpha}\sum_{i=2}^j\sum_{p=1}^{N_{i,k}}\sum_{\iota = -2}^{2k}\ve_t^\iota\frac{|\D^{j+2+\iota}\ti{A}^*_{t,i,p,k}(\ti{x}_t) -  \P_{\ti{x}'_t\ti{x}_t}(\D^{j+2+\iota}\ti{A}^*_{t,i,p,k}(\ti{x}'_t))|_{\ti{g}_t(\ti{x}_t)}}{d^{\ti{g}_t}(\ti{x}_t,\ti{x}'_t)^\alpha}\end{equation}
to \eqref{zumteufel163} goes to zero.

Going back to \eqref{zetaof5} this means that
\begin{equation}\label{zetaof9}
\int_{\{z_\infty\}\times Y}\ti{G}_{\infty,i,p,k}
\tr{\omega_{\C^m}+\omega_{Y,z_\infty}}{(\tilde\eta_{\infty,j,k}+\ti{\eta}^\diamond_\infty)}\omega_{Y,z_\infty}^n=0.
\end{equation}

Next, as mentioned earlier we can write $\tilde\eta_{\infty,j,k} = i\partial\overline\partial \tilde\varphi_\infty$ with $\tilde\varphi_\infty = O(r^{j+2+\alpha})$ and $\underline{\tilde\varphi_\infty} = 0$, and $\ti{\eta}^\diamond_\infty=\ddbar \ti{A}_{\infty,0}$ where $\ti{A}_{\infty,0}$ is a function from the base which is $O(r^{j+2+\alpha})$.

We go back to \eqref{quamolim3}, which we can write as
\begin{equation}\tr{\omega_{\C^m} + \omega_{Y,z_\infty}}{(\tilde\eta_{\infty,j,k}+\ti{\eta}^\diamond_\infty)}=\Delta^{\omega_{\C^m} + \omega_{Y,z_\infty}}(\ti{\vp}_\infty+\ti{A}_{\infty,0})=K_0(z)+\sum_q K_q(z)H_q(y),\end{equation}
and letting $(\Delta^{\C^m})^{-1}K_0(z)$ denote any degree $j+2$ polynomial on $\C^m$ with Laplacian equal to $K_0(z)$, which clearly exists, we can write \begin{equation}\label{promisisti}
\Delta^{\omega_{\C^m} + \omega_{Y,z_\infty}}(\ti{\vp}_\infty+\ti{A}_{\infty,0}-(\Delta^{\C^m})^{-1}K_0)
=\sum_j K_j(z)H_j(y).
\end{equation}
We thus see that the function on the LHS belongs to the fiberwise span of the $\ti{G}_{\infty,i,p,k}$'s, and since by \eqref{zetaof9} it is also fiberwise $L^2$ orthogonal to such span, we conclude that
\begin{equation}\Delta^{\omega_{\C^m} + \omega_{Y,z_\infty}}(\ti{\vp}_\infty+\ti{A}_{\infty,0}-(\Delta^{\C^m})^{-1}K_0)=0.\end{equation}
So the function $\ti{\vp}_\infty+\ti{A}_{\infty,0}-(\Delta^{\C^m})^{-1}K_0$ is actually a harmonic function on $\C^m \times Y$, $O(r^{j+2+\alpha})$ by construction.
Thanks to \cite[Proposition 3.12]{HT2} this implies that $\ti{\vp}_\infty+\ti{A}_{\infty,0}-(\Delta^{\C^m})^{-1}K_0$ is a harmonic polynomial of degree at most $j+2$ on $\C^m$. Absorbing this polynomial into $(\Delta^{\C^m})^{-1}K_0(z)$ (which by construction was only unique modulo harmonic polynomials anyway) we obtain the identity
\begin{equation}\label{etseminieius}
\tilde{\varphi}_\infty+ \tilde{A}_{\infty,0}(z)= (\Delta^{\C^m})^{-1}K_0(z).
\end{equation}
Taking the fiber average of this identity immediately tells us that $\ti{A}_{\infty,0}$ is a polynomial of degree at most $j+2$.
This implies that the contribution
\begin{equation}d_t^{-j-\alpha}\frac{|\D^j\ti{\eta}^\diamond_t(\ti{x}_t) -  \P_{\ti{x}'_t\ti{x}_t}(\D^j\ti{\eta}^\diamond_t(\ti{x}'_t))|_{\ti{g}_t(\ti{x}_t)}}{d^{\ti{g}_t}(\ti{x}_t,\ti{x}'_t)^\alpha}\end{equation}
to \eqref{zumteufel163} also goes to zero.

Then going back to \eqref{etseminieius} we see also that $\tilde{\varphi}_\infty$ is equal to the pullback of a polynomial from the base, but since it also has fiberwise average zero, we conclude that $\tilde{\varphi}_\infty=0$, and hence $\ti{\eta}_{\infty,j,k}=0.$ This implies that the contribution
\begin{equation}d_t^{-j-\alpha}\frac{|\D^j\ti{\eta}_{t,j,k}(\ti{x}_t) -  \P_{\ti{x}'_t\ti{x}_t}(\D^j\ti{\eta}_{t,j,k}(\ti{x}'_t))|_{\ti{g}_t(\ti{x}_t)}}{d^{\ti{g}_t}(\ti{x}_t,\ti{x}'_t)^\alpha}\end{equation}
to \eqref{zumteufel163} also goes to zero, thus giving a contradiction.

\subsection{Subcase C: $\epsilon_t \to 0$. }

It follows from \eqref{utilissimo}, \eqref{crocifisso8}, using also Remark \ref{archiloco}, that $d_t^{-j-\alpha}\tilde\eta_{t,j,k}$ and $d_t^{-j-\alpha}\tilde\eta_t^{\diamond}$
converge in the topology of $C$\begin{small}$^{j,\beta}_{\rm loc}$\end{small}$(\C^m\times Y)$ for every $\beta<\alpha$ to limiting $2$-forms $\tilde\eta_{\infty,j,k}$ and $\tilde\eta_\infty^\diamond$ in $C^{j,\alpha}_{\rm loc}(\C^m \times Y)$, which are weakly closed (as a locally uniform limit of smooth closed forms) and of type $(1,1)$  with respect to $J_{\C^m} + J_{Y,z_\infty}$. We have $\tilde{g}_t^\sharp \to g_{\C^m}$ locally smoothly thanks to \eqref{crocifisso9}, \eqref{crocifisso4quater} and the fact that clearly $\ve_t^2\Theta_t^*\Psi_t^*\omega_F\to 0$ locally smoothly, where $g_{\C^m}$ is a constant K\"ahler metric on $\C^m$ (which as in subcase B equals $g_{\rm can}(0)$ when $j>0$, while for $j=0$ it equals $g_{\rm can}(0)$ plus the subsequential limit of the constant forms $\ti{\eta}^\ddagger_t$).
Similarly, we can pass $d_t^{-j-\alpha}\ti{\eta}^\circ_t$ to a limit $\ti{\eta}^\circ_\infty$ since \eqref{crocifisso3ter} give us a uniform $C^{j,\alpha}_{\rm loc}$ bound (with respect to a fixed metric rather than $\ti{g}_t$).

Since $\ti{\eta}^\diamond_t$ is the pullback of a form from the base, the same is true for $\ti{\eta}^\diamond_\infty$, which by \eqref{whoknows113} is $O(r^{j+\alpha})$ at infinity, and it is also $\de\db$-exact since it is weakly closed on $\C^m$.
From \eqref{zumteufel162} and \eqref{zumteufel163} we see that
\begin{equation}\label{topogigio2}\begin{split}
&d_t^{-j-\alpha}\Bigg(\sum_{i=2}^j\sum_{p=1}^{N_{i,k}}\sum_{\iota = -2}^{2k}\ve_t^{\iota}|\D^{j+2+\iota}\ti{A}^*_{t,i,p,k}(\ti{x}_t) -  \P_{\ti{x}'_t\ti{x}_t}(\D^{j+2+\iota}\ti{A}^*_{t,i,p,k}(\ti{x}'_t))|_{\ti{g}_t(\ti{x}_t)}\\
&+|\D^j\ti{\eta}^\diamond_t(\ti{x}_t) -  \P_{\ti{x}'_t\ti{x}_t}(\D^j\ti{\eta}^\diamond_t(\ti{x}'_t))|_{\ti{g}_t(\ti{x}_t)}
+|\D^j\ti{\eta}_{t,j,k}(\ti{x}_t) -  \P_{\ti{x}'_t\ti{x}_t}(\D^j\ti{\eta}_{t,j,k}(\ti{x}'_t))|_{\ti{g}_t(\ti{x}_t)}\Bigg)
=1.\end{split}
\end{equation}
However, recall that thanks to \eqref{utilissimo} we have for any fixed $R$
\begin{equation}d_t^{-j-\alpha}\|\D^\iota\ti{\eta}_{t,j,k}\|_{L^\infty(\ti{B}_{Rd_t^{-1}},\ti{g}_t)}\to 0,\quad 0\leq \iota\leq j,\end{equation}
and so the contribution of $d_t^{-j-\alpha}\ti{\eta}_{t,j,k}$ to \eqref{topogigio2} is negligible. Since $\ti{\omega}^\sharp_t\to \omega_{\C^m}$ locally smoothly, it also follows that the contribution of $d_t^{-j-\alpha}\ti{\eta}_{t,j,k}$ to \eqref{killmenow} is negligible too, which implies that
\begin{equation}\label{bubbi}
d_t^{-j-\alpha}[\D^j_{\mathbf{b\cdots b}} (\mathrm{tr}^{\ti{\omega}^\sharp_t}(\ti{\eta}^\circ_t+\ti{\eta}^\diamond_t))]_{C^\alpha_{\rm base}(\ti{B}_R,\ti{g}_t)}=o(1),
\end{equation}
for all $R$ fixed.

Furthermore, thanks to \eqref{ottimo}, the contributions of all the terms of the sum in \eqref{topogigio2} with $\iota>-2$ also go to zero, and so we get
\begin{equation}\label{topogigio3}\begin{split}
&d_t^{-j-\alpha}\Bigg(\sum_{i=2}^j\sum_{p=1}^{N_{i,k}}\ve_t^{-2}|\D^j\ti{A}^*_{t,i,p,k}(\ti{x}_t) -  \P_{\ti{x}'_t\ti{x}_t}(\D^j\ti{A}^*_{t,i,p,k}(\ti{x}'_t))|_{\ti{g}_t(\ti{x}_t)}
+|\D^j\ti{\eta}^\diamond_t(\ti{x}_t) -  \P_{\ti{x}'_t\ti{x}_t}(\D^j\ti{\eta}^\diamond_t(\ti{x}'_t))|_{\ti{g}_t(\ti{x}_t)}\Bigg)\\
&=1+o(1).
\end{split}
\end{equation}
We can write
\begin{equation}\label{extremus}d_t^{-j-\alpha}\D^j_{\mathbf{b\cdots b}}(\mathrm{tr}^{\ti{\omega}^\sharp_t}(\ti{\eta}^\circ_t+\ti{\eta}^\diamond_t))=
d_t^{-j-\alpha}\D^j_{\mathbf{b\cdots b}}(\mathrm{tr}^{\ti{\omega}^\sharp_t}\ti{\eta}^\diamond_t)+
(m+n)d_t^{-j-\alpha}\D^j_{\mathbf{b\cdots b}}\frac{\ti{\eta}^\circ_t\wedge(\ti{\omega}^\sharp_t)^{m+n-1}}{(\ti{\omega}^\sharp_t)^{m+n}}.\end{equation}
For the first term we claim that
\begin{equation}
d_t^{-j-\alpha}\D^j_{\mathbf{b\cdots b}}(\mathrm{tr}^{\ti{\omega}^\sharp_t}\ti{\eta}^\diamond_t)=
d_t^{-j-\alpha}\D^j_{\mathbf{b\cdots b}}(\mathrm{tr}^{{\omega}_{\C^m}}\ti{\eta}^\diamond_t)+o(1),
\end{equation}
uniformly on $\ti{B}_R$, which follows by observing that $\mathrm{tr}^{\ti{\omega}^\sharp_t}\ti{\eta}^\diamond_t=\mathrm{tr}^{(\ti{\omega}^\sharp_t)_{\mathbf{bb}}}\ti{\eta}^\diamond_t$ since $\ti{\eta}^\diamond_t$ is pulled back from the base, and using the bounds \eqref{crocifisso8} for $d_t^{-j-\alpha}\ti{\eta}^\diamond_t$ together with the aforementioned fact that $(\ti{\omega}^\sharp_t)_{\mathbf{bb}}-\omega_{\C^m}\to 0$ locally smoothly. Importantly, $\D^j_{\mathbf{b\cdots b}}(\mathrm{tr}^{\omega_{\C^m}}\ti{\eta}^\diamond_t)$ is the pullback of a function from the base.
Ignoring combinatorial constants, we then schematically expand \eqref{extremus} as
\begin{equation}\label{extrema}\begin{split}
d_t^{-j-\alpha}\D^j(\mathrm{tr}^{\omega_{\C^m}}\ti{\eta}^\diamond_t)+
d_t^{-j-\alpha}\sum_{p+q+r=j}(\D^p_{\mathbf{b\cdots b}}\ti{\eta}^\circ_t)\D^q_{\mathbf{b\cdots b}}((\ti{\omega}^\sharp_t)^{m+n-1})\D^r_{\mathbf{b\cdots b}}(((\ti{\omega}^\sharp_t)^{m+n})^{-1})+o(1),
\end{split}\end{equation}
and from
\eqref{crocifisso3bis}, \eqref{crocifisso5bis} and \eqref{kombinacja4} we see that all terms in the sum in \eqref{extrema} are uniformly $o(1)$ except when $p=j,q=r=0$, and so \eqref{extrema} equals
\begin{equation}d_t^{-j-\alpha}\left(\D^j (\mathrm{tr}^{\omega_{\C^m}}\ti{\eta}^\diamond_t)+(m+n)\frac{(\D^j_{\mathbf{b\cdots b}}\ti{\eta}^\circ_t)\wedge(\ti{\omega}^\sharp_t)^{m+n-1}}{(\ti{\omega}^\sharp_t)^{m+n}}\right)+o(1),\end{equation}
uniformly on $\ti{B}_R$. For the second term, we use \eqref{gimme} which gives
\begin{equation}d_t^{-j-\alpha}\D^j_{\mathbf{b\cdots b}}\ti{\eta}^\circ_t
=d_t^{-j-\alpha}\sum_{i=2}^j\sum_{p=1}^{N_{i,k}}(\ddbar(\Delta^{\Theta_t^*\Psi_t^*\omega_F|_{\{\cdot\}\times Y}})^{-1}\ti{G}_{i,p,k})_{\mathbf{ff}} \D^j\ti{A}^*_{t,i,p,k}+o(1),\end{equation}
uniformly on $\ti{B}_R$, and so
\begin{equation}\label{excellent}\begin{split}
&d_t^{-j-\alpha}\D^j_{\mathbf{b\cdots b}}(\mathrm{tr}^{\ti{\omega}^\sharp_t}(\ti{\eta}^\circ_t+\ti{\eta}^\diamond_t))\\
&=d_t^{-j-\alpha}\left(\D^j(\mathrm{tr}^{\omega_{\C^m}}\ti{\eta}^\diamond_t)+\sum_{i=2}^j\sum_{p=1}^{N_{i,k}}(\D^j\ti{A}^*_{t,i,p,k})\mathrm{tr}^{\ti{\omega}^\sharp_t}(\ddbar(\Delta^{\Theta_t^*\Psi_t^*\omega_F|_{\{\cdot\}\times Y}})^{-1}\ti{G}_{i,p,k})_{\mathbf{ff}} \right)+o(1)\\
&=d_t^{-j-\alpha}\left(\D^j (\mathrm{tr}^{\omega_{\C^m}}\ti{\eta}^\diamond_t)+\ve_t^{-2}\sum_{i=2}^j\sum_{p=1}^{N_{i,k}}(\D^j\ti{A}^*_{t,i,p,k})\ti{G}_{i,p,k}\right)+o(1),
\end{split}\end{equation}
uniformly on $\ti{B}_R$. Here we used that $d_t^{-j-\alpha}\|\D^j\ti{A}^*_{t,i,p,k}\|_{L^\infty(\ti{B}_R,\ti{g}_t)}\leq C\ve_t^2$ by \eqref{ottimo} and $\|\ti{\eta}^\dagger_t\|_{L^\infty(\ti{B}_R,\ti{g}_t)}=o(1)$ by \eqref{crocifisso4}, so that we can exchange $\mathrm{tr}^{\ti{\omega}^\sharp_t}(\ddbar)_{\mathbf{ff}}$ with $\mathrm{tr}^{\ve_t^2\Theta_t^*\Psi_t^*\omega_F}(\ddbar)_{\mathbf{ff}}$ with only an $o(1)$ error.

Now \eqref{bubbi} says that the quantity in \eqref{excellent} is asymptotically independent of the base directions on any fixed ball $\ti{B}_R$. Taking the fiberwise average of \eqref{excellent} thus shows that $d_t^{-j-\alpha}\D^j  (\mathrm{tr}^{\omega_{\C^m}}\ti{\eta}^\diamond_t)$ is approaching a constant locally uniformly, and in the limit we obtain
\begin{equation}\label{fine}
\nabla^{j,g_\C^m}\mathrm{tr}^{g_{\C^m}}\ti{\eta}^\diamond_\infty=({\rm const}).
\end{equation}
On the other hand, taking the fiberwise $L^2$ inner product of \eqref{excellent} with each of the $\ti{G}_{\ell,q,k}$'s (which are themselves becoming asymptotically constant in the base directions) shows that $\ve_t^{-2}d_t^{-j-\alpha}\D^j \ti{A}^*_{t,i,p,k}$ is also approaching a constant locally uniformly (for all $2\leq i\leq j, 1\leq p\leq N_{i,k}$), so in particular
\begin{equation}\label{bubbi2}
\sum_{i=2}^j\sum_{p=1}^{N_{i,k}}\ve_t^{-2}d_t^{-j-\alpha}|\D^j\ti{A}^*_{t,i,p,k}(\ti{x}_t) -  \P_{\ti{x}'_t\ti{x}_t}(\D^j\ti{A}^*_{t,i,p,k}(\ti{x}'_t))|_{\ti{g}_t(\ti{x}_t)}\to 0,
\end{equation}
so the contribution from $\D^j\ti{A}^*_{t,i,p,k}$ to \eqref{topogigio3} goes to zero too, which shows that $\ti{\eta}^\diamond_\infty$ is not annihilated by $[\nabla^{j,g_\C^m} \cdot]_{C^\alpha}$.

Thus $\ti{\eta}^\diamond_\infty$ satisfies $|\nabla^{j,g_{\C^m}}\ti{\eta}^\diamond_\infty|=O(r^{\alpha})$, it has a global $\de\db$-potential of class $C^{j+2,\alpha}_{\rm loc}(\C^m)$, and satisfies \eqref{fine}, hence it can be written as $\ti{\eta}^\diamond_\infty=\ddbar\vp$ for some smooth function $\vp$ on $\C^m$ with
\begin{equation}
\nabla^{j,g_\C^m}\Delta^{g_{\C^m}}\vp=({\rm const}).
\end{equation}
It follows that $\vp=\ell+h$ where $\ell$ is a real polynomial of degree $\leq j+2$ on $\C^m$ and $h$ is a harmonic function on $\C^m$ with $|\ddbar h|=O(r^{j+\alpha}),$ and Liouville's Theorem shows that the coefficients of $\ddbar h$ are polynomials of degree at most $j$, thus contradicting the
fact that $\ti{\eta}^\diamond_\infty$ is not annihilated by $[\nabla^{j,g_\C^m} \cdot]_{C^\alpha}$.

This finally concludes the proof of \eqref{noncolla}, and hence of Theorem \ref{shutupandsuffer}.

\section{Proof of the main theorems}\label{sectmain}

For the sake of brevity, in this section all norms and seminorms will be taken on an arbitrary ball $B'\Subset B$ (or on $B'\times Y$ for tensors on the total space) which we allow to shrink slightly whenever interpolation is used, and the generic uniform constant $C$ is allowed to depend on $d(B', \de B)$.

\begin{proof}[Proof of Theorem \ref{mthmA}]
Theorem \ref{mthmA} follows quite easily from Theorem \ref{shutupandsuffer}, as follows.  Recall that on $X$ we have the Ricci-flat K\"ahler metrics $\omega^\bullet_t=f^*\omega_B+e^{-t}\omega_X+\ddbar\vp_t$, which satisfy \eqref{ma_initial}
\begin{equation}\label{maalt}
(\omega^\bullet_t)^{m+n}=\ti{c}_t e^{-nt}\omega_X^{m+n}, \quad \ti{c}_t=e^{nt}\frac{\int_X(f^*\omega_B+e^{-t}\omega_X)^{m+n}}{\int_X\omega_X^{m+n}}, \quad
\sup_X\vp_t=0.
\end{equation}
On $X\setminus S$ we let $\omega_F=\omega_X+\ddbar\rho$ be the semi-Ricci-flat form defined in \eqref{pstwlrllk}, and we define a smooth function $G$ on $X\setminus S$ by
\begin{equation}
e^G=\frac{\omega_X^{m+n}}{f^*\omega_{B}^m\wedge\omega_F^n}.
\end{equation}
One then easily checks \cite{ST,To} that $G$ is pulled back from the base, where it equals
\begin{equation}
e^G=\frac{f_*(\omega_X^{m+n})}{\omega_B^m\int_{X_b}\omega_X^n},
\end{equation}
where of course $\int_{X_b}\omega_X^n$ is independent of $b$. Furthermore, $e^G$ it is integrable on $B\setminus f(S)$ (an is even in $L^p(\omega_B^m)$ for some $p>1$) and satisfies $\int_{B\setminus f(S)}e^G\omega_B^n=\frac{\int_X\omega_X^{m+n}}{\int_{X_b}\omega_X^n}.$
We can then solve the Monge-Amp\`ere equation \cite{ST,To}
\begin{equation}\label{mabase}
\omega_{\rm can}^m=(\omega_B+\ddbar \psi_\infty)^m=\frac{\int_B\omega_B^m\int_{X_b}\omega_X^n}{\int_X\omega_X^{m+n}} e^G\omega_B^m,
\end{equation}
with $\psi_\infty$ smooth on $X\setminus S$ (and globally continuous, which we will not need) and on $X\setminus S$ define $\omega^\natural_t=f^*\omega_{\rm can}+e^{-t}\omega_F$ and $\psi_t=\vp_t-\psi_\infty-e^{-t}\rho$, so that we have
$\omega^\bullet_t=\omega^\natural_t+\ddbar\psi_t$. Thus, combining \eqref{maalt} and \eqref{mabase}, we see that on $X\setminus S$ we have
\begin{equation}\label{marepetida}
(\omega^\bullet_t)^{m+n}=(\omega^\natural_t+\ddbar\psi_t)^{m+n}=c_te^{-nt}\omega_{\rm can}^m\wedge\omega_F^n,
\end{equation}
where
\begin{equation}
c_t=e^{nt}\frac{\int_X(f^*\omega_B+e^{-t}\omega_X)^{m+n}}{\int_B\omega_B^m\int_{X_b}\omega_X^n},
\end{equation}
which is indeed equal to a polynomial in $e^{-t}$ of degree at most $m$ with constant coefficient $\binom{m+n}{n}$. It is easy to see that given any $K\Subset X\setminus S$, there is $t_K$ such that $\omega^\natural_t$ is a K\"ahler metric on $K$ for all $t\geq t_K$, uniformly equivalent to $f^*\omega_B+e^{-t}\omega_X$.

To prove Theorem \ref{mthmA} we can assume that we are given an arbitrary coordinate unit ball compactly contained in $B\setminus f(S)$, over which $f$ is $C^\infty$ trivial. As usual, we simply denote by $B$ this ball, and its preimage is $B\times Y$ equipped with a complex structure $J$ as in Theorem \ref{shutupandsuffer}.
Thanks to \cite{To} we know that on $B\times Y$ we have
\begin{equation}
C^{-1}\omega^\natural_t\leq \omega^\bullet_t\leq C\omega^\natural_t,
\end{equation}
for all $t\geq 0$ (assuming without loss that $\omega^\natural_t$ is K\"ahler for all $t\geq 0$), and that $\ddbar\psi_t\to 0$ weakly as currents
(hence $\psi_t\to 0$ in $L^1$ by standard psh functions theory, since $\vp_t$ is normalized by $\sup_X\vp_t=0$).

We are thus in good shape to apply Theorem \ref{shutupandsuffer}. To prove that $\omega^\bullet_t$ is locally uniformly bounded in $C^{k}$ of a fixed metric (where $k\geq 0$ is arbitrary), we take $j=k$ in Theorem \ref{shutupandsuffer}, so that up to shrinking $B$ we have the decomposition \eqref{decomponiti}
\begin{equation}
\omega^\bullet_t = \omega_t^\natural + \gamma_{t,0} + \gamma_{t,2,k} + \cdots + \gamma_{t,k,k} + \eta_{t,k,k},
\end{equation}
where $\omega_t^\natural$ is clearly smoothly bounded, $\gamma_{t,0}$ has uniform $C^{k,\alpha}$ bounds by \eqref{vier3} and \eqref{vier2}, $\eta_{t,k,k}$ has even shrinking uniform $C^{k,\alpha}$ bounds by \eqref{eins} and \eqref{zwei} (hence a standard uniform $C^{k,\alpha}$ bound thanks to Lemma \ref{arschloch}), and the $\gamma_{t,i,k}$ have uniform $C^{k}$ bounds by the following argument: from the definition \eqref{fuenf} and \eqref{e:Gstructure} we can write for $2\leq i\leq k$
\begin{equation}\label{coot}
\gamma_{t,i,k}=\ddbar\sum_{p=1}^{N_{i,k}}\sum_{\iota=0}^{2k}\sum_{\ell=\lceil\frac{\iota}{2}\rceil}^{k}e^{-\ell t}\Phi_{\iota,\ell}(G_{i,p,k})\circledast\D^\iota A_{t,i,p,k},
\end{equation}
and so schematically for $0\leq r\leq k$
\begin{equation}\label{cornucopia}
\D^r\gamma_{t,i,k}=\sum_{p=1}^{N_{i,k}}\sum_{\iota=0}^{2k}\sum_{\ell=\lceil\frac{\iota}{2}\rceil}^{k}\sum_{s=0}^{r+1}\sum_{i_1+i_2=s+1}e^{-\ell t}(\D^{r+1-s}J)\circledast\D^{i_1}\Phi_{\iota,\ell}(G_{i,p,k})\circledast\D^{i_2+\iota} A_{t,i,p,k},
\end{equation}
and using a fixed metric $g_X$ we have clearly $|\D^{i_1}\Phi_{\iota,\ell}(G_{i,p,k})|_{g_X}\leq C$ and $|(\D^{r+1-s}J)|_{g_X}\leq C$, while from \eqref{drei2} we see that
$|\D^{i_2+\iota} A_{t,i,p,k}|=o(1)$ when $i_2+\iota\leq k+2$ and from \eqref{stroh}
$|\D^{i_2+\iota} A_{t,i,p,k}|=o(e^{(i_2+\iota-k-2)\frac{t}{2}})$ when $k+2<i_2+\iota\leq k+2+2k$, and so
\begin{equation}\label{pietrogamba}\begin{split}
|\D^r\gamma_{t,i,k}|_{g_X}&\leq o(1)+ o(1)\sum_{i_1+i_2=r+2}\sum_{\iota=k+3-i_2}^{2k}\sum_{\ell=\lceil\frac{\iota}{2}\rceil}^{k}e^{-\ell t}e^{(i_2+\iota-k-2)\frac{t}{2}}\\
&\leq o(1)+o(e^{(r-k)\frac{t}{2}})\sum_{\iota=k+3-i_2}^{2k}\sum_{\ell=\lceil\frac{\iota}{2}\rceil}^{k}e^{-\left(\ell-\frac{\iota}{2}\right) t}\leq o(1)+o(e^{(r-k)\frac{t}{2}})=o(1),
\end{split}\end{equation}
since $i_2\leq r+2$ and $r\leq k$.
\end{proof}

\begin{proof}[Proof of Theorem \ref{mthmB}]
From Theorem \ref{shutupandsuffer} applied with $j=2$ and $k\geq 2$ arbitrary we know that
\begin{equation}\omega_t^\bullet=\omega_{\rm can}+e^{-t}\omega_F+\gamma_{t,0}+\gamma_{t,2,k}+\eta_{t,2,k},\end{equation}
where we have that $\gamma_{t,0}=\ddbar\underline{\psi_t}$ goes to zero in the $C^k$ norm (by applying Theorem \ref{shutupandsuffer}, specificially \eqref{vier3}, with $j=k$), and $\gamma_{t,2,k}$ has the schematic structure
\begin{equation}\gamma_{t,2,k}=\ddbar\sum_{p=1}^{N_{2,k}}\sum_{\iota=0}^{2k}\sum_{\ell=\lceil\frac{\iota}{2}\rceil}^{k}e^{-\ell t}\Phi_{\iota,\ell}(G_{2,p,k})\circledast\D^\iota A_{t,2,p,k},\end{equation}
where $A_{t,2,p,k}$ are functions from the base which thanks to \eqref{drei2} and \eqref{stroh} satisfy
\begin{equation}\label{quack}
|\D^iA_{t,2,p,k}|\leq \begin{cases} Ce^{-(3+\alpha)\left(1-\frac{i}{4+\alpha}\right)\frac{t}{2}},\quad 0\leq i\leq 4,\\
o(e^{-(4-i)\frac{t}{2}}),\quad 5\leq i\leq 4+2k.
\end{cases}
\end{equation}
Interestingly, we will also need another interpolation, as in \eqref{610}, interpolating between $|A_{t,2,p,k}|\leq Ce^{-(3+\alpha)\frac{t}{2}}$ from \eqref{drei2} and $[\D^2A_{t,2,p,k}]_{C^\alpha}\leq Ce^{-t}$ from \eqref{sieben} gives
\begin{equation}\label{sberequack}
|\D^iA_{t,2,p,k}|\leq C e^{-t} \left(e^{\frac{-1-\alpha}{2}t}\right)^{1-\frac{i}{2+\alpha}},\quad 0\leq i\leq 2.
\end{equation}
Our goal is to clarify the structure of the term $\gamma_{t,2,k}$. This will take us some work, and the very first step is the claim that arguing as in \eqref{pietrogamba} and using \eqref{quack} and \eqref{sberequack} we will have
\begin{equation}\label{brutus}
|\gamma_{t,2,k}|_{g_X}=o(e^{-t}).
\end{equation}
To prove the claim \eqref{brutus}, we argue as in \eqref{cornucopia} and bound
\begin{equation}\label{cornelius}
|\gamma_{t,2,k}|_{g_X}\leq C\sum_{i_1+i_2=2}\sum_{p=1}^{N_{2,k}}\sum_{\iota=0}^{2k}\sum_{\ell=\lceil\frac{\iota}{2}\rceil}^{k}e^{-\ell t}|\D^{i_2+\iota} A_{t,2,p,k}|,
\end{equation}
and we bound the RHS of \eqref{cornelius} by $o(e^{-t})$ by considering the possible values of $i_2+\iota\in\{0,\dots,2k+2\}$: if $i_2+\iota=0,1,2$ then \eqref{sberequack} in particular gives $|\D^{i_2+\iota} A_{t,2,p,k}|=o(e^{-t})$, so good. If $i_2+\iota=3,4$ then necessarily $\iota\geq 1$ and so $\ell\geq 1$, while \eqref{quack} in particular gives $|\D^{i_2+\iota} A_{t,2,p,k}|=o(1)$ so the RHS of \eqref{cornelius} is again $o(e^{-t})$. And if $i_2+\iota\geq 5$ then we use \eqref{quack} exactly as in \eqref{pietrogamba} to bound the RHS of \eqref{cornelius} by
\begin{equation}\label{korn}
o(e^{-t})+o(1)\sum_{i_1+i_2=2}\sum_{p=1}^{N_{2,k}}\sum_{\iota=5-i_2}^{2k}\sum_{\ell=\lceil\frac{\iota}{2}\rceil}^{k}e^{-\ell t}e^{(i_2+\iota-4)\frac{t}{2}}
\leq o(e^{-t})+o(e^{-t})\sum_{\iota=3}^{2k}\sum_{\ell=\lceil\frac{\iota}{2}\rceil}^{k}e^{-\left(\ell-\frac{\iota}{2}\right) t}=o(e^{-t}),
\end{equation}
using $i_2\leq 2$, which concludes the proof of \eqref{brutus}.

On the other hand, thanks to \eqref{eins} and \eqref{zwei}, together with interpolation (Proposition \ref{l:higher-interpol}, taking here $t$ sufficiently large compared to the radius $R$ of the ball that we are working on, so that $\mathbb{B}^{g_t}(p,R)=B_R\times Y$ for all $p\in B_R\times Y$), $\eta_{t,2,k}$ satisfies
\begin{equation}\label{3satan}|\D^i\eta_{t,2,k}|_{g_t}\leq Ce^{\frac{i-2-\alpha}{2}t},\quad [\D^i\eta_{t,2,k}]_{C^\alpha(g_t)}\leq Ce^{\frac{i-2}{2}t},\quad 0\leq i\leq 2,
\end{equation}
so in particular
\begin{equation}\label{tuquoque}
|\eta_{t,2,k}|_{g_X}=o(e^{-t}), \quad |(\eta_{t,2,k})_{\mathbf{ff}}|_{g_X}=o(e^{-2t}).
\end{equation}

Next, we seek better estimates than \eqref{brutus} for the $\mathbf{ff}$ components of $\gamma_{t,2,k}$. We claim that we have
\begin{equation}\label{satanacchio}
(\gamma_{t,2,k})_{\mathbf{ff}}=\sum_{p=1}^{N_{2,k}}A_{t,2,p,k}\de_{\mathbf{f}}\db_{\mathbf{f}}(\Delta^{\omega_F|_{\{\cdot\}\times Y}})^{-1}G_{2,p,k}+ o(e^{-2t}),
\end{equation}
where the $o(e^{-2t})$ is in $L^\infty_{\rm loc}(g_X)$. Indeed from \eqref{coot} we can write
\begin{equation}
(\gamma_{t,2,k})_{\mathbf{ff}}=\sum_{p=1}^{N_{2,k}}\sum_{\iota=0}^{2k}\sum_{\ell=\lceil\frac{\iota}{2}\rceil}^{k}e^{-\ell t}\de_{\mathbf{f}}\db_{\mathbf{f}}\Phi_{\iota,\ell}(G_{2,p,k})\circledast\D^\iota A_{t,2,p,k},
\end{equation}
and we can estimate each term as follows. For $\iota\geq 4$ we have $|\D^\iota A_{t,2,p,k}|=o(e^{-(4-\iota)\frac{t}{2}})$ from \eqref{quack}, and so
\begin{equation}
e^{-\ell t}|\de_{\mathbf{f}}\db_{\mathbf{f}}\Phi_{\iota,\ell}(G_{2,p,k})\circledast\D^\iota A_{t,2,p,k}|_{g_X}\leq o(1)e^{-2t}e^{-\left(\ell-\frac{\iota}{2}\right)t}=o(e^{-2t}),
\end{equation}
since $\ell\geq \frac{\iota}{2}$. For $\iota=3$, we have $\ell\geq 2$ and $|\D^\iota A_{t,2,p,k}|=o(1)$ from \eqref{quack}, so the term is again $o(e^{-2t})$. For $\iota=1,2$, we have $\ell\geq 1$ and $|\D^\iota A_{t,2,p,k}|=o(e^{-t})$ from \eqref{sberequack}, so the term is again $o(e^{-2t})$. And for $\iota=0$, let us first look at the terms with $\ell\geq 1$. For these, we have $|A_{t,2,p,k}|=O(e^{-(3+\alpha)\frac{t}{2}})$, and so when multiplied by $e^{-\ell t},\ell\geq 1$, these terms are indeed $o(e^{-2t})$. So we are only left with the terms where $\iota=\ell=0$ which equal
\begin{equation}
\sum_{p=1}^{N_{2,k}}A_{t,2,p,k}\de_{\mathbf{f}}\db_{\mathbf{f}}(\Delta^{\omega_F|_{\{\cdot\}\times Y}})^{-1}G_{2,p,k},
\end{equation}
since $\Phi_{0,0}(G)=(\Delta^{\omega_F|_{\{\cdot\}\times Y}})^{-1}G$ by \eqref{sangennaro}, thus proving \eqref{satanacchio}. In particular, using the bound \eqref{quack} in \eqref{satanacchio} gives
\begin{equation}\label{brutus_better}
|(\gamma_{t,2,k})_{\mathbf{ff}}|_{g_X}\leq Ce^{-\frac{3+\alpha}{2}t}.
\end{equation}
Our next goal is to prove that
\begin{equation}\label{filimi}
|A_{t,2,p,k}|\leq Ce^{-2t},
\end{equation}
which improves upon \eqref{drei2}. To see this, recall from \eqref{marepetida} that
\begin{equation}\label{3sat}c_t e^{-nt}\omega_{\rm can}^m\wedge\omega_F^n=(\omega^\bullet_t)^{m+n}=(\omega_{\rm can}+\ddbar\underline{\psi_t}+e^{-t}\omega_F
+\gamma_{t,2,k}+\eta_{t,2,k})^{m+n},\end{equation}
with $c_t=\binom{m+n}{n}+O(e^{-t})$. Multiply this by $\frac{e^{nt}}{\binom{m+n}{n}}$ and using the above estimates \eqref{brutus}, \eqref{tuquoque}, \eqref{brutus_better} (which imply that $|e^t\gamma_{t,2,k}+e^t\eta_{t,2,k}|_{g_X}=o(1)$ and $|(e^t\gamma_{t,2,k}+e^t\eta_{t,2,k})_{\mathbf{ff}}|_{g_X}=o(e^{-\frac{t}{2}})$)
and using also that $ i\partial\overline\partial\underline{\psi_t}$ is small in $C^2$, we can expand it as
\begin{equation}\label{3sata}\begin{split}
\frac{c_t}{\binom{m+n}{n}}\omega_{\rm can}^m\wedge\omega_F^n&=(\omega_{\rm can}+\ddbar\underline{\psi_t})^m\wedge(\omega_F+e^t\gamma_{t,2,k}+e^t\eta_{t,2,k})_{\mathbf{ff}}^n\\
&+\frac{m}{n+1}e^{-t}
(\omega_{\rm can}+\ddbar\underline{\psi_t})^{m-1}\wedge(\omega_F+e^t\gamma_{t,2,k}+e^t\eta_{t,2,k})^{n+1}+O(e^{-2t})\\
&=(\omega_{\rm can}+\ddbar\underline{\psi_t})^m\wedge\omega_F^n+
ne^t(\omega_{\rm can}+\ddbar\underline{\psi_t})^m(\omega_F)_{\mathbf{ff}}^{n-1}(\gamma_{t,2,k})_{\mathbf{ff}}\\
&+\frac{m}{n+1}e^{-t}
(\omega_{\rm can}+\ddbar\underline{\psi_t})^{m-1}\wedge\omega_F^{n+1}+o(e^{-t}),
\end{split}\end{equation}
where the error terms $o(e^{-t})$ are in $L^\infty_{\rm loc}(g_X)$. Define a function $\mathcal{S}$ on $B\times Y$ by
\begin{equation}\label{semmes}
\frac{m}{n+1}\omega_{\rm can}^{m-1}\wedge\omega_F^{n+1}=\mathcal{S}\omega_{\rm can}^{m}\wedge\omega_F^{n},
\end{equation}
so that (using again that  $ i\partial\overline\partial\underline{\psi_t}$ is $o(1)$) \eqref{3sata} gives
\begin{equation}\label{e:satanas}
\omega_{\rm can}^m\wedge(\omega_F)_{\mathbf{ff}}^n\left(1+e^t\mathrm{tr}^{\omega_F|_{\{\cdot\}\times Y}}(\gamma_{t,2,k})_{\mathbf{ff}}+e^{-t}\mathcal{S}\right)(1+o(1)_{\rm base})
=\frac{c_t}{\binom{m+n}{n}}\omega_{\rm can}^m\wedge(\omega_F)_{\mathbf{ff}}^n +o(e^{-t}).
\end{equation}
Divide both sides by  $\omega_{\rm can}^m\wedge(\omega_F)_{\mathbf{ff}}^n$, obtaining an equality of functions, and after subtracting its fiberwise average and multiplying by $e^{-t}$ and dividing by $(1+o(1)_{\rm base})$ we obtain
\begin{equation}
\label{e:satanas2}
e^{-2t}(\mathcal{S} - \underline{\mathcal{S}}) + \mathrm{tr}^{\omega_F|_{\{\cdot\}\times Y}}(\gamma_{t,2,k})_{\mathbf{ff}} = o(e^{-2t}),
\end{equation}
using here that $\gamma_{t,2,k}$ is $\de\db$-exact so its fiberwise trace has zero integral.
Next, taking the fiberwise trace of \eqref{satanacchio} with respect to the fiberwise restriction of $\omega_F$ gives
\begin{equation}\label{satanacchio2}
\mathrm{tr}^{\omega_F|_{\{\cdot\}\times Y}}(\gamma_{t,2,k})_{\mathbf{ff}}=\sum_{p=1}^{N_{2,k}}A_{t,2,p,k}G_{2,p,k} + o(e^{-2t}).
\end{equation}
Subtituting \eqref{satanacchio2} into \eqref{e:satanas2} gives
\begin{equation}\label{freak}
\sum_{p=1}^{N_{2,k}}A_{t,2,p,k}G_{2,p,k}=-e^{-2t}(\mathcal{S} - \underline{\mathcal{S}})+o(e^{-2t}),
\end{equation}
and integrating this against $G_{2,p,k}$ on any fiber $\{z\}\times Y$ gives
\begin{equation}\label{scetate}
A_{t,2,p,k} =-e^{-2t}\int_{\{z\}\times Y}\mathcal{S} G_{2,p,k}\omega_F^n+o(e^{-2t}),
\end{equation}
which is valid as usual in $L^\infty$, and the desired \eqref{filimi} follows.

The next step is to use the improved bound in \eqref{filimi} to obtain better bounds for the derivatives of $A_{t,2,p,k}$, as follows.
Interpolating between \eqref{filimi} and $[\D^4 A_{t,2,p,k}]_{C^{\alpha}}\leq C$ from \eqref{sieben} as in \eqref{610} gives
\begin{equation}\label{kkq0}|\D^iA_{t,2,p,k}|\leq Ce^{-\frac{4-i+i\frac{\alpha}{4+\alpha}}{2}t},\quad 0\leq i\leq 4,\end{equation}
and letting $\gamma=\frac{\alpha}{4+\alpha}$ we obtain in particular
\begin{equation}\label{kkq}|\D^iA_{t,2,p,k}|\leq Ce^{-\frac{4-i+\gamma}{2}t},\quad 1\leq i\leq 4.\end{equation}
We then interpolate again as in \eqref{stroh} between
$[\D^{4+\iota} A_{t,2,p,k}]_{C^{\alpha}}\leq Ce^{\frac{\iota}{2}t}$ from \eqref{sieben} ($1\leq \iota\leq 2k$) and $|\D^4A_{t,2,p,k}|\leq Ce^{-\frac{\gamma}{2}t}$ from \eqref{kkq}, and set $4+\iota=i$, and we can improve this to
\begin{equation}\label{skd}|\D^iA_{t,2,p,k}|\leq C e^{-\frac{4-i+\gamma}{2}t},\quad 1\leq i\leq 4+2k.\end{equation}
Similarly, using interpolation we can bound
\begin{equation}\label{skd2}[\D^iA_{t,2,p,k}]_{C^\alpha}\leq Ce^{-\frac{4-i-\alpha+\gamma}{2}t},\quad 1\leq i\leq 4+2k.\end{equation}
Also, when $i=0$, we can interpolate between \eqref{filimi} and \eqref{kkq} with $i=1$ and obtain
\begin{equation}\label{skd3}[A_{t,2,p,k}]_{C^\alpha}\leq Ce^{-\frac{4-\alpha+\alpha\gamma}{2}t},\end{equation}
and for convenience we will redefine $\gamma$ to be $\alpha\gamma$ from now on.
We now use \eqref{skd}, \eqref{skd2}, \eqref{skd3} together with \eqref{filimi} to bound the error term in \eqref{satanacchio2} as follows. Recall that from \eqref{sangennaro} we have
\begin{equation}\label{drunkenstein}
\gamma_{t,2,k}=\ddbar\sum_{p=1}^{N_{2,k}}A_{t,2,p,k}(\Delta^{\omega_F|_{\{\cdot\}\times Y}})^{-1}G_{2,p,k}+\ddbar\sum_{p=1}^{N_{2,k}}\sum_{\iota=0}^{2k}\sum_{\ell=\max(1,\lceil\frac{\iota}{2}\rceil)}^{k}e^{-\ell t}\Phi_{\iota,\ell}(G_{2,p,k})\circledast\D^\iota A_{t,2,p,k},
\end{equation}
and if we define
\begin{equation}\label{herrnikolaus}
\gamma^\sharp_{t,2,k}:=\gamma_{t,2,k}-\ddbar\sum_{p=1}^{N_{2,k}}A_{t,2,p,k}(\Delta^{\omega_F|_{\{\cdot\}\times Y}})^{-1}G_{2,p,k},
\end{equation}
then the error term in \eqref{satanacchio2} is exactly equal to
\begin{equation}\label{herrwilking}\begin{split}
\mathrm{tr}^{\omega_F|_{\{\cdot\}\times Y}}(\gamma^\sharp_{t,2,k})_{\mathbf{ff}}&=\sum_{p=1}^{N_{2,k}}\sum_{\iota=0}^{2k}\sum_{\ell=\max(1,\lceil\frac{\iota}{2}\rceil)}^{k}e^{-\ell t}(\mathrm{tr}^{\omega_F|_{\{\cdot\}\times Y}}\de_{\mathbf{f}}\db_{\mathbf{f}}\Phi_{\iota,\ell}(G_{2,p,k}))\circledast\D^\iota A_{t,2,p,k}\\
&=\sum_{p=1}^{N_{2,k}}\sum_{\iota=1}^{2k}\sum_{\ell=\lceil\frac{\iota}{2}\rceil}^{k}e^{-\ell t}(\mathrm{tr}^{\omega_F|_{\{\cdot\}\times Y}}\de_{\mathbf{f}}\db_{\mathbf{f}}\Phi_{\iota,\ell}(G_{2,p,k}))\circledast\D^\iota A_{t,2,p,k}\\
&+\sum_{p=1}^{N_{2,k}}\sum_{\ell=1}^{k}e^{-\ell t}(\mathrm{tr}^{\omega_F|_{\{\cdot\}\times Y}}\de_{\mathbf{f}}\db_{\mathbf{f}}\Phi_{0,\ell}(G_{2,p,k})) A_{t,2,p,k}.
\end{split}
\end{equation}
For $0\leq r\leq 2$ we can expand schematically
\begin{equation}\label{krukk}\begin{split}
\D^r\mathrm{tr}^{\omega_F|_{\{\cdot\}\times Y}}(\gamma^\sharp_{t,2,k})_{\mathbf{ff}}&=\sum_{i_1+i_2=r}\sum_{p=1}^{N_{2,k}}\sum_{\iota=1}^{2k}\sum_{\ell=\lceil\frac{\iota}{2}\rceil}^{k}e^{-\ell t}\D^{i_1}(\mathrm{tr}^{\omega_F|_{\{\cdot\}\times Y}}\de_{\mathbf{f}}\db_{\mathbf{f}}\Phi_{\iota,\ell}(G_{2,p,k}))\circledast\D^{i_2+\iota} A_{t,2,p,k}\\
&+\sum_{i_1+i_2=r}\sum_{p=1}^{N_{2,k}}\sum_{\ell=1}^{k}e^{-\ell t}\D^{i_1}(\mathrm{tr}^{\omega_F|_{\{\cdot\}\times Y}}\de_{\mathbf{f}}\db_{\mathbf{f}}\Phi_{0,\ell}(G_{2,p,k}))\circledast \D^{i_2} A_{t,2,p,k},
\end{split}
\end{equation}
and using $|\D^{i_1}(\mathrm{tr}^{\omega_F|_{\{\cdot\}\times Y}}\de_{\mathbf{f}}\db_{\mathbf{f}}\Phi_{\iota,\ell}(G_{2,p,k}))|_{g_t}\leq Ce^{i_1\frac{t}{2}}$
with \eqref{filimi} and \eqref{skd} we can bound the $g_t$-norm of \eqref{krukk} by
\begin{equation}\label{saw2}
\begin{split}
|\D^r\mathrm{tr}^{\omega_F|_{\{\cdot\}\times Y}}(\gamma^\sharp_{t,2,k})_{\mathbf{ff}}|_{g_t}&\leq Ce^{-\frac{\gamma}{2}t}\sum_{i_1+i_2=r}\sum_{\iota=1}^{2k}\sum_{\ell=\lceil\frac{\iota}{2}\rceil}^{k}e^{-\ell t}e^{(i_1+i_2+\iota-4)\frac{t}{2}}
+C \sum_{i_1+i_2=r}\sum_{\ell=1}^{k}e^{-\ell t}e^{(i_1+i_2-4)\frac{t}{2}}\\
&=Ce^{(r-4-\gamma)\frac{t}{2}}\sum_{\iota=1}^{2k}\sum_{\ell=\lceil\frac{\iota}{2}\rceil}^{k}e^{-\left(\ell-\frac{\iota}{2}\right) t}
+Ce^{(r-4)\frac{t}{2}}\sum_{\ell=1}^{k}e^{-\ell t}\\
&\leq Ce^{(r-4-\gamma)\frac{t}{2}},
\end{split}
\end{equation}
which implies in particular that when we restrict to a fiber $\{\cdot\}\times Y$ and differentiate only vertically then
\begin{equation}\label{saw3}
|\D^r_{\mathbf{f}\cdots\mathbf{f}}\mathrm{tr}^{\omega_F|_{\{\cdot\}\times Y}}(\gamma^\sharp_{t,2,k})_{\mathbf{ff}}|_{g_X|_{\{\cdot\}\times Y}}\leq Ce^{-(4+\gamma)\frac{t}{2}},
\end{equation}
for $0\leq r\leq 2$. Similarly, we can bound the H\"older $C^\alpha(g_t)$ seminorm of \eqref{krukk} by using the estimate $[\D^{i_1}(\mathrm{tr}^{\omega_F|_{\{\cdot\}\times Y}}\de_{\mathbf{f}}\db_{\mathbf{f}}\Phi_{\iota,\ell}(G_{2,p,k}))]_{C^\alpha(g_t)}\leq Ce^{(i_1+\alpha)\frac{t}{2}}$ together with \eqref{filimi}, \eqref{skd}, \eqref{skd2}, \eqref{skd3} and get for $0\leq r\leq 2$
\begin{equation}\label{fructus22}
[\D^r\mathrm{tr}^{\omega_F|_{\{\cdot\}\times Y}}(\gamma^\sharp_{t,2,k})_{\mathbf{ff}}]_{C^\alpha(g_t)}\leq Ce^{(r-4-\gamma+\alpha)\frac{t}{2}},
\end{equation}
and so fiberwise this gives
\begin{equation}\label{saw4}
[\D^r_{\mathbf{f}\cdots\mathbf{f}}\mathrm{tr}^{\omega_F|_{\{\cdot\}\times Y}}(\gamma^\sharp_{t,2,k})_{\mathbf{ff}}]_{C^\alpha(\{\cdot\}\times Y,g_X)}\leq Ce^{-(4+\gamma)\frac{t}{2}}.
\end{equation}
As for the whole $\gamma_{t,2,k}$, we can schematically expand
\begin{equation}\label{kruk}
\D^r\gamma_{t,2,k}=\sum_{p=1}^{N_{2,k}}\sum_{\iota=0}^{2k}\sum_{\ell=\lceil\frac{\iota}{2}\rceil}^{k}\sum_{s=0}^{r+1}\sum_{i_1+i_2=s+1}e^{-\ell t}(\D^{r+1-s}J)\circledast\D^{i_1}\Phi_{\iota,\ell}(G_{2,p,k})\circledast\D^{i_2+\iota} A_{t,2,p,k},
\end{equation}
and using $|\D^{i_1}\Phi_{\iota,\ell}(G_{i,p,k})|_{g_t}\leq Ce^{i_1\frac{t}{2}}, |\D^\iota J|_{g_t}\leq Ce^{\iota\frac{t}{2}}$ and \eqref{filimi}, \eqref{skd} we can bound
\begin{equation}\label{saw}
|\D^r\gamma_{t,2,k}|_{g_t}\leq C\sum_{i_1+i_2=r+2}\sum_{\iota=0}^{2k}\sum_{\ell=\lceil\frac{\iota}{2}\rceil}^{k}e^{-\ell t}e^{(i_1+i_2+r+1-s+\iota-4)\frac{t}{2}}=e^{(r-2)\frac{t}{2}}\sum_{\iota=0}^{2k}\sum_{\ell=\lceil\frac{\iota}{2}\rceil}^{k}e^{-\left(\ell-\frac{\iota}{2}\right) t}\leq Ce^{(r-2)\frac{t}{2}},
\end{equation}
for $0\leq r\leq 2$,
and similarly
\begin{equation}\label{saw6}
[\D^r\gamma_{t,2,k}]_{C^\alpha(g_t)}\leq Ce^{(r+\alpha-2)\frac{t}{2}},
\end{equation}
and arguing along similar lines, we obtain
\begin{equation}\label{saw5}
|\D^r\gamma^\sharp_{t,2,k}|_{g_t}\leq Ce^{(r-2-\gamma)\frac{t}{2}},\quad [\D^r\gamma^\sharp_{t,2,k}]_{C^\alpha(g_t)}\leq Ce^{(r+\alpha-2-\gamma)\frac{t}{2}},
\end{equation}
for $0\leq r\leq 2$.
When we take only fiber derivatives, \eqref{saw} and \eqref{saw6} imply that
\begin{equation}\label{kinger}
|\D^r_{\mathbf{f}\cdots\mathbf{f}}\gamma_{t,2,k}|_{g_X}\leq Ce^{-t},\quad [\D^r_{\mathbf{f}\cdots\mathbf{f}}\gamma_{t,2,k}]_{C^\alpha(\{\cdot\}\times Y,g_X)}\leq Ce^{-t},\quad 0\leq r\leq 2,
\end{equation}
but we can actually do better than this by going back to \eqref{kruk}: for any term in \eqref{kruk} with $i_2+\iota>0$, we use the bounds in \eqref{skd},
\eqref{skd2} and we gain a factor of $e^{-\gamma t/2}$ over \eqref{kinger}, and for the terms with $i_2+\iota=0$ we have $r+1-s+i_1=r+2$, and
using now that $|(\D^{r+1-s}J)\circledast\D^{i_1}\Phi_{\iota,\ell}(G_{2,p,k})|_{g_X}\leq C$, we gain a factor of $e^{-(i_1+r+1-s)\frac{t}{2}}=e^{-(r+2)\frac{t}{2}}\leq e^{-\gamma t/2}$ over \eqref{kinger}. In conclusion, we obtain
\begin{equation}\label{zinger}
|\D^r_{\mathbf{f}\cdots\mathbf{f}}\gamma_{t,2,k}|_{g_X}\leq Ce^{-(2+\gamma)\frac{t}{2}},\quad [\D^r_{\mathbf{f}\cdots\mathbf{f}}\gamma_{t,2,k}]_{C^\alpha(\{\cdot\}\times Y,g_X)}\leq Ce^{-(2+\gamma)\frac{t}{2}},\quad 0\leq r\leq 2,
\end{equation}

We have thus derived estimates \eqref{saw3} and \eqref{saw4} for the error term in \eqref{satanacchio2} and its fiberwise derivatives. Our next task is to derive analogous estimates for the error term in \eqref{e:satanas2}, so we need to recall how this error term was obtained:
we first expanded \eqref{3sat} as \eqref{3sata} and derived \eqref{e:satanas}, and then we divided it by $\omega_{\rm can}^m\wedge\omega_F^n$, subtracted its fiber average, and multiplied by $e^{-t}$ to obtain \eqref{e:satanas2}. To estimate the error terms, we thus follow the same procedure and go back to \eqref{3sat}, and divide it by $\binom{m+n}{n}e^{-nt}$
to get
\begin{equation}\label{3satana}\begin{split}
&\frac{c_t}{\binom{m+n}{n}} \omega_{\rm can}^m\wedge\omega_F^n\\
&=(\omega_{\rm can}+\ddbar\underline{\psi_t})^m\wedge\left(\omega_F+e^t\gamma_{t,2,k}+e^t\eta_{t,2,k}\right)^{n}_{\mathbf{ff}}\\
&+\sum_{j=1}^m \frac{m!n!}{(m-j)!(n+j)!}\left(\omega_{\rm can}+\ddbar\underline{\psi_t}\right)^{m-j}\wedge\left(\omega_F+e^t\gamma_{t,2,k}+e^t\eta_{t,2,k}\right)^{n}\wedge\left(e^{-t}\omega_F+\gamma_{t,2,k}+\eta_{t,2,k}\right)^{j}\\
&=(\omega_{\rm can}+\ddbar\underline{\psi_t})^m\wedge\left((\omega_F)^n_{\mathbf{ff}}+ne^t(\omega_F)^{n-1}_{\mathbf{ff}}(\gamma_{t,2,k})_{\mathbf{ff}}\right)\\
&+ne^t(\omega_{\rm can}+\ddbar\underline{\psi_t})^m\wedge(\omega_F)^{n-1}_{\mathbf{ff}}(\eta_{t,2,k})_{\mathbf{ff}}+\sum_{k=2}^n\binom{n}{k} (\omega_{\rm can}+\ddbar\underline{\psi_t})^m\wedge (\omega_F)^{n-k}_{\mathbf{ff}}(e^t\gamma_{t,2,k}+e^t\eta_{t,2,k})^k_{\mathbf{ff}}\\
&+\frac{m}{n+1}e^{-t}\left(\omega_{\rm can}+\ddbar\underline{\psi_t}\right)^{m-1}\wedge\omega_F^{n+1}\\
&+\frac{m}{n+1}\sum_{p=1}^n \binom{n}{p}e^{-t}\left(\omega_{\rm can}+\ddbar\underline{\psi_t}\right)^{m-1}\wedge\omega_F^{n-p+1}\wedge(e^t\gamma_{t,2,k}+e^t\eta_{t,2,k})^p\\
&+\frac{m}{n+1}\left(\omega_{\rm can}+\ddbar\underline{\psi_t}\right)^{m-1}\wedge\left(\omega_F+e^t\gamma_{t,2,k}+e^t\eta_{t,2,k}\right)^{n}\wedge\left(\gamma_{t,2,k}+\eta_{t,2,k}\right)\\
&+\sum_{j=2}^m \frac{m!n!}{(m-j)!(n+j)!}\left(\omega_{\rm can}+\ddbar\underline{\psi_t}\right)^{m-j}\wedge\left(\omega_F+e^t\gamma_{t,2,k}+e^t\eta_{t,2,k}\right)^{n}\wedge\left(e^{-t}\omega_F+\gamma_{t,2,k}+\eta_{t,2,k}\right)^{j},
\end{split}\end{equation}
so the error terms in \eqref{3sata} (marked as $o(e^{-t})$ there) consist precisely of the terms in lines $-1,-2,-3,-5$ of \eqref{3satana}. To obtain the error terms in \eqref{e:satanas} we have to add to these the parts of lines $-4,-6$ which contain $\ddbar\underline{\psi_t}$. And to finally obtain the error terms in \eqref{e:satanas2} we need to divide by $\omega_{\rm can}^m\wedge\omega_F^n$, subtract the fiber average, and multiply by $e^{-t}$.

So first, let us show that, after dividing by $\omega_{\rm can}^m\wedge\omega_F^n$, the terms in lines $-1,-2,-3,-5$ of \eqref{3satana} satisfy
\begin{equation}\label{stronzetto}
|\D^r_{\mathbf{f}\cdots\mathbf{f}}(\ \cdot\ )|_{g_X}\leq Ce^{-(2+\gamma)\frac{t}{2}},
\quad [\D^r_{\mathbf{f}\cdots\mathbf{f}}(\ \cdot\ )]_{C^\alpha(\{\cdot\}\times Y,g_X)}\leq Ce^{-(2+\gamma)\frac{t}{2}},
\end{equation}
for $0\leq r\leq 2$. To do this, first recall that $\ddbar\underline{\psi_t}$ is $o(1)$ in $C^{2,\alpha}$ from \eqref{vier3} and \eqref{vier2}. Thanks to \eqref{3satan} we have in particular
\begin{equation}
|\D^i_{\mathbf{f}\cdots\mathbf{f}}\eta_{t,2,k}|_{g_X}\leq Ce^{-(2+\alpha)\frac{t}{2}},\quad [\D^i_{\mathbf{f}\cdots\mathbf{f}}\eta_{t,2,k}]_{C^\alpha(\{\cdot\}\times Y,g_X)}\leq Ce^{-(2+\alpha)\frac{t}{2}},\quad 0\leq i\leq 2,
\end{equation}
which together with \eqref{zinger} imply
\begin{equation}\label{sauron}
|\D^i_{\mathbf{f}\cdots\mathbf{f}}(\omega_F+e^t\gamma_{t,2,k}+e^t\eta_{t,2,k})|_{g_X}\leq C,\quad
|\D^i_{\mathbf{f}\cdots\mathbf{f}}(e^t\gamma_{t,2,k}+e^t\eta_{t,2,k})|_{g_X}\leq Ce^{-\gamma\frac{t}{2}},\quad 0\leq i\leq 2,
\end{equation}
\begin{equation}\label{saruman}
[\D^i_{\mathbf{f}\cdots\mathbf{f}}(\omega_F+e^t\gamma_{t,2,k}+e^t\eta_{t,2,k})]_{C^\alpha(\{\cdot\}\times Y,g_X)}\leq C,\quad
[\D^i_{\mathbf{f}\cdots\mathbf{f}}(e^t\gamma_{t,2,k}+e^t\eta_{t,2,k})]_{C^\alpha(\{\cdot\}\times Y,g_X)}\leq Ce^{-\gamma\frac{t}{2}},\quad 0\leq i\leq 2,
\end{equation}
while for the fiber components, from \eqref{3satan}, \eqref{saw} and \eqref{saw6} we obtain the better bounds
\begin{equation}\label{mordor}
|\D^i_{\mathbf{f}\cdots\mathbf{f}}(e^t\gamma_{t,2,k})_{\mathbf{ff}}|_{g_X}\leq Ce^{-t},\quad |\D^i_{\mathbf{f}\cdots\mathbf{f}}(e^t\eta_{t,2,k})_{\mathbf{ff}}|_{g_X}\leq Ce^{-(2+\alpha)\frac{t}{2}},\quad 0\leq i\leq 2,\end{equation}
\begin{equation}\label{gollum}
[\D^i_{\mathbf{f}\cdots\mathbf{f}}(e^t\gamma_{t,2,k})_{\mathbf{ff}}]_{C^\alpha(\{\cdot\}\times Y,g_X)}\leq Ce^{-t},\quad [\D^i_{\mathbf{f}\cdots\mathbf{f}}(e^t\eta_{t,2,k})_{\mathbf{ff}}]_{C^\alpha(\{\cdot\}\times Y,g_X)}\leq Ce^{-(2+\alpha)\frac{t}{2}},\quad 0\leq i\leq 2,\end{equation}
and using \eqref{sauron}, \eqref{saruman}, \eqref{mordor} and \eqref{gollum} we immediately see
that the terms in lines $-1,-2,-3,-5$ of \eqref{3satana} satisfy \eqref{stronzetto} for $0\leq r\leq 2$.

Next, we consider the parts of lines $-4,-6$ which contain $\ddbar\underline{\psi_t}$ (again after dividing by $\omega_{\rm can}^m\wedge\omega_F^n$). Explicitly, these are the terms
\begin{equation}\label{aperito}
\sum_{j=1}^m\binom{m}{j}\omega_{\rm can}^{m-j}\wedge(\ddbar\underline{\psi_t})^j\wedge\left((\omega_F)^n_{\mathbf{ff}}+ne^t(\omega_F)^{n-1}_{\mathbf{ff}}(\gamma_{t,2,k})_{\mathbf{ff}}\right),
\end{equation}
\begin{equation}\label{aperyto}
\frac{m}{n+1}e^{-t}\sum_{j=1}^{m-1}\binom{m-1}{j}\omega_{\rm can}^{m-1-j}\wedge(\ddbar\underline{\psi_t})^{j}\wedge\omega_F^{n+1},
\end{equation}
(as usual divided by $\omega_{\rm can}^m\wedge\omega_F^{n}$). The first piece of \eqref{aperito} equals
\begin{equation}
\sum_{j=1}^m\binom{m}{j}\frac{\omega_{\rm can}^{m-j}\wedge(\ddbar\underline{\psi_t})^j}{\omega_{\rm can}^m},
\end{equation}
which is a function pulled back from the base which goes to zero locally smoothly. On the other hand, thanks to \eqref{sauron}, \eqref{saruman}, \eqref{mordor}, \eqref{gollum}, the second piece of \eqref{aperito} as well as \eqref{aperyto} satisfy
\begin{equation}\label{stronzetto_depotenziato}
|\D^r_{\mathbf{f}\cdots\mathbf{f}}(\ \cdot\ )|_{g_X}=o(e^{-t}),
\quad [\D^r_{\mathbf{f}\cdots\mathbf{f}}(\ \cdot\ )]_{C^\alpha(\{\cdot\}\times Y,g_X)}=o(e^{-t}),
\end{equation}
for $0\leq r\leq 2$, which is weaker than \eqref{stronzetto}.

Combining the discussions of all these pieces, shows that the error terms in \eqref{e:satanas} is equal to the sum of $o(1)_{\rm from\ base}$ plus terms that satisfy \eqref{stronzetto_depotenziato}
for $0\leq r\leq 2$. Recall that to go from \eqref{e:satanas} to \eqref{e:satanas2} we need to divide by the reference volume form, subtract the fiber average (which kills the above $o(1)_{\rm from\ base}$ term) and multiply by $e^{-t}$, and hence the error terms in \eqref{e:satanas2} satisfy
\begin{equation}\label{satanhash_depotenziato}
|\D^r_{\mathbf{f}\cdots\mathbf{f}}(\ \cdot\ )|_{g_X}=o(e^{-2t}),
\quad [\D^r_{\mathbf{f}\cdots\mathbf{f}}(\ \cdot\ )]_{C^\alpha(\{\cdot\}\times Y,g_X)}=o(e^{-2t}),
\end{equation}
for $0\leq r\leq 2$, which is weaker than
\begin{equation}\label{satanhash}
|\D^r_{\mathbf{f}\cdots\mathbf{f}}(\ \cdot\ )|_{g_X}\leq Ce^{-(4+\gamma)\frac{t}{2}},
\quad [\D^r_{\mathbf{f}\cdots\mathbf{f}}(\ \cdot\ )]_{C^\alpha(\{\cdot\}\times Y,g_X)}\leq Ce^{-(4+\gamma)\frac{t}{2}},
\end{equation}
which is the bound satisfied by the error terms in \eqref{satanacchio2} thanks to \eqref{saw3} and \eqref{saw4}. Thus, we can finally conclude that in \eqref{freak}
\begin{equation}\label{feffer}
\sum_{p=1}^{N_{2,k}}A_{t,2,p,k}G_{2,p,k}=-e^{-2t}(\mathcal{S} - \underline{\mathcal{S}}) +(\mathrm{Err}),
\end{equation}
the error term (Err) satisfies \eqref{satanhash_depotenziato} for $0\leq r\leq 2$.
Going back to \eqref{herrnikolaus}, observe that
\begin{equation}\label{nykterstein}
\ddbar\sum_{p=1}^{N_{2,k}}A_{t,2,p,k}(\Delta^{\omega_F|_{\{\cdot\}\times Y}})^{-1}G_{2,p,k}=
\ddbar(\Delta^{\omega_F|_{\{\cdot\}\times Y}})^{-1}\left(\sum_{p=1}^{N_{2,k}}A_{t,2,p,k}G_{2,p,k}\right),
\end{equation}
and so using \eqref{saw5} we see that
\begin{equation}\label{stein}
\gamma_{t,2,k}=\ddbar(\Delta^{\omega_F|_{\{\cdot\}\times Y}})^{-1}\left(\sum_{p=1}^{N_{2,k}}A_{t,2,p,k}G_{2,p,k}\right)+(\mathrm{Err}'),
\end{equation}
where the $(1,1)$-form $(\mathrm{Err}')$ has fiber-fiber components that satisfy \eqref{satanhash} for $0\leq r\leq 2$.

When substituting \eqref{feffer} into \eqref{stein}, we need to understand the regularity of $\ddbar(\Delta^{\omega_F|_{\{\cdot\}\times Y}})^{-1}(\mathrm{Err})$ where (Err) are functions that satisfy \eqref{satanhash_depotenziato} for $0\leq r\leq 2$. When we restrict purely to a fiber $\{z\}\times Y$, the operator $(\Delta^{\omega_F|_{\{\cdot\}\times Y}})^{-1}$ increases regularity by $2$ derivatives in H\"older spaces, by standard Schauder theory, and so $(\Delta^{\omega_F|_{\{\cdot\}\times Y}})^{-1}(\mathrm{Err})$ satisfies \eqref{satanhash_depotenziato} for $0\leq r\leq 4$. It follows that the fiber-fiber components of $\ddbar(\Delta^{\omega_F|_{\{\cdot\}\times Y}})^{-1}(\mathrm{Err})$ satisfy \eqref{satanhash_depotenziato} for $0\leq r\leq 2$. Thus from \eqref{feffer} and \eqref{stein} we obtain that
\begin{equation}\label{herrschumi}\begin{split}
\gamma_{t,2,k}&=-e^{-2t}\ddbar(\Delta^{\omega_F|_{\{\cdot\}\times Y}})^{-1}\left(\mathcal{S} -\underline{\mathcal{S}}\right)+(\mathrm{err}),
\end{split}\end{equation}
where the $(1,1)$-form (err) has fiber-fiber components that satisfy \eqref{satanhash_depotenziato} for $0\leq r\leq 2$. In addition, (err) is also $O(1)$ in $C^2(g_t)$ because $\gamma_{t,2,k}$ is $O(1)$ in $C^2(g_t)$ thanks to \eqref{saw}, while the first term on the RHS of \eqref{herrschumi} also clearly $O(1)$ in $C^2(g_t)$. More precisely, we obtain
\begin{equation}\label{tepossino}
|\D^i({\rm err})|_{g_t}\leq Ce^{-\frac{2-i}{2}t}, \quad 0\leq i\leq 2.
\end{equation}

Next, we wish to have a more explicit understanding of the term $\mathcal{S} -\underline{\mathcal{S}}$. For this, we use the work of Schumacher and collaborators \cite{BCS,Schu}. For $1\leq \mu\leq m$ we let $W_\mu$ be the unique $(1,0)$ smooth vector field on $B\times Y$ which is the $\omega_F$-horizontal lift of the standard coordinate vector field $\de_{w^\mu}$ on $\C^m$, i.e. it satisfies $({\rm pr}_B)_*W_\mu=\de_{w^\mu}$ and $\omega_F(W_\mu, T^{0,1}X_z)=0$ for all $z\in B$ (where as usual $X_z=\{z\}\times Y$), cf. \cite[\S 3.1]{BCS}.
Then applying the fiberwise $\db$ operator to $W_\mu$ gives
\begin{equation}A_\mu=\db_{X_z} W_\mu,\end{equation}
where $A_\mu$ is the unique $T^{1,0}X_z$-valued $(0,1)$-form on $X_z$ harmonic with respect to the Ricci-flat metric $\omega_F|_{X_z}$ that represents the Kodaira-Spencer class $\kappa(\de_{w^\mu})\in H^1(X_z, T^{1,0}X_z)$. We now claim that we have the identity
\begin{equation}\label{zykel}
\mathcal{S}-\underline{\mathcal{S}}=(\Delta^{\omega_F|_{\{\cdot\}\times Y}})^{-1}\left(g_{\rm can}^{\mu\bar\nu}(\langle A_\mu, A_{\bar{\nu}}\rangle - \underline{\langle A_{\mu}, A_{\bar\nu}\rangle} )\right),
\end{equation}
where $\langle\cdot,\cdot\rangle$ is the fiberwise Ricci-flat inner product. This identity is proved as follows: first, a classical computation of Semmes \cite{Se} shows that our ``geodesic curvature'' quantity $\mathcal{S}$ defined in \eqref{semmes} is equal to
\begin{equation}
g_{\rm can}^{\mu\bar\nu}\omega_F(W_\mu,\ov{W_\nu}),
\end{equation}
and then \eqref{zykel} follows from this together with the fiberwise Laplacian formula of Schumacher (see \cite[Lemma 4.1]{BCS}) valid on the fiber $X_z$
\begin{equation}\label{BCS}
\Delta^{\omega_F|_{X_z}}\biggl[ \omega_F(W_\mu,\overline{W_\nu})\biggr] = \omega_{\rm WP}(\de_{w^\mu},\de_{\ov{w}^\nu})|_z -\langle A_\mu,\ov{A_\nu}\rangle,\end{equation}
where $\omega_{\rm WP}\geq 0$ is a Weil-Petersson form on $B$, so the term $\omega_{\rm WP}(\de_{w^\mu},\de_{\ov{w}^\nu})|_z$ is constant on $X_z$.

In conclusion, we have the decomposition
\begin{equation}
\omega_t^\bullet = f^*\omega_{\rm can} + e^{-t}\omega_F+\gamma_{t,0} - e^{-2t}\partial\overline\partial(\Delta^{\omega_F|_{\{\cdot\}\times Y}})^{-1} (\Delta^{\omega_F|_{\{\cdot\}\times Y}})^{-1}(g_{\rm can}^{\mu\bar\nu}(\langle A_\mu, A_{\bar{\nu}}\rangle - \underline{\langle A_{\mu}, A_{\bar\nu}\rangle} )) +\eta_{t,2,k}+ (\mathrm{err}),
\end{equation}
where (err) is $O(1)$ in $C^2(g_t)$, its fiber-fiber components satisfy \eqref{satanhash_depotenziato} for $0\leq r\leq 2$, and it satisfies \eqref{tepossino}. But the term $\eta_{t,2,k}$ satisfies even better estimates than these thanks to \eqref{3satan}, and so it can be absorbed in (err), while the term $\gamma_{t,0}$ is pulled back from the base and by \eqref{vier3} and \eqref{vier2} it is $o(1)$ in $C^{2,\alpha}$.

To complete the proof of Theorem \ref{mthmB}, we set ${\rm error}_1=\gamma_{t,0}, {\rm error}_2=({\rm err})$ and we thus need to verify that our estimates for (err) imply the stated bounds \eqref{simplicio}. For simplicity let us write (err)=$\eta$, which is a $\de\db$-exact $(1,1)$-form whose potential has fiberwise average zero. To prove \eqref{simplicio}, we start by converting $\D^j\eta$ to $\nabla^{z,j}\eta$ (at an arbitrary point $(z,y)$). By definition these are equal for $j=0,1,$ while for $j=2$ we have
$\D^2\eta=\nabla^{z,2}\eta+\mathbf{A}\circledast\eta$ by \eqref{komm}, and $|\mathbf{A}|_{g_t}\leq Ce^t$ (since by definition $\mathbf{A}_{\mathbf{fff}}^\bullet=0$).

On the other hand, from \eqref{tepossino} we know that
\begin{equation}
|\D^j\eta|_{g_t}\leq Ce^{-\frac{2-j}{2}t},\quad 0\leq j\leq 2,
\end{equation}
and so
\begin{equation}\label{donkey1}
|\eta|_{g_t}\leq Ce^{-t}, \quad |\nabla^{z}\eta|_{g_t}\leq Ce^{-\frac{t}{2}},
\end{equation}
and
\begin{equation}
|\nabla^{z,2}\eta|_{g_t}\leq |\D^2\eta|_{g_t}+C|\mathbf{A}|_{g_t} |\eta|_{g_t}\leq C+Ce^t e^{-t}\leq C.
\end{equation}
We then work in local product coordinates, and convert
\begin{equation}\label{prwzt1}
\nabla^z\eta=\de\eta+\Gamma^z\circledast\eta,
\end{equation}
with $|\Gamma^z|_{g_t}\leq Ce^{\frac{t}{2}}$ along the fiber over $z$ (since $\Gamma^z$ are the Christoffel symbols of the product metric $g_{z,t}$) and so
\begin{equation}\label{donkey2}
|\de \eta|_{g_t}\leq C e^{-\frac{t}{2}},
\end{equation}
and also
\begin{equation}\label{prwzt2}
\nabla^{z,2}\eta=\de^2\eta+\de\Gamma^z\circledast\eta+\Gamma^z\circledast\de\eta+\Gamma^z\circledast\Gamma^z\circledast\eta,
\end{equation}
and using $|\de\Gamma^z|_{g_t}\leq Ce^t$ (since $\de_{\mathbf{f}}(\Gamma^z)_{\mathbf{ff}}^{\mathbf{b}}=0$) we obtain
\begin{equation}\label{donkey3}
|\de^2 \eta|_{g_t}\leq C.
\end{equation}
Combining \eqref{donkey1}, \eqref{donkey2} and \eqref{donkey3} gives (in a fixed metric)
\begin{equation}\label{donkeyshow}
|(\de^j \eta)\{\ell\}|\leq Ce^{\frac{-2-\ell+j}{2}t},\quad 0\leq j\leq 2,
\end{equation}
which for $\ell < j+2$ agrees with the statement of \eqref{simplicio}. To also obtain the slightly better estimate for $\ell = j+2$ stated in \eqref{simplicio}, recall that from the fact that the fiber-fiber components of (err) satisfy \eqref{satanhash_depotenziato} for $0\leq r\leq 2$, we see that
\begin{equation}\label{monkey2}
|\nabla^{z,j}_{\mathbf{f}\cdots\mathbf{f}}(\eta_{\mathbf{ff}}|_{\{z\}\times Y})|_{g_X}=o(e^{-2t}),\quad 0\leq j\leq 2,
\end{equation}
and using \eqref{monkey2} together with \eqref{prwzt1}, \eqref{prwzt2} and $|\Gamma^z|_{g_X}\leq C, |\de\Gamma^z|_{g_X}\leq C$, gives
\begin{equation}
|\de^j(\eta_{\mathbf{ff}}|_{\{z\}\times Y})|_{g_X}=o(e^{-2t}),\quad 0\leq j\leq 2,
\end{equation}
which is the improvement over \eqref{donkeyshow} for $\ell = j+2$ claimed in \eqref{simplicio}.
\end{proof}


\begin{thebibliography}{99}
\bibitem{BCS} M. Braun, Y.-J. Choi, G. Schumacher, {\em K\"ahler forms for families of Calabi-Yau manifolds}, Publ. Res. Inst. Math. Sci. {\bf 56} (2020), no. 1, 1--13.
\bibitem{CGP} J. Cao, H. Guenancia, M. P\u{a}un, {\em Variation of singular K\"ahler-Einstein metrics: Kodaira dimension zero. With an appendix by Valentino Tosatti}, J. Eur. Math. Soc. (JEMS) {\bf 25} (2023), no. 2, 633--679.
\bibitem{CC} G. Chen, X.X. Chen, {\em Gravitational instantons with faster than quadratic curvature decay (III)}, Math. Ann. {\bf 380} (2021), no. 1-2, 687--717.
\bibitem{CVZ} G. Chen, J. Viaclovsky, R. Zhang, {\em Collapsing Ricci-flat metrics on elliptic K3 surfaces}, Comm. Anal. Geom. {\bf 28} (2020), no. 8, 2019--2133.
\bibitem{DJZ} V. Datar, A. Jacob, Y. Zhang, \emph{Adiabatic limits of anti-self-dual connections on collapsed $K3$ surfaces}, J. Differential Geom. {\bf 118} (2021), no. 2, 223--296.
\bibitem{DP} J.-P. Demailly, N. Pali, {\em Degenerate complex Monge-Amp\`ere equations over compact K\"ahler manifolds}, Internat. J. Math. {\bf 21} (2010), no. 3, 357--405.
\bibitem{DN} A. Douglis, L. Nirenberg, {\em Interior estimates for elliptic systems of partial differential equations}, Comm. Pure Appl. Math. {\bf 8} (1955), 503--538.
\bibitem{EGZ} P. Eyssidieux, V. Guedj, A. Zeriahi, {\em A priori $L^{\infty}$-estimates for degenerate complex Monge-Amp\`ere equations}, Int. Math. Res. Not. IMRN 2008, Art. ID rnn 070, 8 pp.
\bibitem{Fi0} J. Fine, {\em Constant scalar curvature K\"ahler metrics on fibred complex surfaces}, J. Differential Geom. {\bf 68} (2004), no. 3, 397--432.
\bibitem{Fi} J. Fine, {\em Fibrations with constant scalar curvature K\"ahler metrics and the CM-line bundle}, Math. Res. Lett. {\bf 14} (2007), no. 2, 239--247.
\bibitem{FG} W. Fischer, H. Grauert, {\em Lokal-triviale Familien kompakter komplexer Mannigfaltigkeiten}, Nachr. Akad. Wiss. G\"ottingen Math.-Phys. Kl. II (1965), 89--94.
\bibitem{GM2ndEd} M. Giaquinta, L. Martinazzi, {\em An introduction to the regularity theory for elliptic systems, harmonic maps and minimal graphs. Second edition}, Lecture Notes. Scuola Normale Superiore di Pisa (New Series), 11. Edizioni della Normale, Pisa, 2012.
\bibitem{GT} D. Gilbarg, N.S. Trudinger, \emph{Elliptic partial differential equations of second order}, Classics in Mathematics, Springer-Verlag, Berlin, 2001.
\bibitem{GSVY} B. Greene, A. Shapere, C. Vafa, S.-T. Yau, {\em Stringy cosmic strings and noncompact Calabi-Yau manifolds}, Nuclear Phys. B {\bf 337} (1990), no. 1, 1--36.
\bibitem{GTZ} M. Gross, V. Tosatti, Y. Zhang, \emph{Collapsing of abelian fibered Calabi-Yau manifolds}, Duke Math. J. {\bf 162} (2013), no. 3, 517--551.
\bibitem{GTZ3} M. Gross, V. Tosatti, Y. Zhang, \emph{Geometry of twisted K\"ahler-Einstein metrics and collapsing}, Comm. Math. Phys. {\bf 380} (2020), no. 3, 1401--1438.
\bibitem{GW} M. Gross, P.M.H. Wilson, \emph{Large complex structure limits of $K3$ surfaces}, J. Differential Geom. {\bf 55} (2000), no. 3, 475--546.
\bibitem{He0} H.-J. Hein, \emph{Gravitational instantons from rational elliptic surfaces}, J. Amer. Math. Soc. {\bf 25} (2012), 355--393.
\bibitem{He} H.-J. Hein, \emph{A Liouville theorem for the complex Monge-Amp\`ere equation on product manifolds}, Comm. Pure Appl. Math. {\bf 72} (2019), no. 1, 122--135.
\bibitem{HLT} H.-J. Hein, M.-C. Lee, V. Tosatti, \emph{Collapsing immortal K\"ahler-Ricci flows}, preprint, arXiv:2405.04208.
\bibitem{HT} H.-J. Hein, V. Tosatti, \emph{Remarks on the collapsing of torus fibered Calabi-Yau manifolds},  Bull. Lond. Math. Soc. {\bf 47} (2015), no. 6, 1021--1027.
\bibitem{HT2} H.-J. Hein, V. Tosatti, \emph{Higher-order estimates for collapsing Calabi-Yau metrics}, Camb. J. Math. {\bf 8} (2020), no. 4, 683--773.
\bibitem{JS1} W. Jian, Y. Shi, \emph{A ``boundedness implies convergence'' principle and its applications to collapsing estimates in K\"ahler geometry}, Nonlinear Anal. {\bf 206} (2021), 112255.
\bibitem{JS2} W. Jian, Y. Shi, \emph{Global higher-order estimates for collapsing Calabi-Yau metrics on elliptic $K3$ surfaces},  J. Geom. Anal. {\bf 31} (2021), no. 5, 4586--4611.
\bibitem{Kol} S. Ko\l odziej, {\em The complex Monge-Amp\`ere equation}, Acta Math. {\bf 180} (1998), no. 1, 69--117.
\bibitem{LT} G. La Nave, G. Tian, {\em A continuity method to construct canonical metrics}, Math. Ann. {\bf 365} (2016), no. 3-4, 911--921.
\bibitem{Li} Y. Li, \emph{A gluing construction of collapsing Calabi-Yau metrics on $K3$ fibred $3$-folds}, Geom. Funct. Anal. {\bf 29} (2019), no. 4, 1002--1047.
\bibitem{LLZ} C. Li, J. Li, X. Zhang, \emph{A mean value formula and a Liouville theorem for the complex Monge-Amp\`ere equation},  Int. Math. Res. Not. IMRN 2020, no. 3, 853--867.
\bibitem{Schu} G. Schumacher, {\em Positivity of relative canonical bundles and applications}, Invent. Math. {\bf 190} (2012), no. 1, 1--56. Erratum {\bf 192} (2013), no. 1, 253--255.
\bibitem{Se} S. Semmes, {\em Complex Monge-Amp\`ere and symplectic manifolds}, Amer. J. Math. {\bf 114} (1992), no. 3, 495--550.
\bibitem{ST0} J. Song, G. Tian, {\em The K\"ahler-Ricci flow on surfaces of positive Kodaira dimension}, Invent. Math. {\bf 170} (2007), no. 3, 609--653.
\bibitem{ST} J. Song, G. Tian, \emph{Canonical measures and K\"ahler-Ricci flow}, J. Amer. Math. Soc. {\bf 25} (2012), no. 2, 303--353.
\bibitem{Ti} G. Tian, {\em Some progresses on K\"ahler-Ricci flow}, Boll. Unione Mat. Ital. {\bf 12} (2019), no. 1-2, 251--263.
\bibitem{To} V. Tosatti, \emph{Adiabatic limits of Ricci-flat K\"ahler metrics}, J. Differential Geom. {\bf 84} (2010), no. 2, 427--453.
\bibitem{To2} V. Tosatti, {\em Degenerations of Calabi-Yau metrics}, in {\em Geometry and Physics in Cracow,} Acta Phys. Polon. B Proc. Suppl. {\bf 4} (2011), no. 3, 495--505.
\bibitem{To3} V. Tosatti, {\em Calabi-Yau manifolds and their degenerations}, Ann. N.Y. Acad. Sci. {\bf 1260} (2012), 8--13.
\bibitem{To4} V. Tosatti, {\em Collapsing Calabi-Yau manifolds}, Surveys in Differential Geometry {\bf 23} (2018), 305--337, International Press, 2020.
\bibitem{TWY} V. Tosatti, B. Weinkove, X. Yang, \emph{The K\"ahler-Ricci flow, Ricci-flat metrics and collapsing limits}, Amer. J. Math. {\bf 140} (2018), no. 3, 653--698.
\bibitem{TZ} V. Tosatti, Y. Zhang, {\em Triviality of fibered Calabi-Yau manifolds without singular fibers}, Math. Res. Lett. {\bf 21} (2014), no. 4, 905--918.
\bibitem{TZ3} V. Tosatti, Y. Zhang, {\em Infinite time singularities of the K\"ahler-Ricci flow},  Geom. Topol. {\bf 19} (2015), no. 5, 2925--2948.
\bibitem{TZ2} V. Tosatti, Y. Zhang, {\em Collapsing hyperk\"ahler manifolds}, Ann. Sci. \'Ec. Norm. Sup\'er. {\bf 53} (2020), no.3, 751--786.
\bibitem{Ya}  S.-T. Yau, {\em On the Ricci curvature of a compact K\"ahler manifold and the complex Monge-Amp\`ere equation, I}, Comm. Pure Appl. Math. {\bf 31} (1978), 339--411.
\end{thebibliography}
\end{document}